\theoremstyle{plain}
\newtheorem{theorem}{\protect\theoremname}[section]
\theoremstyle{plain}
\newtheorem{proposition}[theorem]{\protect\propositionname}
\theoremstyle{definition}
\newtheorem{definition}[theorem]{\protect\definitionname}
\theoremstyle{plain}
\newtheorem{corollary}[theorem]{\protect\corollaryname}
\theoremstyle{remark}
\newtheorem{remark}[theorem]{\protect\remarkname}
\theoremstyle{plain}
\newtheorem{lemma}[theorem]{\protect\lemmaname}
\providecommand{\corollaryname}{Corollary}
\providecommand{\definitionname}{Definition}
\providecommand{\lemmaname}{Lemma}
\providecommand{\propositionname}{Proposition}
\providecommand{\remarkname}{Remark}
\providecommand{\theoremname}{Theorem}
\numberwithin{equation}{section}
\numberwithin{figure}{section}
\numberwithin{table}{section}
\newcommand{\R}{\mathbb{R}}
\newcommand{\N}{\mathbb{N}}
\newcommand{\Z}{\mathbb{Z}}
\newcommand{\C}{\mathbb{C}}
\renewcommand{\S}{\mathbb{S}}
\newcommand{\Rd}{\R^d}
\newcommand{\Rdxi}{\Rd_\xi}
\newcommand{\Torus}{\mathbb{T}}
\newcommand{\Td}{\Torus^d}
\newcommand{\bydef}{\mathrel{\mathop:}=}
\newcommand{\K}{\mathcal{K}}
\renewcommand{\b}{b}
\newcommand{\demi}{\frac{1}{2}}
\newcommand{\norm}{\mathbf{n}}
\newcommand{\xy}{{x,y}}
\newcommand{\Pext}{P_{\mathrm{ext}}}
\newcommand{\phiw}{\varphi_\omega}
\newcommand{\supp}{\mathrm{supp\,}}
\newcommand{\udlu}{\underline{u}}
\newcommand{\jp}[1]{\langle#1\rangle}
\newcommand{\Holder}[1]{W^{#1,\infty}}
\newcommand{\Symbol}[2]{\dot{\Gamma}^{#1}_{#2}}
\newcommand{\Cinf}{C^\infty}
\newcommand{\M}[4]{M^{#1}_{#2,#3}(#4)}
\newcommand{\Mdot}[4]{\dot{M}^{#1}_{#2,#3}(#4)}
\renewcommand{\H}[1]{H^{#1}}
\newcommand{\Hdot}[1]{\dot{H}^{#1}}
\newcommand{\Linf}{L^\infty}
\newcommand{\Ccinf}{C_c^\infty}
\newcommand{\Lone}{L^1}
\newcommand{\Ltwo}{L^2}
\newcommand{\Ldottwo}{\dot{L}^2}
\newcommand{\Cls}{\mathscr{C}}
\renewcommand{\L}{\mathcal{L}}
\newcommand{\D}{\mathscr{D}}
\newcommand{\drv}{\mathrm{d}}
\renewcommand{\d}{\,\drv}
\newcommand{\dx}{\d x}
\newcommand{\dt}{\d t}
\newcommand{\ds}{\d s}
\newcommand{\pt}{\partial_t}
\newcommand{\ps}{\partial_s}
\newcommand{\pn}{\partial_\norm}
\newcommand{\px}{\partial_x}
\newcommand{\py}{\partial_y}
\newcommand{\pxi}{\partial_\xi}
\newcommand{\ddt}{\frac{\drv}{\drv t}}
\newcommand{\intTorus}{\int_{\Td}}
\newcommand{\T}[1]{T_{#1}}
\newcommand{\Fourier}{\mathcal{F}}
\newcommand{\Id}{\mathrm{Id}}
\newcommand{\op}{\mathrm{Op}}
\newcommand{\bop}{\overline{\op}}
\newcommand{\tr}{\mathrm{tr}}
\newcommand{\MeanCurv}[1]{\nabla_x \cdot \Big( \frac{\nabla_x #1}{\sqrt{1 + |\nabla_x #1|^2}} \Big)}
\newcommand{\SurfArea}[1]{1 + |\nabla_x #1|^2}
\newcommand{\VertVelo}{\nabla_x \eta \cdot \nabla_x \psi + G(\eta) \psi}
\newcommand{\Sol}{\mathcal{S}}
\newcommand{\Range}{\mathcal{R}}
\newcommand{\B}{\mathcal{B}}
\newcommand{\A}{\mathcal{A}}
\newcommand{\E}{\mathcal{E}}
\newcommand{\Zh}{\mathcal{Z}_h}
\renewcommand{\Re}{\mathrm{Re}}
\renewcommand{\Im}{\mathrm{Im}}
\begin{document}
	
\title{\textsc{Control of Three Dimensional Water Waves}}
\author{HUI ZHU} 
\thanks{The author is partially supported by the grant ``ANAÉ'' ANR-13-BS01-0010-03 of the Agence Nationale de la Recherche. This research is carried out during the author's PhD studies, financed by the Allocation Doctorale of the École Normale Supérieure.}


\begin{abstract}

We study the exact controllability for spatially periodic water waves with surface tension, by localized exterior pressures applied to free surfaces. We prove that in any dimension, the exact controllability holds within arbitrarily short time, for sufficiently small and regular data, provided that the region of control satisfies the geometric control condition.
This result was previously obtained by Alazard, Baldi, and Han-Kwan~\cite{ABH-Control} for 2-D water waves. Our proof combines an iterative scheme, that reduces the controllability of the original quasi-linear equation to that of a sequence of linear equations, with a semi\-classical approach for the linear control problems.

\end{abstract}

\maketitle


\section{Introduction}

We consider the Zakharov~\cite{Zakharov} / Craig–Sulem~\cite{C-S} formulation of the gravity water wave system with surface tension. It is defined as follows on the torus $ \Td = \Rd / 2\pi\Z^d $,
\begin{equation}
\label{eq:equation-water-wave}
\begin{cases}
\pt \eta - G(\eta) \psi = 0, \\
\pt \psi + g \eta - H(\eta) + \frac{1}{2} |\nabla_x \psi|^2 - \frac{1}{2} \frac{( \VertVelo )^2}{\SurfArea{\eta}} = \Pext.
\end{cases}
\end{equation}
Here~$ g $ is the gravitational acceleration, $ H(\eta) = \nabla_x \cdot \big( \frac{\nabla_x \eta}{\sqrt{1 + |\nabla_x \eta|^2}} \big) $ is the mean curvature of the surface 
\begin{equation*}
\Sigma_t = \{(x,y) \in \Td \times \R : y = \eta(t,x)\},
\end{equation*}
and $ G(\eta) $ is the Dirichlet-Neumann operator, defined below by~\eqref{eq:def-G(eta)}, of the domain
\begin{equation*}
\Omega_t = \{(x,y) \in \Td \times \R : -b < y < \eta(t,x) \},
\end{equation*}
with depth $ b \in {} ]0,+\infty[ $. Our main theorem states that, any sufficiently small data can be generated by a suitable localized exterior pressure~$ \Pext $.

\begin{definition}
\label{def:GCC}
We say that an open set $ \omega \subset \Td $ satisfies the \textit{geometric control condition} if every geodesic of~$ \Td $ (which are straight lines, for we endow~$ \Td $ with the standard metric) eventually enters $ \omega $. More precisely, this means that for every $ (x,\xi) \in \Td \times \S^{d-1} $, there exists some~$ t \in {}]0,\infty[ $, such that $ x + t\xi \in \omega $.
\end{definition}

\begin{theorem}
\label{thm:main}
Suppose that $ d \ge 1 $, $ T > 0 $, $ s $ is sufficiently large, and $ \omega \subset \Td $ satisfies the geometric control condition, then for some $ \varepsilon_0 > 0 $ sufficiently small and for all $ (\eta_i,\psi_i) \in \H{s+1/2}(\Td) \times \H{s}(\Td) $ satisfying 
\begin{equation*}
\|(\nabla_x \eta_i,\nabla_x \psi_i)\|_{\H{s-1/2}\times\H{s-1}} < \varepsilon_0, \quad (i=0,1)
\end{equation*}
and $ \intTorus \eta_0 \dx = \intTorus \eta_1 \dx = 0 $, there exists $ \Pext \in C ([0,T],\H{s}(\Td)) $, such that
\begin{enumerate}[nosep]
\item $ \Pext $ is real valued, and for all $ t \in [0,T] $, $ \supp \Pext(t,\cdot) \subset \omega $;
\item there exists a unique solution to~\eqref{eq:equation-water-wave}, $ (\eta,\psi) \in C([0,T],\H{s+1/2}(\Td)\times\H{s}(\Td)) $, such that $ (\eta,\psi)|_{t=0} = (\eta_0,\psi_0) $, and $ (\eta,\psi)|_{t=T} = (\eta_1,\psi_1) $. 
\end{enumerate}
\end{theorem}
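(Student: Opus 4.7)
The plan is to extend the strategy of Alazard, Baldi, and Han-Kwan \cite{ABH-Control} to arbitrary dimension. The starting point is a paralinearization of the Zakharov / Craig--Sulem system~\eqref{eq:equation-water-wave} in the spirit of Alazard--Métivier and Alazard--Burq--Zuily: after introducing suitable ``good unknowns'' and conjugating by carefully chosen paradifferential operators, one reduces the system to a scalar paradifferential equation for a complex unknown $u$ of the schematic form
\begin{equation*}
\pt u + i \T{\gamma} u = R(u) + \T{q}\, \Pext,
\end{equation*}
where the principal symbol $\gamma(t,x,\xi)$ is of order $3/2$ and behaves, at high frequency, like $\sqrt{g |\xi| + |\xi|^3}$ (capillary--gravity dispersion), and $R$ is a semilinear remainder. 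This reduces controllability of the water wave system to controllability of a quasi-linear dispersive equation of order $3/2$, provided the inverse paralinearization preserves the support constraint on $\Pext$.

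To handle the quasi-linear nature, I would set up an iterative scheme: at step $n$, freeze the coefficients of the paradifferential system to those determined by the previous iterate $(\eta^{(n)}, \psi^{(n)})$ and solve a \emph{linear} control problem for the resulting time-dependent operator; the next iterate is then produced by solving the true nonlinear equation forward in time with the control just constructed. Convergence in $\H{s+1/2}(\Td) \times \H{s}(\Td)$ for $s$ large is the main technical hurdle, since the linearized solution map loses derivatives in its coefficients and a plain contraction-mapping argument fails. Following~\cite{ABH-Control}, I would compensate this loss by measuring controllability at each step in a Sobolev norm \emph{strictly weaker} than the one at which well-posedness holds, so that the regularizing effect of the linear control absorbs the derivative loss of the paralinearization.

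For each linearized problem, the crucial estimate is an observability inequality for the adjoint equation. I would prove it by a semiclassical defect measure argument \emph{à la} Gérard--Lebeau--Burq: assuming observability fails, one extracts a semiclassical measure $\mu$ on $T^*\Td \setminus 0$ which is invariant under the Hamiltonian flow of $\gamma$ and vanishes over $\omega$. Since $\gamma$ is, to leading order, a function of $|\xi|$ alone, its bicharacteristics project onto straight lines on $\Td$ with varying speed; the geometric control condition (Definition~\ref{def:GCC}) then forces $\mu \equiv 0$, yielding a contradiction after a standard compactness--uniqueness step to absorb subprincipal and lower-order terms. The controllability of the linearized equation then follows from the Hilbert Uniqueness Method.

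The hardest part, as in~\cite{ABH-Control}, will be to run the iteration with sharp bookkeeping of both derivative losses and control sizes: one must ensure that if $(\eta_i,\psi_i)$ have size $\varepsilon_0$, each control $\Pext^{(n)}$ is of size $O(\varepsilon_0)$, the increments $\Pext^{(n+1)} - \Pext^{(n)}$ form a Cauchy sequence in $C([0,T],\H{s}(\Td))$, and the limit steers the genuinely nonlinear system \emph{exactly} from $(\eta_0,\psi_0)$ to $(\eta_1,\psi_1)$. Two secondary difficulties specific to $d \ge 2$ are: (i) the Hamiltonian flow of $\gamma$ coincides with the flat geodesic flow on $\Td$ only modulo errors controlled by the size of the solution, so GCC must be invoked perturbatively for sufficiently small data; and (ii) the zero-mean constraint $\intTorus \eta \dx = 0$ must be preserved throughout the iteration, which requires a small correction to the control coming from the low-frequency kernel of the linearized dynamics.
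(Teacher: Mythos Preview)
Your overall architecture matches the paper's closely: paralinearize to a scalar quasilinear equation of order $3/2$, reduce nonlinear controllability to a sequence of linear control problems via an iteration that converges in a weaker norm $H^{s-3/2}$ and recovers $H^s$ regularity afterwards, and prove linear observability by a semiclassical measure / compactness--uniqueness argument under GCC. So the strategy is right.

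There are, however, three specific technical points your outline glosses over that are genuinely nontrivial in the paper and where a naive implementation would stall. First, because the physical control $\Pext$ must be real, the paralinearized equation is only $\R$-linear (the operator $\Re$ appears), so HUM must be set up on $(\Ldottwo,\Re(\cdot,\cdot)_{\Ltwo})$ and the observability is proved for the $\C$-linear $2\times 2$ system satisfied by $\vec w=\binom{u}{\bar u}$; your single-equation formulation hides this. Second, a direct defect-measure argument on $[0,T]$ for the order-$3/2$ propagator does not work: group velocities at frequency $\sim h^{-1}$ are $\sim h^{-1/2}$, so one must perform a dyadic localization $\vec w_h = \Pi_h\Delta_h \vec w$, rescale time semiclassically by $s=h^{-1/2}t$ (Lebeau's trick), prove a semiclassical observability on each interval $[kh^{1/2}T,(k+1)h^{1/2}T]$, and patch via Littlewood--Paley to get a \emph{weak} observability with an $H^{-N}$ remainder. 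Third, removing that remainder requires unique continuation for the linear equation, but Carleman estimates do not apply to fractional (non-analytic in $\xi$) symbols; the paper sidesteps this by a perturbative argument that reduces, via the smallness $\varepsilon_0\to 0$, to unique continuation for the \emph{constant-coefficient} limit $D_t\vec w+\A(0)\vec w=0$, where an explicit Fourier argument works. Your ``standard compactness--uniqueness step'' hides exactly this difficulty.

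Two minor corrections: the paper's iteration defines $u^{n+1}$ as the solution of the \emph{linear} equation with frozen coefficients (not the nonlinear one), and the Cauchy property of the increments holds in $C([0,T],H^{s-3/2})$, not $C([0,T],H^{s})$; $H^s$ regularity of the limit is recovered a posteriori.
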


\begin{remark}
Theorem~\ref{thm:main} remains valid for infinite depth, that is $ b = +\infty $, with exactly the same proof. However, we restrict ourselves to finite depth for simplicity.
\end{remark}

\begin{remark}
The same result for 2-D water waves, that is $ d=1 $, was previously obtained by Alazard, Baldi, and Han-Kwan~\cite{ABH-Control}, where the geometric control condition is implicit, as it is always satisfied on~$ \Torus^1 $. We show in Appendix~\ref{app:necessity-of-GCC} that on~$ \Torus^2 $, for the exact controllability of the linearized equation around the flat surface (that is $ \eta = 0 $) of~\eqref{eq:equation-water-wave} with infinite depth, the geometric control condition is necessary, and consequently it is natural for the non-linear equation.
\end{remark}

\begin{remark}
The condition for the mean values of~$ \eta_0 $ and~$ \eta_1 $ is necessary since the zero frequency of~$ \eta $ is conserved in time.
\end{remark}

\begin{remark}
We shall only prove the existence of the solution $ (\eta,\psi) $, by an iterative construction performed in Section~\ref{sec:iteration}, because the uniqueness is a consequence of~\cite{ABZ-Surface-Tension}.
\end{remark}

This is a natural control result for a quasi-linear physical equation. Although many works have been done in the control theory of nonlinear equations, including equations describing water waves in some asymptotic regions, like the Benjamin--Ono equation, the Korteweg--de Vries equation, the nonlinear Schr\"{o}dinger equation, etc., the only work so far for the full water wave model is done by Alazard, Baldi and Han-Kwan~\cite{ABH-Control}, who proves the exact controllability for the system~\eqref{eq:equation-water-wave} on~$ \Torus^1 $. Our Theorem~\ref{thm:main} extends their result to higher dimensions.

A main ingredient in~\cite{ABH-Control} is Ingham's inequality, which is a tool specific to $ d=1 $. To tackle the lack of such estimate in higher dimensions, we have to distinguish the high frequency regime and the low frequency regime. The high frequency regime requires to implement in this quasi-linear framework the semi-classical approach (see Lebeau~\cite{Lebeau-Control-Schrodinger}, Burq--Zworski~\cite{BZ-Control-Schrodinger}), while the low frequency regime is studied by the uniqueness-compactness argument (see Bardos--Lebeau--Rauch~\cite{BLR}). It has to be noticed that the usual Carleman estimates do not seem to apply to the paradifferential context we are working with, and this uniqueness result has to be proven by a different method. 

\subsection{From Euler to Zakharov / Craig--Sulem}

We present here briefly the Eulerian formulation of the water wave system with surface tension to give some physical intuitions. Then we define the Dirichlet-Neumann operator, and derive the Zakharov / Craig--Sulem formulation~\eqref{eq:equation-water-wave}. 

Let~$ \Sigma_t $ and~$ \Omega_t $ be defined as above, and let $ \Gamma = \Td \times \{-b\} $ be the flat bottom. Denote by $ v : \Omega_t \to \Rd $ the Eulerian vector field, by $ P : \Omega_t \to \R $ the internal pressure of the fluid, and by $ \norm : \partial\Omega_t \to \S^d $ the exterior unit normal vector to the boundary $ \partial \Omega_t = \Sigma_t \cup \Gamma $. Then the Eulerian formulation of the water wave system is the following system of~$ (\eta,v) $,
\begin{equation}
\label{eq:equation-water-wave-euler}
\begin{cases}
\pt v + v \cdot \nabla_\xy v + \nabla_\xy P = -g \mathbf{e}_y & \mathrm{in\ } \Omega_t, \\
\nabla_\xy \cdot v = 0 & \mathrm{in\ } \Omega_t, \\
\pt \eta = \sqrt{\SurfArea{\eta}} \  v|_{\Sigma_t} \cdot \norm, & \mathrm{on\ } \Sigma_t, \\
v \cdot \norm = 0 & \mathrm{on\ } \Gamma, \\
-P|_{\Sigma_t} = \kappa H(\eta) + \Pext & \mathrm{on\ } \Sigma_t,
\end{cases}
\end{equation}
where~$ \mathbf{e}_y = (0,1) $ is the unit vector in the $ y $-direction, and~$ \kappa > 0 $ is the surface tension coefficient. The physical interpretation of~\eqref{eq:equation-water-wave-euler} is that
\begin{enumerate}[nosep]
\item $ v $ satisfies the incompressible Euler equation in the domain~$ \Omega_t $,
\item fluid particles which are initially on the surface will stay on the surface,
\item the bottom is impenetrable by fluid particles,
\item the internal pressure, the exterior pressure, and the surface tension balance out on the surface.
\end{enumerate}
We suppose furthermore that the vector field~$ v $ admits a scalar potential $ \phi : \Omega_t \to \R $, i.e., $ v = \nabla_\xy \phi $, which implies furthermore that $ v $ is irrotational, i.e., $ \nabla_\xy \times v = 0. $
Denote by $ \psi $ the trace of $ \phi $ to the free surface, in the sense that,
\begin{equation*}
\psi(t,x) = \phi(t,x,\eta(t,x)),
\end{equation*}
then $ \phi $ satisfies the Laplace equation, with a mixed boundary condition,
\begin{equation}
\label{eq:equation-laplace-DN}
\Delta_{x,y} \phi = 0, \quad \phi|_{\Sigma_t} = \psi, \quad \pn \phi |_\Gamma = 0.
\end{equation}
Define the Dirichlet-Neumann operator $ G(\eta) $ by
\begin{align}
\label{eq:def-G(eta)}
G(\eta) \psi(t,x)
& = \sqrt{1+|\nabla_x\eta(t,x)|^2} \ \pn \phi|_{y = \eta(t,x)} \\
& = (\py \phi)(t,x,\eta(x)) - \nabla_x \eta(t,x) \cdot (\nabla_x\phi)(t,x,\eta(x)). \nonumber
\end{align}
Combining~\eqref{eq:equation-water-wave-euler}, \eqref{eq:equation-laplace-DN} and~\eqref{eq:def-G(eta)}, and assume $ \kappa = 1 $, we obtain the Zakharov / Craig--Sulem formulation~\eqref{eq:equation-water-wave} in variables $ (\eta,\psi) $.

The study of the Cauchy problem of~\eqref{eq:equation-water-wave}, initiated by Kano--Nishida~\cite{K-N} and Yosihara~\cite{Yosihara-1,Yosihara-2}, has greatly progressed over decades. To name a few, the local well-posedness in the framework of Sobolev spaces, without smallness assumptions of initial data, has been established by Beyer--Günther~\cite{B-G} in the case with surface tension, and by Wu~\cite{Wu-2D,Wu-3D} in the case without surface tension. For recent progresses, we refer to Ifrim--Tataru~\cite{Ifrim-Tataru}, Ionescu--Pusateri~\cite{Ionescu-Pusateri}, Wang~\cite{Wang-3D-Global}, de~Poyferré--Nguyen~\cite{de Poyferre-Nguyen} and the references therein. Here we are influenced by the paradifferential approach developed by Alazard--Métivier~\cite{AM-Paralinearization}, Alazard--Burq--Zuily~\cite{ABZ-Surface-Tension, ABZ-Without-Surface-Tension, ABZ-non-local}, which allows them to prove the local well-posedness in low Sobolev regularity, without regularity assumptions for the bottom.

\subsection{Outline of Paper}

In Section~\ref{sec:Strategy-of-Proof}, we outline the method of our proof. In Section~\ref{sec:Paradifferentialization-and-Symmetrization}, we reformulate the problem by the para\-differential calculus. In Section~\ref{sec:Ltwo-linear-control}, we prove the null controllability in $ \Ltwo $ for linearized control problems of~\eqref{eq:equation-water-wave}. In Section~\ref{sec:H^s-linear-control}, we prove that for $ \H{s} $ initial data, the control obtained by Section~\ref{sec:Ltwo-linear-control} is also of regularity~$ \H{s} $. In Section~\ref{sec:iteration}, we prove with an iterative construction the null controllability in $ \H{s} $ of the quasilinear para\-differential equation obtained in Section~\ref{sec:Paradifferentialization-and-Symmetrization}. Finally in Section~\ref{sec:theorem-paradiff-implies-theorem}, we prove the exact controllability of~\eqref{eq:equation-water-wave} and conclude the proof of Theorem~\ref{thm:main}.

In Appendix~\ref{app:necessity-of-GCC}, we show that on $ \Torus^2 $, when $ b = \infty $, the geometric control condition is necessary for the controllability of the linearized equation of~\eqref{eq:equation-water-wave} around the flat surface. In Appendix~\ref{sec:paradifferential-calculus}, we recall the basics of the para\-differential calculus. In Appendix~\ref{app:linear-equations}, we prove the well-posedness of some linear evolution equations, which are used in the study of the linearized equations of~\eqref{eq:equation-water-wave}.

\section*{Acknowledgment}

The author would like to express his sincere gratitude to his Ph.D.\ advisors Thomas Alazard and Nicolas Burq for their continuous support and advices. He would like to thank Claude Zuily and Patrick Gérard for all their help and encouragement, and thank Huy Quang Nguyen for some useful discussions at the beginning of this project. He would also like to thank Jean-Marc Delort and Daniel Tataru for their careful reading of the manuscript.

\section{Strategy of Proof and Some Notations}

\label{sec:Strategy-of-Proof}

The general strategy of our proof of Theorem~\ref{thm:main} is to combine the iterative scheme of~\cite{ABH-Control} with Lebeau's method for the linear control problems, where we adapt the semi\-classical approach in the high frequency regime, and use a perturbative argument to prove the unique continuation property for the low frequency regime. We explain in the following some more details.

\subsection{Paralinearization and Reduction to Null Controllability}

By the time reversibility of~\eqref{eq:equation-water-wave}, the procedure of paralinearization and symmetrization systematically developed in~\cite{AM-Paralinearization, ABZ-Surface-Tension, ABZ-Without-Surface-Tension, ABZ-non-local} by Alazard, Burq, Métivier and Zuily, we can reduce the exact controllability of~\eqref{eq:equation-water-wave} to the \textit{null controllability} of a paralinearized equation. By the null controllability, we mean the exact controllability with null final data. For the para\-differential calculus, we refer to Bony~\cite{Bony-Paradiff}, Métivier~\cite{Metivier-Paradifferential-Calculus}, and H\"{o}rmander~\cite{Hormander-Hyperbolic-textbook}, see also Appendix~\ref{sec:paradifferential-calculus} where some basic results are presented.

Recall that the zero frequency of~$ \eta $ is preserved in time, and observe that the zero frequency of~$ \psi $ is of no physical importance and at the same time has no contribution in the nonlinear terms of the equation. Therefore it is natural to work in Sobolev spaces of functions with null zero frequencies,
\begin{equation}
\label{eq:Hdot-definition}
\Hdot{\sigma}(\Td) \bydef \{f\in\H{\sigma}(\Td) : \pi(D_x) f = f\}, \quad \sigma \in \R,
\end{equation}
where $ \pi(D_x) $ is a Fourier multiplier that projects to Fourier modes of nonzero frequencies (see Appendix~\ref{sec:paradifferential-calculus} for details). We equip $ \Hdot{\sigma}(\Td) $ with the usual Sobolev norm inherited from $ \H{s}(\Td) $. Observe that $ f \in \Hdot{\sigma}(\Td) $ means $ \int_{\Td} f \dx = 0 $. We also use the notation $ \Ldottwo(\Td) = \Hdot{0}(\Td) $. 

Following~\cite{ABZ-Surface-Tension,ABH-Control}, we paralinearize~\eqref{eq:equation-water-wave} to obtain a para\-differential equation for the complex variable $ u = u(\psi,\eta) $, 
\begin{equation}
\label{eq:def-u}
u = \T{q} \omega - i \T{p} \eta.
\end{equation}
Here $ \omega = \psi - T_B \eta $ is called \textit{the good unknown of Alinhac}\footnote{We use the same notation~$ \omega $ for the good unknown of Alinhac and the domain of control for it causes no ambiguity, and it is a standard notation in both cases in the literature.}, where~$ B $ is the trace to the free surface~$ \Sigma $ of the vertical velocity~$ \py\phi $ (see~\eqref{eq:def-B-V} and~\eqref{def:good-unknown-of-Alinhac}); while $ \T{q} $ and $ \T{p} $ are para\-differential operators depending solely on~$ \eta $, of orders~$ 0 $ and~$ 1/2 $ respectively, so that $ u \in \Hdot{s}(\Td) $ whenever $ (\eta,\psi) \in \H{s+1/2}(\Td) \times \H{s}(\Td) $. We show that this transform $ (\eta,\psi) \mapsto u $ is invertible except for the zero frequencies. So we first establish the null controllability for~$ u $, and then recover the zero frequencies to obtain the exact controllability for~$ (\eta,\psi) $. To do this, we seek a null control for~$ u $ of the following form
\begin{equation}
\label{eq:form-Pext}
\Pext(t,x) =  \chi_T^{}(t) \phiw(x) \Re\, F(t,x),
\end{equation}
where
\begin{enumerate}[nosep]
\item $ F \in C([0,T],\Hdot{s}(\Td)) $ is complex valued;
\item $ \chi_T^{}(\cdot) = \chi_1^{}(\cdot/T) \in \Cinf(\R) $ where $ \chi_1^{}(t) = 1 $ for $ t \le 1/2 $ and $ \chi_1^{}(t) = 0 $ for $ t \ge 3/4 $;
\item $ 0 \le \phiw \in \Cinf(\Td) $ satisfies $ 1_{\omega'} \le \phiw \le 1_\omega $ where $ \omega' $ satisfies the geometric control condition, and $ \overline{\omega'} \subset \omega $. Such~$ \omega' $ exists because~$ \Td $ is compact.
\end{enumerate} 
We fix $ \chi_T^{} $ and~$ \phiw $ and seek~$ F $, so that with $ \Pext $ of the form~\eqref{eq:form-Pext}, $ u $ satisfies the following nonlinear para\-differential equation,
\begin{equation}
\label{eq:intro-equation-paradiff-nonlinear}
(\pt + P(u) + R(u)) u = (\B(u) + \beta(u)) F.
\end{equation}
Here $ P(u) $ is a para\-differential operator of order~$ 3/2 $,
\begin{equation*}
P(u) = i \T{\gamma(u)} + \nabla_x \cdot \T{V(u)} + \mathrm{\ lower\ order\ terms},
\end{equation*} 
with $ \gamma(u) $ being a symbol of degree~$ 3/2 $ that depends on~$ \nabla_x\eta $ (hence depends on~$ u $) and $ V(u) $ being the trace to the free surface~$ \Sigma $ of the horizontal velocity~$ \nabla_x \phi $ (see~\eqref{eq:def-B-V}); while
\begin{equation*}
\B(u)F = \chi^{}_T \T{q}\phiw \Re F.
\end{equation*}
Under the smallness condition $ u = O( \varepsilon_0)_{\H{s}} $, we have 
\begin{equation*}
\T{\gamma(u)} - |D_x|^{3/2} = O(\varepsilon_0)_{\L(\Hdot{s},\Hdot{s-3/2})}, \quad
V(u) = O(\varepsilon_0)_{\H{s-1}};
\end{equation*}
while remainders $ R(u) $ and $ \beta(u) $ satisfy for some $ \vartheta > 0 $,
\begin{equation}
\label{eq:smallness-of-R}
\|R(u)\|_{\L(\Hdot{s}, \Hdot{s})} \lesssim \|u\|_{\H{s}}^\vartheta,
\quad  
\|\beta(u)\|_{\L(\Hdot{s}, \Hdot{s+1/2})} \lesssim \|u\|_{\H{s}}.
\end{equation}
Therefore perturbation arguments can be used to simplify the situation.

\begin{remark}
The reason to redo the paralinearization of~\eqref{eq:equation-water-wave} rather than borrowing directly the results from~\cite{ABZ-Surface-Tension} is due to two considerations. Firstly~\cite{ABZ-Surface-Tension} studies the Cauchy problem, to which only the regularity of the remainder $ R(u)u $ is important, while in this paper, the smallness is also required to treated it as a perturbation. Secondly, due to the existence of the exterior pressure $ \Pext $ (in~\cite{ABZ-Surface-Tension} $ \Pext = 0 $), the same estimates for $ \pt \psi $ no longer apply, as they now for $ \pt \psi - \Pext $, resulting in the appearance of the term~$ \beta(u) F $.
\end{remark}

We will first prove the null controllability of~\eqref{eq:intro-equation-paradiff-nonlinear} (that is, Theorem~\ref{thm:main-paradiff} below), then prove that it implies the exact controllability of~\eqref{eq:equation-water-wave} (that is, Theorem~\ref{thm:main}) in Section~\ref{sec:theorem-paradiff-implies-theorem}.

\begin{theorem}
\label{thm:main-paradiff}
Suppose that $ d \ge 1 $, $\omega \subset \Td $ satisfies the geometric control condition, $ T > 0 $, and~$ s $ sufficiently large, then for some $ \varepsilon_0 > 0 $ sufficiently small and for all initial data $ u_0 \in \Hdot{s}(\Td) $ with
$ \|u_0\|_{\H{s}} < \varepsilon_0 $,
there exists an $ F \in C([0,T],\Hdot{s}(\Td)) $, such that the unique solution $ u \in C([0,T],\Hdot{s}(\Td)) $ to~\eqref{eq:intro-equation-paradiff-nonlinear} with initial data $ u(0)=u_0 $ vanishes at time~$ T $, that is $ u(T) = 0 $.
Moreover $ F $ can be so chosen that
\begin{equation}
\label{eq:soft-estimate-u-F}
\|u\|_{C([0,T],\H{s}) \cap \Holder{1}([0,T],\H{s-3/2})}  \lesssim \varepsilon_0, \quad 
\|F\|_{C([0,T],\H{s})}  \lesssim \varepsilon_0.
\end{equation}
\end{theorem}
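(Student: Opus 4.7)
The plan is to construct the pair $(u,F)$ by an iterative scheme that at each step reduces the nonlinear control problem to a linear one, for which the controllability results of Sections~\ref{sec:Ltwo-linear-control} and~\ref{sec:H^s-linear-control} apply. Initialize $u^{(0)} = 0$. Assuming $u^{(n)} \in C([0,T],\Hdot{s})$ has been constructed with $\|u^{(n)}\|_{C([0,T],\Hdot{s})} \lesssim \varepsilon_0$, freeze the coefficients and invoke the linear theory to produce $F^{(n+1)} \in C([0,T],\Hdot{s})$ (supported in time in $[0,T]$ and acting via $\B(u^{(n)})$, $\beta(u^{(n)})$) together with $u^{(n+1)}$ solving
\begin{equation*}
(\pt + P(u^{(n)}) + R(u^{(n)})) u^{(n+1)} = (\B(u^{(n)}) + \beta(u^{(n)})) F^{(n+1)},
\end{equation*}
with $u^{(n+1)}(0) = u_0$ and $u^{(n+1)}(T) = 0$, together with uniform estimates
\begin{equation*}
\|u^{(n+1)}\|_{C([0,T],\Hdot{s})} + \|F^{(n+1)}\|_{C([0,T],\Hdot{s})} \lesssim \|u_0\|_{\Hdot{s}}.
\end{equation*}
The linear controllability is applicable because, thanks to the smallness of $u^{(n)}$ and to~\eqref{eq:smallness-of-R}, the principal operator $i\T{\gamma(u^{(n)})} + \nabla_x \cdot \T{V(u^{(n)})}$ is an $\varepsilon_0$-perturbation of $i|D_x|^{3/2}$, and $R(u^{(n)})$, $\beta(u^{(n)})$ are small lower-order perturbations; the geometric control condition for $\omega'$ together with Lebeau's semiclassical approach at high frequencies and the unique-continuation argument at low frequencies then yield controllability with constants uniform in $n$.

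To close the iteration at the same level of smallness, one propagates the a priori bound $\|u^{(n+1)}\|_{C([0,T],\Hdot{s})} \lesssim \varepsilon_0$ by choosing $\varepsilon_0$ small enough that the constant in the linear estimate combined with the Lipschitz behavior of the coefficients $(P,R,\B,\beta)$ with respect to $u$ leaves the bound stable. The soft estimate on $\pt u$ in $\H{s-3/2}$ follows directly from the equation by reading off the principal term $P(u)u = O(\varepsilon_0)_{\H{s-3/2}}$.

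For convergence, consider the differences $v^{(n)} = u^{(n+1)} - u^{(n)}$, $G^{(n)} = F^{(n+1)} - F^{(n)}$. Subtracting two consecutive linear equations one obtains
\begin{equation*}
(\pt + P(u^{(n)}) + R(u^{(n)})) v^{(n)} = (\B(u^{(n)}) + \beta(u^{(n)})) G^{(n)} + E^{(n)},
\end{equation*}
where the commutator-type remainder $E^{(n)}$ depends linearly on $v^{(n-1)}$ and $F^{(n)}$ through the differences $P(u^{(n)})-P(u^{(n-1)})$, etc. A standard loss-of-derivatives issue arises here, so I would estimate differences in a weaker space (say $\Hdot{s-\delta}$ for a small $\delta > 0$) using tame bounds on the paradifferential symbols $\gamma, V, q, p$, while maintaining uniform $\Hdot{s}$ bounds from the previous step. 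This yields $\|v^{(n)}\|_{C([0,T],\Hdot{s-\delta})} + \|G^{(n)}\|_{C([0,T],\Hdot{s-\delta})} \le \tfrac{1}{2}(\|v^{(n-1)}\|_{\cdots} + \|G^{(n-1)}\|_{\cdots})$ for $\varepsilon_0$ small, giving a Cauchy sequence whose limit $(u,F)$, by interpolation with the uniform $\Hdot{s}$ bound, lies in $C([0,T],\Hdot{s})$.

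The main obstacle will be making the linear step genuinely \emph{uniform} in the frozen state $u^{(n)}$: one needs quantitative high-frequency observability and low-frequency unique-continuation estimates for the linearized equation whose constants do not degenerate as the coefficients vary in a small ball of $\Hdot{s}$. The second delicate point is arranging the tame/difference estimates so that the loss of $\delta$ derivatives in the contraction estimate is compensated by uniform $\Hdot{s}$ control, which is the standard Nash--Moser-type trade-off executed here by a simple interpolation rather than a full Nash--Moser scheme, thanks to the Lipschitz smallness~\eqref{eq:smallness-of-R}.
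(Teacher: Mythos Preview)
Your overall strategy matches the paper's almost exactly: initialize at $u^{(0)}=0$, set $F^{(n+1)}=\Phi(u^{(n)})u_0$ with the linear $\H{s}$-control operator of Section~\ref{sec:H^s-linear-control}, solve the frozen linear problem for $u^{(n+1)}$, propagate the uniform $\H{s}$ bound, and run a contraction in a weaker norm (the paper takes $\delta=3/2$). The two ``main obstacles'' you flag are precisely what Sections~\ref{sec:Ltwo-linear-control}--\ref{sec:H^s-linear-control} and~\ref{sec:contraction-estimate} handle.

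There is, however, a genuine gap in your final step. From ``$\{u^{(n)}\}$ bounded in $C([0,T],\H{s})$'' and ``$\{u^{(n)}\}$ Cauchy in $C([0,T],\H{s-\delta})$'', interpolation gives convergence in $C([0,T],\H{s'})$ for every $s'<s$, hence $u\in L^\infty([0,T],\H{s})\cap C_w([0,T],\H{s})$, but \emph{not} $u\in C([0,T],\H{s})$. The paper does not recover the top regularity by interpolation. Instead, after passing to the limit in the equation (in the sense of distributions), it rewrites the limit as the \emph{linear} problem
\[
(\partial_t + P(\underline{u}))u = G,\qquad \underline{u}=u,\quad G=-R(u)u+\B(u)F+\beta(u)F,
\]
observes that weak convergence gives $G\in L^2([0,T],\Hdot{s})$, and invokes the linear well-posedness of Proposition~\ref{prop:app-wellposedness-paradiff-eq} to conclude $u\in C([0,T],\Hdot{s})$. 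The $\H{s}$ regularity of $F$ is then obtained not from the sequence $\{F^{(n)}\}$ directly, but by showing $u\in\Cls^{1,s}(T,C\varepsilon_0)$ so that $\Phi(u)$ is well defined, and identifying $F=\lim_n\Phi(u^{(n)})u_0=\Phi(u)u_0\in C([0,T],\Hdot{s})$ via the contraction estimate for $\Phi$ (Proposition~\ref{prop:Stability-of-Phi}). Replacing your interpolation sentence with this two-line bootstrap closes the argument.
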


\begin{remark}
In the statement of Theorem~\ref{thm:main-paradiff}, and throughout this article, the relation $ X \lesssim Y $ is used to simplify the relation~$ X \le C(d,\omega,T,\b) Y $ for some constant~$ C $ which, whenever~$ \varepsilon_0 $ (and~$ h $ in the semi\-classical setting) is sufficiently small, depends only on~$ d $, $ \omega $, $ T $, $ \b $, and can be treated as a universal constant.
\end{remark}

\subsection{Iterative Scheme.}
To prove Theorem~\ref{thm:main-paradiff}, we adapt the iterative scheme of~\cite{ABH-Control} which reduces the study to the control problem of linear equations. To simplify the notation, we first introduce the following spaces.

\begin{definition}
For $ \sigma \in \R $, $ \varepsilon_0 > 0 $, $ T > 0 $, we say $ u \in \Cls^{1,\sigma}(T,\varepsilon_0) $, resp.\ $ F \in \Cls^{0,\sigma}(T,\varepsilon_0) $, if $ u \in C([0,T],\Hdot{\sigma}(\Td)) \cap \Holder{1}([0,T],\Hdot{\sigma-3/2}(\Td)) $, resp.\ $ F \in C([0,T],\Hdot{\sigma}(\Td)) $ and
\begin{equation*}
\|u\|_{C([0,T],\H{\sigma})} + \|\pt u\|_{\Linf([0,T],\H{\sigma-3/2})} < \varepsilon_0,
\quad \mathrm{resp.} \quad  \|F\|_{C([0,T],\H{\sigma})} < \varepsilon_0.
\end{equation*}
\end{definition}

For~$ s $ sufficiently large and $ \varepsilon_0 > 0 $, we fix $ \udlu \in \Cls^{1,s}(T,\varepsilon_0) $ and consider the null controllability of the linear equation
\begin{equation}
\label{eq:intro-linear-equation-perturbed}
(\pt + P(\udlu) + R(\udlu)) u = (\B(\udlu) + \beta(\udlu)) F.
\end{equation}
We show that for $ \varepsilon_0 $ sufficiently small, there exists a linear operator 
\begin{equation}
\label{eq:intro-def-Phi}
\Phi(\udlu) : \Hdot{s}(\Td) \to C([0,T],\Hdot{s}(\Td)),
\end{equation}
which \textit{null-controls}~\eqref{eq:intro-linear-equation-perturbed}, that is, for any $ u_0 \in \Hdot{s}(\Td) $,
\begin{equation*}
F = \Phi(\udlu) u_0 \in C([0,T],\Hdot{s}(\Td))
\end{equation*}
sends the initial data $ u(0) = u_0 $ at time $ t = 0 $ to final data~$ u(T) = 0 $ at time $ t=T $ by~\eqref{eq:intro-linear-equation-perturbed}. Then the iterative scheme proceeds as follows. Let~$ u_0 \in \Hdot{s}(\Td) $ such that $ \|u_0\|_{\H{s}} < \varepsilon_0 $. We set $ u^0 \equiv 0 $, $ F^0 \equiv 0 $, and for $ n \ge 0 $ set $ (u^{n+1},F^{n+1}) \in C([0,T],\Hdot{s}(\Td)) \times C([0,T],\Hdot{s}(\Td)) $ by letting
\begin{equation*}
F^{n+1} = \Phi(u^n) u_0,
\end{equation*}
and letting $ u^{n+1} $ be the solution to the equation
\begin{equation*}
(\pt + P(u^n) + R(u^n)) u^{n+1} = \B(u^n) F^{n+1}, \quad u^{n+1}(0) = u_0,\ u^{n+1}(T) = 0.
\end{equation*}
Be careful that for $ F^{n+1} $ to be well defined, we must check that for some constant $ C > 0 $, independent of~$ n $ and small~$ \varepsilon_0 $, $ u^n \in \Cls^{1,s}(T,C\varepsilon_0) $.
To prove the convergence of this scheme, we need the following contraction estimate for the control operator,
\begin{equation*}
\|\Phi(u^{n+1}) - \Phi(u^n)\|_{\L(\Hdot{s}, C([0,T],\Hdot{s-3/2}))} \lesssim \|u^{n+1}-u^n\|_{C([0,T],\H{s-3/2})},
\end{equation*}
which shows that $ \{(u^n,F^n)\}_{n \ge 0} $ is a Cauchy sequence in 
\begin{equation*}
C([0,T],\Hdot{s-3/2}(\Td)) \cap \Holder{1}([0,T],\Hdot{s-3}(\Td)) \times C([0,T],\Hdot{s-3/2}(\Td)),
\end{equation*}
and converges to some $ (u,F) $ that satisfies~\eqref{eq:intro-equation-paradiff-nonlinear} in the distributional sense. To recover the~$ \H{s} $-regularity of $ u $, we study~\eqref{eq:intro-equation-paradiff-nonlinear} by treating it as the linear equation~\eqref{eq:intro-equation-dual-non-perturbed} with $ \udlu = u $. The $ \H{s} $-regularity of~$ F $ comes from~\eqref{eq:intro-def-Phi} after proving that $ u \in \Cls^{1,s}(T,C \varepsilon_0) $, and that $ F = \Phi(u) u_0 $.

The construction of $ \Phi(\udlu) $ is the main effort of this paper. Treating $ R(\udlu) $ and $ \beta(\udlu) $ as perturbations, it suffices to study the following unperturbed equation,
\begin{equation}
\label{eq:intro-equation-paradiff-linear-non-perturbed}
(\pt + P(\udlu)) u = \B(\udlu) F.
\end{equation}
Indeed, we show that there exists a linear control operator,
\begin{equation*}
\Theta(\udlu) : \Ldottwo(\Td) \to C([0,T],\Ldottwo(\Td))
\end{equation*}
which null-controls~\eqref{eq:intro-equation-paradiff-linear-non-perturbed} for $ \Ltwo $ initial data, and moreover satisfies
\begin{equation*}
\Theta(\udlu)|_{\Hdot{s}} : \Hdot{s}(\Td) \to C([0,T],\Hdot{s}(\Td)),
\end{equation*}
Then for some $ \E(\udlu) = O(\varepsilon_0^\vartheta)_{\L(\Hdot{s},\Hdot{s})} $ with $ \vartheta > 0 $, 
\begin{equation*}
\Phi(\udlu) = \Theta(\udlu) (1+\E(\udlu))^{-1}.
\end{equation*} 
We first apply the \textit{Hilbert uniqueness method} to construct~$ \Theta(\udlu) $, and use a commutator estimate to prove its $ \H{s} $-regularity, with details explained below.

\subsection{Hilbert Uniqueness Method}

\label{sec:Ltwo-controllability-HUM}

The Hilbert uniqueness method is a purely functional analysis argument, due to Lions~\cite{Lions-HUM} which, when applied to our situation, establishes the equivalence between the $ \Ltwo $-null controllability of~\eqref{eq:intro-equation-paradiff-linear-non-perturbed} and the $ \Ltwo $-observability (see below) of its dual equation,
\begin{equation}
\label{eq:intro-equation-dual-non-perturbed}
(\pt - P(\udlu)^*) u = 0,
\end{equation}
where $ P(\udlu)^* $ is the formal adjoint operator of $ P(\udlu) $ with respect to the scalar product $ \Re(\cdot,\cdot)_{\Ltwo} $, such that for all $ f,g \in \Cinf(\Td) $,
\begin{equation*}
\Re(P(\udlu) f,g)_{\Ltwo} = \Re(f,P(\udlu)^* g)_{\Ltwo}.
\end{equation*}
The reason to use $ \Re (\cdot,\cdot)_{\Ltwo} $ instead of $ (\cdot,\cdot)_{\Ltwo} $, and thus consider the real Hilbert space $ (\Ldottwo(\Td),\Re(\cdot,\cdot)_{\Ltwo}) $ is that $ P(\udlu) $ is not $ \C $-linear, but only $ \R $-linear (see the exact expression of $ P(\udlu) $ in Proposition~\ref{prop:paralinearized-pb-of-control}), due to our pursuit of a real valued control, which results in the appearance of the $ \R $-linear operator $ \Re $ in~\eqref{eq:form-Pext}.

The (strong) $ \Ltwo $-observability of~\eqref{eq:intro-equation-dual-non-perturbed} is the following inequality, that for all its solution~$ u \in C([0,T],\Ldottwo(\Td)) $,
\begin{equation}
\label{eq:intro-observability-para}
\|u(0)\|_{\Ltwo}^2 \lesssim \int_0^T \|\B(\udlu)^* u(t)\|_{\Ltwo}^2 \dt,
\end{equation}
where $ \B(\udlu)^* $ is the adjoint of $ \B(\udlu) $ with respect to $ \Re(\cdot,\cdot)_{\Ltwo} $. To prove it, we consider~\eqref{eq:intro-equation-dual-non-perturbed} as a perturbation of a pseudo\-differential equation,
\begin{equation}
\label{eq:intro-Pseudo-Linear-Equation}
(D_t + Q(\udlu)) u = 0,
\end{equation}
where $ D_t = \frac{1}{i} \pt $, and $ Q(\udlu) $ is the pseudo\-differential operator with the same symbol as $ i P(\udlu)^* $, so that it is of principal symbol $ \sim |\xi|^{3/2}  $. Then we derive~\eqref{eq:intro-observability-para} from the following $ \Ltwo $-observability of~\eqref{eq:intro-Pseudo-Linear-Equation} by Duhamel's formula,
\begin{equation}
\label{eq:intro-observability-pseudo}
\|u(0)\|_{\Ltwo}^2 \lesssim \int_0^T  \|\phiw  \Re\, u\|_{\Ltwo}^2 \dt.
\end{equation}
To deal with this problem caused by the lack of $ \C $-linearity of the equation, which occurs especially in the proof of the unique continuation property that deals with the low frequency regime, we consider simul\-taneously~$ u $ and its complex conjugate~$ \bar{u} $. The pair $ \vec{w} = \binom{u}{\bar{u}} $ then satisfies the $ \C $-linear system of pseudo\-differential equation,
\begin{equation}
\label{eq:intro-equation-pseudo-diff-sys}
D_t \vec{w} + \A(\udlu) \vec{w} = 0,
\end{equation}
where $ \A $ is of principal symbol $ \sim \begin{pmatrix} |\xi|^{3/2} & 0 \\ 0 & -|\xi|^{3/2} \end{pmatrix} $.
Then~\eqref{eq:intro-observability-pseudo} is a consequence of the following $ \Ltwo $-observability of~\eqref{eq:intro-equation-pseudo-diff-sys},
\begin{equation}
\label{eq:intro-observability-sys}
\|\vec{w}(0)\|_{\Ltwo}^2 \lesssim \int_0^T  \|\phiw  \vec{e}\cdot\vec{w}\|_{\Ltwo}^2 \dt,
\end{equation}
where $ \vec{e} = \binom{1}{1} $, so that for $ \vec{w} = \binom{w^+}{w^-} $, $ \vec{e} \cdot \vec{w} = w^+ + w^- $.

\begin{remark}
Here we write $ \|\vec{w}(0)\|_{\Ltwo} = \|\vec{w}(0)\|_{\Ltwo \times \Ltwo} $ for simplicity. This will be our convention for the Sobolev norms of the pair~$ \vec{w} $. In this spirit, we will also write $ \H{s} = \H{s}\times\H{s} $ when there is no ambiguity.
\end{remark}

\subsection{Semiclassical Observability}

We adapt Lebeau's approach~\cite{Lebeau-Control-Schrodinger} to prove~\eqref{eq:intro-observability-sys}. This approach performs a dyadic decomposition in frequencies of~$ \vec{w} $, then proves a semi\-classical observability (see below) for each dyadic part by studying the propagation of mass ($ \Ltwo $-norm), and finally recovers the observability of~$ \vec{w} $ using Littlewood-Paley's theory. The idea behind this approach is that, the mass of solutions of a dispersive equation, is not well propagated unless for those localized in frequencies, because group velocities for different frequencies vary, and we are not expected to have a uniform propagation for all frequencies. 
To be more precise, for $ h = 2^{-j} $, with $ j \in \N $, we define
\begin{equation*}
\vec{w}_h = \Pi_h \Delta_h \vec{w},
\end{equation*}
where $ \Delta_h $ is a semi\-classical pseudo\-differential operator, and $ \Pi_h $ is defined by a semi\-classical functional calculus (see Section~\ref{sec:reduction-to-semiclassical-system} for the exact expression; to set $ \vec{w}_h $ in this way is purely out of a technical concern), so that the frequency of $ \vec{w}_h $ is localized in $ A h^{-1} \le |\xi| \le B h^{-1}$ for some $ 0 < A < B $. 
The semi\-classical observability states that, for $ h > 0 $ sufficiently small,
\begin{equation}
\label{eq:intro-observability-semiclassical}
\|\vec{w}_h(0)\|_{\Ltwo}^2 \lesssim h^{-1/2} \int_0^{h^{1/2}T}  \|\phiw \vec{e} \cdot \vec{w}_h\|_{\Ltwo}^2 \dt + \mathrm{remainder\ terms}.
\end{equation}
The reason why~\eqref{eq:intro-observability-semiclassical} holds on a semi\-classical time interval of length $ \sim h^{1/2} $ can be intuitively explained as follows. The dual equation~\eqref{eq:equation-dual-non-perturbed} with~$ \udlu = 0 $, $ g = 0 $ and $ \b = \infty $ (recall that~$ b $ is the depth of the domain of water) is 
\begin{equation}
\label{eq:intro-dual-udlu=0}
(D_t + |D_x|^{3/2}) u = 0,
\end{equation}
with its dispersion relation and group velocity being
\begin{equation}
\label{eq:intro-dispersion-relation}
w(k) = |k|^{3/2}, \quad v_g(k) = \frac{\partial w}{\partial k} = \frac{3}{2} \cdot \frac{k}{|k|^{1/2}}.
\end{equation} 
Due to the variation of frequencies on the interval $ [Ah^{-1},Bh^{-1}] $ of size $ \sim h^{-1} $, the variation of group velocity is then $ \sim h^{-1/2} $, hence the mass is only ``well propagated'' on a semi\-classical time interval of size $ \sim h^{1/2} $. That is why Lebeau introduced in~\cite{Lebeau-Control-Schrodinger} a semi\-classical time variable $ s $ by a scaling $ s = h^{-1/2} t $ (see also Staffilani-Tataru~\cite{Staffilani-Tataru} and Burq-Gérard-Tzvetkov~\cite{BGT-Strichartz} where this same scaling is used to prove the Strichartz estimates of Schr\"{o}dinger equations with variable coefficients), so that the semi\-classical time interval is of size $ \sim 1 $ in $ s $ variable.

As for a technical explanation, equation~\eqref{eq:intro-dual-udlu=0} writes in variables $ (s,x) $ as a zero order semi\-classical differential equation,
\begin{equation}
\label{eq:intro-equation-linearized-around-zero}
(hD_s + |hD_x|^{3/2}) u = 0.
\end{equation}
The vague word ``well propagated'' above can be quantitatively characterized by the semi\-classical defect measures of solutions to~\eqref{eq:intro-equation-linearized-around-zero}. For semi\-classical defect measures, we refer to Gérard~\cite{Gerard-Semiclassical-Measure}, Gérard-Leichtnam~\cite{G-L}, Lions--Paul~\cite{Lions-Wigner}, see also the survey by Burq~\cite{Burq-Bourbaki}. Then the proof of~\eqref{eq:intro-observability-semiclassical} argues by contradiction and studies the propagation of the semi\-classical defect measures, where the geometric control condition is required. This argument dates back to Lebeau~\cite{Lebeau-damped-wave}.

\subsection{Weak $ \Ltwo $-Observability}

By an energy estimate, we can show that~\eqref{eq:intro-observability-semiclassical} remains valid after replacing the interval of integration $ [0,h^{1/2}T] $ by $ I_k = [h^{1/2}kT,h^{1/2}(k+1)T] $ for $ k = 0,1,\ldots,h^{-1/2}-1 $ (we may of course assume that $ h = 2^{2j} $), with a slight change of the remainder term. Once it is proven, we patch $ \{I_k\}_{0 \le k < h^{-1/2}} $ up and use Littlewood-Paley's theory to obtain a weak observability, that for any $ N > 0 $,
\begin{equation}
\label{eq:intro-observability-weak}
\|\vec{w}(0)\|_{\Ltwo}^2 \lesssim \int_0^T  \|\phiw\vec{e}\cdot\vec{w}\|_{\Ltwo}^2 \dt + \|\vec{w}(0)\|_{\H{-N}}^2.
\end{equation}
The remainder term $ \|\vec{w}(0)\|_{\H{-N}}^2 $ comes from the fact that~\eqref{eq:intro-observability-semiclassical} holds only for small~$ h $, that is, for high frequencies. 

\subsection{Unique Continuation and Strong $ \Ltwo $-Observability}

We use the uniqueness-compactness argument of Bardos--Lebeau--Rauch~\cite{BLR} to eliminate the remainder term and obtain the strong $ \Ltwo $-observability. This argument derives the strong observability from the weak observability and the unique continuation property of the linear equation, which is, in our case equation~\eqref{eq:intro-equation-pseudo-diff-sys}. However, this equation has a fractional (hence non-analytical with respect to~$ \xi $) symbol, so Carleman's estimate does not seem to apply directly. We then use a perturbative argument (by contradiction), and reduce the problem to proving only the unique continuation property of the equation linearized around the equilibrium state, that is $ \eta = 0 $, which is a constant coefficient equation,
\begin{equation}
\label{eq:intro-equation-w-udlu=0}
D_t \vec{w} + \A(0) \vec{w} = 0.
\end{equation}
To be more precise, suppose that the strong observability does not hold for arbitrarily small~$ \varepsilon_0 $, then we can find a sequence of solutions~$ \vec{w}_n $ to~\eqref{eq:intro-equation-pseudo-diff-sys} with $ \udlu = \udlu_n \in \Cls^{1,s}(T,\varepsilon_n) $, $ \varepsilon_n \to 0 $ as $ n \to \infty $, such that
\begin{equation*}
\|\vec{w}_n(0)\|_{\Ltwo} = 1, \quad 
\int_0^T \|\phiw \vec{e}\cdot\vec{w}_n\|_{\Ltwo}^2 \dt = o(1).
\end{equation*}
The solutions $ \vec{w}_n $ converge in distributional sense to a solution $ \vec{w} $ to~\eqref{eq:intro-equation-w-udlu=0}, such that
\begin{equation}
\label{eq:intro-condition-unique-continuation}
\phiw \vec{e}\cdot\vec{w}|_{0 \le t \le T} = 0.
\end{equation}
Moreover, by the weak observability~\eqref{eq:intro-observability-weak} and Rellich--Kondrachov's compact injection theorem, $ \|\vec{w}(0)\|_{\H{-N}} > 0 $, hence $ \vec{w} \not \equiv 0 $. We will obtain a contradiction by the unique continuation property of~\eqref{eq:intro-equation-w-udlu=0}, that is, such a non-zero solution to~\eqref{eq:intro-equation-w-udlu=0} never exists.
To prove this, we consider the $ \C $-linear vector space~$ \mathscr{N} $ of initial data $ \vec{w}_0 \in \Ldottwo(\Td) $ whose corresponding solution satisfies~\eqref{eq:intro-condition-unique-continuation}. Then the weak observability~\eqref{eq:intro-observability-weak} implies that~$ \mathscr{N} $ is a compact metric space with respect to the $ \Ltwo $-norm, and hence of finite dimension. It is not difficult to see that $ \A(0) $ defines a $ \C $-linear operator on~$ \mathscr{N} $, and thus admits an eigenfunction $ \vec{w}_0 = \binom{w_0^+}{w_0^-} \in \Ldottwo(\Td) $ such that
\begin{equation}
\label{eq:intro-eigenfunction-A(0)}
\A(0) \vec{w}_0 = \lambda \vec{w}_0, \quad \lambda \in \C,
\end{equation}
and that
\begin{equation*}
(w^+_0 + w^-_0)|_{\omega} = \vec{e} \cdot \vec{w}_0|_{\omega} = 0.
\end{equation*}
We obtain from~\eqref{eq:intro-eigenfunction-A(0)} an elliptic equation for $ w^+_0 + w^-_0 $, and show that $ w^+_0 + w^-_0 $ has only a finite Fourier modes. Therefore it can never vanish on a non-empty open set unless $ w^+_0 + w^-_0 \equiv 0 $. Again easily from~\eqref{eq:intro-eigenfunction-A(0)} we also obtain $ w^+_0 - w^-_0 \equiv 0 $. This contradicts the fact that~$ \vec{w} $ is an eigenfunction and can not be identically zero.

\subsection{$ \H{s} $-Controllability}
Now we prove the $ \H{s} $-regularity of $ \Theta = \Theta(\udlu) $, that is, whenever the initial data $ u_0 \in \Hdot{s}(\Td) $, the control $ F = \Theta u_0 \in C([0,T],\Hdot{s}(\Td)) $. The Hilbert uniqueness method implies that, at the $ \Ltwo $ level,
\begin{equation*}
\Theta = -\B^*\Sol \K^{-1}, \quad \K = - \Range \B \B^* \Sol,
\end{equation*}
where the definitions of the solution operator~$ \Sol $ and the range operator~$ \Range $ will be made clear in Section~\ref{sec:HUM}. The main motivation to proving the strong $ \Ltwo $-observability is that, it is a rephrasing of the coercivity of the following $ \R $-bilinear form on $ \Ldottwo(\Td) $,
\begin{equation*}
\varpi(f,g) = \Re(\K f,g)_{\Ltwo}.
\end{equation*}
Therefore the strong $ \Ltwo $-observability implies, by Lax-Milgram's theorem, the invertibility of~$ \K $, and consequently the well-definedness of~$ \Theta $.

Coming back to the $ \H{s} $ level, observing that, by the definition of~$ \K $, it is easily verified that~$ \K $ sends $ \Hdot{s}(\Td) $ to itself. Therefore, to prove the $ \H{s} $-regularity of~$ \Theta $, we only need to show that $ \K|_{\Hdot{s}} : \Hdot{s}(\Td) \to \Hdot{s}(\Td) $ defines an isomorphism. We use again Lax-Milgram's theorem, and consider the following $ \R $-bilinear form on the semi\-classical Sobolev space $ \Hdot{s}_h(\Td) = \Hdot{s}(\Td) \cap \H{s}_h(\Td) $ which inherits the semi\-classical Sobolev norm from $ \H{s}_h(\Td) $,
\begin{equation*}
\varpi^s_h(f,g) = \Re(\Lambda^{s}_h|_{t=0} \K f,\Lambda^{s}_h|_{t=0} g)_{\Ltwo}.
\end{equation*}
where $ \Lambda^s_h = 1 + h^s \T{(\gamma^{(3/2)})^{2s/3}} $. In order to conclude, the key point is the following commutator estimate, that for~$ \varepsilon_0 $ and~$ h $ sufficiently small,
\begin{equation}
\label{eq:intro-commutator-estimate}
\big[\K,\Lambda^s_h|_{t=0}\big]\Lambda^{-s}_h|_{t=0} = O(\varepsilon_0 + h)_{\L(\Ldottwo,\Ldottwo)}.
\end{equation}
Indeed, if this is proven, we obtain the coercivity of~$ \varpi^s_h $ on $ \Hdot{s}_h(\Td) $,
\begin{equation*}
\varpi^s_h(f,f) 
= \varpi(\Lambda^{s}_h|_{t=0} f, \Lambda^{s}_h|_{t=0} f) + O(\varepsilon_0+h) \|\Lambda^{s}_h|_{t=0} f\|_{\Ltwo}^2
\gtrsim \|f\|_{\H{s}_h}^2.
\end{equation*}
The trick of introducing the semi\-classical parameter~$ h $ has already been used in~\cite{ABH-Control}, but our proof of the $ \H{s} $-regularity is different.

\section{Reformulation of Problem}
\label{sec:Paradifferentialization-and-Symmetrization}

This section performs the paralinearization of~\eqref{eq:equation-water-wave} as developed in~\cite{AM-Paralinearization,ABZ-Surface-Tension,ABZ-Without-Surface-Tension}, but with two differences: first a more careful treatment to the remainder terms, so that they are not only of sufficient regularity, but also ``super-linear'' (see Remark~2.6 of~\cite{ABH-Control}); and second, a modification concerning the existence of the exterior pressure~$ \Pext $. 

The estimates below are carried out for each fixed time $ t $, so the time variable will be temporarily omitted for simplicity.

\subsection{Paralinearization of Dirichlet-Neumann Operator}

Recall the following notations which are standard in literatures. Let 
\begin{equation*}
B(x) = (\partial_y \phi)(x,\eta(x)), \quad V(x) = (\nabla_x \phi)(x,\eta(x))
\end{equation*}
be the traces of the vertical and horizontal components of the Eulerian vector field to the free surface, respectively. They admit the following expressions which could be seen as linear operators depending on $ \eta $, applying on $ \psi $.
\begin{equation}
\label{eq:def-B-V}
B = B(\eta)\psi 
= \frac{\nabla_x \eta \cdot \nabla_x \psi + G(\eta)\psi}{1+|\nabla_x\eta|^2},  \quad
V = V(\eta)\psi 
= \nabla_x \psi -  B \nabla_x \eta,
\end{equation}
as well as the good unknown of Alinhac,
\begin{equation}
\label{def:good-unknown-of-Alinhac}
\omega = \omega(\eta)\psi = \psi - \T{B} \eta.
\end{equation}

\begin{remark}
When there is no ambiguity, we simply write $ B $, $ V $ and $ \omega $ for short. The notation $ B(\eta) $ will be used later in Lemma~\ref{lem:equation-good-unknown} to denote the operator $ B(\eta) = \frac{\nabla_x\eta\cdot\nabla_x + G(\eta)}{1+|\nabla_x\eta|^2} $.
\end{remark}

\begin{lemma}
\label{lemma:estimate-B,V}
Suppose $ s > 1/2 + d/2 $ and $ 1 + d/2 < \sigma \le s + 1/2 $, then
\begin{equation*}
\|(B,V)\|_{\H{\sigma-1}\times\H{\sigma-1}} \le C(\|\eta\|_{\H{s+1/2}}) \|\nabla_x \psi\|_{\H{\sigma-1}}.
\end{equation*}
\end{lemma}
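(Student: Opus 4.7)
The plan is to read off the estimate from the explicit formulas
\[
B = \frac{\nabla_x\eta \cdot \nabla_x\psi + G(\eta)\psi}{1+|\nabla_x\eta|^2}, \qquad V = \nabla_x\psi - B\nabla_x\eta,
\]
using three ingredients: the boundedness of the Dirichlet--Neumann operator, tame product estimates, and the fact that the denominator $1+|\nabla_x\eta|^2$ is bounded below and admits an inverse that stays in an algebra. Note that the assumptions give $\sigma-1 > d/2$, so $\H{\sigma-1}(\Td)$ is an algebra continuously embedded in $\Linf$, and $s-1/2 > d/2$, so $\nabla_x\eta \in \H{s-1/2} \hookrightarrow \Linf$ with $\H{s-1/2}$ also an algebra.

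First, I would invoke the standard mapping property of the Dirichlet--Neumann operator, as proved in the paradifferential framework of Alazard--M\'etivier~\cite{AM-Paralinearization} and Alazard--Burq--Zuily~\cite{ABZ-Surface-Tension}: under the hypothesis $s > 1/2 + d/2$ and $\sigma \leq s + 1/2$, one has
\[
\|G(\eta)\psi\|_{\H{\sigma-1}} \le C(\|\eta\|_{\H{s+1/2}}) \|\nabla_x \psi\|_{\H{\sigma-1}}.
\]
(Here the right-hand side is expressed only in terms of $\nabla_x\psi$ because $G(\eta)$ kills constants, so the zero frequency of $\psi$ does not appear.)

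Second, I would handle the numerator $\nabla_x\eta \cdot \nabla_x\psi$ by a tame product estimate. Since $\sigma - 1 \le s - 1/2$ and $s - 1/2 > d/2$, the bilinear map $\H{s-1/2} \times \H{\sigma-1} \to \H{\sigma-1}$ is continuous, giving
\[
\|\nabla_x\eta \cdot \nabla_x\psi\|_{\H{\sigma-1}} \lesssim \|\nabla_x\eta\|_{\H{s-1/2}} \|\nabla_x\psi\|_{\H{\sigma-1}}.
\]

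Third, I would control the reciprocal factor. Writing $(1+|\nabla_x\eta|^2)^{-1} = 1 - \frac{|\nabla_x\eta|^2}{1+|\nabla_x\eta|^2}$ and composing the smooth function $t \mapsto t/(1+t)$ (vanishing at $0$) with $|\nabla_x\eta|^2 \in \H{s-1/2}$, the algebra property of $\H{s-1/2}$ combined with the classical composition estimate for Sobolev functions yields
\[
\bigl\|(1+|\nabla_x\eta|^2)^{-1} - 1\bigr\|_{\H{s-1/2}} \le C(\|\nabla_x\eta\|_{\H{s-1/2}}).
\]
Multiplying this bound with the estimates for the numerator via the algebra property of $\H{\sigma-1}$ gives the claimed bound on $B$. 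Finally, $V = \nabla_x\psi - B\nabla_x\eta$ is estimated by the same product rule, absorbing the factor $\|\nabla_x\eta\|_{\H{s-1/2}}$ into $C(\|\eta\|_{\H{s+1/2}})$.

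The only nontrivial input is the boundedness of $G(\eta)$, which is not a mere product estimate but relies on elliptic regularity for the Laplace problem~\eqref{eq:equation-laplace-DN} on the variable domain $\Omega$; this is where the threshold $s > 1/2 + d/2$ enters, ensuring in particular that $\eta \in C^{1,\alpha}$ and the domain is Lipschitz (actually $C^1$) so that the paradifferential construction of a parametrix for the boundary value problem applies. All other steps are routine Moser-type estimates in Sobolev spaces.
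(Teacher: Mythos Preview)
Your proposal is correct and follows essentially the same route as the paper: invoke the boundedness of $G(\eta)$ (the paper cites Theorem~3.12 of \cite{ABZ-Without-Surface-Tension}), then combine the algebra property of $\H{\sigma-1}$ with the Moser-type composition estimate~\eqref{eq:paradiff-estimate-composition-by-function} to handle the numerator, the reciprocal, and the formula for $V$. Your use of the tame product $\H{s-1/2}\times\H{\sigma-1}\to\H{\sigma-1}$ is a slight refinement over the paper's plain algebra bound $\|uv\|_{\H{\sigma-1}}\lesssim\|u\|_{\H{\sigma-1}}\|v\|_{\H{\sigma-1}}$, but the argument is otherwise identical.
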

\begin{proof}
By Theorem 3.12 of~\cite{ABZ-Without-Surface-Tension},
\begin{equation*}
\|G(\eta)\psi\|_{\H{\sigma-1}} \le C(\|\eta\|_{\H{s+1/2}}) \|\nabla_x\psi\|_{\H{\sigma-1}}.
\end{equation*}
We conclude by $ \|uv\|_{\H{\sigma-1}} \lesssim \|u\|_{\H{\sigma-1}} \|v\|_{\H{\sigma-1}} $ and~\eqref{eq:paradiff-estimate-composition-by-function}.
\end{proof}

The following proposition follows by a careful study of Proposition~3.14 of~\cite{ABZ-Surface-Tension}.

\begin{proposition}[Alazard--Burq--Zuily]
\label{prop:AM-Paradiff-of-DN}
Suppose $ s > 3 + d/2 $ and $ (\eta,\psi) \in \H{s+1/2}(\Td) \times \H{s}(\Td) $. Let the symbol
\begin{equation*}
\lambda = \lambda^{(1)} + \lambda^{(0)},
\end{equation*}
be defined by
\begin{align*}
\lambda^{(1)}(x,\xi) & = \sqrt{ (1+|\nabla_x \eta|^2) |\xi|^2 - (\nabla_x \eta \cdot \xi)^2 }, \\
\lambda^{(0)}(x,\xi) & = \frac{\SurfArea{\eta}}{2 \lambda^{(1)}} \big\{ \nabla_x \cdot \big( \alpha^{(1)} \nabla_x \eta \big) + i \pxi \lambda^{(1)} \cdot \nabla_x \alpha^{(1)}  \big\},
\end{align*}
where $	\alpha^{(1)}(x,\xi) = \frac{\lambda^{(1)} + i \nabla_x \eta \cdot \xi}{\SurfArea{\eta}} $. Then
\begin{equation*}
G(\eta) \psi = \T{\lambda}{\omega} - \nabla_x \cdot \T{V}{\eta} + f(\eta,\psi),
\end{equation*}
where for any $ 1/2 < \delta < 1 $,
\begin{equation}
\label{eq:estimate-f(eta,psi)}
\| f(\eta,\psi) \|_{\H{s+\delta}}
\le C(\|\eta\|_{\H{s+1/2}}) \|\nabla_x \psi\|_{\H{s-1}}.
\end{equation}
\end{proposition}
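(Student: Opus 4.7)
The plan is to follow the standard flattening-and-paralinearization strategy developed in~\cite{AM-Paralinearization, ABZ-Surface-Tension}, and more precisely to revisit the proof of Proposition~3.14 of~\cite{ABZ-Surface-Tension}, from which the statement essentially follows. The refinement lies in carefully tracking the remainder so as to obtain the $\delta$-gain of regularity with $1/2<\delta<1$ together with the ``super-linear'' dependence, i.e.\ an estimate through $\|\nabla_x\psi\|_{\H{s-1}}$ rather than $\|\nabla_x\psi\|_{\H{s}}$.

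First, I would flatten the fluid domain by a change of variable $(x,y)\mapsto(x,z)$ built from $\eta$ (for instance via a harmonic or heat-regularised extension of $\eta$ inside the strip), so that $\tilde\phi(x,z)=\phi(x,y(x,z))$ satisfies an elliptic equation on a fixed strip of the form
\begin{equation*}
(\partial_z^2 + \alpha \Delta_x + \beta\cdot\nabla_x\partial_z + \gamma\partial_z)\tilde\phi = 0,
\end{equation*}
with coefficients $\alpha,\beta,\gamma$ depending tamely on $\nabla_{x,y}\rho$, Dirichlet data $\psi$ at $z=0$ and Neumann data $0$ at the bottom, and such that the Dirichlet-Neumann operator is recovered as a first order differential trace of $\tilde\phi$ at $z=0$.

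Next, I would paralinearize the coefficients and factor the principal paradifferential operator $\partial_z^2 + \T{\alpha}\Delta_x + \T{\beta}\cdot\nabla_x\partial_z$ as a product $(\partial_z - \T{a})(\partial_z - \T{A})$ of two first-order paradifferential operators with symbols $a,A$ of order $1$, where $A$ is the \emph{outgoing} root, chosen so that its real part is elliptic of the correct sign, allowing one to solve $(\partial_z - \T{A})w=\text{source}$ parabolically from $z=0$ downward by a tame energy estimate. A careful application of the symbolic calculus at the boundary yields that the principal symbol of $-A|_{z=0}$ is exactly $\lambda^{(1)}$, while the sub-principal correction produced by the composition formula equals $\lambda^{(0)}$. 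Substituting Alinhac's good unknown $\omega=\psi - \T{B}\eta$ at the trace then absorbs the only remaining leading-order nonlinear interaction involving $\nabla_x\eta$ and produces the clean identity $G(\eta)\psi = \T{\lambda}\omega - \nabla_x\cdot\T{V}\eta + f(\eta,\psi)$.

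The main difficulty, and the reason for re-running the argument of~\cite{ABZ-Surface-Tension}, lies in the remainder estimate~\eqref{eq:estimate-f(eta,psi)}. The gain $\delta\in(1/2,1)$ stems from the sharp paradifferential symbolic calculus: whenever a coefficient $m$ of Sobolev regularity $\H{s+1/2}$ is split as $mw=\T{m}w+\T{w}m+R(m,w)$, the paraproduct remainder $R(m,w)$ regularises by $s+1/2-d/2$, which under the assumption $s>3+d/2$ comfortably exceeds $\delta$. The super-linearity in $\psi$ follows by observing that after extracting $\T{\lambda}\omega$ and $\nabla_x\cdot\T{V}\eta$, every remaining term involves either $(B,V)$ or traces at $z=0$ of derivatives of $\tilde\phi$, each controlled at one derivative below $\nabla_x\psi$ by Lemma~\ref{lemma:estimate-B,V} and the elliptic estimate on the flattened strip. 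The product rule $\|uv\|_{\H{s-1}}\lesssim\|u\|_{\H{s-1}}\|v\|_{\H{s-1}}$ (valid for $s-1>d/2$) together with the paraproduct and composition estimates then collect all nonlinear contributions into the tame prefactor $C(\|\eta\|_{\H{s+1/2}})$. The delicate bookkeeping point is that no term of the form $\nabla_x\psi\cdot\nabla_x\eta$ must survive with a loss on $\nabla_x\psi$, which is precisely guaranteed by the introduction of the good unknown.
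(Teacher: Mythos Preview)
Your proposal is correct and follows the same strategy as the paper: both rely on the paralinearization of the Dirichlet--Neumann operator from Proposition~3.14 of~\cite{ABZ-Surface-Tension}. The paper, however, is much terser: rather than re-running the flattening and factorization argument, it simply invokes~\cite{ABZ-Surface-Tension} as a black box to obtain
\[
G(\eta)\psi = \T{\lambda}\omega - \T{V}\cdot\nabla_x\eta - \T{\nabla_x\cdot V}\eta + f(\eta,\psi),
\]
and then makes the single algebraic observation $\T{V}\cdot\nabla_x\eta + \T{\nabla_x\cdot V}\eta = \nabla_x\cdot\T{V}\eta$. The point is that in~\cite{ABZ-Surface-Tension} the term $\T{\nabla_x\cdot V}\eta$ was absorbed into the remainder, which then sits only in $\H{s+1/2}$; pulling it out and recombining it as a total divergence is precisely what allows the remainder to gain the extra $\delta>1/2$. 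You assert the divergence form as the direct output of the good-unknown substitution, which is fine if you are redoing the computation from scratch, but you should be aware that this recombination is the one genuinely new step here beyond citing~\cite{ABZ-Surface-Tension}. The paper also notes that the stronger hypothesis $s>3+d/2$ (versus $s>2+d/2$ in~\cite{ABZ-Surface-Tension}) eliminates the need to handle a positive-order paraproduct $\T{a}$ with $a\in\H{d/2-\varepsilon}$, a simplification you allude to but do not isolate.
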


\begin{proof}
Now that we have a larger regularity, $ s>3+d/2 $, than that that in~\cite{ABZ-Surface-Tension}, which is $ s>2+d/2 $, we have no need to deal with the paraproduct $ \T{a} $ for some $ a \in \H{d/2-\varepsilon} $, which is of positive order. The proof in~\cite{ABZ-Surface-Tension} then implies that
\begin{equation*}
G(\eta) \psi = \T{\lambda} \omega - \T{V} \cdot \nabla_x\eta - \T{\nabla_x \cdot V} \eta + f(\eta,\psi),
\end{equation*}
where for any $ 1/2 < \delta < 1 $, $ \| f(\eta,\psi) \|_{\H{s+\delta}}
\le C(\|\eta\|_{\H{s+1/2}}) \|\nabla_x \psi\|_{\H{s-1}} $. Instead of considering $ - \T{\nabla_x \cdot V} \eta + f(\eta,\psi) $ together as the remainder term of regularity $ \H{s+1/2} $ (which already suffices in~\cite{ABZ-Surface-Tension} for the Cauchy problem), we observe here that
\begin{equation*}
\T{V} \cdot \nabla_x\eta + \T{\nabla_x \cdot V} \eta = \nabla_x \cdot T_V \eta,
\end{equation*}
and conclude.
\end{proof}

\begin{remark}
\label{rmk:DN-remainder}
By the definition of $ \lambda $, $ V $ and $ \omega $, the remainder $ f(\eta,\psi) $ is of the form,
\begin{equation*}
f(\eta,\psi) = R_G(\eta)\psi + M_{\b} \psi,
\end{equation*}
where $ R_G(\eta) $ is a linear operator depending on $ \eta $, and following~\eqref{eq:estimate-f(eta,psi)},
\begin{equation}
\label{eq:estimate-R-eta-AM}
\|R_G(\eta)\psi\|_{\H{s+\delta}}
\le C(\|\eta\|_{\H{s+1/2}}) \|\nabla_x \psi\|_{\H{s-1}},
\end{equation}
while $ M_{\b} = G(0) - |D_x| = m_{\b}(D_x) $ is a smoothing Fourier multiplier, with symbol 
\begin{equation*}
m_{\b}(\xi) = |\xi|(\tanh(\b|\xi|)-1).
\end{equation*}
Indeed, $ G(0) = |D_x| \tanh(\b D_x) $ is a Fourier multiplier which could be calculated directly.
\end{remark}

We aim to prove the following estimate.
\begin{proposition}
For $ s > 3 + d/2 $ and some $ \vartheta > 0 $,
\begin{equation*}
\|R_G(\eta)\psi\|_{\H{s+1/2}} \le C(\|\eta\|_{\H{s+1/2}}) \|\eta\|_{\H{s+1/2}}^\vartheta \|\nabla_x \psi\|_{\H{s-1}}.
\end{equation*}
\end{proposition}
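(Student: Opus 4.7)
\emph{Strategy.} The statement quantifies the fact that $R_G(0) \equiv 0$: with $\eta \equiv 0$ one has $\lambda^{(1)}|_{\eta=0}=|\xi|$, $\lambda^{(0)}|_{\eta=0}=0$, $B|_{\eta=0} = G(0)\psi$, and $V|_{\eta=0} = \nabla_x\psi$, so the paralinearization of Proposition~\ref{prop:AM-Paradiff-of-DN} collapses to the identity $G(0)\psi = |D_x|\psi + M_\b \psi$, showing that the remainder $R_G(\eta)\psi$ is ``super-linear'' in $\eta$. I would quantify this by interpolation between the known bound~\eqref{eq:estimate-R-eta-AM} and an auxiliary linear-in-$\eta$ estimate at a slightly weaker Sobolev regularity.

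\emph{Step 1: a linear-in-$\eta$ estimate at a weaker regularity.} I would first establish
\begin{equation*}
\|R_G(\eta)\psi\|_{\H{s}} \le C(\|\eta\|_{\H{s+1/2}}) \, \|\eta\|_{\H{s+1/2}} \, \|\nabla_x\psi\|_{\H{s-1}},
\end{equation*}
via the Taylor expansion $R_G(\eta)\psi = \int_0^1 \partial_\tau (R_G(\tau\eta)\psi) \d\tau$. The variation of $G(\eta)\psi$ is handled by the classical shape-derivative formula $G'(\eta)[\eta']\psi = -G(\eta)(B\eta') - \nabla_x\cdot(V\eta')$; the variations of the symbols $\lambda^{(1)}$ and $\lambda^{(0)}$, viewed as smooth functions of $\nabla_x\eta$, $\nabla_x^2\eta$, and~$\xi$, propagate through the paradifferential calculus of Appendix~\ref{sec:paradifferential-calculus}; and the paraproducts $T_V \eta$ and $T_B \eta$ entering the good unknown $\omega$ are already linear in $\eta$. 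Each differentiation in $\tau$ produces exactly one factor of $\eta$, at the cost of a single lost derivative compared with~\eqref{eq:estimate-R-eta-AM}, absorbed by the strengthened hypothesis $s > 3 + d/2$ (against $s > 2 + d/2$ in~\cite{ABZ-Surface-Tension}).

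\emph{Step 2: interpolation.} For any $\delta \in (1/2, 1)$, \eqref{eq:estimate-R-eta-AM} reads
\begin{equation*}
\|R_G(\eta)\psi\|_{\H{s+\delta}} \le C(\|\eta\|_{\H{s+1/2}}) \|\nabla_x\psi\|_{\H{s-1}}.
\end{equation*}
By log-convexity of Sobolev norms, with $\theta = 1/(2\delta) \in (0,1)$,
\begin{equation*}
\|R_G(\eta)\psi\|_{\H{s+1/2}} \le \|R_G(\eta)\psi\|_{\H{s+\delta}}^{\theta} \, \|R_G(\eta)\psi\|_{\H{s}}^{1-\theta};
\end{equation*}
substituting the two bounds yields the claim with $\vartheta = 1 - \theta = 1 - 1/(2\delta) > 0$.

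\emph{Main obstacle.} The substantive work is Step~1. Rather than attempting to differentiate an implicit formula for $R_G$, the cleanest implementation is to redo the proof of Proposition~\ref{prop:AM-Paradiff-of-DN} with $\eta$ replaced by $\tau\eta$ and to track the $\tau$-dependence of every remainder arising from Bony's decomposition $ab = T_a b + T_b a + R(a,b)$ or from the symbolic-calculus compositions $T_a T_b - T_{ab}$. Each such remainder vanishes at $\tau = 0$, and its $\tau$-derivative is linear in $\eta$, so Step~1 closes after careful bookkeeping.
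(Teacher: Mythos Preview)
Your interpolation strategy (Step~2) is exactly the paper's. The difference lies in the auxiliary linear-in-$\eta$ estimate.

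The paper does not aim for $H^s$ in Step~1; it proves the cruder bound
\[
\|R_G(\eta)\psi\|_{H^{s-2}} \le C(\|\eta\|_{H^{s+1/2}})\,\|\eta\|_{H^{s+1/2}}\,\|\nabla_x\psi\|_{H^{s-1}}
\]
and then interpolates with~\eqref{eq:estimate-R-eta-AM}. The proof is a short explicit decomposition: write
\[
R_G(\eta)\psi = \bigl(G(\eta)\psi - G(0)\psi\bigr) - \bigl(T_\lambda\omega - |D_x|\psi\bigr) + \nabla_x\cdot T_V\eta,
\]
and estimate each piece directly. The last two are elementary paradifferential bounds; the first uses only the shape-derivative formula and the Newton--Leibniz integral $\int_0^1 \partial_\tau G(\tau\eta)\psi\,d\tau$. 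No re-examination of the proof of Proposition~\ref{prop:AM-Paradiff-of-DN} is needed.

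Your proposed $H^s$ target in Step~1 is optimistic and not justified by the ``single lost derivative'' heuristic. Already the shape-derivative contribution $G(\tau\eta)(B_\tau\eta)$ lands only in $H^{s-2}$, since $B_\tau\in H^{s-1}$ forces $B_\tau\eta\in H^{s-1}$ and the Dirichlet--Neumann operator costs one more derivative; the term $\nabla_x\cdot T_V\eta$ lands only in $H^{s-1/2}$. Your alternative plan of redoing the full ABZ paralinearization while tracking $\tau$-dependence might in principle recover cancellations, but that is substantial unwritten work, and it is unnecessary: any Sobolev index strictly below $s+1/2$ suffices for the interpolation, so the paper's $H^{s-2}$ bound, obtained in a few lines, is the efficient route.
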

\begin{proof}
This follows by interpolating between~\eqref{eq:estimate-R-eta-AM}, and the following Lemma~\ref{lem:estimate-R-eta-quadratic}.
\end{proof}

\begin{lemma}
\label{lem:estimate-R-eta-quadratic}
For $ s > 3 + d/2 $,
$\|R_G(\eta)\psi\|_{\H{s-2}} \le C(\|\eta\|_{\H{s+1/2}})\|\eta\|_{\H{s+1/2}}\|\nabla_x \psi\|_{\H{s-1}}.$
\end{lemma}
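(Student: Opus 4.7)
The strategy is to exploit the fact that Proposition~\ref{prop:AM-Paradiff-of-DN} is designed precisely so that $R_G(\eta)\psi$ vanishes to second order in $\eta$ at $\eta = 0$: the lower regularity $\H{s-2}$ in the target space leaves two derivatives of slack compared with~\eqref{eq:estimate-R-eta-AM}, which can be cashed in for an extra factor of $\|\eta\|_{\H{s+1/2}}$. First I would verify directly that $R_G(0)\psi = 0$ on mean-zero functions: when $\eta = 0$, one has $\omega = \psi$, $T_V \eta = 0$ (through the explicit $\eta$ factor), $\lambda = |\xi|$, so
\begin{equation*}
R_G(0)\psi = G(0)\psi - T_{|\xi|}\psi - M_{\b}\psi = 0,
\end{equation*}
using $G(0) = |D_x| + M_{\b}$ and the fact that $T_{|\xi|}$ and $|D_x|$ coincide on $\Hdot{\sigma}(\Td)$.

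Next I would rearrange
\begin{equation*}
R_G(\eta)\psi = \bigl[ G(\eta)\psi - G(0)\psi \bigr] + \nabla_x \cdot T_V \eta + T_\lambda T_B \eta + T_{|\xi|-\lambda}\psi
\end{equation*}
(using $\omega = \psi - T_B \eta$ and the previous identity), and then apply the shape-derivative formula
\begin{equation*}
G(\eta)\psi - G(0)\psi = -\int_0^1 \bigl[ G(\tau\eta)\bigl(\eta\, B(\tau\eta)\psi\bigr) + \nabla_x \cdot \bigl(\eta\, V(\tau\eta)\psi\bigr) \bigr] \drv \tau
\end{equation*}
to exhibit an explicit $\eta$ factor in each term. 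Each of the four pieces of $R_G(\eta)\psi$ now carries an explicit $\eta$. Applying Bony's decomposition $ab = T_a b + T_b a + R(a,b)$ to $\eta B(\tau\eta)\psi$ and $\eta V(\tau\eta)\psi$, together with the symbolic calculus for composition of paradifferential operators, then isolates the contribution linear in $\eta$ of each of the four pieces. By the very construction of the symbols $\lambda^{(1)}$, $\lambda^{(0)}$ and of $V$ in Proposition~\ref{prop:AM-Paradiff-of-DN}, these linear-in-$\eta$ contributions cancel exactly modulo smoothing operators; this cancellation is precisely the identity that the paralinearization was built to enforce, and is what makes $R_G$ vanish to first order at $\eta = 0$.

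What remains are genuinely bilinear residuals, of three types: Bony remainders $R(\eta, \cdot)$, which gain two derivatives and hence absorb one factor of $\|\eta\|_{\H{s+1/2}}$ through the standard estimates of Appendix~\ref{sec:paradifferential-calculus}; symbolic-calculus remainders $T_a T_b - T_{ab}$, together with the identity $\lambda^{(1)} - |\xi| = O(|\nabla_x \eta|^2)$ read off directly from the formula for $\lambda^{(1)}$; and the iterated Taylor remainder for $G(\eta) - G(0) - dG(0)[\eta]$, controlled by differentiating the shape-derivative formula once more in $\tau$. Combining with Lemma~\ref{lemma:estimate-B,V} for $B$, $V$ and with the linear bound on $G$ from~\cite{ABZ-Without-Surface-Tension}, every residual is bounded in $\H{s-2}$ by $C(\|\eta\|_{\H{s+1/2}})\|\eta\|_{\H{s+1/2}}\|\nabla_x \psi\|_{\H{s-1}}$, which is the desired estimate.

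The main obstacle is the second step: tracking the linear-in-$\eta$ contributions of the four pieces and verifying that they match $dG(0)[\eta]\psi$ coming from the shape-derivative. This is a careful bookkeeping exercise that uses the composition formula for paradifferential operators at subprincipal order; in particular, $\lambda^{(0)}$ is tailored so that its linearization at $\eta = 0$ balances the subprincipal contribution from $\nabla_x \cdot (\eta \nabla_x \psi)$ and from the composition $T_{|\xi|} T_B \eta$. Once this cancellation is in place, every remaining term is either genuinely quadratic in $\eta$ or smoothing at the level of $\H{s-2}$, and standard paradifferential estimates close the argument.
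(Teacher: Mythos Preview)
Your decomposition
\[
R_G(\eta)\psi = \bigl[G(\eta)\psi - G(0)\psi\bigr] + \nabla_x\cdot T_V\eta + T_\lambda T_B\eta + T_{|\xi|-\lambda}\psi
\]
is exactly the one the paper uses. But from this point on you make the problem much harder than it is. The lemma asks only for \emph{one} factor of $\|\eta\|_{\H{s+1/2}}$, and at the low regularity $\H{s-2}$ each of the four terms already carries such a factor on its own: the second and third have an explicit $\eta$; in the fourth the symbol satisfies $\M{1}{0}{d/2+1}{\lambda-|\xi|}\le C(\|\eta\|_{\H{s+1/2}})\|\eta\|_{\H{s+1/2}}$; and the first is handled by the single shape-derivative integral you already wrote, together with the $\H{s-2}$ mapping bound for $G(\tau\eta)$ and Lemma~\ref{lemma:estimate-B,V}. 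That is the entire proof in the paper---no cancellation between terms, no second Taylor step, no subprincipal bookkeeping.

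Your plan instead aims for quadratic vanishing of $R_G$ in $\eta$, via an asserted exact cancellation of the linear-in-$\eta$ contributions of the four pieces. This is both unnecessary and unsupported: you do not carry out the cancellation, and there is no reason in Proposition~\ref{prop:AM-Paradiff-of-DN} for $d_\eta R_G(0)$ to vanish. The remainder $R_G$ is small in $\H{s+\delta}$ because of the paradifferential regularity gain, not because it is $O(\eta^2)$. So the ``main obstacle'' you identify is a self-imposed one; drop the cancellation step and estimate each of the four terms directly in $\H{s-2}$.
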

\begin{proof}
Write 
\begin{equation*}
R_G(\eta)\psi = (G(\eta)\psi - G(0)\psi) - (\T{\lambda}\omega-|D_x|\psi) + \nabla_x \cdot \T{V}\eta, 
\end{equation*}
and estimate the three terms separately. By Lemma~\ref{lemma:estimate-B,V} and Theorem~\ref{thm:Operator-Norm-Estimate-Paradiff},
\begin{equation*}
\|\nabla_x \cdot \T{V} \eta\|_{\H{s-1/2}} 
\lesssim \|V\|_{\Linf} \|\eta\|_{\H{s+1/2}} \le C(\|\eta\|_{\H{s+1/2}}) \|\eta\|_{\H{s+1/2}} \|\nabla_x\psi\|_{\H{s-1}}.
\end{equation*}
For the middle term, write $ \T{\lambda}{\omega} - |D_x| \psi = \T{\lambda - |\xi|}{\psi}	- \T{\lambda}{\T{B}{\eta}}$, and by Theorem~\ref{thm:Operator-Norm-Estimate-Paradiff},
\begin{align*}
\| \T{\lambda - |\xi|}{\psi} \|_{\H{s-1}}
& \lesssim \M{1}{0}{d/2+1}{\lambda-|\xi|} \|\nabla_x \psi\|_{\H{s-1}} 
\le C(\|\eta\|_{\H{s+1/2}}) \|\eta\|_{\H{s+1/2}} \|\nabla_x \psi\|_{\H{s-1}}, \\
\| \T{\lambda}{\T{B}{\eta}} \|_{\H{s-1/2}}
& \lesssim \M{1}{0}{d/2+1}{\lambda} \|B\|_{\Linf} \|\eta\|_{\H{s+1/2}}
\le C(\|\eta\|_{\H{s+1/2}}) \|\eta\|_{\H{s+1/2}} \|\nabla_x \psi\|_{\H{s-1}}.
\end{align*}
The first term is estimated by the following lemma.
\end{proof}

\begin{lemma}
$\| G(\eta) \psi - G(0) \psi \|_{\H{s-2}}
\le C(\|\eta\|_{\H{s+1/2}}) \|\eta\|_{\H{s+1/2}} \|\nabla_x \psi\|_{\H{s-1}}$
\end{lemma}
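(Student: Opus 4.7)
The plan is to use the shape derivative formula for the Dirichlet--Neumann operator in order to write $ G(\eta)\psi - G(0)\psi $ as an integral of quantities that are manifestly linear in~$ \eta $, and then estimate them using Lemma~\ref{lemma:estimate-B,V} together with Theorem~3.12 of~\cite{ABZ-Without-Surface-Tension}. Explicitly, the shape derivative formula states
\begin{equation*}
\frac{\drv}{\drv\tau} G(\tau\eta) \psi = - G(\tau\eta)\big(\eta B(\tau\eta)\psi\big) - \nabla_x \cdot \big(\eta V(\tau\eta)\psi\big),
\end{equation*}
so writing $ B_\tau = B(\tau\eta)\psi $ and $ V_\tau = V(\tau\eta)\psi $ and integrating from $ \tau = 0 $ to $ \tau = 1 $ yields
\begin{equation*}
G(\eta) \psi - G(0) \psi = - \int_0^1 \big\{ G(\tau\eta)(\eta B_\tau) + \nabla_x \cdot (\eta V_\tau) \big\} \d \tau.
\end{equation*}

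It then suffices to bound each integrand in $ \H{s-2} $ uniformly in $ \tau \in [0,1] $. For the first term, Theorem~3.12 of~\cite{ABZ-Without-Surface-Tension} applied with the index $ \sigma = s-1 $ (which is admissible since $ s > 3 + d/2 $ gives $ 1 + d/2 < s-1 \le s + 1/2 $) gives
\begin{equation*}
\|G(\tau\eta)(\eta B_\tau)\|_{\H{s-2}} \le C(\|\eta\|_{\H{s+1/2}}) \|\nabla_x(\eta B_\tau)\|_{\H{s-2}} \le C(\|\eta\|_{\H{s+1/2}}) \|\eta B_\tau\|_{\H{s-1}}.
\end{equation*}
Since $ s-1 > d/2 $, the space $ \H{s-1} $ is an algebra, whence $ \|\eta B_\tau\|_{\H{s-1}} \lesssim \|\eta\|_{\H{s-1}} \|B_\tau\|_{\H{s-1}} \le \|\eta\|_{\H{s+1/2}} \|B_\tau\|_{\H{s-1}} $. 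Lemma~\ref{lemma:estimate-B,V} applied with $ \sigma = s $ (admissible under our hypothesis) gives $ \|B_\tau\|_{\H{s-1}} \le C(\|\eta\|_{\H{s+1/2}}) \|\nabla_x \psi\|_{\H{s-1}} $, which finishes the bound on the first integrand. The second term is estimated in the same manner via $ \|\nabla_x \cdot (\eta V_\tau)\|_{\H{s-2}} \le \|\eta V_\tau\|_{\H{s-1}} $ and the same product estimate applied now to $ V_\tau $.

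The main subtlety is to justify the shape derivative formula itself in the appropriate functional framework. This is a classical identity (see e.g.\ the works of Lannes or Alazard--M\'etivier), and follows from differentiating the variational formulation of the Dirichlet problem~\eqref{eq:equation-laplace-DN} on the moving domain $ \Omega_{\tau\eta} $. For our purposes it is enough to read it as an identity between continuous $ \H{s-2}(\Td) $-valued functions of $ \tau \in [0,1] $, which is legitimate under the standing assumption $ \eta \in \H{s+1/2}(\Td) $ with $ s > 3 + d/2 $, since the right-hand side is then continuous in $ \tau $ by the estimates above. Apart from this point, the argument is just product estimates and the continuity bounds for $ G(\tau\eta) $, $ B(\tau\eta) $, $ V(\tau\eta) $ already cited above.
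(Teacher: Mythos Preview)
Your proof is correct and follows essentially the same route as the paper: both use the shape derivative formula $\drv_\eta G(\eta)\psi\cdot h = -G(\eta)(Bh)-\nabla_x\cdot(Vh)$, integrate along the segment $\tau\eta$, and bound each integrand via the mapping properties of $G(\tau\eta)$, the algebra property of $\H{s-1}$, and Lemma~\ref{lemma:estimate-B,V}. The paper simply cites the shape derivative formula as a known result (Lannes, Alazard--Burq--Zuily) rather than discussing its justification, but the analytic content is identical.
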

\begin{proof}
Recall the shape derivation of the Dirichlet-Neumann operator by~\cite{Lannes-Well-posedness}, see also~\cite{ABZ-Surface-Tension}.
\begin{proposition}
Let $ s > 2 + d/2$, and $ \eta \in \H{s+1/2}(\Td) $, $ \psi \in \H{\sigma}(\Td) $ for $ 1 \le \sigma \le s $. Then there exists a neighborhood of $ \eta $ in $ \H{s+1/2}(\Td) $, in which the mapping $\tilde{\eta} \mapsto G(\tilde{\eta}) \psi$ is differentiable. Moreover, for $ h \in \H{s+1/2}(\Td) $,
\begin{equation}
\label{eq:formula-shapde-derivative}
\drv_\eta G(\eta) \psi \cdot h 
\bydef \lim_{\varepsilon \to 0} \tfrac{1}{\varepsilon} \{ G(\eta + \varepsilon h) \psi - G(\eta) \psi \}
= -G(\eta) (Bh) - \nabla_x \cdot (Vh).
\end{equation}
\end{proposition}
	
Then by Newton-Leibniz formula,
\begin{equation*}
G(\eta) \psi - G(0) \psi
= \int_0^1 \drv G(t \eta) \psi \cdot \eta \dt 
 = - \int_0^1 \Big( G(t \eta) \big( B_t \eta \big) + \nabla_x \cdot \big( V_t \eta \big) \Big) \dt,
\end{equation*}
where $ B_t = B(t \eta)\psi,\ V_t = V(t \eta)\psi $. Therefore,
\begin{align*}
\| G(\eta) \psi - G(0)  \psi \|_{\H{s-2}}
& \le \int_0^1 \Big( C(\|t \eta\|_{\H{s+1/2}})\|B_t\|_{\H{s-1}} + \|V_t\|_{\H{s-1}} \Big) \|\eta\|_{\H{s-1}} \dt \\
& \le C(\| \eta \|_{\H{s+1/2}}) \|\eta \|_{\H{s+1/2}} \| \nabla_x \psi \|_{\H{s-1}}.
\end{align*}
\end{proof}

\subsection{Paralinearization of Surface Tension}

\begin{proposition}
\label{prop:Paralinearization-Surface-Tension}
Let $ \eta \in \H{s+1/2}(\Td) $ with $ s > 3/2 + d/2 $, then
\begin{equation}
H(\eta) = - \T{\ell}\eta + R_H(\eta)\eta,
\end{equation}
where the symbol 
\begin{equation*}
\ell = \ell^{(2)} + \ell^{(1)}
\end{equation*}
is explicitly defined by
\begin{equation*}
\begin{split}
\ell^{(2)} 
=\frac{1}{\sqrt{\SurfArea{\eta}}}\Big( |\xi|^2 - \frac{(\nabla_x\eta\cdot\xi)^2}{\SurfArea{\eta}} \Big), 
\qquad 
\ell^{(1)}
= \frac{1}{2} \pxi \cdot D_x \ell^{(2)},
\end{split}
\end{equation*}
and $ R_H(\eta) $ is a linear operator satisfying the estimate,
\begin{equation*}
\|R_H(\eta)\|_{\L(\H{s+1/2},\H{2s-2-d/2})} 
\le C(\|\eta\|_{\H{s+1/2}})\|\eta\|_{\H{s+1/2}}.
\end{equation*}
\end{proposition}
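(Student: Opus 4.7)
The plan is to rewrite the mean curvature as $H(\eta)=\nabla_x\cdot h(\nabla_x\eta)$ with the smooth map $h(p)=p/\sqrt{1+|p|^2}$ (satisfying $h(0)=0$), paralinearize $h(\nabla_x\eta)$ by Bony's theorem, and then perform exact symbolic calculus on the outer divergence to identify the full symbol $-\ell$ together with a smoothing remainder. Since $\nabla_x\eta\in\H{s-1/2}(\Td)$ with $s-1/2>d/2$, Bony's paralinearization theorem (Appendix~\ref{sec:paradifferential-calculus}) yields
\begin{equation*}
h(\nabla_x\eta)=\T{h'(\nabla_x\eta)}\nabla_x\eta+\rho(\eta),\qquad \|\rho(\eta)\|_{\H{2s-1-d/2}}\le C(\|\eta\|_{\H{s+1/2}})\|\eta\|_{\H{s+1/2}}^{2},
\end{equation*}
where the Jacobian is $h'_{jk}(\nabla_x\eta)=\frac{1}{\sqrt{1+|\nabla_x\eta|^2}}\bigl(\delta_{jk}-\frac{\partial_{x_j}\eta\,\partial_{x_k}\eta}{1+|\nabla_x\eta|^2}\bigr)$.

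Next I apply $\nabla_x\cdot$ to this identity. Because $\partial_{x_j}$ has a symbol independent of $x$, the composition $\T{a}\partial_{x_j}=\T{i\xi_j a}$ is exact; combining this with the Leibniz-type identity $\partial_{x_j}\T{a}=\T{a}\partial_{x_j}+\T{\partial_{x_j}a}$ gives
\begin{equation*}
\nabla_x\cdot\T{h'(\nabla_x\eta)}\nabla_x\eta=\sum_{j,k}\T{h'_{jk}(\nabla_x\eta)}\partial_{x_j}\partial_{x_k}\eta+\sum_{j,k}\T{\partial_{x_j}h'_{jk}(\nabla_x\eta)}\partial_{x_k}\eta=\T{\mu}\eta,
\end{equation*}
with total symbol $\mu(x,\xi)=-\sum_{jk}h'_{jk}(\nabla_x\eta)\xi_j\xi_k+i\sum_{jk}\bigl(\partial_{x_j}h'_{jk}(\nabla_x\eta)\bigr)\xi_k$. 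A direct comparison with the explicit formula for $\ell^{(2)}$ shows that the principal part equals $-\ell^{(2)}$; differentiating $\ell^{(2)}=\sum_{jk}h'_{jk}\xi_j\xi_k$ and using the symmetry of $h'$ confirms that the subprincipal part equals $-\ell^{(1)}=-\tfrac{1}{2}\partial_\xi\cdot D_x\ell^{(2)}$. Hence $H(\eta)=-\T{\ell}\eta+\nabla_x\cdot\rho(\eta)$.

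To present $\nabla_x\cdot\rho(\eta)$ as $R_H(\eta)\eta$ with $R_H(\eta)$ linear, I exploit the integral representation $h(p)=Q(p)\,p$ with $Q(p)=\int_0^1 h'(tp)\,dt$ (so that $Q(0)=I$). Applying the Bony decomposition $ab=\T{a}b+\T{b}a+\mathcal{R}(a,b)$ pointwise in $t$ and subtracting $\T{h'(\nabla_x\eta)}\nabla_x\eta$, every contribution to $\rho(\eta)$ takes the form (linear operator in $\nabla_x\eta$) applied to $\nabla_x\eta$; denoting this operator-valued coefficient by $\mathcal{M}(\eta)$, I set $R_H(\eta)f:=\nabla_x\cdot\mathcal{M}(\eta)\nabla_x f$. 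The required operator-norm bound follows from the bilinear Bony remainder estimate, paralinearization of the smooth composition $Q(\nabla_x\eta)$, and Sobolev product rules; one factor of $\|\eta\|_{\H{s+1/2}}$ is absorbed into the operator norm of $R_H(\eta)$, while the outer divergence loses one derivative from the $\H{2s-1-d/2}$-regularity of the Bony remainder.

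The hard part is Step 3, the bookkeeping that turns the intrinsically quadratic remainder $\rho(\eta)$ into a \emph{linear} operator $R_H(\eta)$ whose operator norm scales linearly in $\|\eta\|_{\H{s+1/2}}$, while preserving the full regularity gain $\H{2s-2-d/2}$. Steps 1 and 2 are routine applications of Bony's theorem and of the observation that compositions with constant-coefficient differential operators introduce no symbolic correction.
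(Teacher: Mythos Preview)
Your Steps~1 and~2 are correct and coincide with the paper's argument: the paper also writes $\frac{\nabla_x\eta}{\sqrt{1+|\nabla_x\eta|^2}}=\T{M}\nabla_x\eta+\tilde R(\eta)\eta$ with $M=h'(\nabla_x\eta)$ and then applies $\nabla_x\cdot$, obtaining the exact symbol $-\ell^{(2)}-\ell^{(1)}$ for the same reason you give.

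Step~3 follows the same strategy as the paper --- factor $h(p)=(\text{matrix})\cdot p$, apply Bony's paraproduct decomposition, then paralinearize the matrix --- the only difference being your choice $Q(p)=\int_0^1 h'(tp)\,dt$ versus the paper's $L(p)I=(1+|p|^2)^{-1/2}I$. However, your sketch omits the one delicate point. After Bony you obtain
\[
\rho(\eta)=\T{Q-h'}\nabla_x\eta+\T{\nabla_x\eta}Q(\nabla_x\eta)+R(Q,\nabla_x\eta),
\]
and neither $\T{Q-h'}\nabla_x v$ nor the leading piece $\T{\nabla_x\eta}\T{\nabla_p Q}\nabla_x v$ coming from paralinearizing $Q$ individually lands in $\H{2s-1-d/2}$: each only maps $\H{s+1/2}\to\H{s-1/2}$. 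What saves the day is the identity $h'-Q=(\nabla_p Q)\cdot p$, which makes these two contributions combine into a $(\T{a}\T{b}-\T{ab})$-type symbolic-calculus remainder that gains the full $s-\tfrac12-\tfrac d2$ derivatives. The paper carries out exactly this cancellation (with $L$ in place of $Q$, using $h'=LI+p\otimes\nabla L$) and records it explicitly as the term $(\T{\nabla_x\eta}\T{(\nabla L)(\nabla_x\eta)}-\T{\nabla_x\eta\otimes(\nabla L)(\nabla_x\eta)})\nabla_x v$. Your list of ingredients (``bilinear Bony remainder, paralinearization of $Q$, Sobolev product rules'') does not include this composition estimate, and without it $\mathcal M(\eta)$ does not reach the target regularity.
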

\begin{proof}
Letting $ L(v) = \frac{1}{\sqrt{1+|v|^2}} $ for $ v \in \Rd $, then $ L(0)-1 = 0 $, and  by~\eqref{eq:paralinearization},
\begin{align*}
\frac{1}{\sqrt{1+|\nabla_x \eta|^2}} = A(\nabla_x \eta)
& = 1 + (L(\nabla_x \eta) - 1) \\
& = 1 + \T{(\nabla L)(\nabla_x \eta)} \cdot \nabla_x\eta + R_{L-1}(\nabla_x\eta),
\end{align*}
where $ (\nabla L)(\nabla_x \eta) = - \frac{\nabla_x \eta}{\sqrt{\SurfArea{\eta}}^3} $, and the remainder satisfies the estimate
\begin{equation*}
\|R_{L-1}(\nabla_x\eta)\|_{\H{2s-1-d/2}}
\le C(\|\eta\|_{\H{s+1/2}}) \|\eta\|_{\H{s+1/2}}.
\end{equation*}
Therefore, using the fact that $ \T{\nabla_x \eta} 1 = 0 $, we have
\begin{align*}
\frac{\nabla_x \eta}{\sqrt{\SurfArea{\eta}}}
& = \nabla_x \eta \; L(\nabla_x \eta) \\
& = \T{\nabla_x \eta}{L(\nabla_x \eta)} + \T{L(\nabla_x \eta)}{\nabla_x \eta} + R\big(\nabla_x \eta,L(\nabla_x \eta)\big); \\
& = \T{\nabla_x \eta} \T{(\nabla L)(\nabla_x \eta)} \cdot \nabla_x \eta + \T{L(\nabla_x \eta)} \nabla_x \eta \\
& \qquad \qquad + R\big(\nabla_x \eta,L(\nabla_x \eta)\big) + \T{\nabla_x \eta}R_{L-1}(\nabla_x\eta) \\
& = \T{M} \nabla_x \eta + \tilde{R}(\eta) \eta,
\end{align*}
with the matrix valued symbol
\begin{equation*}
M 
= L(\nabla_x \eta) + \nabla_x \eta \otimes (\nabla L)(\nabla_x \eta)
= \frac{1}{\sqrt{\SurfArea{\eta}}} I_d - \frac{\nabla_x \eta \otimes \nabla_x \eta}{\sqrt{\SurfArea{\eta}}^3}.
\end{equation*}
As for the remainder, $ \tilde{R}(\eta) $ is a linear operator depending on $ \eta $ defined by
\begin{align*}
\tilde{R}(\eta) v
& = R\big(\nabla_x v,L(\nabla_x \eta)\big) + \T{\nabla_x v}R_{L-1}(\nabla_x\eta) \\
& \qquad \qquad + \big(\T{\nabla_x \eta} \T{(\nabla L)(\nabla_x \eta)} \cdot \nabla_x - \T{\nabla_x \eta \otimes (\nabla L)(\nabla_x \eta)} \nabla_x\big) v,
\end{align*}
satisfying by~\eqref{eq:composition-for-paraproduct} the estimate
\begin{equation*}
\|\tilde{R}(\eta) v\|_{\H{2s-1-d/2}} \le C(\|\eta\|_{\H{s+1/2}}) \|\eta\|_{\H{s+1/2}} \|v\|_{\H{s+1/2}}.
\end{equation*}
Indeed, the only problem is for the first term, for which we write
\begin{equation*}
R(\nabla_x v,L(\nabla_x\eta)) 
= R(\nabla_x v,1) + R(\nabla_x v,L(\nabla_x\eta)-1),
\end{equation*}
where
\begin{equation*}
R(\nabla_x v,1) = \nabla_x v - \T{\nabla_x v} 1 - \T{1}\nabla_x v = \nabla_x v - 0 - \nabla_x v = 0.
\end{equation*}
Consequently,
\begin{equation*}
H(\eta)
= \MeanCurv{\eta}
= \T{-M\xi\cdot\xi+i \nabla_x \cdot M\xi} \eta + R_H(\eta)\eta,
\end{equation*}
with $ \ell^{(2)} = M \xi\cdot\xi, \ell^{(1)} = \frac{1}{i}\nabla_x \cdot M\xi $, and $ R_H(\eta) $ defined by $ R_H(\eta) v = \nabla_x \cdot (\tilde{R}(\eta) v) $ which satisfies the desired estimate.
\end{proof}

\subsection{Equation for the Good Unknown}
\begin{proposition}
\label{prop:Paralinearization-Velocity}
Let $ \eta \in \H{s+1/2}(\Td) $ and $ \psi \in \H{s}(\Td) $ with $ s > 1 + d/2 $, then
\begin{equation}
\label{eq:paralinearization-velocity}
\begin{split}
\frac{1}{2} & \frac{(\VertVelo)^2}{\SurfArea{\eta}} - \frac{1}{2} |
\nabla_x \psi|^2 \\
& \qquad \qquad \qquad
= \T{B} G(\eta) \psi + \T{B}\T{V}\cdot \nabla_x\eta - \T{V}\cdot\nabla_x\psi + R_S(\eta,\psi) \psi,
\end{split}
\end{equation}
where $ R_S(\eta,\psi) $ is a linear operator satisfying the estimate
\begin{equation*}
\|R_S(\eta,\psi)\|_{\L(\H{s},\H{2s-2-d/2})} \le C(\|\eta\|_{\H{s+1/2}}) \|\nabla_x \psi\|_{\H{s-1}}.
\end{equation*}
\end{proposition}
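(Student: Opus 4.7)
The plan is to reduce \eqref{eq:paralinearization-velocity} first to a pointwise algebraic identity in $B$ and $V$, and then to apply Bony's paraproduct decomposition term by term. From $\VertVelo = B(\SurfArea{\eta})$ (which follows directly from the definition of $B$) and $\nabla_x\psi = V + B\nabla_x\eta$, a direct expansion yields the pointwise identity
\begin{equation*}
\tfrac{1}{2}\frac{(\VertVelo)^2}{\SurfArea{\eta}} - \tfrac{1}{2}|\nabla_x\psi|^2 = \tfrac{1}{2}B^2 - BV\cdot\nabla_x\eta - \tfrac{1}{2}|V|^2.
\end{equation*}

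Next, I would apply Bony's decomposition $fg = \T{f}g + \T{g}f + R(f,g)$ to each of $B^2 = 2\T{B}B + R(B,B)$, $V\cdot(B\nabla_x\eta) = \T{V}\cdot(B\nabla_x\eta) + \T{B\nabla_x\eta}\cdot V + R(V,B\nabla_x\eta)$, and $|V|^2 = 2\T{V}\cdot V + R(V,V)$. To recover the three paraproducts displayed in \eqref{eq:paralinearization-velocity}, I would substitute $B = G(\eta)\psi + V\cdot\nabla_x\eta$ inside $\T{B}B$ and decompose $V\cdot\nabla_x\eta$ once more by Bony, then exploit $\T{V}\cdot V + \T{V}\cdot(B\nabla_x\eta) = \T{V}\cdot\nabla_x\psi$ coming from $V+B\nabla_x\eta = \nabla_x\psi$. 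After regrouping, all surviving terms form
\begin{equation*}
R_S(\eta,\psi)\psi = (\T{B}\T{\nabla_x\eta} - \T{B\nabla_x\eta})\cdot V + \T{B}R(V,\nabla_x\eta) + \tfrac{1}{2}R(B,B) - R(V,B\nabla_x\eta) - \tfrac{1}{2}R(V,V).
\end{equation*}
To view $R_S(\eta,\psi)$ as a linear operator on its argument $\psi'$, in each bilinear term one of the factors (either $B(\eta)\psi'$, $V(\eta)\psi'$, or $\nabla_x\psi'$) is taken to depend linearly on $\psi'$ while the other factor is frozen at the fixed $\psi$.

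The remainder estimate then follows routinely from the paradifferential calculus of Appendix~\ref{sec:paradifferential-calculus} together with Lemma~\ref{lemma:estimate-B,V}, which gives $\|(B,V)\|_{\H{s-1}\times\H{s-1}} \le C(\|\eta\|_{\H{s+1/2}})\|\nabla_x\psi\|_{\H{s-1}}$, where $s > 1 + d/2$ also ensures $\H{s-1} \hookrightarrow L^\infty$. Bony's smoothing $R : \H{a}\times\H{b} \to \H{a+b-d/2}$ places $R(B,B)$, $R(V,V)$, and $R(V,B\nabla_x\eta)$ in exactly $\H{2s-2-d/2}$ with the required bilinear bound, while $\T{B}R(V,\nabla_x\eta)$ even lies in $\H{2s-3/2-d/2}$. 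The main technical obstacle is the composition term $(\T{B}\T{\nabla_x\eta} - \T{B\nabla_x\eta})\cdot V$: I would invoke the paradifferential composition estimate, which produces smoothing of order equal to the Hölder regularity $s-1-d/2$ of $B$ and $\nabla_x\eta$, so that applied to $V \in \H{s-1}$ it lands precisely in $\H{2s-2-d/2}$. This last step is the sharp consumer of the hypothesis $s > 1 + d/2$, which is exactly what is needed to make the smoothing gain positive; all other terms are straightforward once the algebraic reduction and Bony decompositions are in place.
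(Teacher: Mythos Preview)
Your proposal is correct and follows essentially the same approach as the paper: reduce to a pointwise algebraic identity in $B,V$, then expand via Bony's paraproduct decomposition and estimate the remainders with Theorem~\ref{thm:Estimate-Paraproduct-Paralinearization} and Lemma~\ref{lemma:estimate-B,V}. The only cosmetic difference is that the paper writes the identity as $BG(\eta)\psi - \tfrac{1}{2}B^2 - \tfrac{1}{2}|V|^2$ (using $G(\eta)\psi = B - V\cdot\nabla_x\eta$) and paralinearizes $BG(\eta)\psi$ first, which produces a remainder containing $R(B,G(\eta)\psi)$, $(\T{V}\cdot\T{\nabla_x\eta}-\T{V\cdot\nabla_x\eta})B$ and a commutator $[\T{V},\T{B}]\nabla_x\eta$ rather than your $(\T{B}\T{\nabla_x\eta}-\T{B\nabla_x\eta})\cdot V$ and $R(V,B\nabla_x\eta)$; the two remainder expressions are of course equivalent and are estimated by exactly the same tools at the same regularity $\H{2s-2-d/2}$.
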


\begin{proof}
By the definition of~$ B $, $ V $ and $ G(\eta) $, we easily verify the identity.
\begin{equation*}
G(\eta)\psi = B - \nabla_x \eta \cdot V.
\end{equation*}
Plug this into the left hand side of~\eqref{eq:paralinearization-velocity},
\begin{align*}
\frac{1}{2} & \frac{(\VertVelo)^2}{\SurfArea{\eta}} - \frac{1}{2} |
\nabla_x \psi|^2 \\
& \qquad \qquad \qquad
= \frac{1}{2} B (\nabla_x \eta \cdot \nabla_x \psi + G(\eta) \psi) - \nabla_x \psi \cdot (V + B\nabla_x \eta) \\
& \qquad \qquad \qquad
= B G(\eta) \psi - \frac{1}{2} (B G(\eta)\psi + \nabla_x \psi \cdot V) \\
& \qquad \qquad \qquad
= B G(\eta) \psi - \frac{1}{2} \big( B (B-V\cdot \nabla_x \eta) + (V+B\nabla_x \eta) \cdot V \big) \\
& \qquad \qquad \qquad
= B G(\eta) \psi - \frac{1}{2} B^2 - \frac{1}{2} |V|^2.
\end{align*}
Then by~\eqref{eq:composition-for-paraproduct},
\begin{align*}
B G(\eta) \psi - & \demi B^2 - \demi |V|^2 \\
& = \T{B} G(\eta) \psi + \T{G(\eta)\psi} B - \T{B}B - \T{V}\cdot V + R_1(\eta,\psi)\psi \\
& = \T{B} G(\eta) \psi + \T{G(\eta)\psi -B} B - \T{V}\cdot (\nabla_x\psi - B\nabla_x\eta) + R_1(\eta,\psi)\psi \\
& = \T{B} G(\eta) \psi - \T{V\cdot\nabla_x\eta} B - \T{V}\cdot\nabla_x\psi + \T{V}\cdot (B\nabla_x\eta) + R_1(\eta,\psi)\psi \\
& = \T{B} G(\eta) \psi + \T{B}\T{V}\cdot \nabla_x\eta - \T{V}\cdot\nabla_x\psi + R_1(\eta,\psi)\psi + R_2(\eta,\psi)\psi,
\end{align*}
with the two operators defined by
\begin{align*}
R_1(\eta,\psi) v^1
& = R(B,G(\eta)v^1) - \demi R(B,B(\eta) v^1) - \demi R(V,V(\eta) v^1) \\
R_2(\eta,\psi) v^2
& = \T{V}\cdot R(B(\eta) v^2,\nabla_x\eta) + (\T{V}\cdot\T{\nabla_x\eta}-\T{V\cdot\nabla_x\eta})B(\eta) v^2 + [\T{V},\T{B(\eta)v^2}]\nabla_x\eta.
\end{align*}
Hence $ R_S(\eta,\psi) = R_1(\eta,\psi) + R_2(\eta,\psi) $ satisfies the desired estimate by~\eqref{eq:composition-for-paraproduct} and~\eqref{eq:remainder-of-paraproduct}.
\end{proof}

\begin{lemma}
\label{lem:equation-good-unknown}
With $ \Pext $ of the form~\eqref{eq:form-Pext}, the good unknown~$ \omega $ satisfies the equation,
\begin{equation*}
\pt\omega + \nabla_x \cdot \T{V} \omega + \T{\ell}\eta + g\eta + R^1_\omega(\psi,\eta)\psi + R^2_\omega(\psi,\eta)\eta = \chi_T^{} \big( \phiw \Re F - \T{B(\eta) (\phiw \Re F)} \eta \big),
\end{equation*}
where $ R^i(\psi,\eta) $ $ (i=1,2) $ are linear operators such that, for $ s > 2 + d/2 $,
\begin{align*}
\|R^1_\omega(\eta,\psi)\|_{\L(\H{s},\H{s})}
& \le C(\|(\eta,\nabla_x \psi)\|_{\H{s+1/2}\times\H{s-1}})\|\nabla_x \psi\|_{\H{s-1}}, \\
\|R^2_\omega(\eta,\psi)\|_{\L(\H{s+1/2},\H{s})}
& \le C(\|(\eta,\nabla_x \psi)\|_{\H{s+1/2}\times\H{s-1}}) (\|\nabla_x \psi\|_{\H{s-1}} + \|\eta\|_{\H{s+1/2}}).
\end{align*}
\end{lemma}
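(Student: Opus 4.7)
My plan is to derive the $\omega$-equation by differentiating $\omega = \psi - \T{B}\eta$ in time, substituting the two water-wave equations, and invoking the paralinearizations of $H(\eta)$ (Proposition~\ref{prop:Paralinearization-Surface-Tension}) and of the quadratic velocity term (Proposition~\ref{prop:Paralinearization-Velocity}).

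First I would compute $\pt\omega = \pt\psi - \T{\pt B}\eta - \T{B}\pt\eta$. Using the $\psi$-equation together with Proposition~\ref{prop:Paralinearization-Velocity} to expand $\pt\psi$, and $\pt\eta = G(\eta)\psi$ to expand $\T{B}\pt\eta$, the common term $\T{B} G(\eta)\psi$ cancels, and inserting $H(\eta) = -\T{\ell}\eta + R_H(\eta)\eta$ produces
\begin{equation*}
\pt\omega + \T{\pt B}\eta + \T{\ell}\eta + g\eta = -\T{V}\cdot\nabla_x\psi + \T{B}\T{V}\cdot\nabla_x\eta + R_H(\eta)\eta + R_S(\eta,\psi)\psi + \Pext.
\end{equation*}
To recover the structural term $\nabla_x\cdot \T{V}\omega$ on the left, I substitute $\psi = \omega + \T{B}\eta$ in the first right-hand term and use the Leibniz rule $\nabla_x\cdot \T{V} = \T{V}\cdot\nabla_x + \T{\div V}$ to rewrite
\begin{equation*}
-\T{V}\cdot\nabla_x\psi + \T{B}\T{V}\cdot\nabla_x\eta = -\nabla_x\cdot \T{V}\omega + \T{\div V}\psi + [\T{B},\T{V}]\cdot\nabla_x\eta - \T{V}\cdot \T{\nabla_x B}\eta - \T{\div V}\T{B}\eta.
\end{equation*}
The paraproduct $\T{\div V}\psi$ together with $R_S(\eta,\psi)\psi$ furnishes $R^1_\omega(\psi,\eta)\psi$, and the three remaining $\eta$-terms combined with $R_H(\eta)\eta$ give part of $R^2_\omega(\psi,\eta)\eta$; the stated estimates follow from Lemma~\ref{lemma:estimate-B,V}, the operator-norm and commutator estimates in Appendix~\ref{sec:paradifferential-calculus}, and the embedding $H^{s-1/2}\hookrightarrow L^\infty$ valid for $s > 2 + d/2$.

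It remains to handle $\T{\pt B}\eta$ together with $\Pext$. Applying the chain rule to $B = B(\eta)\psi$ and substituting both water-wave equations, I split $\pt B = \tilde B_t(\eta,\psi) + B(\eta)\Pext$, where $\tilde B_t(\eta,\psi)$ collects every contribution depending only on $(\eta,\psi)$. Absorbing $\T{\tilde B_t}\eta$ into $R^2_\omega(\psi,\eta)\eta$ and recombining $-\T{B(\eta)\Pext}\eta$ with $\Pext = \chi_T^{}\phiw\Re F$ yields exactly the source term $\chi_T^{}\bigl(\phiw\Re F - \T{B(\eta)(\phiw\Re F)}\eta\bigr)$ claimed by the lemma. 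The main obstacle will be verifying the super-linear bound demanded on $R^2_\omega$: one must control $\tilde B_t(\eta,\psi)$ in a space embedded into $L^\infty$, which requires tame estimates for $B(\eta)$ composed with $H(\eta)$, the quadratic velocity term, and the shape derivative $(d_\eta B(\eta)\psi)\cdot G(\eta)\psi$; each of these follows by combining the Dirichlet--Neumann estimates, the paraproduct composition formulas of Appendix~\ref{sec:paradifferential-calculus}, and Lemma~\ref{lemma:estimate-B,V}, after which the assignment of residual terms to $R^1_\omega$ and $R^2_\omega$ is unambiguous and the announced bounds follow.
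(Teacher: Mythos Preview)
Your proposal is correct and follows essentially the same route as the paper's own proof: differentiate $\omega=\psi-\T{B}\eta$ in time, insert both water-wave equations and the paralinearizations of $H(\eta)$ and of the quadratic velocity term, rewrite the transport contribution as $\nabla_x\cdot\T{V}\omega$ plus lower-order pieces, and then split $\partial_t B = B(\eta)\Pext + (\text{terms in }(\eta,\psi)\text{ only})$ to extract the source $\chi_T(\phiw\Re F - \T{B(\eta)(\phiw\Re F)}\eta)$. The paper groups the residual $\eta$-terms as $(\nabla_x\cdot\T{V}\T{B}-\T{B}\T{V}\cdot\nabla_x)\eta$ rather than expanding them into the commutator $[\T{B},\T{V}]\cdot\nabla_x\eta$ and the two paraproduct pieces as you do, but this is only a cosmetic difference in bookkeeping; the same operators and the same estimates are invoked.
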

\begin{proof}
Combining the paralinearization results above, $ \omega $ satisfies the equation,
\begin{equation*}
\pt\omega + \nabla_x \cdot \T{V} \omega + \T{\ell}\eta + g\eta + \tilde{R} = \chi_T^{} \phiw\Re F,
\end{equation*}
with the remainder $ \tilde{R} $ being,
\begin{equation*}
\tilde{R} = \T{\pt B} \eta + (\nabla_x\T{V}\T{B} - \T{B}\T{V}\cdot\nabla_x) \eta + R_H(\eta)\eta + R_S(\eta,\psi)\psi + (\T{V}\cdot \nabla_x - \nabla_x \cdot \T{V}) \psi,
\end{equation*}
where $ R_x(\eta,\psi) $ is defined in Proposition~\ref{prop:Paralinearization-Velocity}. The term $ \T{\pt B} \eta $ should be treated with attention, where $ \pt \psi $ is involved. 
\begin{align*}
\pt B
& = \pt (B(\eta) \psi) 
= B(\eta) \pt\psi + [\pt,B(\eta)] \psi \\
& = \chi_T^{} B(\eta) (\phiw \Re F) + B(\eta)(\pt \psi - \chi_T^{} \phiw \Re F) + [\pt,B(\eta)] \psi,
\end{align*}
where, using the equation,
\begin{align*}
\pt \psi - \chi_T^{} \phiw \Re F 
& = -g\eta + H(\eta) + \frac{1}{2}\frac{(\nabla_x \eta \cdot \nabla_x  \psi + G(\eta)\psi)^2}{1+|\nabla_x \eta|^2} - \frac{1}{2}|\nabla_x \psi|^2, \\
[\pt,B(\eta)]\psi
& = \pt\Big( \frac{1}{1+|\nabla_x\eta|^2} \Big) (\nabla_x\eta\cdot\nabla_x \psi + G(\eta) \psi) \\ 
& \qquad \qquad + \frac{1}{1+|\nabla_x\eta|^2} \big(\nabla_x\pt\eta\cdot\nabla_x \psi + \big(\pt G(\eta)\big) \psi\big).
\end{align*}
Therefore,
\begin{equation*}
\|\pt \psi - \chi^{}_T\phiw \Re F \|_{\H{s-3/2}}
\le C(\|\eta,\nabla_x \psi\|_{\H{s+1/2}\times\H{s-1}}) (\|\nabla_x\psi\|_{\H{s}} + \|\eta\|_{\H{s+1/2}}).
\end{equation*}
And by~\eqref{eq:formula-shapde-derivative},
\begin{equation*}
(\pt G(\eta)) \psi = -G(\eta)(B\pt\eta) - \nabla_x \cdot (V\pt\eta),
\end{equation*}
from which, replacing $ \pt \eta $ with $ G(\eta)\psi $ by the equation,
\begin{equation*}
\|[\pt,B(\eta)]\psi\|_{\H{s-5/2}} \le C(\|(\eta,\nabla_x \psi)\|_{\H{s+1/2}\times\H{s-1}}) (\|\nabla_x\psi\|_{\H{s}} + \|\eta\|_{\H{s+1/2}}).
\end{equation*}
As for an exact formula, it suffices to set
\begin{equation*}
\begin{split}
R_\omega^1(\eta,\psi) v^1
&  = R_S(\eta,\psi) v^1 + (\T{V}\cdot \nabla_x - \nabla_x \cdot \T{V}) v^1\\
R_\omega^2(\eta,\psi) v^2
&  = \T{A(\eta,\psi)} v^2 + (\nabla_x\T{V}\T{B} - \T{B}\T{V}\cdot\nabla_x) v^2 + R_H(\eta)v^2,
\end{split}
\end{equation*}
where $ R_H(\eta) $ is defined in Proposition~\ref{prop:Paralinearization-Surface-Tension}, and by expanding $ \pt B - \chi_T^{} B(\eta) (\phiw \Re F) $ thoroughly using the identities above,
\begin{align*}
A(\eta,\psi) & = -g\eta + H(\eta) + \frac{1}{2}\frac{(\nabla_x \eta \cdot \nabla_x  \psi + G(\eta)\psi)^2}{1+|\nabla_x \eta|^2} - \frac{1}{2}|\nabla_x \psi|^2 \\
& \qquad  - \frac{2\nabla_x\eta \cdot \nabla_x G(\eta) \psi}{(1+|\nabla_x \eta|^2)^2} (\nabla_x\eta \cdot \nabla_x\psi + G(\eta)\psi) \\
& \qquad  \qquad \frac{1}{1+|\nabla_x\eta|^2} \big( \nabla_x G(\eta) \psi \cdot \nabla_x \psi - G(\eta) (B G(\eta)\psi) - \nabla_x \cdot (V G(\eta) \psi) \big).
\end{align*}
\end{proof}

\subsection{Symmetrization}

Now by Proposition~\ref{prop:AM-Paradiff-of-DN} and Lemma~\ref{lem:equation-good-unknown}, for $ s > 3 + d/2 $, the system of water waves can be reformulated as
\begin{align*}
& \left\{
\begin{aligned}
& \pt \eta - \T{\lambda} \omega + \nabla_x \cdot \T{V} \eta - M_{\b}\psi - R_{G}(\eta)\psi  = 0,\\
& \pt \omega + \T{\ell}\eta + \nabla_x \cdot \T{V} \omega + g\eta + R^1_\omega(\psi,\eta)\psi + R^2_\omega(\psi,\eta)\eta 
\end{aligned}
\right.
\\
& \qquad \qquad \qquad \qquad \qquad \qquad \qquad \qquad 
= \chi_T^{} \big( \phiw \Re F - \T{B(\eta) (\phiw \Re F)} \eta \big).
\end{align*}
Following~\cite{ABZ-Surface-Tension}, let symbols which depend solely on $ \eta $,
\begin{equation*}
q  = q^{(0)},
\quad
p  = p^{(1/2)} + p^{(-1/2)}, 
\quad 
\gamma  = \gamma^{(3/2)} + \gamma^{(1/2)},
\end{equation*}
be defined by
\begin{align*}
q^{(0)} & = (\SurfArea{\eta})^{1/4}, \\
\gamma^{(3/2)} & = \sqrt{\ell^{(2)} \lambda^{(1)}}, \\
\gamma^{(1/2)} & = \sqrt{\frac{\ell^{(2)}}{\lambda^{(1)}}} \frac{\Re \lambda^{(0)}}{2} - \demi \pxi \cdot D_x \sqrt{\ell^{(2)}\lambda^{(1)}}, \\
p^{(1/2)} & = (\SurfArea{\eta})^{-1/2} \sqrt{\lambda^{(1)}}, \\
p^{(-1/2)} & = \frac{1}{\gamma^{(3/2)}}( q^{(0)} \ell^{(1)} - \gamma^{(1/2)} p^{(1/2)} + i\pxi \gamma^{(3/2)} \cdot \px p^{(1/2)}).
\end{align*}
Then $ \T{p}\eta $ and $ \T{q}\omega $ satisfies the equation
\begin{align}
\label{eq:equation-pre-symmetrization}
& \left\{
\begin{aligned}
& \pt (\T{p}\eta) - \T{\gamma} \T{q} \omega + \nabla_x \cdot \T{V} \T{p} \eta - \T{p}M_{\b} \psi + R_1(\eta) \psi + R_2(\eta)\eta = 0, \\
& \pt (\T{q} \omega) + \T{\gamma} \T{p} \eta + \nabla_x \cdot \T{V} \T{q} \omega + g \T{q} \eta + R_3(\eta,\psi)\psi + R_4(\eta,\psi) \eta 
\end{aligned}
\right.
\\ \nonumber
& \qquad \qquad \qquad \qquad \qquad \qquad \qquad \qquad  
= \chi_T^{} \big( \T{q} \phiw \Re F - \T{q} \T{B(\eta) \phiw \Re F} \eta \big),
\end{align}
where the remainders are defined by
\begin{align*}
R_1(\eta) v^1
& = - (\T{p}\T{\lambda} - \T{\gamma}\T{q}) \omega(\eta) v^1 - \T{p} R_G(\eta) v^1  - [\nabla_x \cdot \T{V(\eta)v^1},\T{p}] \eta,  \\
R_2(\eta) v^2
& = - \T{\pt p} v^2,  \\
R_3(\eta,\psi) v^3
& = - \T{\pt q}\omega(\eta) v^3 - [\nabla_x \cdot \T{V},\T{q}] \omega(\eta) v^3 + \T{q} R^1_\omega(\eta,\psi) v^3, \\
R_4(\eta,\psi) v^4
& =  - (\T{\gamma}\T{p} - \T{q}\T{\ell})v^4 + \T{q} R^2_\omega(\eta,\psi) v^4,
\end{align*}
and satisfy the estimates
\begin{align*}
\|R_1(\eta)\|_{\L(\H{s},\H{s})} 
& \le C(\|\eta\|_{\H{s+1/2}}) \|\eta\|_{\H{s+1/2}}^\vartheta, \quad \vartheta > 0, \\
\|R_2(\eta)\|_{\L(\H{s+1/2},\H{s})} 
& \le C(\|\eta\|_{\H{s+1/2}}) \|\eta\|_{\H{s+1/2}}, \\
\|R_3(\eta,\psi)\|_{\L(\H{s},\H{s})} 
& \le C(\|(\eta,\nabla_x \psi)\|_{\H{s+1/2}\times\H{s-1}}) (\|\nabla_x\psi\|_{\H{s}} + \|\eta\|_{\H{s+1/2}}), \\
\|R_4(\eta,\psi)\|_{\L(\H{s+1/2},\H{s})} 
& \le C(\|(\eta,\nabla_x \psi)\|_{\H{s+1/2}\times\H{s-1}}) (\|\nabla_x\psi\|_{\H{s}} + \|\eta\|_{\H{s+1/2}}),
\end{align*}
We introduce the new variable
\begin{equation}
\label{eq:identity-u-eta-psi}
u = \T{q} \omega - i \T{p} \eta.
\end{equation}
Then $ u \in \Hdot{s}(\Td) $, and
\begin{equation*}
\|u\|_{\H{s}} \le C(\|\eta\|_{\H{s+1/2}})(\|\nabla_x \psi\|_{\H{s-1}} + \|\eta\|_{\H{s+1/2}}).
\end{equation*}

\begin{proposition}
\label{prop:reversibility-u-eta-psi}
For $ s > 3 + d/2 $ and $ \varepsilon_0 $ sufficiently small, if $ u \in \Hdot{s}(\Td) $ such that $ \|u\|_{\H{s}} < \varepsilon_0,$ then there exists a unique $ (\psi,\eta) \in \Hdot{s}(\Td) \times \Hdot{s+1/2}(\Td) $ satisfying simultaneously the estimate
\begin{equation*} 
\|(\psi,\eta)\|_{\H{s}\times\H{s+1/2}} \le 2 \|u\|_{\H{s}},
\end{equation*}
and the identity~\eqref{eq:identity-u-eta-psi}. 
Moreover, if $ u \in \Cls^{1,s}(T,\varepsilon_0) $, then $ \eta \in \Cls^{1,s+1/2}(T,2\varepsilon_0) $, with
\begin{equation*}
\|\pt \eta\|_{\Linf([0,T],\H{s-1})} \le 2 \|\pt u\|_{\Linf([0,T],\H{s-3/2})}.
\end{equation*}
\end{proposition}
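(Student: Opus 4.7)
The plan is to view the identity $u = \T{q(\eta)}\omega - i\T{p(\eta)}\eta$, with $\omega = \psi - \T{B(\eta)\psi}\eta$, as a nonlinear equation for $(\eta,\psi)$ with source $u$, and invert it by a Banach fixed point around the equilibrium $(\eta,\psi) = (0,0)$. A direct inspection of the symbol formulas shows that $q(0) \equiv 1$ and $p(0) \equiv |\xi|^{1/2}$ (the lower-order piece $p^{(-1/2)}$ vanishes identically at $\eta = 0$, since $\ell^{(1)}$, $\lambda^{(0)}$, $\gamma^{(1/2)}$ all do), while $\T{B(\eta)\psi}\eta$ vanishes together with both partial differentials at $(0,0)$. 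The differential at the origin is therefore the $\R$-linear map
\[
\mathcal{L}_0 : (\eta,\psi) \longmapsto \psi - i|D_x|^{1/2}\eta,
\]
which, after splitting into real and imaginary parts, is an isomorphism from $\Hdot{s+1/2}\times\Hdot{s}$ onto $\Hdot{s}$.

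Rewriting the equation as $(\eta,\psi) = \mathcal{L}_0^{-1}\bigl(u - \mathcal{N}(\eta,\psi)\bigr)$ with super-linear remainder
\[
\mathcal{N}(\eta,\psi) = \T{q(\eta)-1}\psi - \T{q(\eta)}\T{B(\eta)\psi}\eta - i\T{p(\eta)-|\xi|^{1/2}}\eta,
\]
the symbolic identities $q(\eta)-1 = O(|\nabla_x\eta|^2)$ and $p(\eta)-|\xi|^{1/2} = O(|\nabla_x\eta|^2)$, combined with the genuine bilinearity of $\T{B(\eta)\psi}\eta$ in $(\eta,\psi)$, yield via the composition and product rules of paradifferential calculus a tame estimate
\[
\|\mathcal{N}(\eta,\psi)\|_{\H{s}} \le C\bigl(\|(\eta,\psi)\|_{\H{s+1/2}\times\H{s}}\bigr)\|\eta\|_{\H{s+1/2}}\bigl(\|\eta\|_{\H{s+1/2}} + \|\psi\|_{\H{s}}\bigr),
\]
together with the analogous Lipschitz estimate on differences. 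For $\varepsilon_0$ small, the resulting map is a strict contraction on the closed ball of radius $2\|u\|_{\H{s}}$ in $\Hdot{s+1/2}\times\Hdot{s}$; its unique fixed point is the desired $(\eta,\psi)$, automatically satisfying $\|(\psi,\eta)\|_{\H{s}\times\H{s+1/2}} \le 2\|u\|_{\H{s}}$.

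For the time-dependent statement, applying the inversion pointwise in $t$ gives $(\eta,\psi) \in C([0,T], \Hdot{s+1/2}\times\Hdot{s})$. To control $\partial_t \eta$, I would re-run the contraction in the lower-regularity spaces $\Hdot{s-1} \times \Hdot{s-3/2}$ (the loss of $3/2$ matching the order of~$P$); under the $\H{s+1/2}\times\H{s}$ smallness already obtained for $(\eta,\psi)$, the same tame estimates show that the inverse map $u \mapsto (\eta,\psi)$ is Lipschitz from $\Hdot{s-3/2}$ into $\Hdot{s-1}\times\Hdot{s-3/2}$ on the small ball, with Lipschitz constant as close to $\|\mathcal{L}_0^{-1}\|$ as we wish. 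Applying this Lipschitz bound to the finite differences $(u(t+h)-u(t))/h$ and letting $h\to 0$ yields $\|\partial_t \eta\|_{\Linf([0,T],\H{s-1})} \le 2\|\partial_t u\|_{\Linf([0,T],\H{s-3/2})}$. The main technical obstacle is the bookkeeping of the paradifferential estimates for~$\mathcal{N}$: one has to control $q(\eta)$, $p(\eta)$, $B(\eta)\psi$ as tame functions of $\|\eta\|_{\H{s+1/2}}$ in suitable $M^m_\rho$ symbol classes, and crucially exploit the super-linearity in $\eta$ to close the contraction on a ball whose radius is proportional to $\|u\|_{\H{s}}$, rather than of fixed size.
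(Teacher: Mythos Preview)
Your proposal is correct and follows the same overall strategy as the paper---a contraction/fixed-point argument around the linearization at $(\eta,\psi)=(0,0)$---but the execution differs in two respects worth noting.

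First, the paper decouples the two unknowns: since $q$ is real and even in $\xi$, and $\Re p$ is even while $\Im p$ is odd, the operators $\T{p}$ and $\T{q}$ commute with $\Re$. Taking imaginary parts of \eqref{eq:identity-u-eta-psi} gives $\T{p(\eta)}\eta = -\Im u$, an equation for $\eta$ \emph{alone}, which the paper solves by iterating $\eta_{n+1} = -|D_x|^{-1/2}\Im u + |D_x|^{-1/2}\T{|\xi|^{1/2}-p(\eta_n)}\eta_n$. Once $\eta$ is found, $\psi$ is read off by an explicit formula. Your joint fixed point for $(\eta,\psi)$ works equally well; the paper's decoupling is marginally cleaner and yields the two bounds $\|\eta\|_{\H{s+1/2}}\le 2\|\Im u\|_{\H{s}}$ and $\|\psi\|_{\H{s}}\le 2\|u\|_{\H{s}}$ separately.

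Second, for $\partial_t\eta$ the paper does not use difference quotients. Instead it writes down the fixed-point equation that $\partial_t\eta$ must satisfy (obtained by formally differentiating $\T{p(\eta)}\eta=-\Im u$ in $t$), solves it by a genuine Banach contraction in $\Hdot{s-1}$ at each time, and then checks that $\eta(0)+\int_0^t\zeta$ coincides with $\eta$ by uniqueness. Your Lipschitz-on-difference-quotients argument is valid too, but you need a weak-$*$ compactness step to pass from uniform boundedness of $(\eta(t+h)-\eta(t))/h$ in $\H{s-1}$ to actual membership of $\eta$ in $W^{1,\infty}([0,T],\Hdot{s-1})$.

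One minor correction: your claim that $p(\eta)-|\xi|^{1/2}=O(|\nabla_x\eta|^2)$ is not quite right for the lower-order piece. The symbol $p^{(-1/2)}$ contains $\gamma^{(1/2)}p^{(1/2)}$, and $\gamma^{(1/2)}$ carries a contribution from $\Re\lambda^{(0)}$ which is \emph{linear} in $\nabla_x^2\eta$. So $p^{(-1/2)}$ vanishes only to first order in $\eta$. This does not affect your argument, since $\T{p^{(-1/2)}}\eta$ is still quadratic in $\eta$ (a symbol linear in $\eta$ applied to $\eta$), and that is all you need for the contraction estimate on $\mathcal{N}$.
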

\begin{proof}
Observe that~$ q $ is a real symbol and an even function in~$ \xi $, and the real part of~$ p $ is even in~$ \xi $, while its imaginary part is odd in~$ \xi $, so by Lemma~\ref{lem:real-part-preserving-property}, $ \T{p} $ and $ \T{q} $ preserves the real part of a function. Therefore, by the definition of $ u $, we have
\begin{equation}
\label{eq:inversion-formula-u-eta-psi}
\T{p(\eta)} \eta = - \Im\, u,
\end{equation}
where we write $ p = p(\eta) $ to emphasize the fact that~$ p $ depends on~$ \eta $. Equivalently,
\begin{equation*}
\eta = \Psi(\eta),
\end{equation*}
where $ \Psi : \Hdot{\mu+1/2}(\Td) \to \Hdot{\mu+1/2}(\Td) $ for any $ s-3/2 \le \mu \le s $, is defined by
\begin{equation*}
\Psi(\eta) \bydef - |D_x|^{-1/2} \Im\, u + |D_x|^{-1/2}\T{|\xi|^{1/2}-p(\eta)} \eta.
\end{equation*}
A direct application of the Banach fixed point theorem is not possible because $ \Psi $ is a contraction only when $ \|\eta\|_{\Holder{2}} $ is sufficiently small. We need to apply the same iterative scheme while proving that each element of the sequence of iteration is sufficiently small. Let $ \eta_0 = 0 $ and $ \eta_{n+1} = \Psi(\eta_n) $ for $ n \in \N $. Denote $ p_n = p(\eta_n) $ for simplicity. We claim that for all $ n\in\N $, 
\begin{equation*}
\|\eta_n\|_{\H{s+1/2}} \le 2 \|\Im\,u\|_{\H{s}}.
\end{equation*}
We prove this by induction. Clearly it is true for $ n = 0 $. Suppose it is proven for $ n $, then,
\begin{align*}
\|\eta_{n+1}\|_{\H{s+1/2}} 
 = \|\Psi(\eta_n)\|_{\H{s+1/2}}  
& \le \|\Im\, u\|_{\H{s}} + C \M{1/2}{0}{d/2+1}{|\xi|^{1/2}-p_n}\|\eta_n\|_{\H{s+1/2}} \\
& \le \|\Im\, u\|_{\H{s}} + C(\|\eta_n\|_{\H{s+1/2}}) \|\eta_n\|_{\H{s+1/2}} \|\Im\, u\|_{\H{s}} \\
& \le 2 \|\Im\, u\|_{\H{s}},
\end{align*}
whenever $ \varepsilon_0 $ is sufficiently small.
In particular $ \|\eta_{n}\|_{\H{s+1/2}} \le 2\varepsilon_0$ for $ n \in \N $, and
\begin{align*}
\|\eta_{n+1}-\eta_n\|_{\H{\mu+1/2}}
& \le \|\T{|\xi|^{1/2}-p_n} (\eta_n-\eta_{n-1})\|_{\H{\mu}} + \|\T{p_n-p_{n-1}} \eta_{n-1}\|_{\H{\mu}} \\
& \le C(\|\eta_n\|_{\H{s+1/2}}) \|\eta_n\|_{\H{s+1/2}} \|\eta_n-\eta_{n-1}\|_{\H{\mu+1/2}}  \\
& \qquad + C(\|(\eta_{n-1},\eta_n)\|_{\H{s+1/2}\times\H{s+1/2}}) \|\eta_n - \eta_{n-1}\|_{\H{\mu+1/2}} \|\eta_{n-1}\|_{\H{\mu+1/2}} \\
& \le C \varepsilon_0 \|\eta_n-\eta_{n-1}\|_{\H{\mu+1/2}}.
\end{align*}
Hence when $ \varepsilon_0 $ is so small that $ C \varepsilon_0 < 1 $, $ \{\eta_n\}_n $ is a Cauchy sequence in $ \Hdot{s+1/2}(\Td) $, and converges to some $ \eta \in \Hdot{s+1/2}(\Td) $ such that
\begin{equation*}
\eta = \Psi(\eta), \quad \|\eta\|_{\H{s+1/2}} \le 2 \|\Im\, u\|_{\H{s}}.
\end{equation*}
The uniqueness of such $ \eta \in \Hdot{\mu+1/2}(\Td) $ for any $ s-3/2 \le \mu \le s $ comes from the same contraction estimate. Therefore~$ \eta $ is independent of the choice of the space $ \Hdot{\mu+1/2}(\Td) $ whenever $ s-3/2 \le \mu \le s $.

Write $ \T{q} = \pi(D_x) + \T{q-1} $, $ \T{p} = (\pi(D_x) + \T{p/|\xi|^{1/2}-1})|D_x|^{1/2} $, where
\begin{align*}
\|\T{q-1}\|_{\L(\Hdot{s},\Hdot{s})}
& \le C \|q-1\|_{\Linf}
\le C(\|\eta\|_{\H{s+1/2}}) \|\eta\|_{\H{s+1/2}}
\le C\varepsilon_0. \\
\|\T{p/|\xi|^{1/2}-1}\|_{\L(\Hdot{s},\Hdot{s})}
& \le C \M{0}{0}{d/2+1}{p/|\xi|^{1/2}-1}
\le C(\|\eta\|_{\H{s+1/2}}) \|\eta\|_{\H{s+1/2}}
\le C \varepsilon_0.
\end{align*} 
Observe that $ \pi(D_x) = \Id_{\Hdot{\sigma}} : \Hdot{\sigma} \to \Hdot{\sigma} $ for $ \sigma \in \R $ (see Proposition~\ref{prop:T_1=pi}), hence by means of Neumann series, $ \T{p} : \Hdot{s+1/2}(\Td) \to \Hdot{s}(\Td) $ and $ \T{q} : \Hdot{s}(\Td) \to \Hdot{s}(\Td) $ are invertible with their inverses being
\begin{equation*}
\T{q}^{-1} = (\Id + \T{q-1})^{-1},
\quad
\T{p}^{-1} = |D_x|^{-1/2} (\Id + \T{p/|\xi|^{1/2}-1})^{-1}.
\end{equation*}
and satisfying the estimates,
\begin{equation*}
\|\T{q}^{-1}\|_{\L(\Hdot{s},\Hdot{s})}  \le 1 + C\varepsilon_0, 
\quad
\|\T{p}^{-1}\|_{\L(\Hdot{s},\Hdot{s+1/2})} \le 1 + C\varepsilon_0.
\end{equation*}
Therefore there is a unique $ \psi \in \Hdot{s}(\Td) $ such that~\eqref{eq:identity-u-eta-psi} is satisfied, which is given by the formula
\begin{equation*}
\psi = (\T{q}^{-1} \Re - \T{B}\T{p}^{-1}\Im) u,
\end{equation*}
from which the estimate
\begin{equation*}
\|\psi\|_{\H{s}} \le (1 + C\varepsilon_0) \|u\|_{\H{s}} \le 2 \|u\|_{\H{s}},
\end{equation*}
for sufficiently small $ \varepsilon_0 $.

As for the estimate of time derivative, observe that the symbol $ p $ is a function of $ \nabla_x \eta $, $ \nabla_x^2 \eta $ and $ \xi $. More precisely, one may write 
\begin{equation*}
p(t,x,\xi) = f(\nabla_x \eta(t,x),\nabla_x^2 \eta(t,x),\xi)
\end{equation*}
with some $ f=f(v,M,\xi) \in \Cinf(\Rd\times\R^{d\times d}\times \Rd_*) $. Denote for simplicity $ \nabla = \nabla_{v,M} $, then
\begin{align*}
\pt p
& = \big\langle \nabla_v f(\nabla_x \eta,\nabla_x^2 \eta,\xi), \nabla_x \pt \eta \big\rangle + \big\langle \nabla_M f(\nabla_x \eta,\nabla_x^2 \eta,\xi), \nabla_x^2 \pt \eta \big\rangle \\
& = \big\langle \nabla f(\nabla_x \eta,\nabla_x^2 \eta,\xi), (\nabla_x \pt \eta,\nabla_x^2 \pt \eta) \big\rangle.
\end{align*}
Therefore a formal differentiation of the formula~\eqref{eq:inversion-formula-u-eta-psi} gives,
\begin{equation*}
\T{p} \pt \eta + \T{\langle \nabla f(\nabla_x \eta,\nabla_x^2 \eta,\xi), (\nabla_x \pt \eta,\nabla_x^2 \pt \eta) \rangle} \eta = - \Im\, \pt u,
\end{equation*}
which leads to the consideration of the map $ \tilde{\Psi}_t : \Hdot{s-1}(\Td) \to \Hdot{s-1}(\Td) $,
\begin{equation*}
\tilde{\Psi}_t(\zeta) = |D_x|^{-1/2} \T{|\xi|^{1/2}-p} \zeta - |D_x|^{-1/2} \T{\langle \nabla f(\nabla_x \eta,\nabla_x^2 \eta,\xi), (\nabla_x \zeta,\nabla_x^2 \zeta) \rangle}\eta - |D_x|^{-1/2} \Im\, \pt u,
\end{equation*}
defined for almost every time $ t \in [0,T] $, such that, whenever $ \pt\eta $ is defined,
\begin{equation*}
\pt \eta = \tilde{\Psi}_t(\pt \eta).
\end{equation*}
We show that $ \tilde{\Psi}_t $ is a contraction for~$ \varepsilon_0 $ sufficiently small, indeed,
\begin{align*}
\|\tilde{\Psi}_t(\zeta_1)-\tilde{\Psi}_t(\zeta_2)\|_{\H{s-1}}
& \le \| \T{|\xi|^{1/2}-p} (\zeta_1-\zeta_2) \|_{\H{s-3/2}} \\
& \qquad + \|\T{\langle \nabla f(\nabla_x \eta,\nabla_x^2 \eta,\xi), (\nabla_x (\zeta_1-\zeta_2),\nabla_x^2 (\zeta_1-\zeta_2) \rangle)}\eta \|_{\H{s-3/2}} \\
& \le C(\|\eta\|_{\H{s+1/2}})\|\eta\|_{\H{s+1/2}} \|\zeta_1-\zeta_2\|_{\H{s-1}} \\
& \le C \varepsilon_0 \|\zeta_1-\zeta_2\|_{\H{s-1}}.
\end{align*}
Therefore, by Banach fixed point theorem, for almost every $ t \in [0,T] $, there exists a unique $ \zeta(t) \in \Hdot{s-1}(\Td) $ such that
\begin{equation*}
\tilde{\Psi}_t(\zeta(t)) = \zeta(t), \quad \|\zeta(t)\|_{\H{s-1}} \le 2 \|\Im\,\pt u(t)\|_{\H{s-1}}.
\end{equation*}
We claim that $ \pt \eta = \zeta $. Indeed, define 
\begin{equation*}
 \tilde{\eta}(t) = \eta(0) + \int_0^t \zeta(s) \ds,
\end{equation*}
Then $ \tilde{\eta}(0) = \eta(0) $, $ \pt \tilde{\eta} = \zeta $, $ \tilde{\eta} \in \Holder{1}([0,T],\Hdot{s-1}(\Td)) $, and
\begin{equation*}
\|\tilde{\eta}\|_{C([0,T],\H{s-1})} \le (1+T) C\varepsilon_0.
\end{equation*}
To show that $ \eta \equiv \tilde{\eta} $, consider the function $ g \in \Holder{1}([0,T],\Hdot{s-3/2}(\Td)) $ defined by
\begin{equation*}
g(t) = \T{p(\tilde{\eta}(t))} \tilde{\eta}(t) + \Im\, u(t).
\end{equation*}
Then $ g(0) = 0 $ by the definition of $ \eta(0) $, while $ \pt g \equiv 0 $ by the definition of $ \zeta(t) $. Therefore $ g \equiv 0 $. That implies, by the uniqueness of the $ \eta $ in $ \Hdot{s-3/2}(\Td) $, for $ \varepsilon_0 $ sufficiently small, and for each time $ t \in [0,T] $, $ \eta(t) = \tilde{\eta}(t) $.
Consequently, $ \eta = \tilde{\eta} \in \Holder{1}([0,T],\Hdot{s-1}(\Td)) $. 
The continuity $ \eta \in C([0,T],\Hdot{s+1/2}(\Td)) $, is a direct consequence of Lemma~\ref{lem:stability-u-eta} to be proven later.
\end{proof}

\begin{remark}
All the symbols that depends on~$ \eta $, say $ a = a(\eta) \in \Symbol{m}{\rho} $, that appear in this article are functions of $ \nabla_x\eta $ and $ \nabla_x^2 \eta $, that is, following the proof of the previous lemma,
\begin{equation*}
a(t,x,\xi) = f(\nabla_x\eta(t,x),\nabla_x^2\eta(t,x),\xi),
\end{equation*}
for some $ f = f(v,M,\xi) \in \Cinf(\Rd\times\R^{d\times d}\times \Rd_*) $.

As a consequence of this lemma, when $ u \in \Cls^{1,s}(T,\varepsilon_0) $ is defined by~\eqref{eq:def-u} with~$ \varepsilon_0 $ being sufficiently small, we can express $ \eta = \eta(u) \in \Hdot{s+1/2}(\Td) $, and thus consider $ a = a(u) $ as a symbol depending on~$ u $,
\begin{equation*}
a(u) = f(\nabla_x \eta(u), \nabla_x^2 \eta(u),\xi).
\end{equation*}
When considering time derivatives of~$ a $, we use the formula,
\begin{equation*}
\pt a( u) = \big\langle (\nabla_{v,M} f)(\nabla_x \eta(u),\nabla_x^2 \eta(u),\xi), (\nabla_x \pt \eta(u), \nabla_x^2 \pt \eta(u)) \big\rangle.
\end{equation*}

Consequently, we write $ p = p(u) $, $ q = q(u) $. Expressing also $ \dot{\psi} = \dot{\psi}(u) \in \Hdot{s}(\Td) $ where $ \dot{\psi} = \pi(D_x)\psi $, so that $ B=B(u) $, because~$ B $ does not depend on the zero frequency of~$ \psi $. Then we have explicitly the following formula,
\begin{equation*}
\pi(D_x) \eta = - \T{p(u)}^{-1} \Im u,\quad
\pi(D_x) \psi = (\T{q(u)}^{-1} \Re - \T{B(u)}\T{p(u)}^{-1}\Im) u,
\end{equation*}
which will be used in Proposition~\ref{prop:paralinearized-pb-of-control} to derive an equation for~$ u $.

For $ \R $-linear operators of the form $ R_i(\eta,\psi) $, $ (i=1,2) $ that depend on nonzero frequencies of $ (\eta,\psi) $, such that $ R_1 : \H{s}(\Td) \to \H{s}(\Td),\ R_2 : \H{s+1/2}(\Td) \to \H{s}(\Td) $, and $ R_i = R_i \pi(D_x) $, we can correspond to them $ \R $-linear operators of the form $ \tilde{R}_i(u) : \H{s}(\Td) \to \H{s}(\Td) $, such that
\begin{equation*}
R_1(\eta,\psi) \psi = \tilde{R}_1(u)u, 
\quad
R_2(\eta,\psi) \eta = \tilde{R}_2(u)u.
\end{equation*}
Indeed, we simply let
\begin{equation*}
\begin{split}
\tilde{R}_1(u) & = R_1(\eta(u),\dot{\psi}(u)) (\T{q(u)}^{-1} \Re - \T{B(u)}\T{p(u)}^{-1}\Im), \\
\tilde{R}_2(u) & = - R_2(\eta(u),\dot{\psi}(u)) \T{p(u)}^{-1} \Im.
\end{split}
\end{equation*}
For simplicity, we write henceforth by an abuse of notation $ R_i(u) = \tilde{R}_i(u) $, for this correspondence.
\end{remark}

\begin{proposition}
\label{prop:paralinearized-pb-of-control}
Let $ s > 3 + d/2 $, and $ u \in \Cls^{0,s}(T,\varepsilon_0) $ for $ T > 0 $ and $ \varepsilon_0 $ sufficiently small, then $ u $ satisfies the equation
\begin{equation}
\label{eq:equation-control-of-u}
\pt u + P(u) u + R(u) u = \B(u) F + \beta(u) F,
\end{equation}
with $ P(u) $ being a para\-differential operator defined by,
\begin{equation}
\label{eq:definition-P}
P(u)  = i \T{\gamma(u)} + \nabla_x \cdot \T{V(u)}  - g \T{r(u)^{-1}} \Im + i\T{r(u)}M_{\b}\Re, 
\end{equation}
where the symbol $ r(u) = p^{(1/2)}(u)/q(u) $ is homogeneous in $ \xi $ of order $ 1/2 $.
The operators $ R(u) $, $ \B(u) $ and $ \beta(u) $ depend on $ u $, the latter two being explicitly defined as follows,
\begin{equation*}
\B(u) F  = \chi_T^{} \T{q(u)} \phiw \Re F, \quad 
\beta(u) F  = \chi_T^{} \T{q(u)} \T{B(u)(\phiw\Re F)} \T{p(u)}^{-1} \Im u. 
\end{equation*}
Furthermore,
\begin{equation}
\label{eq:estimate-B-beta-R}
\begin{split}
\B(u) & \in C([0,T],\L(\Hdot{\sigma},\Hdot{\sigma})),\quad \forall \sigma \ge 0, \\
\beta(u) & \in \Linf([0,T],\L(\Hdot{s},\Hdot{s+1/2})) \cap C([0,T],\L(\Hdot{s},\Hdot{s-1})), \\
R(u) & \in \Linf([0,T],\L(\Hdot{s},\Hdot{s})) \cap C([0,T],\L(\Hdot{s},\Hdot{s-3/2})),
\end{split}
\end{equation}
and they satisfy the following estimates
\begin{equation}
\label{eq:estimate-various-operators}
\begin{split}
\|\B(u)\|_{C([0,T],\L(\Hdot{\sigma},\Hdot{\sigma}))}  & \lesssim 1,\\
\|\beta(u)\|_{\Linf([0,T],\L(\Hdot{s},\Hdot{s+1/2}))} & \lesssim \varepsilon_0, \\
\|R(u)\|_{\Linf([0,T],\L(\Hdot{s},\Hdot{s}))} & \lesssim \varepsilon_0^\vartheta,
\end{split}
\end{equation}
for some $ \vartheta > 0 $.
\end{proposition}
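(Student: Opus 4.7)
The plan is to derive equation~\eqref{eq:equation-control-of-u} directly from the symmetrized system~\eqref{eq:equation-pre-symmetrization} by taking the linear combination that defines~$u$, then re-expressing every term in terms of~$u$ alone by means of the inversion formulas established in Proposition~\ref{prop:reversibility-u-eta-psi}. Concretely, I would first combine the two equations of~\eqref{eq:equation-pre-symmetrization} into
\[
\pt u = -(T_{\gamma}T_p\eta + i T_{\gamma}T_q\omega) - (\nabla_x\cdot T_V T_q\omega - i\nabla_x\cdot T_V T_p\eta) - g\T{q}\eta - i\T{p}M_{\b}\psi + \text{forcing} + \text{remainders}.
\]
The forcing terms reproduce $\B(u)F$ immediately and, after substituting $\eta=-\T{p}^{-1}\Im u$, reproduce $\beta(u)F$.

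Next I would identify each principal block on the right-hand side with a term of $-P(u)u$. Using the real-part-preserving property invoked in the proof of Proposition~\ref{prop:reversibility-u-eta-psi}, one has $\Re u = \T{q}\omega$ and $\Im u = -\T{p}\eta$, so that $iT_\gamma u = iT_\gamma T_q\omega + T_\gamma T_p\eta$ and $\nabla_x\cdot T_V u = \nabla_x\cdot T_V T_q\omega - i\nabla_x\cdot T_V T_p\eta$; these give two of the four pieces of $P(u)$ exactly. For the gravity term, writing $\eta=-T_p^{-1}\Im u$ and using the symbolic calculus (Appendix~\ref{sec:paradifferential-calculus}) yields $\T{q}\eta = -\T{q}\T{p}^{-1}\Im u = -\T{r(u)^{-1}}\Im u + \rho_1 u$, where the principal symbol calculation uses $r(u) = p^{(1/2)}/q$ and $\rho_1$ is of order $-1$ in~$\xi$. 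For the tension/depth term, substituting $\psi=\T{q}^{-1}\Re u-\T{B}\T{p}^{-1}\Im u$ and exploiting the smoothing character of $M_{\b}$ (Remark~\ref{rmk:DN-remainder}) gives $i\T{p}M_{\b}\psi = i\T{r(u)}M_{\b}\Re u + \rho_2 u$ with $\rho_2$ a smoothing operator.

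The third step is the bookkeeping of the remainder. All the leftover pieces come from: (a) the operator remainders $R_1,R_2,R_3,R_4$ already present in~\eqref{eq:equation-pre-symmetrization}; (b) the commutator/composition errors $\rho_1,\rho_2$ produced in the previous step and the analogous error in rewriting the forcing via $\eta=-T_p^{-1}\Im u$. All of these act on $\eta$ or $\psi$, and are converted into operators acting on~$u$ via the correspondence $R(\eta,\psi)\mapsto\tilde R(u)$ recorded in the Remark following Proposition~\ref{prop:reversibility-u-eta-psi}. To obtain the super-linear bound $\|R(u)\|_{\L(\Hdot{s},\Hdot{s})}\lesssim\varepsilon_0^\vartheta$, I would invoke the estimates stated just below~\eqref{eq:equation-pre-symmetrization} and note that every commutator error has a symbol that differs from its value at $\eta\equiv 0$ by a quantity controlled by $\|\nabla_x\eta\|$, so Theorem~\ref{thm:Operator-Norm-Estimate-Paradiff} and~\eqref{eq:paradiff-estimate-composition-by-function} produce a factor $\|\eta\|_{\H{s+1/2}}\lesssim\|u\|_{\H{s}}\lesssim\varepsilon_0$. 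The bounds on $\B(u)$ and $\beta(u)$ in~\eqref{eq:estimate-various-operators} are direct consequences of the paradifferential calculus: $\B(u)$ is just a paraproduct composed with multiplication by $\phiw\in\Cinf$; for $\beta(u)$ the operator $\T{p(u)}^{-1}$ loses $1/2$ derivative while $\T{B(u)(\phiw\Re F)}$ is of order zero with symbol norm controlled by $\|\phiw\Re F\|_{\H{s}}$, and the resulting operator is applied to~$\Im u$, producing the $\varepsilon_0$ gain. The time-continuity statements in~\eqref{eq:estimate-B-beta-R} follow by a standard density argument combined with Proposition~\ref{prop:reversibility-u-eta-psi}, which transfers the regularity $u\in\Cls^{0,s}$ to $\eta\in C([0,T],\Hdot{s+1/2})$.

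The main obstacle will be step three, the careful accounting of every error term so that it is genuinely \emph{super-linear} in~$\varepsilon_0$ rather than merely bounded. This is the precise reason the paralinearization has to be redone here, and the key technical point is that each composition remainder in~(b) carries a factor of $\|\nabla_x\eta\|$ coming from the vanishing of $r(u)$, $V(u)$, and $p(u)-|\xi|^{1/2}q(u)/|\xi|^{1/2}\cdots$ at $\eta=0$; once this structural observation is combined with the Sobolev product and symbolic calculus estimates from Appendix~\ref{sec:paradifferential-calculus}, the required bound $\lesssim\varepsilon_0^\vartheta$ drops out. Continuity of $R(u)$ and $\beta(u)$ in the weaker spaces is then obtained by using the equation to trade a time derivative for~$3/2$ spatial derivatives and approximating by smooth data.
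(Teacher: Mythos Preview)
Your proposal is correct and follows essentially the same route as the paper. The paper's proof is terser: it first records the equation with a preliminary $\tilde P(u)=i\T{\gamma}+\nabla_x\cdot\T{V}-g\T{q}\T{p}^{-1}\Im+i\T{p}M_{\b}(\T{q}^{-1}\Re-\T{B}\T{p}^{-1}\Im)$ and $\tilde R(u)=-iR_1-iR_2+R_3+R_4$, then defines $R(u)$ explicitly as $\tilde R(u)$ plus the three correction terms $g(\T{r^{-1}}-\T{q}\T{p}^{-1})\Im$, $i(\T{p}M_{\b}\T{q}^{-1}-\T{r}M_{\b})\Re$, $-i\T{p}M_{\b}\T{B}\T{p}^{-1}\Im$, and appeals to symbolic calculus for the smallness. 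The one genuine difference is the continuity argument: instead of a density argument, the paper defers to the contraction Lemmas~\ref{lem:stability-B-B^*}, \ref{lem:stability-R}, \ref{lem:stability-beta} (proved later in Section~\ref{sec:contraction-estimate}), which give Lipschitz dependence of $\B$, $R$, $\beta$ on $\udlu$ in the weaker target norms; combined with $u\in C([0,T],\Hdot{s})$ this yields the time-continuity in~\eqref{eq:estimate-B-beta-R} directly and more cleanly than your proposed approximation argument.
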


\begin{proof}
By~\eqref{eq:equation-pre-symmetrization} and the definition of $ u $,
\begin{equation*}
\pt u + \tilde{P}(u) u + \tilde{R}(u) u = \B(u) F + \beta(u) F,
\end{equation*}
with $ \tilde{R}(u) =  - i R_1(u) - i R_2(u) + R_3(u) + R_4(u)$ and 
\begin{equation*}
\tilde{P}(u) = i \T{\gamma} + \nabla_x \cdot \T{V} - g \T{q}\T{p^{-1}} \Im + i\T{p}M_{\b} (\T{q^{-1}}\Re - \T{B}\T{p^{-1}}\Im).
\end{equation*}
Therefore it suffices to put
\begin{equation*}
R(u) = \tilde{R}(u) + g(\T{r^{-1}}-\T{q}\T{p}^{-1}) \Im 
+ i (\T{p}M_{\b}\T{q}^{-1} - \T{r^{-1}}M_{\b}) \Re
- i \T{p}M_{\b}\T{B}\T{p}^{-1} \Im.
\end{equation*}
Now the estimate for $ R(u) $ comes from that of $ \tilde{R}(u) $ and a symbolic calculus. The estimate for $ \B(u) $ comes directly from Theorem~\ref{thm:Operator-Norm-Estimate-Paradiff}. The estimate of~$ \beta(u) $ uses Lemma~\ref{lemma:estimate-B,V},
\begin{equation*}
\|\beta(u) F\|_{\H{s+1/2}}
\lesssim \|B(u) F\|_{\Linf} \|u\|_{\H{s}}
\lesssim C(\|u\|_{\H{s}}) \|F\|_{\H{s}} \|u\|_{\H{s}}.
\end{equation*}
The continuity of the these three operators is a consequence of Lemma~\ref{lem:stability-B-B^*}, Lemma~\ref{lem:stability-beta} and Lemma~\ref{lem:stability-R} to be proven later.
\end{proof}

\section{$ \Ltwo $ Linear Control}
\label{sec:Ltwo-linear-control}

Let~$ s $ be sufficiently large, $ T > 0 $, $ \varepsilon_0 > 0 $, fix $ \underline{u} \in \Cls^{1,s}(T,\varepsilon_0) $, and denote for simplicity
\begin{equation*}
P = P(\underline{u}), \quad \B = \B(\underline{u}).
\end{equation*}
The purpose of this section is to prove the $ \Ltwo $-null controllability, when~$ \varepsilon_0 $ is sufficiently small, of the following equation without the perturbation terms,
\begin{equation}
\label{eq:equation-paradiff-linear-non-perturbed}
(\pt + P) u = \B F.
\end{equation}

\begin{proposition}
\label{prop:Ltwo-linear-control-HUM}
Suppose that~$ \omega $ satisfies the geometric control condition, $ s $ is sufficiently large, $ T > 0 $, $ \varepsilon_0 > 0 $, $ \underline{u} \in \Cls^{1,s}(T,\varepsilon_0) $, then when~$ \varepsilon_0 $ is sufficiently small, the $ \Ltwo $-null controllability of~\eqref{eq:equation-paradiff-linear-non-perturbed} holds. More precisely, there exists a linear operator
\begin{equation*}
\Theta = \Theta(\underline{u}) : \Ldottwo(\Td) \to C([0,T],\Ldottwo(\Td)),
\end{equation*}
satisfying the estimate
\begin{equation*}
\|\Theta\|_{\L(\Ldottwo, C([0,T],\Ldottwo))} \lesssim 1,
\end{equation*}
such that, for $ u_0 \in \Ldottwo(\Td) $ and
\begin{equation*}
F = \Theta u_0 \in C([0,T],\Ldottwo(\Td)),
\end{equation*}
the unique solution $ u \in C([0,T],\Ldottwo(\Td)) $ of~\eqref{eq:equation-paradiff-linear-non-perturbed} with initial data $ u(0) = u_0 $, vanishes at time~$ T $, that is $ u(T) = 0 $.
\end{proposition}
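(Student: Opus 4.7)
The plan is to implement the Hilbert uniqueness method outlined in Section~\ref{sec:Strategy-of-Proof}. First I would recast the problem in the real Hilbert space $(\Ldottwo(\Td), \Re(\cdot,\cdot)_{\Ltwo})$, since $P = P(\udlu)$ is only $\R$-linear. Let $\Sol(t,\tau)$ denote the flow of the dual (backward) equation $(\pt - P^*)v = 0$ on $\Ldottwo(\Td)$; its well-posedness for $\udlu\in\Cls^{1,s}(T,\varepsilon_0)$ with $\varepsilon_0$ small comes from the energy estimates sketched in Appendix~\ref{app:linear-equations}, using the skew-adjointness of the principal part $i\T{\gamma}+\nabla_x\cdot\T{V}$ modulo lower order terms. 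Setting $\Range : C([0,T],\Ldottwo)\to\Ldottwo$, $F\mapsto \int_0^T \Sol(T,t)\B F(t)\,dt$, and $\K = -\Range\,\B\B^*\Sol(\cdot,0)$, the formal HUM ansatz produces the candidate $\Theta = -\B^*\Sol(\cdot,0)\K^{-1}$. The whole construction therefore reduces to the invertibility of $\K$ on $\Ldottwo(\Td)$, which by Lax--Milgram is equivalent to the coercivity
\begin{equation*}
\|v_0\|_{\Ltwo}^2 \lesssim \int_0^T \|\B^* \Sol(t,0)v_0\|_{\Ltwo}^2\dt,
\end{equation*}
i.e.\ the strong $\Ltwo$-observability of the dual equation.

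Next I would establish this observability via the route summarized in the introduction. Using Duhamel on the low order commutators, it suffices to prove the observability of the pseudo\-differential equation $(D_t + Q(\udlu))u = 0$. To handle the lack of $\C$-linearity I would pass to the pair $\vec{w} = \binom{u}{\bar u}$, which solves the $\C$-linear system $D_t\vec w + \A(\udlu)\vec w = 0$ with principal symbol $\mathrm{diag}(|\xi|^{3/2},-|\xi|^{3/2})$, and reduce the target inequality to
\begin{equation*}
\|\vec w(0)\|_{\Ltwo}^2 \lesssim \int_0^T \|\phiw\,\vec e\cdot\vec w\|_{\Ltwo}^2\dt.
\end{equation*}

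For this I would follow Lebeau's semiclassical scheme. A Littlewood--Paley decomposition $\vec w = \sum_h \vec w_h$ with $h = 2^{-j}$ and the scaling $s = h^{-1/2}t$ converts the principal equation on the frequency shell $|\xi|\sim h^{-1}$ into the zeroth order semi\-classical system $(hD_s + |hD_x|^{3/2})u = 0$. Arguing by contradiction and extracting a semi\-classical defect measure that would propagate along the bicharacteristics of $|\xi|^{3/2}$ (straight lines on $\Td$), the GCC forces the measure to vanish and yields the semi\-classical observability on the time slab $[0,h^{1/2}T]$. An energy estimate transports this bound to each subinterval $I_k = [kh^{1/2}T,(k+1)h^{1/2}T]$; summing in $k$ and then dyadically in $h$ via Littlewood--Paley produces the weak observability
\begin{equation*}
\|\vec w(0)\|_{\Ltwo}^2 \lesssim \int_0^T\|\phiw\,\vec e\cdot\vec w\|_{\Ltwo}^2\dt + \|\vec w(0)\|_{\H{-N}}^2.
\end{equation*}

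The hardest step, and where I would spend most of the work, is upgrading weak to strong observability by a Bardos--Lebeau--Rauch uniqueness/compactness argument, since the symbol is fractional and no direct Carleman estimate is available. Assuming by contradiction a normalized sequence $\vec w_n$ of solutions with $\udlu_n\to 0$ in $\Cls^{1,s}$ and vanishing observation, I would pass to a weak limit $\vec w$ solving the constant coefficient system $D_t\vec w + \A(0)\vec w = 0$ with $\phiw\,\vec e\cdot\vec w = 0$ on $[0,T]\times\Td$; the weak observability plus Rellich then ensures $\vec w\not\equiv 0$. The space $\mathscr N$ of such initial data is finite dimensional and $\A(0)$-invariant, so an eigenfunction $\vec w_0$ of $\A(0)$ exists with $(w_0^++w_0^-)|_\omega = 0$; the eigenvalue equation then forces $w_0^+ + w_0^-$ to have finitely many Fourier modes, hence to vanish identically, and a second line of the system forces $w_0^+ - w_0^- \equiv 0$, contradicting $\vec w_0\neq 0$. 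This closes the observability, Lax--Milgram gives $\K^{-1}\in\L(\Ldottwo)$, and the continuity estimate $\|\Theta\|_{\L(\Ldottwo,C([0,T],\Ldottwo))}\lesssim 1$ follows by tracking constants, while continuity in time of $\Theta u_0$ reduces to the continuity of $\Sol(\cdot,0)v_0$ in $\Ldottwo$.
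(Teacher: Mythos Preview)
Your proposal is correct and follows essentially the same route as the paper: the HUM setup in the real Hilbert space $(\Ldottwo,\Re(\cdot,\cdot)_{\Ltwo})$, reduction via Duhamel to the pseudodifferential system for $\vec w=\binom{u}{\bar u}$, Lebeau's semiclassical scheme with the time rescaling $s=h^{-1/2}t$ and defect-measure propagation under GCC, Littlewood--Paley summation to the weak observability, and finally the Bardos--Lebeau--Rauch uniqueness/compactness argument with the eigenfunction analysis of $\A(0)$ on the finite-dimensional space $\mathscr N$. The only cosmetic discrepancy is your Duhamel-style formula for $\Range$, whereas the paper defines $\Range G$ as the initial value of the backward solution of $(\pt+P)u=G$, $u(T)=0$; these are equivalent once the correct propagator is inserted.
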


We use the \textit{Hilbert Uniqueness Method}, or HUM for short, to prove this proposition.

\subsection{Hilbert Uniqueness Method}
\label{sec:HUM}

The HUM establishes by a duality argument on a well-chosen Hilbert space, the equivalence between the null controllability of the original equation (that is in our case~\eqref{eq:equation-paradiff-linear-non-perturbed}) and an \textit{observability inequality} to its dual equation (see~\eqref{eq:equation-dual-non-perturbed}), with respect to the Hilbertian structure. Because~$ P $ and~$ \B $ are not $ \C $-linear but only $ \R $-linear, we then choose the real Hilbert space $ \Ldottwo(\Td,\C) $ equipped with the scalar product $ \Re(\cdot,\cdot)_{\Ltwo} $. Therefore our dual equation is
\begin{equation}
\label{eq:equation-dual-non-perturbed}
\pt u - P^* u = 0,
\end{equation}
where~$ P^* $ is the formal adjoint of~$ P $, with respect to $ (\Ldottwo(\Td),\Re (\cdot,\cdot)_{\Ltwo}) $, in the sense that, for all $ f,g \in \Cinf(\Td) $,
\begin{equation*}
\Re( P f, g)_{\Ltwo} = \Re(f, P^* g)_{\Ltwo}.
\end{equation*}

\begin{proposition}
$ P^* = -i\T{\gamma}^* - \T{V}^*\cdot\nabla_x - g i \T{r^{-1}}^* \Re + M_{\b}\T{r}^* \Im, $ where for a para\-differential operator $ \T{a} $, $ \T{a}^* $ denotes its formal adjoint with respect to $ (\cdot,\cdot)_{\Ltwo} $.
\end{proposition}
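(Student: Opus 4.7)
The plan is to compute the real adjoint of $P$ term by term. The four summands of $P = iT_\gamma + \nabla_x \cdot T_V - gT_{r^{-1}}\Im + iT_r M_\b \Re$ split into two groups: the first two are $\C$-linear, while the last two are only $\R$-linear because of the factors $\Re$ and $\Im$. For each $\C$-linear operator $A$, the real adjoint with respect to $\Re(\cdot,\cdot)_{\Ltwo}$ coincides with the usual complex adjoint with respect to $(\cdot,\cdot)_{\Ltwo}$, since $A$ commutes with multiplication by $i$ and thus respects the decomposition into real and imaginary parts. So $(iT_\gamma)^* = \bar{i}\, T_\gamma^* = -iT_\gamma^*$, and integration by parts gives $(\nabla_x \cdot T_V)^* = -T_V^* \cdot \nabla_x$; this disposes of the first two terms.

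For the mixed terms, first I record the two elementary identities with respect to $\Re(\cdot,\cdot)_{\Ltwo}$, namely $\Re^* = \Re$ and $\Im^* = i\Re$. The first is obvious from $\Re(\Re f,g)=\int(\Re f)(\Re g)\dx=\Re(f,\Re g)$. The second is a one-line computation: $\Re(\Im f, g) = \int(\Im f)(\Re g)\dx$, while $\Re(f, i\Re g) = \Re\int f\cdot(-i\Re g)\dx = \int(\Im f)(\Re g)\dx$. Second, I invoke Lemma~\ref{lem:real-part-preserving-property}: the symbols $r$ and $r^{-1}$ are real-valued and (after expanding $r = p^{(1/2)}/q$ and checking parity coming from $\lambda^{(1)}$) even in $\xi$, and $m_\b(\xi)=|\xi|(\tanh(\b|\xi|)-1)$ is likewise real and even, so the operators $T_r$, $T_{r^{-1}}$, $M_\b$ all commute with $\Re$ and preserve reality; by duality, the same holds for $T_r^*$, $T_{r^{-1}}^*$.

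Armed with these facts, term three proceeds as follows. Since $\Im f$ is real and $T_{r^{-1}}$ preserves reality, $T_{r^{-1}}\Im f$ is real, so
\begin{equation*}
\Re(-g T_{r^{-1}}\Im f, h)_{\Ltwo}
= -g\int(T_{r^{-1}}\Im f)(\Re h)\dx
= -g\int(\Im f)\,T_{r^{-1}}^*\Re h\dx,
\end{equation*}
using the complex $L^2$-adjoint of $T_{r^{-1}}$ together with the reality of $T_{r^{-1}}^*\Re h$. Rewriting $\int(\Im f)\,\phi\dx = \Re(f, i\phi)$ for any real $\phi$ gives $(-gT_{r^{-1}}\Im)^* = -gi T_{r^{-1}}^*\Re$. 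Term four is symmetric: $T_r M_\b \Re f$ is real, so
\begin{equation*}
\Re(iT_r M_\b\Re f, h)
= \int (T_r M_\b\Re f)(\Im h)\dx
= \int(\Re f)\,M_\b T_r^*\Im h\dx
= \Re(f, M_\b T_r^*\Im h),
\end{equation*}
where I used $M_\b^*=M_\b$ and the reality of $M_\b T_r^*\Im h$. Summing the four contributions yields the claimed formula.

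The only genuine subtlety is the reality-preservation of the paradifferential operators and their adjoints, which requires the parity-in-$\xi$ of each relevant symbol and must be appealed to carefully; beyond that the proof is purely formal bookkeeping built on the two identities $\Re^*=\Re$ and $\Im^*=i\Re$.
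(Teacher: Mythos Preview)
Your proof is correct and follows essentially the same approach as the paper's: both hinge on the identity $\Im^* = i\Re$ (with respect to $\Re(\cdot,\cdot)_{L^2}$) and on Lemma~\ref{lem:real-part-preserving-property} to justify that $T_{r^{\pm 1}}$, $M_\b$ and their adjoints commute with $\Re$ and $\Im$. The only cosmetic difference is that the paper computes $P^*$ by applying $(AB)^* = B^*A^*$ formally and then commutes $\Re,\Im$ through at the end, whereas you unfold the inner products explicitly term by term; the underlying argument is the same.
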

\begin{proof}
For a para\-differential operator $ \T{a} $, its formal adjoint $ \T{a}^* $ with respect to the scalar product $ (\cdot,\cdot)_{\Ltwo} $ is evidently at the same time its formal adjoint with respect to the scalar product $ \Re(\cdot,\cdot)_{\Ltwo} $. Moreover, by a direct verification, $ \Im^* = i \Re $. Therefore,
\begin{align*}
P^* 
& = \T{\gamma}^* i^* - \T{V}^*\cdot\nabla_x - g \Im^* \T{r^{-1}}^* + (i\Re)^* M_{\b}\T{r}^* \\
& = -i\T{\gamma}^* - \T{V}^*\cdot\nabla_x - g i\Re \T{r^{-1}}^* + \Im M_{\b}\T{r}^*.
\end{align*}
It remains to show that for $ a = r^{\pm 1} $ or $ m_{\b} $, $ [\T{a}^*,\Re] = [\T{a}^*,\Im] = 0 $. Equivalently $ [\T{a},\Re] = 0 $. This is a consequence of Lemma~\ref{lem:real-part-preserving-property}, as in either case~$ a $ is real valued, and is an even function of~$ \xi $.
\end{proof}

Now HUM proceeds as follows. Define the range operator $ \Range = \Range(\underline{u}) $ by
\begin{equation*}
\Range : \Ltwo([0,T],\Ldottwo(\Td)) \to \Ldottwo(\Td), \quad G \mapsto u(0),
\end{equation*}
where $ u \in C([0,T],\Ldottwo(\Td)) $ is the unique solution to the Cauchy problem
\begin{equation*}
(\pt + P)u = G, \quad u(T) = 0.
\end{equation*}
Also define the solution operator $ \Sol = \Sol(\underline{u}) $ by
\begin{equation*}
\Sol : \Ldottwo(\Td) \to C([0,T],\Ldottwo(\Td)) \subset \Ltwo([0,T],\Ldottwo(\Td)), \quad v_0 \mapsto v,
\end{equation*}
where $ v \in C([0,T],\Ldottwo(\Td)) $ is the unique solution to the dual equation~\eqref{eq:equation-dual-non-perturbed} with initial data $ v(0) = v_0 $.

\begin{remark}
For the well-posedness of these equations, we refer to Appendix~\ref{app:linear-equations}. Moreover, for all $ \mu \ge 0 $, we have
\begin{equation*}
\Range|_{\Ltwo([0,T],\Hdot{\mu})} : \Ltwo([0,T],\Hdot{\mu}(\Td)) \to \Hdot{\mu}(\Td), \quad
\Sol|_{\Hdot{\mu}}  : \Hdot{\mu}(\Td) \to C([0,T],\Hdot{\mu}(\Td)).
\end{equation*}
\end{remark}

\begin{remark}
In classical literatures, the operator $ \Range\B $ is called the range operator, while $ \B^*\Sol $ is called the solution operator. We isolate the operator~$ \B $ for later simplicity.
\end{remark}

\begin{proposition}
\label{prop:duality-HUM}
Let $ F \in \Ltwo([0,T],\Ldottwo(\Td)) $, and $ v_0 \in \Ldottwo(\Td) $, then the duality holds
\begin{equation}
\label{eq:duality-relation-HUM}
-\Re(\Range \B F, v_0)_{\Ltwo} = \Re(F,\B^* \Sol v_0)_{\Ltwo([0,T],\Ltwo)},
\end{equation}
or formally, with respect to $ \Re(\cdot,\cdot)_{\Ltwo} $, $ \Range \B = - (\B^* \Sol)^* $.
\end{proposition}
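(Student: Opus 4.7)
The proof is a standard duality / integration-by-parts argument, so the plan is essentially to compute $\frac{d}{dt}\Re(u,v)_{\Ltwo}$ for a well-chosen pair $(u,v)$ and integrate from $0$ to $T$. Concretely, set $u \in C([0,T],\Ldottwo(\Td))$ to be the solution of $(\pt + P)u = \B F$ with $u(T)=0$, so that $u(0) = \Range \B F$ by definition of $\Range$; and set $v = \Sol v_0 \in C([0,T],\Ldottwo(\Td))$, so that $(\pt - P^*)v = 0$ and $v(0)=v_0$.

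Formally, using $\pt u = \B F - Pu$ and $\pt v = P^* v$, and the fact that $P^*$ is by definition the formal adjoint of $P$ with respect to the real scalar product $\Re(\cdot,\cdot)_{\Ltwo}$,
\begin{equation*}
\ddt \Re(u,v)_{\Ltwo} = \Re(\B F - Pu, v)_{\Ltwo} + \Re(u, P^* v)_{\Ltwo} = \Re(\B F, v)_{\Ltwo}.
\end{equation*}
Integrating on $[0,T]$ and using $u(T)=0$, $v(0)=v_0$, $u(0) = \Range \B F$, and finally the defining property of $\B^*$, I get
\begin{equation*}
-\Re(\Range \B F, v_0)_{\Ltwo} = \int_0^T \Re(\B F, v)_{\Ltwo} \dt = \Re(F, \B^* \Sol v_0)_{\Ltwo([0,T],\Ltwo)},
\end{equation*}
which is exactly~\eqref{eq:duality-relation-HUM}.

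The only point that needs care is the justification of the pointwise differentiation $\ddt \Re(u,v)_{\Ltwo}$, since \emph{a priori} we only have $u,v \in C([0,T],\Ldottwo)$ with $\pt u, \pt v$ living in some negative Sobolev space. The cleanest way to handle this is by a density argument: for $F \in C([0,T],\Hdot{\sigma})$ and $v_0 \in \Hdot{\sigma}$ with $\sigma$ large (where the well-posedness results from Appendix~\ref{app:linear-equations} give $u,v \in C([0,T],\Hdot{\sigma})$ with the derivatives in $\Ltwo$), the computation above is rigorous and each side of~\eqref{eq:duality-relation-HUM} is continuous in $(F,v_0)$ with respect to the $\Ltwo([0,T],\Ldottwo) \times \Ldottwo$-topology (by the boundedness estimates on $\Range$, $\Sol$, and $\B$). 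One then extends the identity to general $(F,v_0) \in \Ltwo([0,T],\Ldottwo) \times \Ldottwo$ by density of smooth data. The main (minor) obstacle is therefore just verifying this continuity and approximation step, which reduces to the a~priori bounds on $\Range$, $\Sol$ and $\B$ already established (or postponed to) the appendix; the algebraic heart of the statement is merely the adjoint property of $P$.
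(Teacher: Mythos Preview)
Your proof is correct and follows essentially the same approach as the paper: both set $u$ to be the backwards solution with $u(T)=0$ and $v=\Sol v_0$, integrate $\Re(u,v)_{\Ltwo}$ by parts in time using the adjoint relation between $P$ and $P^*$, and justify the computation by a density argument on smooth data. The only cosmetic difference is that the paper phrases the integration by parts as a Newton--Leibniz computation on $\Re(\B F,v)_{\Ltwo([0,T],\Ltwo)}$ rather than differentiating $\Re(u,v)_{\Ltwo}$ directly.
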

\begin{proof}
By a density argument, it suffice to prove the identity for $ F \in C([0,T],\Hdot{\infty}(\Td)) $, and $ v_0 \in \Hdot{\infty}(\Td) $. Then $ v = \Sol v_0 \in C^1([0,T],\Hdot{\infty}(\Td))$. And the solution $ u $ to~\eqref{eq:equation-paradiff-linear-non-perturbed} with $ u(T) = 0 $ belongs to $ C^1([0,T],\Hdot{\infty}(\Td)) $. By the Newton-Leibniz formula,
\begin{align*}
\Re (F,\B^* v)_{\Ltwo([0,T],\Ltwo)}
& = \Re (\B F, v)_{\Ltwo([0,T],\Ltwo)} 
= \Re ((\pt + P) u, v)_{\Ltwo([0,T],\Ltwo)} \\
& = \Re (u(t),v(t))_{\Ltwo}|_0^T + \Re (u, (-\pt + P^*) v)_{\Ltwo([0,T],\Ltwo)} \\
& = \Re (u(T),v(T))_{\Ltwo} - \Re (u(0),v(0))_{\Ltwo}.
\end{align*}
Now that $ u(T) = 0 $, the duality relation~\eqref{eq:duality-relation-HUM} follows.
\end{proof}

\begin{proposition}
\label{prop:observability-Ltwo-pafadiff-nonperturbed}
Suppose that~$ \omega $ satisfies the geometric control condition, $ s $ is sufficiently large, $ T > 0 $, $ \varepsilon_0 > 0 $, and $ \underline{u} \in \Cls^{1,s}(T,\varepsilon_0) $. Then for~$ \varepsilon_0 $ sufficiently small, the $ \Ltwo $-observability of~\eqref{eq:equation-dual-non-perturbed} holds. That is, for all its solution~$ u $, with initial data in~$ \Ldottwo(\Td) $,
\begin{equation}
\label{eq:observability-Ltwo-paradiff-nonperturbed}
\|u(0)\|_{\Ltwo}^2 \lesssim \int_0^T \|\B^* u(t)\|_{\Ltwo}^2 \dt.
\end{equation}
Here $ \B^* = \chi^{}_T \phiw \T{q}^* \Re $ is the dual operator of~$ \B $ with respect to $ \Re(\cdot,\cdot)_{\Ltwo} $.
\end{proposition}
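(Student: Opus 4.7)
The plan is to follow the roadmap announced in Section~\ref{sec:Strategy-of-Proof}: reduce \eqref{eq:equation-dual-non-perturbed} to a pseudodifferential system, establish a semiclassical observability on short time intervals by Lebeau's propagation method, patch the pieces together via Littlewood--Paley theory to obtain a weak $\Ltwo$-observability, and finally upgrade it to the strong observability \eqref{eq:observability-Ltwo-paradiff-nonperturbed} using a Bardos--Lebeau--Rauch type uniqueness-compactness argument.

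First I would rewrite \eqref{eq:equation-dual-non-perturbed} as a perturbation $(D_t + Q(\udlu))u = r(u)$, where $Q(\udlu)$ is the pseudodifferential quantization with the same principal symbol as $i P(\udlu)^*$, and the residual $r$ collects the symbolic-calculus errors and the lower-order $g\T{r^{-1}}^*\Re$, $M_\b \T{r}^* \Im$ terms. Since~$P$ is only $\R$-linear, I double the system by letting $\vec{w} = \binom{u}{\bar u}$, which satisfies the $\C$-linear system $D_t \vec{w} + \A(\udlu)\vec{w} = 0$ with principal symbol the diagonal matrix $\mathrm{diag}(|\xi|^{3/2},-|\xi|^{3/2})$. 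By Duhamel's formula the sought observability reduces to
\begin{equation*}
\|\vec{w}(0)\|_{\Ltwo}^2 \lesssim \int_0^T \|\phiw\, \vec{e}\cdot\vec{w}\|_{\Ltwo}^2 \dt.
\end{equation*}

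The technical heart is the \emph{semiclassical observability}. For $h = 2^{-j}$ I would frequency-localize $\vec{w}_h = \Pi_h\Delta_h \vec{w}$ at scale $|\xi|\sim h^{-1}$ and apply the Lebeau rescaling $s=h^{-1/2}t$. The system becomes zero-order semiclassical, $(hD_s + |hD_x|^{3/2} + O(\varepsilon_0+h))\vec{w}_h = 0$, and the target is
\begin{equation*}
\|\vec{w}_h(0)\|_{\Ltwo}^2 \lesssim h^{-1/2}\int_0^{h^{1/2}T} \|\phiw\, \vec{e}\cdot\vec{w}_h\|_{\Ltwo}^2\dt + \text{lower order}.
\end{equation*}
I would argue by contradiction: a violating sequence produces a semiclassical defect measure $\mu$ on $T^*\Td \setminus 0$, which by the standard commutator argument is transported by the Hamiltonian flow of the principal symbol $\gamma^{(3/2)}(\udlu)$. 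For $\varepsilon_0$ small this flow is a small perturbation of the flat geodesic flow on $\Td$, so the geometric control condition on~$\omega'$ still forces every bicharacteristic to enter $\{\phiw > 0\}$ within the rescaled time window; invariance of~$\mu$ then forces $\mu\equiv 0$, contradicting the normalization.

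An energy estimate transfers the same inequality to every subinterval $I_k = [h^{1/2}kT,\, h^{1/2}(k+1)T]$ for $0 \le k < h^{-1/2}$; summing in $k$ and reassembling the dyadic pieces via Littlewood--Paley theory yields the weak observability
\begin{equation*}
\|\vec{w}(0)\|_{\Ltwo}^2 \lesssim \int_0^T \|\phiw\, \vec{e}\cdot\vec{w}\|_{\Ltwo}^2\dt + \|\vec{w}(0)\|_{\H{-N}}^2,
\end{equation*}
the $\H{-N}$ remainder being the price of semiclassical restriction to high frequencies. To upgrade to the strong observability I would suppose \eqref{eq:observability-Ltwo-paradiff-nonperturbed} fails, extract a sequence $\udlu_n \to 0$ in $\Cls^{1,s}(T,\varepsilon_n)$ with solutions $\vec{w}_n$ normalized in $\Ltwo$ whose observation tends to $0$; Rellich's theorem combined with the weak observability shows a nonzero weak limit $\vec{w}$ solving $D_t\vec{w} + \A(0)\vec{w}=0$ with $\phiw\,\vec{e}\cdot\vec{w} \equiv 0$. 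The space of such initial data is finite-dimensional and $\A(0)$-invariant, so it contains an eigenfunction $\vec{w}_0 = \binom{w_0^+}{w_0^-}$ vanishing on $\omega$; since $\A(0)$ has constant coefficients, $w_0^+ + w_0^-$ satisfies an elliptic equation with only finitely many Fourier modes, hence vanishes identically whenever it vanishes on an open set, and $w_0^+ - w_0^-$ is then forced to vanish too, contradicting $\vec{w}_0 \ne 0$.

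The main obstacle is the semiclassical propagation step: one must carry out the defect-measure argument in the paradifferential setting with a non-$\C$-linear, $\underline{u}$-dependent principal symbol, checking that the symbolic calculus errors, the coupling between the two components of $\vec{w}$, and the smallness of $\varepsilon_0$ are compatible with the geometric control condition being preserved by the perturbed Hamiltonian flow. The remaining arguments (Duhamel perturbation, patching by energy estimates, and the finite-dimensional elliptic unique continuation) are relatively standard once the semiclassical observability is in hand.
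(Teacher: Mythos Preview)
Your outline is essentially the paper's proof: reduction to the $\C$-linear pseudodifferential system for $\vec w=\binom{u}{\bar u}$, semiclassical observability via defect measures after Lebeau's rescaling $s=h^{-1/2}t$, Littlewood--Paley patching to a weak observability, and the Bardos--Lebeau--Rauch uniqueness-compactness upgrade.

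Two execution details differ from the paper and are worth flagging. First, the terms $g\T{r^{-1}}^*\Re$ and $M_\b\T{r}^*\Im$ should \emph{not} go into the residual: they are bounded on $\Ldottwo$ but not $O(\varepsilon_0)$-small, so the Duhamel absorption from $(D_t+Q)u=0$ back to $(\pt-P^*)u=0$ would fail. The paper keeps them inside $Q$ (and hence inside the matrix symbol $A$ of \eqref{eq:def-symbol-A}); being of order $\le 1/2$ they do not affect the principal symbol or the propagation argument, and they are exactly what produces the elliptic equation for $w_0^++w_0^-$ in the unique-continuation step. Second, in the semiclassical contradiction the paper sends \emph{both} $h_n\to 0$ and $\varepsilon_n\to 0$ (the statement is quantified ``$\exists\,\varepsilon_0,h_0$ such that for all $h<h_0$ and $\udlu\in\Cls^{1,s}(T,\varepsilon_0)$\ldots'', so its negation allows this). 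The limiting measure is then transported by the \emph{exact} flat Hamiltonian $|\xi|^{3/2}$, and no stability-of-GCC argument under perturbation of the flow is needed; your version with fixed small $\varepsilon_0$ and perturbed bicharacteristics would also work but requires that extra robustness check.
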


\begin{remark}
\label{remark:observability-erasing-chi_T}
Now that $ \chi_T^{}(t) = 1 $ for $ t \le T/2 $, and that Proposition~\ref{prop:observability-Ltwo-pafadiff-nonperturbed} is stated for all $ T > 0 $, replacing~$ T $ with~$ 2T $ in~\eqref{eq:observability-Ltwo-paradiff-nonperturbed}, then for~\eqref{eq:observability-Ltwo-paradiff-nonperturbed} to be satisfied, we only need to prove the following observability for all $ T > 0 $,
\begin{equation*}
\|u(0)\|_{\Ltwo}^2 \lesssim \int_0^T \|\phiw \T{q}^* \Re\, u(t)\|_{\Ltwo}^2 \dt.
\end{equation*}
Therefore, we may simply omit the factor $ \chi_T^{} $ in~\eqref{eq:observability-Ltwo-paradiff-nonperturbed}.
\end{remark}

The effort of the rest of the section will be devoted to proving this observability, which states the coercivity of the operator $ \B^*\Sol $ because~\eqref{eq:observability-Ltwo-paradiff-nonperturbed} writes in a compact form as
\begin{equation*}
\|u_0\|_{\Ltwo} \lesssim \|\B^* \Sol u_0\|_{\Ltwo([0,T],\Ltwo)}.
\end{equation*}
Once this is proven, define the $ \R $-linear operator
\begin{equation}
\label{eq:definition-K}
\K \bydef - \Range \B \B^* \Sol : \Ldottwo(\Td) \to \Ldottwo(\Td), \quad \forall \mu \ge 0.
\end{equation}
\begin{remark}
By Proposition~\ref{prop:duality-HUM}, $ \K = (\B^* \Sol)^* \B^* \Sol $.
By an energy estimate, we have for all $ \mu \ge 0 $, $ \K|_{\Hdot{\mu}} : \Hdot{\mu}(\Td) \to \H{\mu}(\Td) $.
\end{remark}

Consider the continuous $ \R $-bilinear form on $ \Ldottwo(\Td) $,
\begin{equation*}
\varpi(f_0,v_0) \bydef \Re(\K f_0, v_0)_{\Ltwo}.
\end{equation*}
It is coercive by~\eqref{eq:observability-Ltwo-paradiff-nonperturbed},
\begin{equation*}
\varpi(v_0,v_0) 
= \Re ((\B^* \Sol)^* \B^* \Sol v_0, v_0)_{\Ltwo} 
= \Re (\B^* \Sol v_0,\B^* \Sol v_0)_{\Ltwo([0,T],\Ltwo)} 
\gtrsim \|v_0\|_{\Ltwo}^2.
\end{equation*}
By Lax-Milgram's theorem, for $ u_0 \in \Ldottwo(\Td) $, there exists a unique $ f_0 \in \Ldottwo(\Rd) $, such that
\begin{equation*}
\varpi(f_0,v_0) = \Re(u_0,v_0)_{\Ltwo}, \quad \forall v_0 \in \Ldottwo(\Td),
\end{equation*}
and consequently, we have the invertibility of~$ \K $,
\begin{equation}
\label{eq:inverse-K-f-u}
\K f_0 = u_0,\quad \|f_0\|_{\Ltwo} \lesssim \|u_0\|_{\Ltwo}.
\end{equation}
Therefore, we have proven the following proposition, as a consequence of the $ \Ltwo $-observability.
\begin{proposition}
\label{prop:HUM-Ltwo}
Suppose that~$ \omega $ satisfies the geometric control condition, $ s $ is sufficiently large, $ T > 0 $, $ \varepsilon_0 > 0 $, and $ \underline{u} \in \Cls^{1,s}(T,\varepsilon_0) $. Then for $ \varepsilon_0 $ sufficiently small, $ \K $ defines an isomorphism on $ \Ldottwo(\Td) $ with
\begin{equation*}
\|\K\|_{\L(\Ldottwo,\Ldottwo)} + \|\K^{-1}\|_{\L(\Ldottwo,\Ldottwo)} \lesssim 1.
\end{equation*}
\end{proposition}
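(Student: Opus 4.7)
The plan is to derive Proposition~\ref{prop:HUM-Ltwo} as an immediate corollary of the $\Ltwo$-observability~\eqref{eq:observability-Ltwo-paradiff-nonperturbed} of Proposition~\ref{prop:observability-Ltwo-pafadiff-nonperturbed}, the duality relation of Proposition~\ref{prop:duality-HUM}, and the Lax-Milgram theorem applied to the real Hilbert space $(\Ldottwo(\Td),\Re(\cdot,\cdot)_{\Ltwo})$. Almost everything is already laid out in the paragraph preceding the statement; what remains is to make the logical skeleton explicit and verify continuity of $\K$.

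First I would check that $\K : \Ldottwo(\Td) \to \Ldottwo(\Td)$ is a bounded $\R$-linear operator with $\|\K\|_{\L(\Ldottwo,\Ldottwo)} \lesssim 1$. This follows by composing the four operators: $\Sol$ is bounded from $\Ldottwo(\Td)$ to $C([0,T],\Ldottwo(\Td))$ by the well-posedness of the dual equation (Appendix~\ref{app:linear-equations}); $\B^* = \chi_T^{}\phiw\T{q}^*\Re$ is bounded on $\Ltwo$ uniformly in $t$ since $q \in \Symbol{0}{\infty}$ depends continuously on $\udlu$; dually $\B$ is bounded on $\Ltwo([0,T],\Ldottwo(\Td))$; and finally $\Range$ is bounded from $\Ltwo([0,T],\Ldottwo(\Td))$ to $\Ldottwo(\Td)$ by the well-posedness of the forward equation with null terminal data. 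The duality $\K = (\B^*\Sol)^*\B^*\Sol$ noted in the remark after~\eqref{eq:definition-K} confirms $\K$ is symmetric and nonnegative with respect to $\Re(\cdot,\cdot)_{\Ltwo}$.

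Next I would introduce the continuous $\R$-bilinear form $\varpi(f_0,v_0) = \Re(\K f_0, v_0)_{\Ltwo}$ on $\Ldottwo(\Td)$ and combine the duality relation~\eqref{eq:duality-relation-HUM} with the observability~\eqref{eq:observability-Ltwo-paradiff-nonperturbed} to obtain
\begin{equation*}
\varpi(v_0,v_0) = \Re(\B^*\Sol v_0, \B^*\Sol v_0)_{\Ltwo([0,T],\Ltwo)} = \|\B^*\Sol v_0\|_{\Ltwo([0,T],\Ltwo)}^2 \gtrsim \|v_0\|_{\Ltwo}^2,
\end{equation*}
which is precisely the coercivity of $\varpi$ on the real Hilbert space $(\Ldottwo(\Td),\Re(\cdot,\cdot)_{\Ltwo})$. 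Applying Lax-Milgram in this real setting, for any $u_0 \in \Ldottwo(\Td)$ there is a unique $f_0 \in \Ldottwo(\Td)$ with $\varpi(f_0,v_0) = \Re(u_0,v_0)_{\Ltwo}$ for all $v_0$, and $\|f_0\|_{\Ltwo} \lesssim \|u_0\|_{\Ltwo}$. This means $\K f_0 = u_0$ and $\|\K^{-1} u_0\|_{\Ltwo} \lesssim \|u_0\|_{\Ltwo}$, so $\K$ is an isomorphism with the claimed bounds.

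The only nontrivial ingredient is Proposition~\ref{prop:observability-Ltwo-pafadiff-nonperturbed}, whose proof (via semiclassical observability, weak observability and a uniqueness-compactness argument) is the heart of the paper and is what the sections summarized in the introduction are devoted to; modulo that, the present proposition is a purely functional-analytic conclusion. One minor subtlety to flag is the use of Lax-Milgram over $\R$ rather than $\C$: this is the correct setting because, as stressed in Section~\ref{sec:HUM}, neither $P$ nor $\B$ is $\C$-linear, so $\K$ is only $\R$-linear. The symmetry of $\varpi$ with respect to $\Re(\cdot,\cdot)_{\Ltwo}$, which follows from $\K = (\B^*\Sol)^*\B^*\Sol$, is what lets the real-Hilbert-space Lax-Milgram apply cleanly; no further obstacles arise.
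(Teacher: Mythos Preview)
Your proposal is correct and follows essentially the same approach as the paper: the argument is exactly the one given in the paragraphs preceding Proposition~\ref{prop:HUM-Ltwo}, namely boundedness of $\K$ by composition, coercivity of the real bilinear form $\varpi$ via the duality relation of Proposition~\ref{prop:duality-HUM} combined with the observability~\eqref{eq:observability-Ltwo-paradiff-nonperturbed}, and then Lax--Milgram on the real Hilbert space $(\Ldottwo(\Td),\Re(\cdot,\cdot)_{\Ltwo})$. Your emphasis on the $\R$-linearity point and the identification of Proposition~\ref{prop:observability-Ltwo-pafadiff-nonperturbed} as the only substantive input are both accurate.
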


To construct $ \Theta $ and thus prove Proposition~\ref{prop:Ltwo-linear-control-HUM}, we set
\begin{equation*}
F = -\B^* \Sol f_0 = -\B^* \Sol \K^{-1} u_0 \in C([0,T],\Ldottwo(\Td)),
\end{equation*}
then by~\eqref{eq:inverse-K-f-u} and the definition of $ \K $, $ \Range \B F = u_0 $. Therefore,
\begin{equation}
\label{eq:definition-Theta}
\Theta \bydef - \B^*\Sol\K^{-1} : \Ldottwo(\Td) \to C([0,T],\Ldottwo(\Rd))
\end{equation}
defines a desired control operator.

It remains to prove Proposition~\ref{prop:observability-Ltwo-pafadiff-nonperturbed}.

\subsection{Reduction to Pseudodifferential Equation}

This sections shows that, the observability of the para\-differential equation~\eqref{eq:equation-dual-non-perturbed} can be reduced to that of a pseudo\-differential equation. To do this, write
\begin{equation*}
-P^* = i Q + R_Q,
\end{equation*}
where the pseudo\-differential operator $ Q = Q(\udlu) $ is
\begin{equation}
\label{eq:definition-Q}
Q = \pi(D_x) \op(\gamma\pi) + \pi(D_x) \big( V \cdot D_x + g\op(r^{-1}\pi) \Re + \op(r \cdot m_b) \, i \Im  \big).
\end{equation}

\begin{lemma}
\label{lem:estimate-R_Q}
Suppose that~$ s $ is sufficiently large, $ T > 0 $, $ \varepsilon_0 > 0 $, and $ \udlu \in \Cls^{0,s}(T,\varepsilon_0) $. Then for~$ \varepsilon_0 $ sufficiently small,
\begin{equation*}
R_Q \in \Linf([0,T],\L(\Ldottwo,\Ldottwo)) \cap C([0,T],\L(\Ldottwo,\Hdot{-3/2})),
\end{equation*}
with the estimate
\begin{equation*}
\| R_Q \|_{\Linf([0,T],\L(\Ldottwo,\Ldottwo))} \lesssim \varepsilon_0. 
\end{equation*}
\end{lemma}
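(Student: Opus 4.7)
The plan is to unpack $R_Q = -P^* - iQ$ term by term and use the paradifferential adjoint and reduction formulas from Appendix~\ref{sec:paradifferential-calculus} to identify the cancellations built into the definition of $Q$. First I would split
\begin{align*}
R_Q &= i\bigl(\T{\gamma}^* - \pi(D_x)\op(\gamma\pi)\bigr) + \bigl(\T{V}^*\cdot\nabla_x - \pi(D_x)(iV\cdot D_x)\bigr)\\
&\quad + gi\bigl(\T{r^{-1}}^* - \pi(D_x)\op(r^{-1}\pi)\bigr)\Re - \bigl(M_\b\T{r}^* - \pi(D_x)\op(r\cdot m_\b)\bigr)\Im,
\end{align*}
where $iV\cdot D_x = V\cdot\nabla_x$, so that it suffices to prove each of these four differences defines an operator of order $\le 0$ with $\L(\Ldottwo,\Ldottwo)$ norm $O(\varepsilon_0)$ and the required continuity in $t$.

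For the principal piece $i(\T{\gamma}^* - \pi(D_x)\op(\gamma\pi))$ with $\gamma = \gamma^{(3/2)} + \gamma^{(1/2)}$, I would combine the paradifferential adjoint expansion $\T{a}^* \equiv \T{\bar a + \tfrac{1}{i}\partial_x\cdot\partial_\xi \bar a}$ (modulo a smoother controlled by Sobolev seminorms of $a$) with the reduction $\T{a} \equiv \pi(D_x)\op(a\pi)$ modulo operators of lower order, which is valid because $\eta \in \Hdot{s+1/2}$ with $s$ large by Proposition~\ref{prop:reversibility-u-eta-psi}. The key observation is that the explicit formula $\gamma^{(1/2)} = \sqrt{\ell^{(2)}/\lambda^{(1)}}\,\Re\lambda^{(0)}/2 - \tfrac{1}{2}\partial_\xi\cdot D_x\sqrt{\ell^{(2)}\lambda^{(1)}}$ was chosen precisely so that its imaginary sub-principal part absorbs the adjoint correction produced by $\T{\gamma^{(3/2)}}^*$: the order-$3/2$ and order-$1/2$ contributions then cancel in the difference, leaving an operator of order $\le 0$ whose symbol is a smooth function of $\nabla_x\eta$ and $\nabla_x^2\eta$ that vanishes at $\eta = 0$. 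Theorem~\ref{thm:Operator-Norm-Estimate-Paradiff} then delivers the $\L(\Ldottwo,\Ldottwo)$ bound $O(\|\eta\|_{\H{s+1/2}}) = O(\varepsilon_0)$. The remaining three differences are handled analogously but more simply: the symbols $V$, $r^{-1}-|\xi|^{-1/2}$ and $r-|\xi|^{1/2}$ all vanish at $\eta=0$ with size $O(\varepsilon_0)$ in the relevant seminorms (by Lemma~\ref{lemma:estimate-B,V} and Proposition~\ref{prop:reversibility-u-eta-psi}), and $M_\b = G(0)-|D_x|$ is smoothing, so all adjoint and paradifferential-to-pseudodifferential corrections are $\Ldottwo$-bounded with $O(\varepsilon_0)$ norm. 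Continuity $R_Q \in C([0,T],\L(\Ldottwo,\Hdot{-3/2}))$ follows from $u \in C([0,T],\Hdot{s})$ (hence $\eta \in C([0,T],\Hdot{s+1/2})$) together with the stability of paradifferential operators under perturbations of their symbols, exactly as in Proposition~\ref{prop:paralinearized-pb-of-control}.

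The main obstacle I expect is precisely the $\L(\Ldottwo,\Ldottwo)$ bound for the $\gamma$ piece: a naive symbolic calculus would leave a residue of order $1/2$ proportional to $\partial_\xi\cdot\partial_x\gamma^{(3/2)}$, which is not $\Ldottwo$-bounded no matter how small its coefficients are. The resolution is to verify carefully that the imaginary part of $\gamma^{(1/2)}$, namely $\tfrac{i}{2}\partial_\xi\cdot\partial_x\gamma^{(3/2)}$, is exactly what is required so that, after combining the adjoint correction for $\T{\gamma^{(3/2)}}$ with the conversion $\T{a}\mapsto\pi(D_x)\op(a\pi)$, no order-$1/2$ term survives. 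This bookkeeping is delicate but it is precisely the symmetrization computation of Alazard-Burq-Zuily underlying the original symbol choice, and once it is carried out the entire remainder $R_Q$ reduces to an $\Ldottwo$-bounded operator whose symbolic seminorms are controlled by $\|\eta\|_{\H{s+1/2}} \lesssim \varepsilon_0$.
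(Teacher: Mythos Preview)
Your approach is correct and hits the same key points as the paper: the cancellation built into $\gamma^{(1/2)}$ via the identity $i\,\Im\gamma^{(1/2)} = \tfrac{1}{2}\partial_\xi\cdot D_x\gamma^{(3/2)}$, and the $O(\varepsilon_0)$ smallness obtained by subtracting the Fourier-multiplier part of each symbol before applying the para-to-pseudo comparison (Proposition~\ref{prop:unparadifferentialization}). The paper organizes the computation slightly differently, writing $-R_Q = (P^* - P) + (P + iQ)$ so that the self-adjointness defect and the para-to-pseudo conversion are handled as two separate global steps rather than term by term; this is a cosmetic difference, and your term-by-term route reaches the same conclusion with the same ingredients.
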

\begin{proof}
Lemma~\ref{lem:stability-P-P^*} later shows that $ P^* \in C([0,T],\L(\Ldottwo,\Hdot{-3/2})) $. To prove that $ Q \in C([0,T],\L(\Ldottwo,\Hdot{-3/2})) $, the main estimate is a similar contraction estimate for the principal term. Let $ \udlu_i \in \Hdot{s}(\Td)\ (i=0,1) $ be such that $ \|\udlu_i\|_{\H{s}} < \varepsilon_0 $, then
\begin{equation*}
\|\op(\gamma(\udlu_1)^{(3/2)}\pi) - \op(\gamma(\udlu_2)^{(3/2)}\pi)\|_{\L(\Ldottwo,\Hdot{-3/2})}
\lesssim \|\udlu_1-\udlu_2\|_{\H{s}}.
\end{equation*}

To estimate the $ \Linf $-norm, write $ - R_Q = P^* + iQ = (P^* + P) + (P - iQ) $. For $ P^* + P $, we estimate $ \T{\gamma}^* - \T{\gamma} $. By the definition of $ \gamma = \gamma^{(3/2)} + \gamma^{(1/2)} $, 
\begin{equation}
\label{eq:gamma-almost-selfadjoint}
i\, \Im \gamma^{(1/2)} = \frac{1}{2} \pxi \cdot D_x \gamma^{(3/2)}.
\end{equation} 
Therefore, a symbolic calculus shows,
\begin{equation*}
\T{\gamma}^* = \T{\gamma} + O(\varepsilon_0)_{\L(\Ltwo,\Ltwo)}.
\end{equation*}
\begin{remark}
\label{rmk:smallness-symbol}
The remainder is of size $ O(\varepsilon_0)_{\L(\Ldottwo,\Ldottwo)} $. To prove this, we write $ \T{\gamma}^* = \T{\gamma^{(3/2)}-|\xi|^{3/2}}^* + \T{|\xi|^{3/2}}^* + \T{\gamma^{(1/2)}}^* $, and proceeds with the symbolic calculus, using the estimates that $ \M{3/2}{3/2}{d/2+1}{\gamma^{(3/2)}-|\xi|^{3/2}} \lesssim \varepsilon_0 $, $ \M{1/2}{1/2}{d/2+1}{\gamma^{(1/2)}} \lesssim \varepsilon_0 $. The idea is that the symbols do not differ much from some Fourier multipliers, in the sense that their differences are of size $ \varepsilon_0 $. This idea will be frequently used in later estimates of remainders, and will not again be explained in detail.
\end{remark}

To estimate $ P + iQ $, write
\begin{equation*}
\begin{split}
P + iQ
& = i \pi(D_x) \big(\T{\gamma}-\op(\gamma\pi)\big) + i \pi(D_x) (\T{V\cdot\xi}-\op(V\cdot\xi)) \\
& \qquad + ig \pi(D_x) \big(\T{r^{-1}}-\op(r^{-1}\pi)\big) \Re - \pi(D_x) (\T{r\cdot m_b}-\op(r\cdot m_b)) \Im.
\end{split}
\end{equation*}
We conclude with Proposition~\ref{prop:unparadifferentialization} that $ P + i Q = O(\varepsilon_0)_{\L(\Ldottwo,\Ldottwo)} $.
\end{proof}

Consider the following pseudo\-differential equation,
\begin{equation}
\label{eq:Pseudo-Linear-Equation}
D_t u + Q u = 0.
\end{equation}
\begin{proposition}
\label{prop:observability-Ltwo-Q}
Suppose that~$ \omega $ satisfies the geometric control condition, $ s $ is sufficiently large, $ T > 0 $, $ \varepsilon_0 > 0 $, and $ \underline{u} \in \Cls^{1,s}(T,\varepsilon_0) $. Then for~$ \varepsilon_0 $ sufficiently small, the $ \Ltwo $-observability of~\eqref{eq:Pseudo-Linear-Equation} holds.
That is, for all its solution with initial data in $ \Ldottwo(\Td) $,
\begin{equation}
\label{eq:observability-Ltwo-Q}
\|u(0)\|_{\Ltwo}^2 \lesssim \int_0^T \|\phiw \Re\, u(t)\|_{\Ltwo}^2 \dt.
\end{equation}
\end{proposition}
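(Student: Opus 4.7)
The plan is to reduce to a $\C$-linear system, perform a semiclassical dyadic analysis to obtain a weak observability, and then upgrade to the strong observability by a uniqueness-compactness argument in the spirit of Bardos-Lebeau-Rauch. First, since $Q$ is only $\R$-linear because of the $\Re$ and $\Im$ in~\eqref{eq:definition-Q}, I would couple $u$ with its complex conjugate and set $\vec{w} = \binom{u}{\bar u}$. Then $\vec{w}$ satisfies a $\C$-linear pseudodifferential system
$$D_t \vec{w} + \A(\udlu) \vec{w} = 0,$$
whose principal symbol is diagonal with entries $\pm \gamma^{(3/2)} \sim \pm |\xi|^{3/2}$. The target observability~\eqref{eq:observability-Ltwo-Q} will follow from
$$\|\vec{w}(0)\|_{\Ltwo}^2 \lesssim \int_0^T \|\phiw\, \vec{e}\cdot \vec{w}(t)\|_{\Ltwo}^2 \dt, \qquad \vec{e}=\binom{1}{1}.$$

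The main step is a semiclassical observability for dyadically localized data. For $h=2^{-j}$ I would define $\vec{w}_h = \Pi_h\Delta_h \vec{w}$, a frequency localization at scale $|\xi|\sim h^{-1}$, and rescale time by $s=h^{-1/2}t$, so that the linearized-at-zero model $(hD_s + |hD_x|^{3/2})u = 0$ becomes a genuine zero-order semiclassical equation with natural time scale $s\sim 1$. Arguing by contradiction, a failure of
$$\|\vec{w}_h(0)\|_{\Ltwo}^2 \lesssim h^{-1/2}\int_0^{h^{1/2}T}\|\phiw\, \vec{e}\cdot \vec{w}_h\|_{\Ltwo}^2\dt + \mathrm{remainder}$$
for arbitrarily small $h$ would produce a nonzero semiclassical defect measure supported on the characteristic set, invariant under the bicharacteristic flow of $\gamma^{(3/2)}$ (which up to reparametrization is the geodesic flow on $\Td$). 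The geometric control condition forces every bicharacteristic to enter $\{\phiw\neq 0\}$, where the measure must vanish; this is the desired contradiction.

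Next I would use a standard energy estimate to transport the short-time bound to any interval $[kh^{1/2}T, (k+1)h^{1/2}T]$ with $0\le k<h^{-1/2}$, patch them together, and sum dyadically in $h$ via Littlewood-Paley theory to arrive at a weak $\Ltwo$-observability,
$$\|\vec{w}(0)\|_{\Ltwo}^2 \lesssim \int_0^T \|\phiw\, \vec{e}\cdot \vec{w}\|_{\Ltwo}^2\dt + \|\vec{w}(0)\|_{\H{-N}}^2, \qquad \forall\, N>0.$$
The $\H{-N}$ term encodes the fact that the semiclassical argument controls only high frequencies. The reduction from the paradifferential equation~\eqref{eq:equation-dual-non-perturbed} to~\eqref{eq:Pseudo-Linear-Equation} uses Lemma~\ref{lem:estimate-R_Q} and Duhamel: the $O(\varepsilon_0)$ remainder $R_Q$ contributes a term that can be absorbed on the left-hand side for $\varepsilon_0$ small.

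The hard part will be eliminating the $\H{-N}$ term to obtain the strong observability. I would argue by contradiction as in~\cite{BLR}: from a failing sequence $\vec{w}_n$ with coefficients $\udlu_n\in\Cls^{1,s}(T,\varepsilon_n)$, $\varepsilon_n\to 0$, $\|\vec{w}_n(0)\|_{\Ltwo}=1$ and $\int_0^T\|\phiw\, \vec{e}\cdot \vec{w}_n\|_{\Ltwo}^2\dt\to 0$, the weak observability together with Rellich-Kondrachov yields a nonzero limit $\vec{w}$ solving the constant-coefficient equation $D_t \vec{w} + \A(0)\vec{w}=0$ with $\phiw\,\vec{e}\cdot \vec{w}\equiv 0$ on $[0,T]$. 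Since the fractional symbol $|\xi|^{3/2}$ rules out standard Carleman estimates, I would conclude via a finite-dimensional reduction: the space $\mathscr{N}$ of such initial data is $\A(0)$-invariant and, by the weak observability, compactly embedded in $\Ldottwo$, hence finite-dimensional. A $\C$-eigenfunction $\vec{w}_0=\binom{w_0^+}{w_0^-}$ of $\A(0)|_{\mathscr{N}}$ then satisfies an elliptic relation that forces $w_0^+ + w_0^-$ to have only finitely many Fourier modes; vanishing on the open set $\{\phiw\neq 0\}$ forces $w_0^+ + w_0^-\equiv 0$, and a second algebraic relation extracted from the eigen-equation gives $w_0^+ - w_0^-\equiv 0$, contradicting $\vec{w}_0\neq 0$. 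This last step, combining the fractional-calculus spectral analysis with the GCC-driven geometric input, is where I expect the main technical difficulty to lie.
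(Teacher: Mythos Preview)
Your proposal is correct and follows essentially the same route as the paper: reduction to the $\C$-linear system for $\vec{w}=\binom{u}{\bar u}$, semiclassical dyadic localization with the time rescaling $s=h^{-1/2}t$, propagation of defect measures under the geometric control condition to obtain the semiclassical observability, patching and Littlewood--Paley summation to get the weak observability, and finally the uniqueness-compactness argument with the finite-dimensional eigenfunction reduction for $\A(0)$ to remove the $\H{-N}$ remainder. One small misplacement: the Duhamel argument with Lemma~\ref{lem:estimate-R_Q} is not part of the proof of Proposition~\ref{prop:observability-Ltwo-Q} itself, but rather the step that derives Proposition~\ref{prop:observability-Ltwo-pafadiff-nonperturbed} \emph{from} Proposition~\ref{prop:observability-Ltwo-Q}; it should sit outside the present proof.
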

By Duhamel's formula and Lemma~\ref{lem:estimate-R_Q}, we can deduce Proposition~\ref{prop:observability-Ltwo-pafadiff-nonperturbed} from Proposition~\ref{prop:observability-Ltwo-Q}. Indeed, for every solution $ u \in C([0,T],\Ldottwo(\Td)) $ of equation~\eqref{eq:equation-dual-non-perturbed}, write $ u = v + w $ with $ v,w \in C([0,T],\Ldottwo(\Td)) $ such that
\begin{equation*}
(\pt + i Q) v = 0, \quad v(0) = u(0); 
\qquad
(\pt + i Q + R_Q) w  = - R_Q v, \quad w(0) = 0.
\end{equation*}
Then by Proposition~\ref{prop:app-eq-pseudo-well-posedness},
\begin{align*}
\|v\|_{C([0,T],\Ltwo)} 
& \lesssim \|u(0)\|_{\Ltwo}, \\
\|w\|_{C([0,T],\Ltwo)} 
& \lesssim \|R_Q v\|_{\Lone([0,T],\Ltwo)} 
 \lesssim \varepsilon_0 \|v\|_{\Lone([0,T],\Ltwo)} 
 \lesssim \varepsilon_0 \|u(0)\|_{\Ltwo}.
\end{align*}
Applying Proposition~\ref{prop:observability-Ltwo-Q},
\begin{align*}
\|u(0)\|_{\Ltwo}^2
& \lesssim\int_0^T \| \phiw \Re\,  v(t)\|_{\Ltwo}^2 \dt \\
& \lesssim \int_0^T \| \B^* u(t)\|_{\Ltwo}^2 \dt + \int_0^T \|(\B^* - \phiw \Re) u(t)\|_{\Ltwo}^2 \dt + \int_0^T \| \phiw \Re\, w(t)\|_{\Ltwo}^2 \dt \\
& \lesssim \int_0^T \| \B^* u(t)\|_{\Ltwo}^2 \dt + \varepsilon_0 \|u(0)\|_{\Ltwo}^2.
\end{align*}
The estimate above uses $ \|(\B^* - \phiw\Re) u\|_{\Ltwo} = \|\phiw\Re(\T{q}^*-\pi) u\|_{\Ltwo} \lesssim \varepsilon_0 \|u\|_{\Ltwo} $ (by Proposition~\ref{prop:T_1=pi}).
The observability for~$ u $ then follows for~$ \varepsilon_0 $ sufficiently small.

To deal with the problem caused by the $ \R $-linearity of the equation, we are going to consider $ u $ and $ \bar{u} $ simultaneously, and study the equation satisfied by the pair~$ \binom{u}{\bar{u}} $. First we derive the equation for~$ \bar{u} $ with the help of the following observation. For a pseudo\-differential operator with symbol~$ a $, we have
\begin{equation*}
\op(a) u = \overline{\op(\tilde{a}) \bar{u}}, \qquad
\tilde{a}(x,\xi) = \overline{a(x,-\xi)}.
\end{equation*}
If $ a $ is a real even function or pure imaginary odd function of $ \xi $, then $ \tilde{a} = a $. In particular $ \tilde{\gamma} = \gamma $, because $ \gamma^{(3/2)} $ and $ \Re\, \gamma^{(1/2)} $ are real and even functions of $ \xi $, while $ i\, \Im\, \gamma^{(1/2)} $ is a pure imaginary odd function of $ \xi $. Taking the complex conjugation of~\eqref{eq:Pseudo-Linear-Equation}, we have
\begin{equation*}
(D_t - \tilde{Q}) \bar{u} = 0,
\end{equation*}
with (using the identity $ \pi^2|_{\Z^d} = \pi|_{\Z^d} $)
\begin{equation*}
\tilde{Q} = \pi(D_x) \op(\gamma\pi) + \pi(D_x) \big( - V \cdot D_x + g\op(r^{-1}\pi) \Re + \op(r \cdot m_b) i \Im \big).
\end{equation*}
This suggest the consideration of the following system of equations,
\begin{equation}
\label{eq:equation-pseudo-diff-sys}
D_t \vec{w} + \A \vec{w} = 0,
\end{equation}
where $ \vec{w} = \binom{w_+}{w_-} \in \Ldottwo(\Td) \times \Ldottwo(\Td) $, and $ \A = \pi(D_x) \op(A\pi) $ with the symbol
\begin{equation}
\label{eq:def-symbol-A}
A 
= \gamma \begin{pmatrix} 1 & \phantom{*}0 \\ 0 & - 1 \end{pmatrix} 
+ V \cdot \xi
+ \frac{g}{2r} \begin{pmatrix} \phantom{*}1 & \phantom{*}1 \\ -1 & -1 \end{pmatrix}
+ \frac{r\cdot m_{\b}}{2} \begin{pmatrix} \phantom{*}1 & -1 \\ -1 & \phantom{*}1 \end{pmatrix}.
\end{equation}
Indeed,~$ \binom{u}{\bar{u}} $ satisfies~\eqref{eq:equation-pseudo-diff-sys} whenever~$ u $ satisfies~\eqref{eq:Pseudo-Linear-Equation}.
Then the $ \Ltwo $-observability of~\eqref{eq:Pseudo-Linear-Equation} is a simple consequence of that of~\eqref{eq:equation-pseudo-diff-sys}.

\begin{proposition}
\label{prop:Ltwo-observability-sys}
Suppose that~$ \omega $ satisfies the geometric control condition, $ s $ is sufficiently large, $ T > 0 $, $ \varepsilon_0 > 0 $, and $ \underline{u} \in \Cls^{1,s}(T,\varepsilon_0) $. Then for~$ \varepsilon_0 $ sufficiently small, the $ \Ltwo $-observability of~\eqref{eq:equation-pseudo-diff-sys} holds, that is, for all solution~$ \vec{w} $ with initial data in $ \Ldottwo(\Td) \times \Ldottwo(\Td) $,
\begin{equation}
\label{eq:observability-Ltwo-system}
\|\vec{w}(0)\|_{\Ltwo}^2 \lesssim \int_0^T \|\phiw \vec{e}\cdot\vec{w}\|_{\Ltwo}^2 \dt,
\end{equation}
where $ \vec{e} = \binom{1}{1} $ defines a linear map $ \vec{e} \cdot : \Ldottwo(\Td) \times \Ldottwo(\Td) \to \Ldottwo(\Td) \  \vec{w} \mapsto w^+ + w^- $.
\end{proposition}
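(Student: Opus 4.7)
The plan is to follow the three-step scheme sketched in the introduction: semiclassical observability in each dyadic frequency shell, patching to obtain weak $\Ltwo$-observability, and elimination of the compact remainder by a uniqueness-compactness argument.

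\textbf{Step 1 (Semiclassical observability).} I will decompose $\vec w$ dyadically: for $h = 2^{-j}$ set $\vec w_h = \Pi_h \Delta_h \vec w$ where $\Delta_h$ is a semiclassical Littlewood-Paley operator localizing $|\xi|\sim h^{-1}$ and $\Pi_h$ diagonalizes the principal part of $\A$ at scale $h$, using the semiclassical functional calculus for the symbol $A$ in~\eqref{eq:def-symbol-A}. Rescale time by $s=h^{-1/2}t$ so that the equation on each shell becomes a semiclassical pseudodifferential equation of order zero with principal symbol $\pm(h|\xi|)^{3/2}$ and group velocity of size $\sim 1$ in the $s$-variable. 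Arguing by contradiction and using semiclassical defect measures à la Gérard-Lions-Paul-Burq, I extract a positive measure on $T^\ast\Td$ that is invariant under the bicharacteristic flow associated to $\pm|\xi|^{3/2}$ (which projects to straight-line geodesics) and supported outside $\omega$. The geometric control condition then forces the measure to vanish, contradicting normalization. This yields, for $h$ small,
\[
\|\vec w_h(0)\|_{\Ltwo}^2 \lesssim h^{-1/2}\int_0^{h^{1/2}T} \|\phiw\vec e\cdot\vec w_h\|_{\Ltwo}^2\dt + \text{l.o.t.}
\]

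\textbf{Step 2 (Weak $\Ltwo$-observability).} Using the energy estimate from Appendix~\ref{app:linear-equations} for~\eqref{eq:equation-pseudo-diff-sys}, the semiclassical observability above transports from $[0,h^{1/2}T]$ to each interval $I_k=[h^{1/2}kT,h^{1/2}(k+1)T]$ with $0\le k<h^{-1/2}$, at the cost of an $h$-independent constant. Summing over $k$ and reassembling the dyadic shells by Littlewood-Paley, I obtain, for every $N>0$,
\begin{equation}
\label{eq:plan-weak}
\|\vec w(0)\|_{\Ltwo}^2 \lesssim \int_0^T \|\phiw\vec e\cdot\vec w\|_{\Ltwo}^2\dt + \|\vec w(0)\|_{\H{-N}}^2,
\end{equation}
the second term accounting for the finitely many low-frequency shells on which the semiclassical argument does not apply.

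\textbf{Step 3 (Uniqueness-compactness).} Suppose~\eqref{eq:observability-Ltwo-system} fails. Then one finds $\udlu_n\in\Cls^{1,s}(T,\varepsilon_n)$ with $\varepsilon_n\to 0$ and solutions $\vec w_n$ of~\eqref{eq:equation-pseudo-diff-sys} (with coefficient $\udlu_n$) satisfying $\|\vec w_n(0)\|_{\Ltwo}=1$ and $\int_0^T\|\phiw\vec e\cdot\vec w_n\|_{\Ltwo}^2\dt=o(1)$. By~\eqref{eq:plan-weak} and Rellich, a subsequence converges to a nontrivial solution $\vec w$ of the constant-coefficient limit equation $D_t\vec w+\A(0)\vec w=0$ with $\phiw\vec e\cdot\vec w\equiv 0$ on $[0,T]\times\Td$. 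The space $\mathscr N\subset\Ldottwo\times\Ldottwo$ of such initial data is, again by~\eqref{eq:plan-weak} and compactness, finite-dimensional; it is $\C$-linear and stable under $\A(0)$, hence contains an eigenvector $\vec w_0=\binom{w_0^+}{w_0^-}$ with $\A(0)\vec w_0=\lambda\vec w_0$ and $(w_0^++w_0^-)|_\omega=0$. The eigenvalue equation, written with the explicit symbol~\eqref{eq:def-symbol-A} at $\eta=0$, reduces to a Fourier-multiplier equation for $w_0^++w_0^-$ whose symbol $\gamma(0)^{(3/2)}\pm$ lower-order terms $-\lambda$ has finitely many roots in $\Z^d$; thus $w_0^++w_0^-$ is a trigonometric polynomial, and vanishing on an open set forces $w_0^++w_0^-\equiv 0$. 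Substituting back into the eigenvalue system yields $w_0^+-w_0^-\equiv 0$ as well, contradicting $\vec w_0\neq 0$.

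The main obstacle is Step 3: because the principal symbol $|\xi|^{3/2}$ is not analytic in $\xi$, Carleman estimates are unavailable, so the unique continuation must be obtained by the perturbative reduction to the constant-coefficient case together with the finite-Fourier-mode argument described above. Steps 1 and 2 are of Lebeau type and are comparatively routine once the semiclassical diagonalization via $\Pi_h$ has been set up.
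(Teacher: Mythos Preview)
Your proposal is correct and follows essentially the same three-step scheme as the paper: semiclassical observability via defect measures on each dyadic shell (Sections~4.3--4.4), patching over the subintervals $I_k$ together with Littlewood--Paley to obtain the weak observability~\eqref{eq:weak-observability-Ltwo} (Section~4.5), and the uniqueness--compactness argument reducing to the constant-coefficient eigenvalue problem for $\A(0)$ (Section~4.6). The only minor discrepancy is your description of $\Pi_h$ as ``diagonalizing'' $\A$: in the paper $\Pi_h=\chi(\Re\Zh)$ is a spectral cutoff for the scalar operator $\Re\Zh$ (the matrix $\A$ is already diagonal at leading order), and the key technical point in Step~2 is that $\A_h-\A_h^*=O(h^{3/2})$, which is what makes the energy estimate uniform over the full interval $[0,T]_t=[0,h^{-1/2}T]_s$.
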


The rest of the section is devoted to proving this proposition.

\subsection{Reduction to Semiclassical Equation}
\label{sec:reduction-to-semiclassical-system}

This section derives the equation satisfied by frequency localized quasi-modes of~\eqref{eq:equation-pseudo-diff-sys}. By the hypothesis that~$ \omega $ satisfies the geometric control condition, for fixed $ T > 0 $, there exists some $ \upsilon \in {}]0,\infty[ $ such that
\begin{equation}
\label{eq:condition-propagation-speed}
\frac{3}{2}\upsilon^{1/2} \cdot T > \min\{L \ge 0 : \forall (x,\xi) \in \Td \times \S^{d-1}, [x,x+L\xi] \cap \omega \ne \emptyset \},
\end{equation}
where $ [x,x+L\xi] = \{x+t\xi : t \in [0,L] \} $. Indeed the right hand side is finite by the geometric control condition and the compactness of~$ \Td $. The expression of the left hand side is in accordance with the group velocity~\eqref{eq:intro-dispersion-relation}, so that wave packets of the linearized equation around frequencies with modulus $ \ge \upsilon $ will travel into~$ \omega $ within time~$ T $.
We define a class of cutoff functions
\begin{equation*}
\Xi(\upsilon) = \big\{\chi \in \Ccinf(\R\backslash\{0\}) : 0 \le \chi \le 1, \ \supp\chi\subset\{1 \le |z|/\upsilon^{3/2} \le 5\},\ \chi|_{2\le |z|/\upsilon^{3/2}\le 4} \equiv 1 \big\}.
\end{equation*}
Fix $ \chi \in \Xi(\upsilon) $. Let $ \phi \in \Cinf(\R) $ be such that $ 0 \le \phi \le 1 $, $ \phi(z) = 1 $ for $ z \ge \upsilon^{3/2}/2 $, and $ \phi(z) = 0 $ for $ z \le \upsilon^{3/2}/4 $. In particular, $ \phi \chi = \chi $. Then set $ \varphi(\xi) = \phi(|\xi|^{3/2}) $. 

Now for~$ s $ sufficiently large, $ T > 0 $, $ \varepsilon_0 > 0 $, $ \udlu \in \Cls^{1,s}(T,\varepsilon_0) $, set
\begin{equation*}
h = 2^{-j}
\end{equation*}
as a semi\-classical parameter, and define the operator,
\begin{equation*}
\Zh = \op_h(\varphi) \op_h\big(\gamma_h \varphi\big), 
\qquad
\gamma_h = \gamma^{(3/2)} + h \gamma^{(1/2)},
\end{equation*}
so that $ \gamma_h(x,h\xi) = h^{3/2} \gamma_1(x,\xi) = h^{3/2} \gamma(x,\xi) $. For~$ \varepsilon_0 $ sufficiently small, $ \Zh $ is elliptic of order~$ 3/2 $, and the symmetric operator
\begin{equation*}
\Re \Zh \bydef \frac{1}{2} (\Zh + \Zh^*), \quad D(\Re \Zh) = \Cinf(\Td)
\end{equation*}
has a Friedrichs extension. We still denote by $ \Re \Zh $ this extension, and define
\begin{equation*}
\Delta_h = \op_h(\chi(\gamma^{(3/2)})),\quad \Pi_h = \chi(\Re \Zh)
\end{equation*}
where the latter is defined by the functional calculus of $ \Re \Zh $.Finally set 
\begin{equation}
\label{eq:definition-w-h}
\vec{w}_h = \Pi_h \Delta_h \vec{w}.
\end{equation}
We will derive the equation satisfied by $ \vec{w}_h $, under the coordinates $ (s,x) $, where
\begin{equation}
s = h^{-1/2} t,
\end{equation}
so that $ D_s = h^{1/2} D_t $. The equation~\eqref{eq:equation-pseudo-diff-sys} is equivalent to
\begin{equation}
\label{eq:equation-pseudo-diff-sys-(s,x)}
h D_s \vec{w} + h^{3/2} \A \vec{w} = 0.
\end{equation}
The equation for $ \vec{w}_h $ will be obtained by commutating successively $ \Delta_h $ and $ \Pi_h $ with~\eqref{eq:equation-pseudo-diff-sys-(s,x)}. A careful study of the operator $ \Pi_h $ is needed.

\subsubsection{Semiclassical Functional Calculus for $ \Pi_h $}

We use the notations $ [a,b]_s $ and $ [a,b]_t $ to denote the time intervals $ \{s : a \le s \le b\} $ and $ \{t : a \le t \le b\} $ to avoid ambiguity.
\begin{lemma}
\label{lem:semiclassical-functional-calculus}
Suppose that~$ s $ is sufficiently large, $ T > 0 $, $ \varepsilon_0 > 0 $, and $ \udlu \in \Cls^{0,s}(T,\varepsilon_0) $. Then for~$ \varepsilon_0 $ sufficiently small, $ \Delta_h \in C([0,T]_t,\L(\Ltwo,\Ltwo)) $ and
\begin{equation*}
\|\Pi_h - \Delta_h\|_{\Linf([0,T]_t,\L(\Ltwo,\Ltwo))} \lesssim h.
\end{equation*}
Moreover, if $ \udlu \in \Cls^{1,s}(T,\varepsilon_0) $, then for~$ \varepsilon_0 $ sufficiently small,
\begin{equation*}
\|D_s \Pi_h \|_{\Linf([0,T]_t,\L(\Ltwo,\Ltwo))} \lesssim (\varepsilon_0 + h) h^{1/2},
\end{equation*}
and in particular $ \Pi_h \in \Holder{1}([0,T]_t,\L(\Ltwo,\Ltwo)) $.
\end{lemma}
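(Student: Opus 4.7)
The plan is to rely on the Helffer--Sjöstrand functional calculus for the Friedrichs self-adjoint extension $ \Re\Zh $, combined with the semiclassical symbolic calculus. A first step is to identify its semiclassical principal symbol. Since $ \Zh = \op_h(\varphi)\op_h(\gamma_h\varphi) $ and $ \gamma_h $ is real, standard composition gives $ \sigma_h(\Re\Zh) = \gamma_h\varphi^2 + h\,s_h $ with $ s_h $ a bounded symbol on $ \supp\varphi $; the Poisson-bracket contributions from $ \Zh $ and $ \Zh^* $ cancel by antisymmetry. For $ \varepsilon_0 $ small, $ \gamma^{(3/2)}(x,\xi) \gtrsim |\xi|^{3/2} $ on $ \supp\varphi $, which secures ellipticity and the Friedrichs extension.

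For the first estimate, I would use an almost analytic extension $ \tilde\chi \in \Ccinf(\C) $ of $ \chi $ with $ |\bar\partial\tilde\chi(z)| \lesssim_N |\Im z|^N $, and write
$$\Pi_h = -\frac{1}{\pi}\int_\C \bar\partial\tilde\chi(z)\,(z-\Re\Zh)^{-1}\,dL(z).$$
A semiclassical parametrix $ (z-\Re\Zh)^{-1} = \op_h\bigl((z-\gamma^{(3/2)}\varphi^2)^{-1}\bigr) + h\,R_h(z) $, with $ \|R_h(z)\|_{\L(\Ltwo,\Ltwo)} $ polynomial in $ |\Im z|^{-1} $, then yields $ \Pi_h = \op_h(\chi(\gamma^{(3/2)}\varphi^2)) + O(h)_{\L(\Ltwo,\Ltwo)} $. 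Since $ \chi $ is supported in $ |z| \ge \upsilon^{3/2} $, and on $ \{\gamma^{(3/2)} \ge \upsilon^{3/2}\} $ one has $ |\xi|^{3/2} \gtrsim \upsilon^{3/2} $, so $ \varphi \equiv 1 $ there, a simple case analysis shows $ \chi(\gamma^{(3/2)}\varphi^2) = \chi(\gamma^{(3/2)}) $ identically, whence $ \Pi_h = \Delta_h + O(h) $. The continuity of $ \Delta_h $ in $ t $ reduces to continuity of the symbol $ \chi(\gamma^{(3/2)}) $, which follows from $ \udlu \in \Cls^{0,s}(T,\varepsilon_0) $ for $ s $ sufficiently large and Theorem~\ref{thm:Operator-Norm-Estimate-Paradiff} adapted to the semiclassical setting.

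For the second estimate, I differentiate the Helffer--Sjöstrand formula,
$$\partial_t\Pi_h = \frac{1}{\pi}\int_\C \bar\partial\tilde\chi(z)\,(z-\Re\Zh)^{-1}(\partial_t\Re\Zh)(z-\Re\Zh)^{-1}\,dL(z),$$
and aim to prove $ \|\partial_t\Re\Zh\|_{\L(\Ltwo,\Ltwo)} \lesssim \varepsilon_0 + h $. By the Remark following Proposition~\ref{prop:reversibility-u-eta-psi}, $ \gamma^{(3/2)} $ and $ \gamma^{(1/2)} $ are smooth functions of $ (\nabla_x\eta,\nabla_x^2\eta,\xi) $, so $ \partial_t $ hits $ \eta $ through $ \nabla_x\partial_t\eta $ and $ \nabla_x^2\partial_t\eta $. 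When $ \udlu \in \Cls^{1,s}(T,\varepsilon_0) $, Proposition~\ref{prop:reversibility-u-eta-psi} furnishes $ \|\partial_t\eta\|_{\Linf_tH^{s-1}} \lesssim \varepsilon_0 $, which controls all relevant symbol semi-norms of $ \partial_t\gamma_h $ by $ \varepsilon_0 $; combined with the semiclassical localization in $ \varphi $, this yields $ \|\partial_t\Re\Zh\|_{\L(\Ltwo,\Ltwo)} \lesssim \varepsilon_0 + h $. The decay $ |\bar\partial\tilde\chi(z)| \lesssim |\Im z|^N $ absorbs the $ |\Im z|^{-2} $ from the two resolvents, so $ \|\partial_t\Pi_h\|_{\L(\Ltwo,\Ltwo)} \lesssim \varepsilon_0 + h $, and multiplying by $ h^{1/2} $ (since $ D_s = h^{1/2}D_t $) gives the announced bound.

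The main obstacle will be executing the semiclassical parametrix with enough iterates to achieve the $ O(h) $ accuracy while tracking the two small parameters $ \varepsilon_0 $ and $ h $ independently; in particular, the subprincipal contributions coming from the composition $ \op_h(\varphi)\op_h(\gamma_h\varphi) $ and the $ \eta $-dependence of the $ \gamma^{(j/2)} $ must be treated simultaneously so that every error is of size $ O(h)+O(\varepsilon_0) $ rather than $ O(1) $.
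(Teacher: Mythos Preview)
Your approach is essentially the same as the paper's: Helffer--Sj\"ostrand for $\Pi_h$, a one-step semiclassical parametrix $(z-\Re\Zh)^{-1}=\op_h\bigl((z-\gamma^{(3/2)}\varphi^2)^{-1}\bigr)+O(h|\Im z|^{-N})$, the identity $\chi(\gamma^{(3/2)}\varphi^2)=\chi(\gamma^{(3/2)})$, and differentiation of the Helffer--Sj\"ostrand formula for $D_s\Pi_h$. Two small remarks: (i) $\gamma_h$ is not real since $\gamma^{(1/2)}$ has an imaginary part, but this is harmless because you take $\Re\Zh$ anyway and the paper's explicit computation confirms the principal symbol is $\gamma^{(3/2)}\varphi^2$; (ii) the paper goes slightly further and extracts the explicit leading term $D_s\Pi_h=-h^{1/2}\op_h\bigl(\partial_z\chi(\gamma^{(3/2)}\varphi^2)\,D_t\gamma^{(3/2)}\varphi^2\bigr)+O(h^{3/2})$, which it reuses in a later Littlewood--Paley argument, so you may want to record it as well.
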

\begin{proof}
We omit the time variable~$ t $ for simplicity. The idea of the proof is to use Helffer-Sj\"{o}strand's formula (see for example~\cite{DS-Semiclassical-Limit,BGT-Strichartz,Zworski-Semiclassical-Analysis}),
\begin{equation}
\label{eq:formula-Helffer-Sjostrand}
\Pi_h = -\frac{1}{\pi}\int_{\C} \bar{\partial}\tilde{\chi}(z)(z-\Re \Zh)^{-1} \d z,
\end{equation}
where $ \tilde{\chi} $ is an almost analytic extension of $ \chi $, such that for some $ n \in \N $ to be fixed later,
\begin{equation*}
\tilde{\chi}(z) = \varrho(\Im\, z) \sum_{k=0}^n \frac{\chi^{(k)}(\Re\, z)}{k !} (i\Im\, z)^k,
\end{equation*}
with $ \varrho \in \Ccinf(\R) $ such that $ \varrho = 1 $ near zero. Notice that
\begin{equation}
\label{eq:estimate-almost-analytic}
|\bar{\partial} \tilde{\chi}(z)| \lesssim |\Im\, z|^{n+1}.
\end{equation}
In order to perform a semi\-classical parametrix construction for $ z-\Re\Zh $, we first determine the symbol for $ \Re \Zh $. By a semi\-classical symbolic calculus,
\begin{align}
\label{eq:symbolic-expansion-Z}
\Zh 
& = \op_h(\gamma^{(3/2)}\varphi^2) + h \op_h(\gamma^{(1/2)}\varphi^2) + h \op_h(\varphi \pxi \varphi \cdot D_x \gamma^{(3/2)}) + O(h^2)_{\L(\Ltwo,\Ltwo)} \nonumber \\
\Zh^*
& = \op_h(\gamma_h\varphi)^* \op_h(\varphi) \\
& = \big(\op_h(\gamma^{(3/2)}\varphi) + h\op_h(\overline{\gamma^{(1/2)}}\varphi) + h \op_h(\pxi \cdot D_x (\gamma^{(3/2)}\varphi)) \big) \op_h(\varphi) \nonumber \\
& \qquad \qquad + O(h^2)_{\L(\Ltwo,\Ltwo)} \nonumber \\
& = \op_h(\gamma^{(3/2)}\varphi^2) + h\op_h(\overline{\gamma^{(1/2)}}\varphi^2) + h \op_h(\varphi\pxi\varphi\cdot D_x \gamma^{(3/2)}) \nonumber  \\
& \qquad \qquad + h \op_h(\pxi\cdot D_x \gamma^{(3/2)}\varphi^2) + O(h^2)_{\L(\Ltwo,\Ltwo)}. \nonumber 
\end{align}
Consequently,
\begin{equation*}
\Re \Zh = \op_h(\gamma^{(3/2)}\varphi^2) + h \op_h(\zeta^{(1/2)}\varphi) + O(h^2)_{\L(\Ltwo,\Ltwo)},
\end{equation*}
where
\begin{equation*}
\zeta^{(1/2)} = \big( \frac{1}{2} \pxi\cdot D_x \gamma^{(3/2)} + \Re\,\gamma^{(1/2)}\big) \varphi^2 + \varphi\pxi\varphi\cdot D_x \gamma^{(3/2)}.
\end{equation*}
Now let $ q_0(z,x,\xi) = (z-\gamma^{(3/2)}\varphi^2)^{-1} $ for $ z \in \C\backslash\R $, then for some $ N > 0 $,
\begin{equation*}
\op_h(q_0(z)) \comp (z-\Re\Zh)
= 1 + O(h|\Im\, z|^{-N})_{\L(\Ltwo,\Ltwo)}.
\end{equation*}
Apply both sides to $ (z-\Re\Zh)^{-1} $, and use the resolvent estimate 
\begin{equation*}
\|(z-\Re\Zh)^{-1}\|_{\L(\Ltwo,\Ltwo)} \le |\Im\, z|^{-1},
\end{equation*}
we have,
\begin{equation*}
(z-\Re\Zh)^{-1} = \op_h(q_0(z)) + O(h|\Im z|^{-(N+1)})_{\L(\Ltwo,\Ltwo)}.
\end{equation*}
Plug this into~\eqref{eq:formula-Helffer-Sjostrand} with $ n \ge N $. Use Cauchy's integral formula and~\eqref{eq:estimate-almost-analytic},
\begin{align*}
\Pi_h 
& = -\frac{1}{\pi}\int_\C \bar{\partial}\tilde{\chi}(z) \big( \op_h\big(q_0(z)\big) + O\big(h|\Im z|^{-(N+1)}\big)_{\L(\Ltwo,\Ltwo)} \big) \d z \\
& = \op_h\big(-\frac{1}{\pi}\int_\C \bar{\partial}\tilde{\chi}(z) (z-\gamma^{(3/2)}\varphi^2)^{-1} \d z \big) \\
& \qquad \qquad -\frac{1}{\pi}\int_\C \bar{\partial}\tilde{\chi}(z) O\big(h|\Im z|^{-(N+1)}\big)_{\L(\Ltwo,\Ltwo)} \d z \\
& = \op_h\big(\chi(\gamma^{(3/2)}\varphi^2)\big) + O(h)_{\L(\Ltwo,\Ltwo)} \\
& = \Delta_h + O(h)_{\L(\Ltwo,\Ltwo)},
\end{align*}
where the last equality uses $ \chi(\gamma^{(3/2)}\varphi^2) = \chi(\gamma^{(3/2)}) $, for small~$ \varepsilon_0 $. 

As for $ D_s \Pi_h $, we apply the identity
\begin{equation*}
D_t (z-A)^{-1} = (z-A)^{-1}(D_t A)(z-A)^{-1}
\end{equation*}
to~\eqref{eq:formula-Helffer-Sjostrand} and deduce
\begin{equation*}
D_s \Pi_h = -\frac{h^{1/2}}{\pi} \int_{\C} \bar{\partial}\tilde{\chi}(z) (z-\Re\Zh)^{-1} (D_t\Re\Zh) (z-\Re\Zh)^{-1} \d z,
\end{equation*}
where $ D_t\Re\Zh = \Re (D_t \Zh) = \op_h(D_t \gamma^{(3/2)}\varphi^2) + h \op_h(D_t \zeta^{(1/2)}\varphi) + O(h^2)_{\L(\Ltwo,\Ltwo)} $. Therefore, use again Cauchy's integral formula,
\begin{equation}
\label{eq:expansion-D_s-Pi_h}
\begin{split}
D_s \Pi_h  =
& \op_h\Big(-\frac{h^{1/2}}{\pi}\int_{\C} \bar{\partial} \tilde{\chi}(z) D_t \gamma^{(3/2)} \varphi^2 (z-\gamma^{(3/2)} \phi^2)^{-2} \Big) \d z + O(h^{3/2})_{\L(\Ltwo,\Ltwo)} \\
& = -h^{1/2} \op_h\big(\partial_z\chi(\gamma^{(3/2)} \phi^2)D_t \gamma^{(3/2)} \varphi^2\big) + O(h^{3/2})_{\L(\Ltwo,\Ltwo)} \\
& = O((\varepsilon_0 + h) h^{1/2})_{\L(\Ltwo,\Ltwo)}.
\end{split}
\end{equation}
\end{proof}

\begin{corollary}
\label{cor:h-oscillation}
Suppose that~$ s $ is sufficiently large, $ T > 0 $, $ \varepsilon_0 > 0 $, and $ \udlu \in \Cls^{1,s}(T,\varepsilon_0) $. Let $ \chi'\in\Xi(\upsilon) $ be such that $ \chi'\chi=\chi $, then, for~$ \varepsilon_0 $ sufficiently small, 
\begin{equation}
\label{eq:h-oscillance-of-w(h)}
\vec{w}_h = \Delta'_h \vec{w}_h + R_{\Delta,h}\vec{w}_h,
\end{equation}
where $ \Delta'_h = \op_h\big(\chi'(\gamma^{(3/2)})\big) $, and
\begin{equation*}
\|R_{\Delta,h}\|_{C([0,T]_t,\L(\Ltwo,\Ltwo))} \lesssim h.
\end{equation*}
\end{corollary}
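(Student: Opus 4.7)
The plan is to show that the difference $\vec{w}_h - \Delta'_h \vec{w}_h$ is controlled by $O(h) \vec{w}_h$ by inserting, between the $\Delta'_h$ cut-off on the symbol side and the $\Pi_h$ cut-off on the operator side, an auxiliary operator defined from $\chi'$ via the semiclassical functional calculus. Concretely I would introduce
\begin{equation*}
\tilde{\Pi}_h \bydef \chi'(\Re \Zh),
\end{equation*}
which is well defined by the functional calculus of the self-adjoint Friedrichs extension $\Re \Zh$, exactly as for $\Pi_h$.

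\textbf{Two key ingredients.} First, because $\chi' \chi = \chi$, the spectral theorem gives the \emph{exact} identity
\begin{equation*}
\tilde{\Pi}_h \Pi_h \;=\; (\chi' \chi)(\Re \Zh) \;=\; \chi(\Re\Zh) \;=\; \Pi_h.
\end{equation*}
Second, the proof of Lemma~\ref{lem:semiclassical-functional-calculus} uses only the fact that the cut-off belongs to $\Xi(\upsilon)$ and that $\varphi \equiv 1$ on its support; both properties also hold for $\chi'$. Repeating the Helffer-Sj\"ostrand parametrix verbatim with $\chi'$ in place of $\chi$ therefore yields
\begin{equation*}
\tilde{\Pi}_h \;=\; \Delta'_h + O(h)_{\Linf([0,T]_t,\L(\Ltwo,\Ltwo))}.
\end{equation*}
Moreover the time-continuity of $\tilde{\Pi}_h$ on $\L(\Ltwo,\Ltwo)$ comes from the same formula (\ref{eq:expansion-D_s-Pi_h})-type bound for $D_t \tilde\Pi_h$, while that of $\Delta'_h$ follows from the continuity in $t$ of the symbol $\chi'(\gamma^{(3/2)}(\udlu))$, a consequence of $\udlu \in \Cls^{1,s}(T,\varepsilon_0)$ together with standard symbolic calculus.

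\textbf{Combining.} Writing $\Delta'_h = \tilde{\Pi}_h + (\Delta'_h - \tilde{\Pi}_h)$ and applying to $\vec{w}_h = \Pi_h \Delta_h \vec{w}$,
\begin{equation*}
\Delta'_h \vec{w}_h
\;=\; \tilde{\Pi}_h \Pi_h \Delta_h \vec{w} + (\Delta'_h - \tilde{\Pi}_h)\vec{w}_h
\;=\; \Pi_h \Delta_h \vec{w} + (\Delta'_h - \tilde{\Pi}_h)\vec{w}_h
\;=\; \vec{w}_h + (\Delta'_h - \tilde{\Pi}_h)\vec{w}_h,
\end{equation*}
so setting $R_{\Delta,h} = \tilde{\Pi}_h - \Delta'_h$ gives exactly the claimed decomposition (\ref{eq:h-oscillance-of-w(h)}) together with $\|R_{\Delta,h}\|_{C([0,T]_t,\L(\Ltwo,\Ltwo))} \lesssim h$.

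\textbf{Main obstacle.} There is no serious difficulty: the heart of the argument is the (essentially free) functional-calculus identity $\tilde{\Pi}_h \Pi_h = \Pi_h$, which converts the $h$-oscillation of $\vec{w}_h$ into a semiclassical approximation statement already proved in Lemma~\ref{lem:semiclassical-functional-calculus}. The only step requiring care is checking that the Helffer-Sj\"ostrand parametrix from that lemma transfers to $\chi'$ without change; this reduces to verifying that $\varphi \equiv 1$ on $\supp \chi'$ (which holds since $\supp \chi' \subset \{|z| \ge \upsilon^{3/2}\}$ and $\phi(z)=1$ for $z \ge \upsilon^{3/2}/2$) and that the ellipticity-uniformity estimates for $\Re\Zh$ used in the parametrix are independent of the choice of cut-off in $\Xi(\upsilon)$.
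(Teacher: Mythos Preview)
Your proposal is correct and follows essentially the same approach as the paper: the paper introduces $\Pi'_h = \chi'(\Re \Zh)$ (your $\tilde\Pi_h$), invokes Lemma~\ref{lem:semiclassical-functional-calculus} to get $\Pi'_h = \Delta'_h + O(h)$, and concludes via the functional-calculus identity $\Pi'_h \vec{w}_h = \Pi'_h \Pi_h \Delta_h \vec{w} = \Pi_h \Delta_h \vec{w} = \vec{w}_h$. Your write-up is slightly more explicit about why the lemma transfers to $\chi'$ and about the time-continuity, but the argument is the same.
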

\begin{proof}
Let $ \Pi'_h = \chi'(\Re \Zh) $, then Lemma~\ref{lem:semiclassical-functional-calculus} shows that
\begin{equation*}
\Pi'_h = \Delta'_h + O(h)_{C([0,T]_t,\L(\Ltwo,\Ltwo))}.
\end{equation*}
We conclude with the functional calculus $ \Pi'_h \vec{w}_h = \Pi'_h \Pi_h \Delta_h \vec{w} = \Pi_h \Delta_h \vec{w} = \vec{w}_h $.
\end{proof}

\begin{corollary}
\label{cor:Littlewood-Paley-for-Pi-h-Delta-h}
Suppose that~$ s $ is sufficiently large, $ T > 0 $, $ \varepsilon_0 > 0 $, and $ \udlu \in \Cls^{1,s}(T,\varepsilon_0) $. Then for~$ \varepsilon_0 $ sufficiently small, for all $ t \in [0,T] $, $ \mathfrak{D}_h = \Pi_h, \ \Delta_h $ or $ \Pi_h \Delta_h $, where we omit the time variable~$ t $ in $ \mathfrak{D}_h = \mathfrak{D}_h(t) $, for $ u \in \Ltwo(\Td) $,
\begin{equation*}
\sum_{\substack{h = 2^{-j} \\ j \ge 0}} \|\mathfrak{D}_h u\|_{\Ltwo}^2
\lesssim \|u\|_{\Ltwo}^2.
\end{equation*}
Moreover, for $ j_0 > 0 $ sufficiently large, and all $ N > 0 $,
\begin{equation*}
\sum_{\substack{h = 2^{-j} \\ j \ge j_0}} \|\mathfrak{D}_h u\|_{\Ltwo}^2
\gtrsim \|u\|_{\Ltwo}^2 - \|u\|_{\H{-N}}^2.
\end{equation*}
\end{corollary}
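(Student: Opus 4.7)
The approach is to analyze the operators $M_{\mathfrak{D}}^{(j_0)}\bydef\sum_{j\ge j_0}\mathfrak{D}_h^*\mathfrak{D}_h$ via symbolic calculus, exploiting the compact support of $\chi$ in a fixed window of $h^{3/2}\gamma^{(3/2)}$. By Lemma~\ref{lem:semiclassical-functional-calculus}, $\Pi_h=\Delta_h+O(h)_{\L(\Ltwo,\Ltwo)}$, so each $\mathfrak{D}_h\in\{\Delta_h,\Pi_h,\Pi_h\Delta_h\}$ has (standard-quantization) principal symbol a power $\chi^\kappa(h^{3/2}\gamma^{(3/2)}(x,\xi))$ with $\kappa\in\{1,2\}$, supported in a dyadic annulus $|\xi|\sim 2^j$ since $\gamma^{(3/2)}$ is homogeneous of degree $3/2$ in $\xi$ and $\gamma^{(3/2)}\sim|\xi|^{3/2}$ when $\varepsilon_0$ is small.

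For the upper bound, Calder\'on--Vaillancourt's theorem yields $\|\mathfrak{D}_h\|_{\L(\Ltwo,\Ltwo)}\lesssim 1$ uniformly in $h$. Fix a standard Fourier-multiplier Littlewood--Paley partition $\{\tilde\psi_k(D)\}_{k\ge 0}$ with $\tilde\psi_k$ supported in $|\xi|\sim 2^k$ and $\sum_k\tilde\psi_k^2\equiv 1$ on $\{|\xi|\ge 1\}$. By disjoint symbol supports, $\mathfrak{D}_h\tilde\psi_k(D)$ is smoothing of any order whenever $|k-j|>C_0=C_0(\upsilon)$. This gives
\begin{equation*}
\|\mathfrak{D}_h u\|_{\Ltwo}^2\lesssim\sum_{|k-j|\le C_0}\|\tilde\psi_k(D)u\|_{\Ltwo}^2+C_N\|u\|_{\H{-N}}^2
\end{equation*}
for any $N>0$; summing in $j$ and invoking the classical Littlewood--Paley theorem yields $\sum_{j\ge 0}\|\mathfrak{D}_h u\|_{\Ltwo}^2\lesssim\|u\|_{\Ltwo}^2$.

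For the lower bound, fix $j_0$ large. The decisive property is $\chi\equiv 1$ on $\{2\le z/\upsilon^{3/2}\le 4\}$: for each $(x,\xi)$ with $|\xi|\ge C\cdot 2^{j_0}$ (with $C$ independent of $j_0$), there exists a unique $j\ge j_0$ with $h^{3/2}\gamma^{(3/2)}(x,\xi)\in[2\upsilon^{3/2},4\upsilon^{3/2}]$, so $\chi(h^{3/2}\gamma^{(3/2)}(x,\xi))=1$. Therefore $\sigma(x,\xi)\bydef\sum_{j\ge j_0}\chi^{2\kappa}(h^{3/2}\gamma^{(3/2)}(x,\xi))$ is bounded above (finite overlap) and satisfies $\sigma\ge 1$ on $\{|\xi|\ge C\cdot 2^{j_0}\}$. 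By symbolic calculus, $M_{\mathfrak{D}}^{(j_0)}=\op(\sigma)+\tilde R$ with $\tilde R=\sum_{j\ge j_0}\op(r_j)$ where each $\op(r_j)=O(2^{-j})_{\L(\Ltwo,\Ltwo)}$, so $\|\tilde R\|_{\L(\Ltwo,\Ltwo)}\lesssim 2^{-j_0}$. Decomposing $u=\tilde\psi_{<j_0}(D)u+u_H$ into low- and high-frequency parts, applying sharp G\aa rding to $\sigma-\Psi^2\ge 0$ where $\Psi(D)$ is a smooth Fourier multiplier supported in $\{|\xi|\ge 2C\cdot 2^{j_0}\}$ with $\Psi^2\le\sigma$, and using $\|u_H\|_{\H{-1/2}}^2\lesssim 2^{-j_0}\|u_H\|_{\Ltwo}^2$ together with $\|\tilde\psi_{<j_0}(D)u\|_{\Ltwo}^2\lesssim C_{j_0,N}\|u\|_{\H{-N}}^2$, one obtains $\langle M_{\mathfrak{D}}^{(j_0)}u,u\rangle\gtrsim\|u\|_{\Ltwo}^2-\|u\|_{\H{-N}}^2$ for any $N>0$ upon enlarging $j_0$ sufficiently.

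The main technical burden is the uniform-in-$j$ control of the symbolic-calculus remainders $\op(r_j)$ when summing infinitely many terms: the compact support of $\chi$ in a fixed annulus of $h^{3/2}\gamma^{(3/2)}$ constrains the overlap between distinct scales to $C_0(\upsilon)$, and each intra-scale composition gains a factor $h=2^{-j}$, ensuring $\sum_j\|\op(r_j)\|_{\L(\Ltwo,\Ltwo)}$ converges and can be made arbitrarily small by enlarging $j_0$, which is precisely what allows one to absorb the sharp G\aa rding loss of half a derivative and close the lower bound.
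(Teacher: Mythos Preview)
Your approach is correct, and the key technical points—the finite overlap of the supports $\{|\xi|\sim 2^j\}$, the $O(h)$ gain in each intra-scale symbolic-calculus remainder, and the resulting summability $\sum_{j\ge j_0}O(2^{-j})=O(2^{-j_0})$—are the right ones. However, the paper takes a markedly simpler route that avoids both summing the operators $\mathfrak{D}_h^*\mathfrak{D}_h$ into a single symbol and the sharp G\aa rding inequality altogether.

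The paper's observation is that since $\gamma^{(3/2)}=|\xi|^{3/2}+O(\varepsilon_0)|\xi|^{3/2}$, one can sandwich $\mathfrak{D}_h$ scale-by-scale between genuine Fourier multipliers. Choosing $\chi',\chi''\in\Xi(\upsilon)$ with $\chi\chi'=\chi'$ and $\chi''\chi=\chi$, a single disjoint-support argument in the semiclassical calculus gives
\[
\Pi_h\Delta_h=O(1)_{\L(\Ltwo,\Ltwo)}\,\chi''(|hD_x|^{3/2})+O(h)_{\L(\Ltwo,\Ltwo)},\qquad
\chi'(|hD_x|^{3/2})=O(1)_{\L(\Ltwo,\Ltwo)}\,\Pi_h\Delta_h+O(h)_{\L(\Ltwo,\Ltwo)}.
\]
Both inequalities then reduce directly to the classical Littlewood--Paley theory for the Fourier multipliers $\chi'(|hD_x|^{3/2})$ and $\chi''(|hD_x|^{3/2})$, with the $O(h)$ errors summing to $O(2^{-j_0})\|u\|_{\Ltwo}$. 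Your route through $\op(\sigma)$ and sharp G\aa rding is more robust—it would generalize more readily if no clean Fourier-multiplier comparison were available—but here it is heavier machinery than needed; the paper's argument is essentially three lines once the two sandwich identities are written down.
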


\begin{proof}
We only prove the estimates for $ \mathfrak{D}_h = \Pi_h \Delta_h $. The rest is similar. By Lemma~\ref{lem:semiclassical-functional-calculus},
\begin{equation*}
\Pi_h \Delta_h 
= (\Delta_h + O(h)_{\L(\Ltwo,\Ltwo)}) \Delta_h
= \op_h(\chi(\gamma^{(3/2)})^2) + O(h)_{\L(\Ltwo,\Ltwo)}.
\end{equation*}
Let $ \chi',\chi'' \in \Xi(\upsilon) $ be such that $ \chi \chi' = \chi', \chi'' \chi = \chi $, then
\begin{align*}
\Pi_h\Delta_h
& = \Pi_h\Delta_h \chi''(|hD_x|^{3/2}) + \Pi_h \Delta_h (1-\chi''(|hD_x|^{3/2})) \\
& = O(1)_{\L(\Ltwo,\Ltwo)} \chi''(|hD_x|^{3/2}) + O(h)_{\L(\Ltwo,\Ltwo)}. \\
\chi'(|hD_x|^{3/2})
& =  \chi'(|hD_x|^{3/2}) \Pi_h\Delta_h +  \chi'(|hD_x|^{3/2}) (1 - \Pi_h\Delta_h) \\
& = O(1)_{\L(\Ltwo,\Ltwo)} \Pi_h\Delta_h + O(h)_{\L(\Ltwo,\Ltwo)}.
\end{align*}
Therefore, by Littlewood-Paley's theory,
\begin{align*}
\sum_{\substack{h = 2^{-j} \\ j \ge 0}} \|\Pi_h \Delta_h u\|_{\Ltwo}^2 
\lesssim \sum_{\substack{h = 2^{-j} \\ j \ge 0}} \| \chi''(|hD_x|^{3/2}) u\|_{\Ltwo}^2 + h^2 \|u\|_{\Ltwo}^2 
& \lesssim \|u\|_{\Ltwo}^2. \\
\sum_{\substack{h = 2^{-j} \\ j \ge j_0}} \|\Pi_h \Delta_h u\|_{\Ltwo}^2 + h^2 \|u\|_{\Ltwo}^2
\gtrsim \sum_{\substack{h = 2^{-j} \\ j \ge j_0}} \|\chi'(|hD_x|^{3/2}) u \|_{\Ltwo}^2 
& \gtrsim \|u\|_{\Ltwo}^2 - \|u\|_{\H{-N}}^2.
\end{align*}
We conclude for $ j_0 $ sufficiently large.
\end{proof}

\subsubsection{Equation for $ \Delta_h \vec{w} $}
\begin{proposition}
\label{prop:equation-Delta-h-w(h)}
Suppose that~$ s $ is sufficiently large, $ T > 0 $, $ \varepsilon_0 > 0 $, $ \udlu \in \Cls^{1,s}(T,\varepsilon_0) $. Then for~$ \varepsilon_0 $ sufficiently small, $ \Delta_h\vec{w} $ satisfies the equation
\begin{equation}
\label{eq:equation-Delta-h-w}
(h D_s + \A_h) \Delta_h \vec{w} + \tilde{R}_h \vec{w} = 0,
\end{equation}
where $ \A_h = h^{3/2} \op_h(\varphi) \A \op_h(\varphi) $, and
\begin{equation}
\label{eq:estimate-tilde-R-h}
\|\tilde{R}_h\|_{\Linf([0,T]_t,\L(\Ltwo,\Ltwo))} \lesssim (\varepsilon_0 + h^{1/2})h^{3/2}.
\end{equation}
\end{proposition}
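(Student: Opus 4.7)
The plan is to apply $\Delta_h$ directly to the semi\-classical form~\eqref{eq:equation-pseudo-diff-sys-(s,x)}. Since $\Delta_h = \op_h(\chi(\gamma^{(3/2)}))$ depends on $t = h^{1/2} s$ through $\udlu$, the Leibniz rule gives $[hD_s, \Delta_h] = h(D_s\Delta_h)$, and substituting into the equation yields
\begin{equation*}
hD_s \Delta_h \vec{w} + h^{3/2} \Delta_h \A \vec{w} = h (D_s \Delta_h) \vec{w}.
\end{equation*}
Rearranging so as to isolate $(hD_s + \A_h)\Delta_h\vec{w}$ on the left identifies the remainder as $\tilde{R}_h = (h^{3/2}\Delta_h \A - \A_h \Delta_h) - h(D_s\Delta_h)$, which I would estimate piece by piece in $\Linf([0,T]_t, \L(\Ltwo,\Ltwo))$.

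For the time-derivative piece $h(D_s\Delta_h)$, the argument transcribes that of Lemma~\ref{lem:semiclassical-functional-calculus}: differentiating the symbol gives $D_s\Delta_h = h^{1/2}\op_h(\chi'(\gamma^{(3/2)}) D_t\gamma^{(3/2)})$; the cutoff $\chi'$ renders the symbol bounded in~$\xi$, while $D_t\gamma^{(3/2)}$ depends linearly on $\pt\udlu \in \Linf([0,T],\Hdot{s-3/2})$ with norm $\lesssim \varepsilon_0$ (using Sobolev embedding for~$s$ large), yielding $\|h(D_s\Delta_h)\|_{\L(\Ltwo,\Ltwo)}\lesssim \varepsilon_0 h^{3/2}$.

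For the spatial piece, I would first eliminate the two $\op_h(\varphi)$ factors in $\A_h$: for $\varepsilon_0$ small, the semi\-classical support of $\Delta_h$, namely $\{\chi(\gamma^{(3/2)}(x, h\xi)) \ne 0\}$, is contained in $\{\varphi(h\xi) = 1\}$ by the construction of $\varphi$ and the ellipticity of $\gamma^{(3/2)}$. A disjoint-support argument then gives $\op_h(\varphi)\Delta_h = \Delta_h + O(h^\infty)_{\L(\Ltwo,\Ltwo)}$ and $(1 - \op_h(\varphi))\A\Delta_h = O(h^\infty)_{\L(\Ltwo,\Ltwo)}$, so that $\A_h\Delta_h = h^{3/2}\A\Delta_h + O(h^\infty)$ and the spatial piece reduces, modulo negligible terms, to the commutator $h^{3/2}[\Delta_h,\A]$. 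Decomposing $A$ according to~\eqref{eq:def-symbol-A} and converting each homogeneous piece to its semi\-classical form (so that $h^{3/2}\op(\gamma) = \op_h(\gamma_h)$, $h^{3/2}\op(V\cdot\xi) = h^{1/2}\op_h(V\cdot\xi)$, and the matrix terms built from $r^{\pm 1}$ and $m_{\b}$ acquire higher powers of~$h$), the Moyal calculus produces commutator symbols proportional to Poisson brackets with $\chi(\gamma^{(3/2)})$. Crucially, $\{\chi(\gamma^{(3/2)}), \gamma^{(3/2)}\} = \chi'(\gamma^{(3/2)})\{\gamma^{(3/2)},\gamma^{(3/2)}\}$ vanishes identically, so the principal part contributes $O(h^2)_{\L(\Ltwo,\Ltwo)}$; the $V\cdot\xi$ term contributes $\lesssim \varepsilon_0 h^{3/2}$ because $V(\udlu)$ and $\partial_x\gamma^{(3/2)}$ are both $O(\varepsilon_0)$; and the remaining matrix terms, of strictly lower order, contribute $O(h^{5/2})$ or less. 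Summing yields $\|\tilde{R}_h\|\lesssim \varepsilon_0 h^{3/2} + h^2 = (\varepsilon_0 + h^{1/2})h^{3/2}$, which is~\eqref{eq:estimate-tilde-R-h}.

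The main obstacle I expect is the careful bookkeeping required to juggle the three calculi that meet in $\A$, $\A_h$ and $\Delta_h$ — classical pseudo\-differential, semi\-classical, and the plain Fourier projector $\pi(D_x)$ — and to verify that the rescaling $h^{3/2}\op(a) = \op_h(a)$ on $(3/2)$-homogeneous symbols interacts cleanly with the frequency localization by $\Delta_h$, so that the disjoint-support and Moyal remainder estimates above are indeed justified as $\L(\Ltwo,\Ltwo)$ operator-norm bounds.
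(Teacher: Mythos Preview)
Your proposal is correct and follows essentially the same decomposition as the paper: the paper writes $\tilde{R}_h = [\Delta_h, hD_s] + h^{3/2}[\Delta_h,\A] + h^{3/2}\A(1-\op_h(\varphi))\Delta_h + h^{3/2}(1-\op_h(\varphi))\A\op_h(\varphi)\Delta_h$ and estimates the four pieces separately, which is algebraically identical to your split into the time-derivative term plus $(h^{3/2}\Delta_h\A - \A_h\Delta_h)$, with your ``eliminate the $\op_h(\varphi)$ factors'' step corresponding to the last two pieces and the commutator $h^{3/2}[\Delta_h,\A]$ being the paper's term~(II).

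Two small corrections. First, your disjoint-support claims $O(h^\infty)$ are too strong here because the symbols depend on $\udlu \in H^s$ with finite $s$; the paper only extracts $O(h^2)$ from these terms (via $(1-\op_h(\varphi))\Delta_h = O(h^2)_{\L(L^2,H^2_h)}$ and $h^{3/2}\langle D_x\rangle^{3/2}:H^2_h\to L^2$), which is all that is needed. Second, the bookkeeping you flag in your last paragraph is indeed where the care lies: $\A = \pi(D_x)\op(A\pi)$ carries the non-semiclassical projector $\pi$, and the paper disposes of it by noting that $\pi\partial_\xi\pi = 0$ on $\Z^d$ (so the sub-principal contribution from composing with $\pi(D_x)$ vanishes) and that $\{\chi(h^{3/2}\gamma^{(3/2)}),\pi\} = 0$ by disjoint supports, after which your Poisson-bracket argument $\{\chi(\gamma^{(3/2)}),\gamma^{(3/2)}\}=0$ goes through exactly as you describe.
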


\begin{remark}
\label{remark:A_h}
$ \A_h $ is of the form $ \A_h = \begin{pmatrix}
\Zh & 0 \\ 0 & -\Zh
\end{pmatrix} 
+{} \mathrm{small\ lower\ order\ terms}
$.
\end{remark}

\begin{proof}
The remainder $ \tilde{R}_h $ is explicitly written as follows,
\begin{equation*}
\begin{split}
\tilde{R}_h
& = [\Delta_h,h D_s] + h^{3/2} [\Delta_h,\A]  \\
& \qquad + h^{3/2} \A (1-\op_h(\varphi)) \Delta_h + h^{3/2} (1-\op_h(\varphi)) \A \op_h(\varphi) \Delta_h, \\
& = (\mathrm{I}) + (\mathrm{II}) + (\mathrm{III}) + (\mathrm{IV}).
\end{split}
\end{equation*}
Estimate the four terms respectively.

\textit{Estimate of }(I).
Use $ D_s = h^{1/2} D_t $, and bound $ D_t \gamma^{(3/2)} $ by $ \varepsilon_0 $,
\begin{equation*}
\begin{split}
(\mathrm{I}) 
& = - h^{3/2} \op_h(\partial_z\chi( \gamma^{(3/2)} )\cdot D_t \gamma^{(3/2)}) 
 = O(\varepsilon_0 h^{3/2})_{\L(\Ltwo,\Ltwo)}.
\end{split}
\end{equation*} 

\textit{Estimate of }(II).
The estimate of the commutator $ [\pi(D_x)\op(\gamma^{(3/2)}\pi),\Delta_h] $ is the main difficulty. By a symbolic calculus, and Remark~\ref{rmk:smallness-symbol},
\begin{equation*}
\begin{split}
\pi(D_x) \op(\gamma^{(3/2)}\pi)
& = \op(\gamma^{(3/2)}\pi) + \op(\pi \pxi\pi \cdot D_x \gamma^{(3/2)}) + O(\varepsilon_0)_{\L(\Ltwo,\Ltwo)} \\
& = \op(\gamma^{(3/2)}\pi) + O(\varepsilon_0)_{\L(\Ltwo,\Ltwo)},
\end{split}
\end{equation*}
where we use $ \pi \pxi \pi = 0 $ on $ \Z^d $, whence $\op(\pi \pxi\pi \cdot D_x \gamma^{(3/2)}) = 0$. Therefore,
\begin{equation*}
\begin{split}
[\Delta_h,\pi(D_x)\op(\gamma^{(3/2)}\pi)] 
& = [\Delta_h,\op(\gamma^{(3/2)}\pi)] + O(\varepsilon_0)_{\L(\Ltwo,\Ltwo)} \\
& = \frac{1}{i} \op \big( \{ \chi(h^{3/2}\gamma^{(3/2)}), \gamma^{(3/2)}\pi \} \big) + O(\varepsilon_0)_{\L(\Ltwo,\Ltwo)}.
\end{split} 
\end{equation*}
Observe that the Poisson bracket vanishes,
\begin{equation*}
\{ \chi(h^{3/2}\gamma^{(3/2)}), \gamma^{(3/2)}\pi \}
= \{ \chi(h^{3/2}\gamma^{(3/2)}), \gamma^{(3/2)} \} \pi
+ \{ \chi(h^{3/2}\gamma^{(3/2)}), \pi \} \gamma^{(3/2)} 
= 0.
\end{equation*}
Indeed, the first term vanishes because $ \chi(h^{3/2}\gamma^{(3/2)}) $ is a function of $ \gamma^{(3/2)} $. The singularity at $ 0 $ is not a problem when $ \varepsilon_0 $ is sufficiently small. The second term vanishes because the supports of $ \chi(h^{3/2}\gamma^{(3/2)}) $ and $ \pi $ are disjointed, for $ h $ and $ \varepsilon_0 $ sufficiently small. Combining the estimates of commutators of lower orders, $ (\mathrm{II}) = O((\varepsilon_0 + h^{1/2}) h^{3/2})_{\L(\Ltwo,\Ltwo)} $.

\textit{Estimate of }(III).
Write
\begin{equation*}
(\mathrm{III}) = \A \langle D_x \rangle^{-3/2} h^{3/2} \langle D_x \rangle^{3/2} (1-\op_h(\varphi)) \Delta_h.
\end{equation*}
Recall that $ \phi\chi = \chi $, so for $ \varepsilon_0 $ sufficiently small, 
$ (1-\op_h(\varphi)) \Delta_h = O(h^2)_{\L(\Ltwo,\H{2}_h)}$.
And $ h^{3/2} \langle D_x \rangle^{3/2} : \H{2}_h \to \H{1/2}_h \hookrightarrow \Ltwo $, $ \A \langle D_x \rangle^{-3/2} : \Ltwo \to \Ltwo $. Therefore, $ (\mathrm{III}) = O(h^2)_{\L(\Ltwo,\Ltwo)} $.

\textit{Estimate of (IV)}.
Observe that $ h^{3/2}\A \op_h(\varphi) $ is a semi\-classical pseudo\-differential operator with principle symbol $  \begin{pmatrix} \gamma^{(3/2)} & 0 \\ 0 & -\gamma^{(3/2)} \end{pmatrix} \varphi $. It suffices therefore to estimate $ (1-\op_h(\varphi))\op_h(\gamma^{(3/2)}\varphi)\Delta_h $. By the symbolic calculus, this is of order $ O(h^2)_{\L(\Ltwo,\H{2}_h)} $. Therefore, $ (\mathrm{IV}) = O(h^2)_{\L(\Ltwo,\Ltwo)} $.
\end{proof}

\subsubsection{Equation for $ \vec{w}_h = \Pi_h \Delta_h \vec{w} $}

\begin{proposition}
\label{prop:equation-w(h)}
Suppose that~$ s $ is sufficiently large, $ T > 0 $, $ \varepsilon_0 > 0 $, $ \udlu \in \Cls^{1,s}(T,\varepsilon_0) $. Then for~$ \varepsilon_0 $ sufficiently small, $ \vec{w}_h $ satisfies the equation
\begin{equation}
\label{eq:equation-w(h)}
(h D_s \vec{w}_h + \A_h) \vec{w}_h + R_h \vec{w} = 0,
\end{equation}
Let $ \Pi'_h = \chi'(\Re \Zh) $ with $ \chi'\in\Xi(\eta) $ and $ \chi'\chi=\chi $, then for some operators $ R_h^i $ ($ i=1,2,3 $) satisfying the estimates
\begin{equation}
\label{eq:estimate-R-h-i}
\|R_h^i\|_{\Linf([0,T]_t,\L(\Ltwo,\Ltwo))} \lesssim (\varepsilon_0 + h^{1/2}) h^{3/2},
\end{equation}
the following decomposition holds,
\begin{equation}
\label{eq:decomposition-R-h}
R_h = \Pi_h R_h^1 + R_h^2 \Pi'_h + R_h^3\Delta_h.
\end{equation}
\end{proposition}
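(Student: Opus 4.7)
The plan is to start from the equation
\begin{equation*}
(hD_s + \A_h)\Delta_h\vec{w} + \tilde{R}_h\vec{w} = 0
\end{equation*}
established in the preceding proposition, compose on the left with $\Pi_h$, and commute $\Pi_h$ through $hD_s + \A_h$. Since $\Pi_h(hD_s + \A_h) = (hD_s + \A_h)\Pi_h + [\Pi_h, hD_s + \A_h]$, we obtain $(hD_s + \A_h)\vec{w}_h + R_h\vec{w} = 0$ with
\begin{equation*}
R_h\vec{w} = \Pi_h\tilde{R}_h\vec{w} + [\Pi_h, hD_s]\Delta_h\vec{w} + [\Pi_h, \A_h]\Delta_h\vec{w}.
\end{equation*}

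The first piece is already of the form $\Pi_h R_h^1$ with $R_h^1 := \tilde{R}_h$, satisfying the desired bound by~\eqref{eq:estimate-tilde-R-h}. The second piece equals $-(hD_s\Pi_h)\Delta_h$, which fits the $R_h^3\Delta_h$ slot; the bound $\|hD_s\Pi_h\|_{\L(\Ltwo,\Ltwo)} \lesssim (\varepsilon_0+h)h^{3/2}$ is supplied by the time-derivative estimate in Lemma~\ref{lem:semiclassical-functional-calculus}.

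The main difficulty is to handle $[\Pi_h, \A_h]$. The crucial input is the near self-adjointness of $\Zh$: inspecting the semiclassical expansions of $\Zh$ and $\Zh^*$ computed in the proof of Lemma~\ref{lem:semiclassical-functional-calculus}, the identity~\eqref{eq:gamma-almost-selfadjoint} forces the order-$h$ subprincipal parts of $\Zh - \Zh^*$ to cancel, whence $\Im\Zh = O(h^2)_{\L(\Ltwo,\Ltwo)}$. Following Remark~\ref{remark:A_h}, write $\A_h = \mathrm{diag}(\Zh,-\Zh) + \mathrm{L}_h$ where $\mathrm{L}_h$ collects the $V\cdot\xi$ transport and the lower-order off-diagonal $r^{\pm 1}$, $m_{\b}$ couplings. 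Since $\Pi_h = \chi(\Re\Zh)$ commutes with $\Re\Zh$ by functional calculus,
\begin{equation*}
[\Pi_h, \mathrm{diag}(\Zh,-\Zh)] = [\Pi_h, \mathrm{diag}(i\Im\Zh,-i\Im\Zh)] = O(h^2)_{\L(\Ltwo,\Ltwo)},
\end{equation*}
while a semiclassical symbolic expansion of $[\Pi_h, \mathrm{L}_h]$, together with the smallness of the principal parts of $V$ and the lower-order structure of the remaining entries, yields the bound $O((\varepsilon_0+h^{1/2})h^{3/2})_{\L(\Ltwo,\Ltwo)}$.

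To reach the three-term decomposition, split $\Delta_h = \Pi'_h\Delta_h + (1-\Pi'_h)\Delta_h$. The relation $\chi'\chi = \chi$ together with the Helffer–Sjöstrand argument of Lemma~\ref{lem:semiclassical-functional-calculus} gives $\Pi'_h = \Delta'_h + O(h)_{\L(\Ltwo,\Ltwo)}$, and the semiclassical composition $\Delta'_h\Delta_h = \op_h(\chi'(\gamma^{(3/2)})\chi(\gamma^{(3/2)})) + O(h) = \Delta_h + O(h)$ yields $(1-\Pi'_h)\Delta_h = O(h)_{\L(\Ltwo,\Ltwo)}$. Thus
\begin{equation*}
[\Pi_h, \A_h]\Delta_h = [\Pi_h, \A_h]\Pi'_h + [\Pi_h, \A_h](1-\Pi'_h)\Delta_h,
\end{equation*}
so setting $R_h^2 := [\Pi_h, \A_h]$ fills the $R_h^2\Pi'_h$ slot, while the remaining piece folds into $R_h^3 := -hD_s\Pi_h + [\Pi_h, \A_h](1-\Pi'_h)$ with the same $(\varepsilon_0 + h^{1/2})h^{3/2}$ bound. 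The hardest step throughout is establishing the smallness of $[\Pi_h, \A_h]$, for which the near self-adjointness $\Im\Zh = O(h^2)$ is decisive; everything else is a routine semiclassical calculus exercise.
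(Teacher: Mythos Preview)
Your overall strategy is the paper's: apply $\Pi_h$ to the equation for $\Delta_h\vec{w}$, commute, and control $[\Pi_h,\A_h]$ via the near-self-adjointness $\Zh-\Zh^*=O(h^2)$ coming from~\eqref{eq:gamma-almost-selfadjoint}. That commutator estimate is the real content, and you handle it correctly.

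The gap is in your three-term decomposition. From $\Delta_h=\Pi'_h\Delta_h+(1-\Pi'_h)\Delta_h$ you obtain
\[
[\Pi_h,\A_h]\Delta_h=[\Pi_h,\A_h]\Pi'_h\Delta_h+[\Pi_h,\A_h](1-\Pi'_h)\Delta_h,
\]
not $[\Pi_h,\A_h]\Pi'_h+[\Pi_h,\A_h](1-\Pi'_h)\Delta_h$; you have silently dropped a $\Delta_h$. With your choices $R_h^2=[\Pi_h,\A_h]$ and $R_h^3=-hD_s\Pi_h+[\Pi_h,\A_h](1-\Pi'_h)$, the sum $\Pi_hR_h^1+R_h^2\Pi'_h+R_h^3\Delta_h$ differs from the true $R_h$ by $[\Pi_h,\A_h]\Pi'_h(1-\Delta_h)$. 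This is \emph{not} negligible: the relation $\chi'\chi=\chi$ forces $\chi'=1$ on $\supp\chi$, so $\chi'(1-\chi)\not\equiv 0$, and hence $\Pi'_h(1-\Delta_h)$ is $O(1)$, not $O(h)$.

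The paper avoids this entirely by assigning the pieces differently. Since $[\Pi_h,\A_h]\Delta_h$ already ends in $\Delta_h$, it is left intact as the $R_h^3\Delta_h$ term, with $R_h^3=[\Pi_h,\A_h]$. The $\Pi'_h$ factor is manufactured from the time-derivative term instead: differentiating the functional-calculus identity $\Pi_h=\Pi_h\Pi'_h$ gives
\[
hD_s\Pi_h=(hD_s\Pi_h)\Pi'_h+\Pi_h(hD_s\Pi'_h),
\]
so the contribution $-(hD_s\Pi_h)$ splits cleanly into $(-hD_s\Pi_h)\Pi'_h$, which is the $R_h^2\Pi'_h$ piece, and $-\Pi_h(hD_s\Pi'_h)$, which joins $\Pi_h\tilde R_h$ to form $\Pi_hR_h^1$ with $R_h^1=\tilde R_h-hD_s\Pi'_h$. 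This particular decomposition is not accidental: it is chosen so that each $R_h^i$ has exactly the structure needed for the Littlewood--Paley summation in Corollary~\ref{cor:littlewood-paley-R_h}.
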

\begin{proof}
Commuting~\eqref{eq:equation-Delta-h-w} with $ \Pi_h $, using the functional calculus $ \Pi_h = \Pi_h \Pi'_h $, the remainder $ R_h $ writes explicitly
\begin{equation*}
\begin{split}
R_h 
& = \Pi_h \tilde{R}_h + [\Pi_h,h D_s] + [\Pi_h,\A_h]\Delta_h \\
& = \Pi_h (\tilde{R}_h - hD_s \Pi'_h) + (-hD_s \Pi_h) \Pi'_h + [\Pi_h,\A_h]\Delta_h.
\end{split}
\end{equation*}
Therefore $ R^1_h = \tilde{R}_h - hD_s \Pi'_h $, $ R^2_h = -hD_s \Pi_h $, $ R^3_h = [\Pi_h,\A_h] $.
The estimates of $ R^1_h $ and $ R^2_h $ follow from Lemma~\ref{lem:semiclassical-functional-calculus} and~\eqref{eq:estimate-tilde-R-h}.
In view of Remark~\ref{remark:A_h}, the main estimate of $ R^3_h $ is that of the commutator $ [\Pi_h,\Zh] $. To do this, write $ \Zh = \Re\Zh + \frac{1}{2}(\Zh - \Zh^*) $, where by a functional calculus $ [\Pi_h,\Re \Zh] = 0 $, while by~\eqref{eq:symbolic-expansion-Z} and~\eqref{eq:gamma-almost-selfadjoint},
\begin{equation}
\label{eq:estimate-Z_h-Z_h}
\Zh-\Zh^*
= h\op\big( (2i\Im\gamma^{(1/2)}-\pxi\cdot D_x \gamma^{(3/2)}) \varphi^2 \big) + O(h^2)_{\L(\Ltwo,\Ltwo)}
= O(h^2)_{\L(\Ltwo,\Ltwo)}.
\end{equation}
Therefore $ [\Pi_h,\Zh] = O(h^2)_{\L(\Ltwo,\Ltwo)} $, and combining the estimates of commutators with lower order terms, $ (\mathrm{III}) = O((\varepsilon_0 + h^{1/2})h^{3/2})_{\L(\Ltwo,\Ltwo)} $.
\end{proof}

\begin{corollary}
\label{cor:littlewood-paley-R_h}
Suppose that~$ s $ is sufficiently large, $ T > 0 $, $ \varepsilon_0 > 0 $, $ \udlu \in \Cls^{1,s}(T,\varepsilon_0) $. Then for~$ \varepsilon_0 $ sufficiently small, for almost every $ t \in [0,T] $ and for all $ j_0 \in \N $,
\begin{equation*}
\sum_{\substack{h=2^{-j} \\ j \ge j_0}} \|h^{-3/2} R_h u\|_{\Ltwo}^2
\lesssim (\varepsilon_0^2 + 2^{-j_0}) \|u\|_{\Ltwo}^2.
\end{equation*}
\end{corollary}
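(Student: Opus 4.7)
The plan is to decompose $R_h$ via~\eqref{eq:decomposition-R-h} and bound each of the three pieces $R_h^3 \Delta_h$, $R_h^2 \Pi'_h$, and $\Pi_h R_h^1$ separately. All three rely on the operator-norm bound $\|h^{-3/2} R_h^i\|_{\L(\Ltwo,\Ltwo)}^2 \lesssim \varepsilon_0^2 + 2^{-j}$ (with $h = 2^{-j}$) extracted from~\eqref{eq:estimate-R-h-i}, but the essential distinction is whether the frequency-localizing factor sits on the right of $R_h^i$ (easy) or on the left (harder).

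For $R_h^3 \Delta_h$, the direct estimate gives
\[
\sum_{j \ge j_0} \|h^{-3/2} R_h^3 \Delta_h u\|_{\Ltwo}^2 \le \sup_{j \ge j_0}(\varepsilon_0^2 + 2^{-j}) \cdot \sum_{j \ge j_0}\|\Delta_h u\|_{\Ltwo}^2 \lesssim (\varepsilon_0^2 + 2^{-j_0})\|u\|^2_{\Ltwo},
\]
where the last inequality is the Littlewood-Paley estimate of Corollary~\ref{cor:Littlewood-Paley-for-Pi-h-Delta-h}. The piece $R_h^2 \Pi'_h$ is handled identically once one writes $\Pi'_h = \Delta'_h + O(h)_{\L(\Ltwo,\Ltwo)}$ through Lemma~\ref{lem:semiclassical-functional-calculus} and applies the same corollary to $\Delta'_h$.

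For $\Pi_h R_h^1$, the naive bound gives $\sum_{j \ge j_0}(\varepsilon_0^2 + 2^{-j})\|u\|^2$, which diverges in its $\varepsilon_0^2$ tail, so I would instead set up a $TT^*$ argument. Define $T \colon \Ldottwo \to \ell^2_{j \ge j_0}\Ltwo$ by $Tu = (h^{-3/2}\Pi_h R_h^1 u)_{j \ge j_0}$; since $\Pi_h$ is self-adjoint as a Borel function of the self-adjoint operator $\Re \Zh$, one has $(TT^*)_{jk} = h_j^{-3/2}h_k^{-3/2}\Pi_{h_j} R_{h_j}^1 (R_{h_k}^1)^* \Pi_{h_k}$, and Schur's test reduces the desired bound $\|T\|^2 \lesssim \varepsilon_0^2 + 2^{-j_0}$ to an off-diagonal estimate of the type
\[
\|(TT^*)_{jk}\|_{\L(\Ltwo,\Ltwo)} \lesssim_N (\varepsilon_0^2 + 2^{-\min(j,k)})\, 2^{-N|j-k|}, \qquad \forall N>0.
\]

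The main obstacle is justifying this off-diagonal decay. The plan is to inspect $R_h^1 = \tilde R_h - hD_s \Pi'_h$ term by term: the commutators $[\Delta_h, hD_s]$ and $h^{3/2}[\Delta_h, \A]$ are semiclassical pseudodifferential operators whose symbols sit in the frequency shell $\{|\xi|\sim h^{-1}\}$ inherited from $\chi'(\gamma^{(3/2)})$ or $\chi(\gamma^{(3/2)})$; the remainder terms involving the factor $(1-\op_h(\varphi))\Delta_h$ are of operator norm $O(h^2)$ thanks to $\phi\chi = \chi$ and are already absorbed by the crude diagonal estimate; and $hD_s\Pi'_h$ is computed in~\eqref{eq:expansion-D_s-Pi_h} as a pseudodifferential operator with symbol supported on the shell where $\partial_z\chi$ lives. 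The outer cutoffs $\Pi_{h_j}$ and $\Pi_{h_k}$ then force these shells to match, which fails once $|j-k|$ exceeds a fixed constant, and a standard semiclassical non-stationary phase (or symbolic composition) argument produces the required Schwartz decay in $|j-k|$, completing the Cotlar-Stein estimate.
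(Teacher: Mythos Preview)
Your treatment of $R_h^3\Delta_h$ and $R_h^2\Pi'_h$ matches the paper. The difference is in how you handle $\Pi_h R_h^1$.

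The paper does not set up a $TT^*$/Cotlar--Stein argument. Instead it simply drops the outer $\Pi_h$ (which is uniformly bounded on $\Ltwo$) and shows directly that $\sum_{j\ge j_0}\|h^{-3/2}R_h^1 u\|_{\Ltwo}^2 \lesssim (\varepsilon_0^2+2^{-j_0})\|u\|_{\Ltwo}^2$. This works because each piece of $R_h^1=\tilde R_h - hD_s\Pi'_h$ is, after scaling by $h^{-3/2}$, already a semiclassical pseudodifferential operator with symbol supported in the dyadic shell (up to an $O(h)$ error), so ordinary Littlewood--Paley applies without any almost-orthogonality machinery. Notice that this is exactly the information you invoke to justify your off-diagonal decay; once you have it, the $TT^*$ detour is unnecessary.

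There is one genuine technical point you gloss over. You assert that $(\mathrm{II})=h^{3/2}[\Delta_h,\A]$ has symbol sitting in the shell, to be handled by ``standard semiclassical symbolic composition''. But $\A=\pi(D_x)\op(A\pi)$ contains the cutoff $\pi$, which in semiclassical coordinates becomes $\pi_h(\xi)=\pi(\xi/h)$ with $\xi$-derivatives of size $h^{-1}$, so the usual symbolic calculus does not apply cleanly. The paper deals with this by inserting a Fourier dyadic cutoff on the right, writing $(\mathrm{II})=(\mathrm{II})\chi'(|hD_x|^{3/2})+(\mathrm{II})(1-\chi'(|hD_x|^{3/2}))$; the first piece now has an honest dyadic factor and the second is shown to be $O(h^2)_{\L(\Ltwo,\Ltwo)}$ via a separate elementary lemma (Lemma~\ref{lem:semiclassical-calculus-singularity-0}) that bounds $\op_h(\varphi)\op(a_h)$ when the supports are disjoint, without requiring control on $\xi$-derivatives of $a_h$. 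Your sketch would need an analogous device to make the off-diagonal estimate rigorous, so the paper's route is both shorter and more explicit about where the real work lies.
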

\begin{proof}
We omit the time variable~$ t $ for simplicity. By Corollary~\ref{cor:Littlewood-Paley-for-Pi-h-Delta-h} and the estimates that $  h^{-3/2} R^i_h = O(\varepsilon_0 + h^{1/2})_{\L(\Ltwo,\Ltwo)} $,
\begin{equation*}
\begin{split}
\sum_{\substack{h=2^{-j} \\ j \ge j_0}} \|h^{-3/2} R^2_h \Pi'_h u\|_{\Ltwo}^2 
 + & \sum_{\substack{h=2^{-j} \\ j \ge j_0}} \|h^{-3/2} R^3_h \Delta_h u\|_{\Ltwo}^2 \\
\lesssim (\varepsilon_0^2 + 2^{-j_0}) 
& \Big( \sum_{\substack{h=2^{-j} \\ j \ge j_0}}  \|\Pi'_h u\|_{\Ltwo}^2
+ \sum_{\substack{h=2^{-j} \\ j \ge j_0}} \|\Delta_h u\|_{\Ltwo}^2   \Big)
\lesssim (\varepsilon_0^2 + 2^{-j_0}) \|u\|_{\Ltwo}^2.
\end{split}
\end{equation*}
The main difficulty is to estimate the square sum for $ \Pi_h R^1_h $. Recall that
\begin{equation*}
R^1_h = \tilde{R}_h - h D_s \Pi'_h.
\end{equation*}
By Lemma~\ref{lem:semiclassical-functional-calculus}, in particular~\eqref{eq:expansion-D_s-Pi_h}, and apply the same proof as Corollary~\ref{cor:Littlewood-Paley-for-Pi-h-Delta-h},
\begin{equation*}
\sum_{\substack{h=2^{-j} \\ j \ge j_0}} \|h^{-3/2} \Pi_h h D_s\Pi'_h u\|_{\Ltwo}^2
\lesssim \sum_{\substack{h=2^{-j} \\ j \ge j_0}} \|h^{-1/2} D_s \Pi'_h u\|_{\Ltwo}^2 
\lesssim (\varepsilon_0^2 + 2^{- j_0}) \|u\|_{\Ltwo}^2.
\end{equation*}
Indeed, we use the identity
\begin{equation*}
h^{-1/2} D_s \Pi'_h = - \op_h\big(\partial_z\chi'(\gamma^{(3/2)} \phi^2)D_t \gamma^{(3/2)} \varphi^2\big) + O(h)_{\L(\Ltwo,\Ltwo)},
\end{equation*}
and apply Littlewood-Paley's theory.

It remains to estimate $ \Pi_h \tilde{R}_h $. Use the decomposition for $ \tilde{R}_h $ (Proposition~\eqref{prop:equation-Delta-h-w(h)}), the terms (I),  (III)  and  (IV)  pose no problem because they each ends with $ \Delta_h $ or $ h D_s \Delta_h $. A similar argument as above works. However (II) should be treated with care. Write
\begin{equation*}
(\mathrm{II}) = (\mathrm{II}) \chi'(|hD_x|^{3/2}) + (\mathrm{II}) (1-\chi'(|hD_x|^{3/2})).
\end{equation*}
By $ \Pi_h (\mathrm{II}) = O((\varepsilon_0 + h^{1/2})h^{3/2})_{\L(\Ltwo,\Ltwo)} $, the estimate of $ \Pi_h (\mathrm{II}) \chi'(|h D_x|^{3/2}) $ is exactly the same as before, 
\begin{equation*}
\sum_{\substack{h=2^{-j} \\ j \ge j_0}} \|h^{-3/2} \Pi_h (\mathrm{II}) \chi''(|h D_x|^{3/2}) u\|_{\Ltwo}^2
\lesssim (\varepsilon_0^2 + 2^{-j_0}) \|u\|_{\Ltwo}^2.
\end{equation*}
As for the second term, separate it into two halves by expanding the commutator,
\begin{equation*}
(\mathrm{II}) (1-\chi'(|hD_x|^{3/2})) = h^{3/2} \Delta_h \A (1-\chi'(|hD_x|^{3/2})) - h^{3/2} \A \Delta_h (1-\chi'(|hD_x|^{3/2})),
\end{equation*}
and estimate them separately by semi\-classical symbolic calculus. It suffices to show that each half is of order $ O(h^{2})_{\L(\Ltwo,\Ltwo)} $. For the second half,
\begin{equation*}
\Delta_h (1-\chi'(|hD_x|^{3/2})) = O(h^2)_{\L(\Ltwo,\H{2}_h)} = O(h^2)_{\L(\Ltwo,\H{3/2}_h)} = O(h^{1/2})_{\L(\Ltwo,\H{3/2})}.
\end{equation*}
Therefore $ h^{3/2} \A \Delta_h (1-\chi'(|hD_x|^{3/2})) = O(h^{2})_{\L(\Ltwo,\Ltwo)} $. As for the first half, we only do the estimate for the principal term. However, the basic semi\-classical symbolic calculus cannot be directly applied here, for 
\begin{equation*}
h^{3/2}\op(\gamma \pi) (1-\chi'(|hD_x|^{3/2}))
= \op_h\big(\gamma_h(1-\chi'(|\xi|^{3/2}))\pi_h\big),
\end{equation*}
with $ \pi_h(\xi) = \pi(\xi/h) $, whose $ \xi $-derivative is unbounded as~$ h $ to~0. However, we have the following Lemma~\ref{lem:semiclassical-calculus-singularity-0}, which shows that no problems can be caused by the low frequency, where the estimate of the $ \xi $-derivatives of the symbols are not required. Therefore the first term is also of order $ O(h^2)_{\L(\Ltwo,\Ltwo)} $, as long as~$ s $ is sufficiently large.
\end{proof}

\begin{lemma}
\label{lem:semiclassical-calculus-singularity-0}
Let $ \varphi \in C(\Rd) \cap \Linf(\Rd) $ and let $ \{a_h\}_{h > 0} \subset \Symbol{m}{\rho} $ (see Appendix~\ref{sec:paradifferential-calculus}) be a family of symbols depending on~$ h $. Suppose that there exists $ 0 < C_1 < C_2 $, such that for~$ h $ sufficiently small,
\begin{equation*}
\supp \varphi \subset \{\xi:|\xi|>C_2\}, \quad
\supp a_h \subset \Td \times \{\xi:|\xi|<C_1 h^{-1}\}.
\end{equation*}
Then if~$ \rho = 2N $, with $ \N \ni N > d $,
\begin{equation*}
\|\op_h(\varphi) \op(a_h)\|_{\L(\Ltwo,\Ltwo)} \lesssim h^{\rho/2-m} \M{m}{\rho}{0}{a_h} \|u\|_{\Ltwo}.
\end{equation*}
\end{lemma}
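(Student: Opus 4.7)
The strategy is to work entirely on the Fourier side, where the frequency gap created by $C_1 < C_2$ converts $x$-regularity of $a_h$ into smallness in~$h$. Setting $T_h \bydef \op_h(\varphi) \op(a_h)$, a direct computation on~$\Td$ gives, for $u \in \Ltwo(\Td)$,
\begin{equation*}
\widehat{T_h u}(k) = \varphi(hk) \sum_{l\in\Z^d} \hat{a}_h(k-l,l)\,\hat{u}(l),
\end{equation*}
where $\hat{a}_h(\cdot,l)$ denotes the $x$-Fourier coefficients of $a_h(\cdot,l)$. The two support hypotheses then force the frequency separation: whenever $\varphi(hk) \ne 0$ and $a_h(\cdot, l) \ne 0$, one has $|k| \ge C_2 h^{-1}$, $|l| < C_1 h^{-1}$, hence $|k-l| \ge (C_2-C_1)h^{-1} \gtrsim h^{-1}$.

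The next step is to apply integration by parts in~$x$, which is legitimate because $a_h \in \Symbol{m}{\rho}$ has $\rho$ derivatives in~$x$ controlled by $\M{m}{\rho}{0}{a_h}$: this gives
\begin{equation*}
|\hat{a}_h(\zeta,l)| \lesssim \M{m}{\rho}{0}{a_h}\, (1+|l|)^m\, (1+|\zeta|)^{-\rho}.
\end{equation*}
I would then split $(1+|k-l|)^{-\rho} = (1+|k-l|)^{-\rho/2}\cdot(1+|k-l|)^{-\rho/2}$, using the frequency gap $|k-l|\gtrsim h^{-1}$ to turn the first factor into the gain $h^{\rho/2}$, and keeping the second factor for Schur summability. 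Combining with $(1+|l|)^m \lesssim h^{-m}$, which holds on $|l| < C_1 h^{-1}$ (and is only improved when $m<0$), yields on the relevant support
\begin{equation*}
|\varphi(hk)\,\hat{a}_h(k-l,l)| \lesssim \|\varphi\|_{\Linf}\,\M{m}{\rho}{0}{a_h}\, h^{\rho/2-m}\, (1+|k-l|)^{-\rho/2}.
\end{equation*}

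I would conclude by applying Schur's test to this kernel on $\ell^2(\Z^d)$: both row and column sums reduce to $\sum_{n\in\Z^d}(1+|n|)^{-\rho/2}$, which converges precisely because $\rho = 2N > 2d$. Parseval then promotes the matrix bound to the announced operator estimate $\|T_h\|_{\L(\Ltwo,\Ltwo)} \lesssim h^{\rho/2-m}\,\M{m}{\rho}{0}{a_h}$. \textbf{The hard part.} There is no genuinely difficult step, but the main conceptual obstacle worth flagging is that a standard $\Ltwo$-boundedness theorem for $\op(a_h)$ would require $\xi$-regularity of~$a_h$, which the third index~$0$ of the seminorm $\M{m}{\rho}{0}{\cdot}$ deliberately refuses—this is exactly the reason the lemma is invoked in the proof of Corollary~\ref{cor:littlewood-paley-R_h} where $\pi_h$ has unbounded $\xi$-derivatives. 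The frequency-gap argument replaces $\xi$-derivatives by $x$-derivatives plus a power of~$h$, and the arithmetic of the exponents $\rho/2-m$ versus the summability threshold $\rho/2 > d$ conspires so that the hypothesis $\rho = 2N$, $N>d$, just suffices for both halves of the split.
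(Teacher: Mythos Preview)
Your proof is correct and follows essentially the same route as the paper: compute on the Fourier side, exploit the frequency gap $|k-l|\gtrsim h^{-1}$, trade $\rho=2N$ derivatives in~$x$ for decay $\langle k-l\rangle^{-2N}$ of the kernel, split this decay half into the $h^{N}$ gain and half into summability, and conclude by Schur/Young since $N>d$. The paper's write-up differs only cosmetically (it phrases the integration by parts via $\Lambda^{2N}=\langle D_x\rangle^{2N}$ and uses Young's convolution inequality in place of Schur), and your remark about why only the seminorm with third index~$0$ appears is exactly the point.
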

\begin{proof}
Set $ A_h = \op_h(\varphi) \op(a_h) $, $ \Lambda^\mu = \langle D_x \rangle^\mu $ for $ \mu \in \R $. Let $ \widehat{a_h} $ denote the Fourier transform of~$ a_h $ with respect to the~$ x $. Then for $ \xi \in \Z^d \cap \supp \varphi(h\cdot) $,
\begin{align*}
\widehat{A_hu}(\xi)
= \varphi(h\xi) \widehat{\op(a_h) u}(\xi) 
& = (2\pi)^{-d} \sum_{\eta\in\Z^d} \varphi(h\xi) \widehat{a_h}(\xi-\eta,\eta) \hat{u}(\eta) \\
& = (2\pi)^{-d} \sum_{|\eta| < C_1 h^{-1}} \frac{1}{\jp{\xi-\eta}^{2N}} \varphi(h\xi) \widehat{\Lambda^{2N} a_h}(\xi-\eta,\eta) \hat{u}(\eta).
\end{align*}
Observe that by our hypothesis, $ |\xi - \eta| \gtrsim h^{-1} $ whenever $ \xi \in \sup \varphi(h\cdot) \subset \{\xi : |\xi| > C_2 h^{-1}\} $ and $ |\eta| < C_1 h^{-1} $, and use the fact that the Fourier transform defines a bounded operator from $ \Linf(\Td) \subset \Lone(\Td) $ to $ \ell^\infty(\Z^d) $, we have
\begin{align*}
|\widehat{Au}(\xi)|
& \lesssim h^N \sum_{|\eta| < C_1 h^{-1}} \frac{1}{\jp{\xi-\eta}^{N}} |\varphi(h\xi) \widehat{\Lambda^{2N} a_h}(\xi-\eta,\eta) \hat{u}(\eta)| \\
& \lesssim h^{N} \|\varphi\|_{\Linf} \sum_{|\eta| < C_1 h^{-1}} \|a_h(\cdot,\eta)/\jp{\eta}^m\|_{W^{2N,\infty}} \frac{\jp{\eta}^m}{\jp{\xi-\eta}^{N}} |\hat{u}(\eta)| \\
& \lesssim h^{N-m} \M{m}{2N}{0}{a_h} \sum_{\eta \in \Z^d} \frac{|\hat{u}(\eta)|}{\jp{\xi-\eta}^{N}}.
\end{align*}
Consequently, for $ \rho /2 = N > d $,
\begin{align*}
\|Au\|_{\Ltwo(\Td)} 
& \lesssim \|\widehat{Au}\|_{\ell^2(\Z^d)}
\lesssim h^{N-m} \M{m}{\rho}{0}{a_h} \|\jp{\cdot}^{-N} \ast |\hat{u}|\|_{\ell^2(\Z^d)} \\
& \lesssim h^{N-m} \M{m}{\rho}{0}{a_h} \|\jp{\cdot}^{-N}\|_{\ell^1(\Z^d)} \|\hat{u}\|_{\ell^2(\Z^d)}
\lesssim h^{N-m} \M{m}{\rho}{0}{a_h} \|u\|_{\Ltwo(\Td)}.
\end{align*}
\end{proof}

\subsection{Semiclassical Observability}

In this section, unless otherwise specified, the time variable will be $ s $, and thus we write $ [0,T] = [0,T]_s $ for simplicity. The purpose is to prove the following Proposition~\ref{prop:semiclassical-observability}, by carefully adapting~\cite{Gerard-Semiclassical-Measure} to our case.

\begin{remark}
We will deal with both the symbols defined on $ T^*\Td $ and $ T^*(\R\times\Td) $. To void ambiguity, the semiclassical quantification operator $ \bop_h $ will be applied to symbols defined on $ T^*(\R\times\Td) \simeq \R_s\times\Td_x\times\R_\sigma\times\Rd_\xi $, that is, for $ a = a(s,x,\sigma,\xi) $
\begin{equation*}
\bop_h(a) \bydef a(s,x,hD_s,hD_x);
\end{equation*}
while $ \op_h $ will only be applied to symbols defined on $ T^*\Td \simeq \Td_x\times\Rd_\xi$, that is, for $ a = a(x,\xi) $,
\begin{equation*}
\op_h(a) = a(x,hD_x).
\end{equation*}
\end{remark}

\begin{proposition}
\label{prop:semiclassical-observability}
Suppose that~$ \omega $ satisfies the geometric control condition, $ s $ is sufficiently large, $ T > 0 $, then for some $ \varepsilon_0 > 0 $ and $ h_0 > 0 $ sufficiently small, for all $ 0 < h < h_0 $ and for $ \udlu \in \Cls^{0,s}(h^{1/2}T,\varepsilon_0) $ (we only put $ h^{1/2}T $ here because we are dealing with the time variable $ s = h^{-1/2} t $), the solution $ \vec{w}_h = \binom{w^+_h}{w^-_h} \in C([0,T],\Ltwo(\Td)) $ to the equation
\begin{equation}
\label{eq:equation-semiclassical-w-f}
(hD_s + \A_h(\udlu)) \vec{w}_h = \vec{f}_h,
\end{equation}
where $ \vec{f}_h \in \Ltwo([0,T],\Ltwo(\Td)) $, satisfies a semi\-classical observability
\begin{equation}
\label{eq:observability-semiclassical-Ltwo-system}
\|\vec{w}_h(0)\|_{\Ltwo}^2 \lesssim \int_0^T \|\phiw \vec{e}\cdot\vec{w}_h\|_{\Ltwo}^2 \ds + h^{-2} \|\vec{f}_h\|_{\Ltwo([0,T],\Ltwo)}^2,
\end{equation}
provided that
\begin{equation}
\label{eq:h-oscillation}
\vec{w}_h = \Pi'_h(\udlu) \vec{w}_h + o(1)_{C([0,T],\L(\Ltwo,\Ltwo))}\vec{w}_h,
\end{equation}
where $ \Pi'_h(\udlu) $ is defined as in Corollary~\ref{cor:h-oscillation}.
\end{proposition}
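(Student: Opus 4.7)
The plan is to argue by contradiction using semiclassical defect measures, in the spirit of Lebeau. If~\eqref{eq:observability-semiclassical-Ltwo-system} fails then, for arbitrarily small $\varepsilon_0, h_0 > 0$, one finds a sequence $h_n \to 0$, backgrounds $\udlu_n \in \Cls^{0,s}(h_n^{1/2}T,\varepsilon_0)$, and $h_n$-oscillating solutions $\vec{w}_n$ to $(hD_s + \A_{h_n}(\udlu_n))\vec{w}_n = \vec{f}_n$ satisfying $\|\vec{w}_n(0)\|_{\Ltwo} = 1$ while
\begin{equation*}
\int_0^T \|\phiw \vec{e}\cdot\vec{w}_n\|_{\Ltwo}^2 \ds + h_n^{-2}\|\vec{f}_n\|_{\Ltwo([0,T],\Ltwo)}^2 \to 0.
\end{equation*}
A direct energy estimate yields $\|\vec{w}_n(s)\|_{\Ltwo}^2 = 1 + o(1)$ uniformly on $[0,T]$, so $\vec{w}_n$ is bounded in $\Ltwo([0,T];\Ltwo(\Td))$ with mass bounded below on every subinterval.

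Up to a subsequence, I would associate to $(\vec{w}_n)$ a semiclassical defect measure $\mu$: a Hermitian $2\times 2$ matrix-valued nonnegative Radon measure on $T^*(\R_s\times\Td_x)$, characterized by $(\bop_{h_n}(a)\vec{w}_n,\vec{w}_n)_{\Ltwo} \to \langle\mu, a\rangle$ for every compactly supported Hermitian symbol~$a$. The $h$-oscillation~\eqref{eq:h-oscillation}, together with the cut-off $\chi'\in\Xi(\upsilon)$, localizes $\mu$ in $\{\upsilon^{3/2} \le |\xi|^{3/2} \le 5\upsilon^{3/2}\}$ and prevents escape of mass at $|\xi|\to 0,\infty$, so the lower bound on $\|\vec{w}_n(s)\|_{\Ltwo}^2$ transfers to strictly positive total mass of $\mu$ on any slab $(s_0,s_1)\subset(0,T)$.

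The equation forces $\supp\mu$ to lie in the characteristic variety of the principal symbol $\sigma I_2 + \gamma^{(3/2)}(x,\xi)\,\mathrm{diag}(1,-1)$. Writing $\mu = (\mu^{\alpha\beta})_{\alpha,\beta\in\{+,-\}}$, the standard left-right test yields $(\sigma + \alpha\gamma^{(3/2)})\mu^{\alpha\beta} = \mu^{\alpha\beta}(\sigma + \beta\gamma^{(3/2)}) = 0$; since $\gamma^{(3/2)} > 0$ on $\supp\mu$, the two sheets $\sigma = \mp\gamma^{(3/2)}$ are disjoint, forcing $\mu^{\pm\mp}=0$ and leaving only the diagonal parts $\mu^{\pm\pm}$ on opposite sheets. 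The usual commutator computation with $\bop_{h_n}(a)$, tested against $\vec{w}_n$ and passed to the limit, then shows that $\mu^{\pm\pm}$ is invariant under the Hamiltonian flow of $\sigma \pm \gamma^{(3/2)}$; modulo $O(\varepsilon_0 + h)$ corrections from $\udlu_n$ and from the lower-order terms in $\A_{h_n}$, these flows project on $T^*\Td$ onto reparametrizations of the geodesic flow with group speed $\tfrac{3}{2}|\xi|^{1/2} \ge \tfrac{3}{2}\upsilon^{1/2}$.

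Finally, the vanishing of $\int_0^T\|\phiw\vec{e}\cdot\vec{w}_n\|_{\Ltwo}^2\ds$ means that the scalar measure $\sum_{\alpha,\beta}\mu^{\alpha\beta}$ is killed by $|\phiw|^2$, so by the bound $\phiw \ge 1_{\omega'}$ and the vanishing of off-diagonals, each $\mu^{\pm\pm}$ vanishes on $(0,T)\times\omega'\times\Rd_\xi$. The geometric control condition on $\omega' \subset \omega$, in the quantitative form~\eqref{eq:condition-propagation-speed}, guarantees that every straight-line geodesic meets $\omega'$ within rescaled time $T$, so propagation invariance forces $\mu^{++}\equiv\mu^{--}\equiv 0$ on $(0,T)\times T^*\Td$, contradicting positivity of the total mass. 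The main obstacle is to reconcile the matrix structure with the small perturbation $\udlu_n$: taking $\varepsilon_0$ small keeps $\gamma^{(3/2)}(x,\xi) \ge \tfrac{1}{2}|\xi|^{3/2}$ on $\supp\mu$, which both separates the two characteristic sheets (killing off-diagonal defects) and reduces the perturbation in the propagation identity to an $O(\varepsilon_0 + h)$ error, which vanishes in the limit.
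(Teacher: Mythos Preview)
Your proposal is correct and follows the same contradiction-plus-defect-measure strategy as the paper: normalize, extract a matrix-valued semiclassical measure, localize it to the characteristic sheets $\{\sigma\pm|\xi|^{3/2}=0\}$ (killing the off-diagonal blocks by disjointness), propagate along the geodesic flow, and contradict the geometric control condition.

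One organizational difference is worth noting. The paper does not stop at the space-time measure~$\mu$: it builds, via an equicontinuity argument (Proposition~\ref{prop:nu-is-well-defined}), a continuous family of fixed-time measures $s\mapsto\nu_{s,\pm}$, relates them to $\mu_\pm$ through the sections $\zeta^\pm$ of $\Sigma^\pm$, proves the transport equation $(\partial_s\pm\tfrac{3}{2}|\xi|^{-1/2}\xi\cdot\nabla_x)\nu_\pm=0$, and derives the contradiction specifically at $s=0$ from $\nu_{0}\ne0$. Your shortcut---using the refined energy identity $\|\vec w_n(s)\|_{\Ltwo}^2=1+o(1)$ together with the $h$-oscillation hypothesis to guarantee that $\mu$ has positive mass on every slab $(s_0,s_1)\times T^*\Td$---is legitimate and avoids this machinery. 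Two small points to tighten: the negation of the statement gives $\varepsilon_n\to0$ (not merely a fixed small $\varepsilon_0$), and this is what forces the limiting principal symbol to be $|\xi|^{3/2}$ rather than $\gamma^{(3/2)}(\udlu_n)$; correspondingly the propagation in the limit is exactly along straight-line geodesics, not merely ``modulo $O(\varepsilon_0)$''. Your closing remark already gestures at this, but it should be stated as an actual limit rather than a perturbative bound.
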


We prove this proposition by contradiction. If~\eqref{eq:observability-semiclassical-Ltwo-system} is not true, then we can find sequences of $ \varepsilon_n > 0 $, $ h_n > 0 $, $ \udlu_n \in \Cls^{0,s}(T,\varepsilon_n) $, and
\begin{equation*}
\begin{split}
(\vec{w}_n,\vec{f}_n) \in C([0,T],\Ltwo(\Td)) \times \Ltwo([0,T],\Ltwo(\Td)),
\end{split}
\end{equation*}
such that $ \vec{w}_n $ satisfies the condition~\eqref{eq:h-oscillation},
\begin{equation*}
	(h_n D_s + \A_{n}) \vec{w}_{n} = \vec{f}_n,
\end{equation*}
where we denote $ \A_n = \A_{h_n}(\udlu_n) $. And as $ n \to \infty $, $ \varepsilon_n = o(1) $, $ h_n = o(1) $,
\begin{equation*}
\|\vec{w}_n(0)\|_{\Ltwo} = 1, \quad
\|\phiw \vec{e} \cdot \vec{w}_{n}\|_{\Ltwo([0,T],\Ltwo)} = o(1), \quad
\|\vec{f}_n\|_{\Ltwo([0,T],\Ltwo)} = o(h_n).
\end{equation*}

We perform an energy estimate to show that $ \{\vec{w}_n\}_n $ is bounded in $ \Ltwo([0,T],\Ltwo(\Td)) $ so that its space-time semi\-classical defect measure is well defined. Indeed, recall that $ \mathcal{Z}_{h_n}(\udlu_n) - \mathcal{Z}_{h_n}(\udlu_n)^* = O(h_n^{2})_{\Linf([0,T],\L(\Ltwo,\Ltwo))} $ (see~\eqref{eq:estimate-Z_h-Z_h}), combining with the estimates of the lower order terms,
\begin{equation}
\label{eq:A_n-is-almost-self-adjoint}
\A_n - \A_n^* = O((\varepsilon_n+h_n^{1/2}){h_n^{3/2}})_{\Linf([0,T],\L(\Ltwo,\Ltwo))}.
\end{equation}
Using only a weaker estimate $ \A_n - \A_n^* = O(h_n)_{\L(\Ltwo,\Ltwo)} $,
\begin{equation}
\label{eq:energy-estimate-eq-w-h}
\begin{split}
h_n \ps \|\vec{w}_{n}\|_{\Ltwo}^2
& = \big(i \big(\A_n - \A_n^*\big)\vec{w}_{n},\vec{w}_{n}\big)_{\Ltwo} + 2 \Re (\vec{f}_n,\vec{w}_{n})_{\Ltwo}\\
& \lesssim h_n \|\vec{w}_{n}\|_{\Ltwo}^2 + h_n^{-1} \|\vec{f}_n\|_{\Ltwo}^2.
\end{split}
\end{equation}
By Gronwall's inequality, and the hypothesis that $ \|\vec{f}_n\|_{\Ltwo([0,T],\Ltwo)} = o(h_n) $,
\begin{equation}
\label{eq:boundness-space-time-w(h)}
\begin{split}
\|\vec{w}_{n}\|_{C([0,T],\Ltwo)}^2 
\lesssim \|\vec{w}_{n}(0)\|_{\Ltwo}^2 + h_n^{-2} \|\vec{f}_n\|_{\Ltwo([0,T],\Ltwo)}^2 
\lesssim 1,
\end{split}
\end{equation}
proving that $ \{\vec{w}_{n}\}_n $ is bounded in $ C([0,T],\Ltwo(\Td)) $. Therefore, up to a subsequence, we may suppose that $ \{\vec{w}_{n}|_{]0,T[\times\Td}\}_n $ is pure, and let
\begin{equation*}
\mu = \mu(s,x,\sigma,\xi) \in M_{2\times 2}\big(\mathcal{M}(T^*( ]0,T[{} \times \Td),\C)\big)
\end{equation*}
be its space-time semi\-classical defect measure, such that
\begin{equation}
\label{eq:formula-semiclassical-measure}
\lim_{n\to\infty} \big(\bop_{h_n}(X) (\psi \vec{w}_n),\psi\vec{w}_n\big)_{\Ltwo(]0,T[,\Ltwo)}
= \int_{T^*(]0,T[ \times \Td)} \tr(X\d\mu),
\end{equation}
for all $ \psi \in \Ccinf(]0,T[,\R) $ and $ X \in M_{2\times 2} (\Ccinf(T^*(]0,T[\times\Td))) $, such that $ \psi X = X $.
It is classical that $ \mu $ is hermitian in the sense that, for all $ c \in \C^2 $, $ c^t \mu c $ is a non-negative Radon measure. It admits the following form,
\begin{equation*}
\mu = \begin{pmatrix} \mu_{+} & \mu_{*} \\ \overline{\mu_{*}} & \mu_{-} \end{pmatrix},
\end{equation*}
where $ \mu_\pm = \mu[w^\pm_n] \ge 0 $ are respectively semi\-classical defect measures of the pure sequences $ \{w^\pm_n|_{]0,T[\times\Td}\}_n $. And moreover,
\begin{equation}
\label{eq:mu-absolutely-continuous}
\mu_* \ll \sqrt{\mu_+  \mu_-}. 
\end{equation}

\begin{lemma}
The following properties hold.
\begin{enumerate}[nosep]
\item $ \supp \mu \subset \{|\xi| \ge \upsilon \} $,
\item $ \supp \mu_{\pm} \subset \Sigma^\pm = \{\sigma \pm |\xi|^{3/2} = 0\} $, 
\item $ \mu_* = 0 $, therefore $ \mu $ is diagonal.
\end{enumerate}
\end{lemma}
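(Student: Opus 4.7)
All three properties follow from testing the defining relation~\eqref{eq:formula-semiclassical-measure} against carefully chosen semiclassical observables, using the equation $(h_n D_s + \A_n) \vec{w}_n = \vec{f}_n$ and the $h$-oscillation hypothesis~\eqref{eq:h-oscillation}.

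For Property (1), I would exploit the $h$-oscillation alone. By the argument of Lemma~\ref{lem:semiclassical-functional-calculus} applied with $\chi'$ in place of $\chi$,
\[
\Pi'_{h_n} = \op_{h_n}\big(\chi'(\gamma^{(3/2)}(\udlu_n))\big) + O(h_n)_{\L(\Ltwo,\Ltwo)},
\]
whose semiclassical symbol is supported in $\{\gamma^{(3/2)}(\udlu_n) \ge \upsilon^{3/2}\}$. Since $\udlu_n \to 0$ in $\H{s}$, $\gamma^{(3/2)}(\udlu_n) \to |\xi|^{3/2}$ in the relevant symbol seminorms. So for any scalar $a \in \Ccinf(T^*(]0,T[\times\Td))$ supported in $\{|\xi|^{3/2} < \upsilon^{3/2}/2\}$, symbolic calculus yields $\bop_{h_n}(a)\Pi'_{h_n} = o(1)_{\L(\Ltwo,\Ltwo)}$, whence
\[
\int a \,\tr(d\mu) = \lim_n (\bop_{h_n}(a) \Pi'_{h_n} \vec{w}_n, \vec{w}_n) + o(1) = 0,
\]
proving $\supp\mu \subset \{|\xi| \ge \upsilon\}$.

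For Property (2), I test the equation with a matrix-valued observable $X \in \Ccinf(T^*(]0,T[\times\Td),\C^{2\times 2})$. Since $\|\vec{f}_n\|_{\Ltwo([0,T],\Ltwo)} = o(h_n)$ and $\|\vec{w}_n\|_{C([0,T],\Ltwo)} = O(1)$, one has $(\bop_{h_n}(X)\vec{f}_n,\vec{w}_n) = o(h_n)$. On the other hand, by Remark~\ref{remark:A_h}, estimate~\eqref{eq:A_n-is-almost-self-adjoint}, and semiclassical symbolic calculus,
\[
(\bop_{h_n}(X)(h_n D_s + \A_n)\vec{w}_n,\vec{w}_n) = (\bop_{h_n}(X p_n)\vec{w}_n,\vec{w}_n) + o(1),
\]
with semiclassical principal symbol $p_n = \sigma I_2 + \gamma^{(3/2)}(\udlu_n) \varphi(\xi)^2 \,\mathrm{diag}(1,-1)$. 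Passing to the limit $n \to \infty$ gives
\[
\int \tr(X p\, d\mu) = 0, \qquad p = \sigma I_2 + |\xi|^{3/2}\varphi(\xi)^2 \,\mathrm{diag}(1,-1),
\]
for every admissible $X$. Taking $X = a\,\mathrm{diag}(1,0)$ and $X = a\,\mathrm{diag}(0,1)$ with arbitrary scalar $a$ separates the identities $(\sigma + |\xi|^{3/2}\varphi^2)\mu_+ = 0$ and $(\sigma - |\xi|^{3/2}\varphi^2)\mu_- = 0$; combined with (1), which gives $\varphi \equiv 1$ on $\supp\mu$, this produces $\supp\mu_\pm \subset \Sigma^\pm$.

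Property (3) is then immediate: on $\{|\xi| \ge \upsilon\}$ the varieties $\Sigma^+$ and $\Sigma^-$ are disjoint, so $\supp\mu_+ \cap \supp\mu_- = \emptyset$, and the absolute continuity $\mu_* \ll \sqrt{\mu_+\mu_-}$ from~\eqref{eq:mu-absolutely-continuous} forces $\mu_* = 0$. The main technical obstacle lies in step (2): one must verify that the subprincipal, $\udlu_n$-dependent blocks of $\A_n$ (built from $V\cdot\xi$, $g/(2r)$, $r \cdot m_{\b}/2$, and $\gamma^{(1/2)}$) each contribute only $o(1)$ to the matrix-valued pairing, and that the limit $\udlu_n \to 0$ commutes with the quantization. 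The $\varphi$-cutoff together with the $h$-oscillation are what keep the semiclassical calculus well-behaved away from the singularity of $\gamma^{(3/2)}$ at $\xi = 0$.
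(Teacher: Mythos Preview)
Your proposal is correct and follows essentially the same strategy as the paper: property~(1) via the $h$-oscillation hypothesis and the symbolic approximation $\Pi'_{h_n} \approx \op_{h_n}(\chi'(|\xi|^{3/2}))$, property~(2) by testing the equation against diagonal matrix observables and reading off $(\sigma\pm|\xi|^{3/2})\mu_\pm=0$, and property~(3) from $\Sigma^+\cap\Sigma^-=\emptyset$ on $\{|\xi|\ge\upsilon\}$ together with~\eqref{eq:mu-absolutely-continuous}. The only organisational difference is that in step~(2) the paper places the operator on the adjoint side, computing $((h_nD_s+\A_n^*)\bop_{h_n}(X^\pm)\psi\vec w_n,\psi\vec w_n)$ and then passing $(h_nD_s+\A_n)$ onto $\psi\vec w_n$, whereas you compose $\bop_{h_n}(X)(h_nD_s+\A_n)$ directly; these are equivalent. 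One point you gloss over and the paper treats explicitly is the time cutoff $\psi$ in the defining relation~\eqref{eq:formula-semiclassical-measure}: the paper inserts $\psi$ throughout and disposes of the commutator term $[h_nD_s,\psi]\vec w_n$ by the disjoint-support argument (left quantization localises the output in the $s$-support of the symbol), which is exactly the missing justification behind your direct pairing $(\bop_{h_n}(Xp_n)\vec w_n,\vec w_n)\to\int\tr(Xp\,d\mu)$. Your closing remark about the subprincipal blocks is handled in the paper by the single estimate $(\A_n^*-\A_{h_n}(0))\langle h_nD_x\rangle^{-3/2}=O(\varepsilon_n)$.
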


\begin{proof}
The first statement comes from~\eqref{eq:h-oscillation}. Indeed, let $ \psi \in \Ccinf(]0,T[,\R) $ and $ X \in M_{2\times 2} (\Ccinf(T^*(]0,T[\times\Td))) $ be such that $ \psi X = X $. Denote $ \Pi'_n = \Pi'_{h_n}(\udlu_n) $, and $ \Delta'_n = \Delta'_{h_n}(\udlu_n) $, then by Lemma~\ref{lem:semiclassical-functional-calculus}
\begin{equation*}
\Pi'_n
= \Delta'_n + o(1)_{\Linf([0,T],\L(\Ldottwo,\Ldottwo))}
= \op_{h_n}(\chi'(|\xi|^{(3/2)})) + o(1)_{\Linf([0,T],\L(\Ltwo,\Ltwo))}.
\end{equation*}
The second equality comes from the fact that $ \chi'(\gamma^{(3/2)}) - \chi'(|\xi|^{3/2}) $ is bounded by~$ \varepsilon_0 $.
Consequently, by our hypothesis
\begin{align*}
0 
& = \lim_{n\to\infty} (\bop_{h_n}(X) (1-\Pi'_{h_n}) \psi \vec{w}_n, \psi \vec{w}_n)_{\Ltwo(]0,T[,\Ltwo)} \\
& = \int_{T^*(]0,T[ \times \Td)} \tr\big(X(1-\chi'(|\xi|^{3/2}))\d\mu\big),
\end{align*}
implying that $ \supp \mu \subset \supp \chi'(|\xi|^{3/2}) \subset \{|\xi| \ge \upsilon\} $.

To prove the second statement, let $ a \in \Ccinf(T^*(]0,T[\times\Td)) $ be such that $ \psi a = a $, and set $ X^+ = \begin{pmatrix} a & 0 \\ 0 & 0 \end{pmatrix} $, and $ X^- = \begin{pmatrix} 0 & 0 \\ 0 & a \end{pmatrix} $. Then write
\begin{equation*}
(h_nD_s \pm \A_n^*) \bop_{h_n}(X^\pm)
 = (h_nD_s \pm \A_{h_n}(0)) \bop_{h_n}(X^\pm) 
\pm (\A_n^* - \A_{h_n}(0)) \bop_{h_n}(X^\pm).
\end{equation*}
For the first term,
\begin{equation*}
\A_{h_n}(0) = h_n^{3/2} \op_{h_n}(\varphi) \A(0) \op_{h_n}(\varphi),
\end{equation*}
where $ \A(0) = \A(\udlu = 0) $ whose exact expression will be explicitly written in~\eqref{eq:A(0)-formula}, such that the principal symbol of $ \A_{h_n}(0) $ is 
$ \varphi(\xi)^2
\begin{pmatrix} 
|\xi|^{3/2} & 0 \\
0 & - |\xi|^{3/2}
\end{pmatrix} $,
and, considering $ \psi $ as a multiplication operator,
\begin{align*}
(h_nD_s \pm \A_{h_n}(0)) \bop_{h_n}(X^\pm) \psi
& = \bop_{h_n} \bigg(
\begin{pmatrix} 
\sigma + \varphi^2 |\xi|^{3/2} & 0 \\
0 & \sigma - \varphi^2|\xi|^{3/2}
\end{pmatrix} 
X^\pm\bigg)  \psi \\
& \qquad \qquad + O(h_n)_{\L(\Ltwo([0,T],\Ltwo),\Ltwo([0,T],\Ltwo))}.
\end{align*}
While for the second term, by~\eqref{eq:A_n-is-almost-self-adjoint},
\begin{align*}
(\A_n^* - \A_{h_n}(0) )\langle h_nD_x\rangle^{-3/2} 
& = (\A_{h_n}(\udlu_n)^* - \A_{h_n}(\udlu_n))\langle h_nD_x\rangle^{-3/2}  \\
& \qquad + (\A_{h_n}(\udlu_n) - \A_{h_n}(0))\langle h_nD_x\rangle^{-3/2}  \\
& = O(\varepsilon_n + h_n^{1/2} )_{\Linf([0,T],\L(\Ltwo,\Ltwo)}.
\end{align*}
and $ \langle hD_x\rangle^{3/2}\op_{h_n}(X^\pm ) : \Ltwo([0,T],\Ltwo(\Td))\to\Ltwo([0,T],\Ltwo(\Td)) $. Therefore
\begin{equation*}
\pm (\A_n^* - \A_{h_n}(0)) \bop_{h_n}(X^\pm)
=O(\varepsilon_n)_{\L(\Ltwo([0,T],\Ltwo),\Ltwo([0,T],\Ltwo))}.
\end{equation*}
Consequently, since $ \varphi \equiv 1 $ on $ \{|\xi| \ge \upsilon \} \supset \supp \mu $,
\begin{equation}
\label{eq:mu-support-calcul-I}
\begin{split}
\lim_{n\to\infty} \big( (h_nD_s \pm \A_n^*) \bop_{h_n}(& X^\pm)  (\psi \vec{w}_n), \psi \vec{w}_n \big)_{\Ltwo([0,T],\Ltwo)} \\
& = \int_{T^*(]0,T[\times\Td)} (\sigma \pm |\xi|^{3/2} ) a(s,x,\sigma,\xi) \d\mu_\pm,
\end{split}
\end{equation}
On the other hand,
\begin{equation}
\label{eq:mu-support-calcul_II-1}
\begin{split}
\big( (h_nD_s \pm \A_n^*) \bop_{h_n}(& X^\pm) (\psi \vec{w}_n), \psi \vec{w}_n \big)_{\Ltwo([0,T],\Ltwo)} \\
& = \big(\bop_{h_n}(X^\pm) (\psi \vec{w}_n), (h_nD_s \pm \A_n) (\psi \vec{w}_n) \big)_{\Ltwo([0,T],\Ltwo)} \\
& = \big( \bop_{h_n}(X^\pm) (\psi \vec{w}_n), [h_n D_s, \psi] \vec{w}_n + \psi \vec{f}_n \big)_{\Ltwo([0,T],\Ltwo)},
\end{split}
\end{equation}
where, since the support of $ [h_n D_s,\psi] \vec{w}_n = (h_n D_s \psi) \vec{w}_n $ and $ \bop_{h_n}(X^\pm) \vec{w}_n $ are disjointed, and by the hypothesis that $ \vec{f}_n = o(h_n)_{\Ltwo([0,T],\Ltwo)} $,
\begin{equation}
\label{eq:mu-support-calcul_II-2}
\begin{split}
\big( \bop_{h_n}(X^\pm) (\psi \vec{w}_n), [h D_s, \psi] \vec{w}_n \big)_{\Ltwo([0,T],\Ltwo)} & = 0, \\
\big( \bop_{h_n}(X^\pm) (\psi \vec{w}_n), \psi \vec{f}_n \big)_{\Ltwo([0,T],\Ltwo)} & = o(h_n).
\end{split}
\end{equation}
Combining \eqref{eq:mu-support-calcul-I}, \eqref{eq:mu-support-calcul_II-1} and \eqref{eq:mu-support-calcul_II-2},
\begin{equation*}
\int_{T^*(]0,T[\times\Td)} (\sigma \pm |\xi|^{3/2} ) a(s,x,\sigma,\xi) \d\mu_\pm = 0,
\end{equation*}
which implies that $ \supp \mu_\pm \subset \{ \sigma \pm |\xi|^{3/2} = 0 \} $, and by the first statement,
\begin{equation*}
\supp \mu_\pm \subset \Sigma^\pm \cap \{|\xi| > \upsilon \}.
\end{equation*}
In particular $ \supp \mu_+ \cap \supp \mu_- = \emptyset $, so
\begin{equation}
\label{eq:orthogonality-mu-pm}
\mu_+ \perp \mu_-,
\end{equation}
whence by \eqref{eq:mu-absolutely-continuous}, $\mu_* \ll \sqrt{\mu_+\mu_-} = 0 $.
\end{proof}

The following corollary states that the formula~\eqref{eq:formula-semiclassical-measure} of the semi\-classical defect measure for the sequence $ \{\vec{w}_n\}_n $ remains valid for some symbols which are not of compact support in the $ \sigma $ variable. The main idea is that on the supports of $ \mu_\pm $, which are contained in the hypersurfaces $ \Sigma^\pm $, $ \sigma $ is automatically bounded whenever~$ \xi $ is bounded.
\begin{corollary}
\label{cor:extension-of-definition-of-mu}
Define the projection
\begin{equation*}
\kappa : T^*(]0,T[ \times \Td) \to {}]0,T[ \times T^*\Td, 
\quad (s,\sigma,x,\xi) \mapsto (s,x,\xi),
\end{equation*}
and its corresponding pullback
\begin{equation*}
\kappa^* : \Ccinf(]0,T[ \times T^*\Td) \to \Cinf(T^*(]0,T[ \times \Td)), 
\quad a \mapsto \kappa^* a = a \comp \kappa.
\end{equation*}
Let $ a \in \Ccinf(]0,T[ \times T^*\Td) $, and $ u \in \Ltwo([0,T], \Ltwo(\Td)) $, then
\begin{equation}
\label{eq:identity-a-kappa-a}
\op(a) u = \bop(\kappa^* a) u.
\end{equation}
Moreover, if $ a \in \Ccinf(]0,T[\times T^*\Td) $, and $ \psi \in \Ccinf(]0,T[) $, such that $ \psi a = a $, then
\begin{equation}
\label{eq:semiclassical-measure-non-compact-symbol}
\lim_{n\to\infty} \big( \op_{h_n}(a) (\psi w^\pm_{n}), \psi w^\pm_{n} \big)_{\Ltwo([0,T],\Ltwo)}
= \int_{\Sigma^\pm} \kappa^* a \d\mu_\pm.
\end{equation}
\end{corollary}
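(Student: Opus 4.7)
The plan is to establish the two identities in turn: the first is a direct computation exploiting the $\sigma$-independence of $\kappa^* a$, while the second is obtained by a $\sigma$-frequency cutoff that reduces the claim to the defining formula \eqref{eq:formula-semiclassical-measure} up to an error term controlled by the evolution equation.

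For \eqref{eq:identity-a-kappa-a}, since $(\kappa^* a)(s, x, \sigma, \xi) = a(s, x, \xi)$ does not depend on $\sigma$, the $\sigma$-integration in the definition of $\bop(\kappa^* a)$ disentangles and reconstructs the partial Fourier transform of $u$ in $s$: a direct Fourier computation gives
\begin{equation*}
\bop(\kappa^* a) u(s, x) = \int e^{ix\cdot\xi} a(s, x, \xi)\Big(\int e^{is\sigma} \hat u(\sigma, \xi) \frac{\d\sigma}{2\pi}\Big) \frac{\d\xi}{(2\pi)^d} = \op(a) u(s, x).
\end{equation*}
The same argument applies to the semiclassical quantizations, yielding $\op_{h_n}(a) = \bop_{h_n}(\kappa^* a)$ on $\Ltwo([0,T], \Ltwo(\Td))$, which is what will actually be used below.

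For \eqref{eq:semiclassical-measure-non-compact-symbol}, since $a$ is compactly supported in $\xi$, I fix $M > 0$ with $|\xi|^{3/2} \le M/2$ on $\supp a$, pick $\chi \in \Ccinf(\R;[0,1])$ with $\chi \equiv 1$ on $[-M, M]$, and decompose
\begin{equation*}
\op_{h_n}(a) = \bop_{h_n}\big(\chi(\sigma)\, \kappa^* a\big) + \bop_{h_n}\big((1-\chi(\sigma))\, \kappa^* a\big).
\end{equation*}
The first symbol lies in $\Ccinf(T^*(\,]0,T[\times\Td))$, so placing it as the $(1,1)$ or $(2,2)$ entry of a matrix $X^\pm$ and applying \eqref{eq:formula-semiclassical-measure} yields $\int \chi(\sigma) \kappa^* a \, \d\mu_\pm$. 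Since $\supp \mu_\pm \subset \Sigma^\pm = \{\sigma = \mp |\xi|^{3/2}\}$ and $|\sigma| \le M/2$ on $\Sigma^\pm \cap \supp \kappa^* a$, $\chi \equiv 1$ on the integration domain, giving the desired $\int_{\Sigma^\pm} \kappa^* a \, \d\mu_\pm$.

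The main obstacle is to show the error term vanishes, that is, $\bop_{h_n}((1-\chi(\sigma)) \kappa^* a) \psi w^\pm_n \to 0$ in $\Ltwo$. On the support of $(1-\chi(\sigma)) \kappa^* a$ one has $|\sigma \pm |\xi|^{3/2}| \ge M/2$, so the quotient
\begin{equation*}
e^\pm(s, x, \sigma, \xi) \bydef \frac{(1-\chi(\sigma)) \kappa^* a(s, x, \xi)}{\sigma \pm |\xi|^{3/2}}
\end{equation*}
is a smooth symbol, compactly supported in $(s, x, \xi)$ and decaying like $|\sigma|^{-1}$, hence in the symbol class $S(1)$ so that $\bop_{h_n}(e^\pm)$ is $\Ltwo$-bounded uniformly in $h_n$ by Calder\'on-Vaillancourt. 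Semiclassical symbolic calculus gives
\begin{equation*}
\bop_{h_n}\big((1-\chi)\kappa^* a\big) = \bop_{h_n}(e^\pm)\, \bop_{h_n}\big(\sigma \pm |\xi|^{3/2}\big) + O(h_n)_{\L(\Ltwo, \Ltwo)},
\end{equation*}
and $\bop_{h_n}(\sigma \pm |\xi|^{3/2}) \psi w^\pm_n = \psi\,(h_n D_s \pm |h_n D_x|^{3/2}) w^\pm_n + (h_n D_s \psi) w^\pm_n$. The second term is $O(h_n)_{\Ltwo}$ by \eqref{eq:boundness-space-time-w(h)}. For the first, I use the equation $(h_n D_s + \A_n) \vec{w}_n = \vec{f}_n = o(h_n)_{\Ltwo}$ with $\A_n = \mathrm{diag}(\mathcal{Z}_{h_n}, -\mathcal{Z}_{h_n}) + \text{l.o.t.}$, together with \eqref{eq:A_n-is-almost-self-adjoint} and the $h$-oscillation of Corollary~\ref{cor:h-oscillation}, to replace $\mathcal{Z}_{h_n}$ on $w^\pm_n$ by $|h_n D_x|^{3/2}$ modulo $o(1)_{\L(\Ltwo, \Ltwo)}$, yielding $(h_n D_s \pm |h_n D_x|^{3/2}) w^\pm_n = o(1)_{\Ltwo}$. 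The hard point here is really the symbol-class verification for $e^\pm$ that justifies the $\Ltwo$-boundedness and the symbolic calculus without compact support in $\sigma$; once this is in place, combining the two pieces proves \eqref{eq:semiclassical-measure-non-compact-symbol}.
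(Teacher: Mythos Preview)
Your argument is correct and uses the same parametrix idea as the paper: localize near $\Sigma^\pm$, apply \eqref{eq:formula-semiclassical-measure} to the resulting compactly supported symbol, and control the remainder through the evolution equation. The paper executes this slightly differently: rather than cutting in $\sigma$ alone and inverting $\sigma\pm|\xi|^{3/2}$, it applies the parametrix $\bop_{h_n}\big(\tfrac{1-\zeta(\sigma\pm\gamma^{(3/2)}\varphi^2)}{\sigma\pm\gamma^{(3/2)}\varphi^2}\big)$ directly to \eqref{eq:equation-w-pm-n}, obtaining $(1-\bop_{h_n}(b^\pm))(\psi w_n^\pm)=o(1)$ with $b^\pm=\zeta(\sigma\pm\gamma^{(3/2)}\varphi^2)$, and then replaces $\kappa^*a$ by the compactly supported symbol $\theta b^\pm\kappa^*a$, which coincides with $\kappa^*a$ on $\Sigma^\pm$. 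The choice $\gamma^{(3/2)}\varphi^2$ in place of your $|\xi|^{3/2}$ has two small advantages: it matches the equation exactly, so no extra step is needed to swap $\mathcal Z_{h_n}$ for $|h_nD_x|^{3/2}$, and the factor $\varphi^2$ makes the symbol smooth across $\xi=0$. In your version the claim $e^\pm\in S(1)$ is not literally true when $0\in\supp_\xi a$, since $\partial_\xi^2|\xi|^{3/2}$ blows up at the origin; this is easily repaired (split off the part of $a$ near $\xi=0$ and kill it using the $h$-oscillation hypothesis \eqref{eq:h-oscillation} --- which is the pertinent reference here, rather than Corollary~\ref{cor:h-oscillation}), but it deserves a sentence.
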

\begin{proof}
The identity~\eqref{eq:identity-a-kappa-a} follows by a direct verification of the definitions. To prove~\eqref{eq:semiclassical-measure-non-compact-symbol}, observe that $ w_n^\pm $ satisfies the equation
\begin{equation}
\label{eq:equation-w-pm-n}
(h_n D_s \pm \mathcal{Z}_{n} + h_n^{1/2}\op_{h_n}(V_n \cdot \xi \varphi^2)) w^\pm_{n} = o(h_n)_{\Ltwo([0,T],\Ltwo)}.
\end{equation}
Apply both sides by~$ \psi $, then $ \psi w_n^\pm $ satisfies
\begin{equation*}
\big(\bop_{h_n}(\sigma \pm \gamma^{(3/2)}\varphi^2) + O(h_n^{1/2}) \times \mathrm{lower\ order\ terms}\big) (\psi w^\pm_n) = o(1)_{\Ltwo(\R\times\Td)}.
\end{equation*}
Let $ \zeta \in \Ccinf(\R) $ be such that $ \zeta(z) = 1 $ in a neighborhood of $ 0 $, and apply both sides by $ \bop_{h_n}(\frac{1-\zeta(\sigma \pm \gamma^{(3/2)}\varphi^2)}{\sigma \pm \gamma^{(3/2)}\varphi^2}) $, we see that
\begin{equation}
\label{eq:estimate-outside-characteristic-manifold}
(1- \bop_{h_n}(b^\pm)) (\psi w^\pm_n) = o(1)_{\Ltwo(\R\times\Td)}.
\end{equation}
where $ b^\pm = \zeta(\sigma \pm \gamma^{(3/2)}\varphi^2)) $. Now let $ \theta \in \Ccinf(\R_s\times\Td_x\times\Rdxi) $ be such that $ \theta \equiv 1 $ on the support of $ a $, then by a symbolic calculus and~\eqref{eq:estimate-outside-characteristic-manifold},
\begin{equation*}
\begin{split}
\op_{h_n}(a) (\psi w^\pm_n) = \bop_{h_n}(\kappa^* a) (\psi w^\pm_n)
& = \bop_{h_n}(b^\pm \kappa^* a) (\psi w^\pm_n) + \bop_{h_n}((1- b^\pm)\kappa^* a) (\psi w^\pm_n) \\
& = \bop_{h_n}(\theta b^\pm \kappa^* a) (\psi w^\pm_n) + o(1)_{\Ltwo(\R\times\Td)}.
\end{split}
\end{equation*}
Notice that here the symbol $ \theta b^\pm \kappa^* a $ is of compact support in $ T^*(]0,T[\times\Td) $, and that $ \theta b^\pm \kappa^* a|_{\Sigma^\pm} = \kappa^* a|_{\Sigma^\pm} $. Therefore
\begin{equation*}
\begin{split}
\lim_{n\to\infty} \big( \op_{h_n}(a) (\psi w^\pm_{n}), \psi w^\pm_{n} \big)_{\Ltwo(]0,T[,\Ltwo)}
& = \lim_{n\to\infty} \big(\bop_{h_n}(\theta b^\pm \kappa^* a) (\psi w^\pm_{n}), \psi w^\pm_{n})_{\Ltwo(]0,T[,\Ltwo)} \\
& = \int_{\Sigma^\pm} \theta b^\pm \kappa^* a \,\d\mu_\pm
= \int_{\Sigma^\pm} \kappa^* a \, \d\mu_\pm.
\end{split}
\end{equation*}
\end{proof}

Now that by~\eqref{eq:boundness-space-time-w(h)}, $ \{\vec{w}_n\}_n $ is bounded in $ C([0,T],\Ltwo(\Td)) $, for all $ s \in [0,T] $, up to a subsequence, $ \{\vec{w}_{n}(s)\}_n $ is pure, and admits a semi\-classical measure. The next proposition proves that this subsequence can be so chosen that $ \{\vec{w}_{n}(s)\}_n $ is pure for all $ s \in [0,T] $.

\begin{proposition}
\label{prop:nu-is-well-defined}
Up to a subsequence, for all $ s \in [0,T] $, the sequence $ \{\vec{w}_{n}(s)\}_n $ is pure in $ \Ltwo(\Td) $, and thus admits a semi\-classical measure
\begin{equation*}
\nu_s = \nu_s(x,\xi) \in M_{2\times 2}\big(\mathcal{M}\big(T^*\Td,\C\big)\big),
\end{equation*}
which is positive definite and has the following form
\begin{equation*}
\nu_s = \begin{pmatrix} \nu_{s,+} & \nu_{s,*} \\ \overline{\nu_{s,*}} & \nu_{s,-} \end{pmatrix},
\end{equation*}
where $ \nu_{s,\pm} $ are semi\-classical measures of the sequences $ \{w^\pm_{n}(s)\}_n $, we denote this by $ \nu_{s,\pm} = \nu[w^\pm_n(s)] $.
Moreover, the distribution valued function $ \nu : s \to \nu_s $ is continuous in time, that is, it belongs to $ C([0,T],M_{2\times 2}(\mathcal{D}'(T^*\Td))) $.
\end{proposition}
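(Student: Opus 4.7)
The plan is to establish the proposition via a compactness argument applied to the family of scalar functions
\[
F_n^X(s) \bydef \bigl(\op_{h_n}(X)\vec{w}_n(s),\vec{w}_n(s)\bigr)_{\Ltwo(\Td)},
\]
indexed by matrix-valued test symbols $X \in M_{2\times 2}(\Ccinf(T^*\Td))$. Uniform boundedness $|F_n^X(s)| \lesssim \|X\|$ on $[0,T]$ is immediate from the a~priori bound $\|\vec{w}_n\|_{C([0,T],\Ltwo)} \lesssim 1$ supplied by~\eqref{eq:boundness-space-time-w(h)} together with the $\L(\Ltwo,\Ltwo)$-continuity of $\op_{h_n}(X)$, uniform in $n$.

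The crucial step is equicontinuity in $s$. Differentiating $F_n^X$ and using the equation $(h_nD_s+\A_n)\vec{w}_n = \vec{f}_n$, I get
\[
h_n\ps F_n^X(s)
= i\bigl([\op_{h_n}(X),\A_n]\vec{w}_n,\vec{w}_n\bigr)_{\Ltwo}
+ i\bigl((\A_n-\A_n^*)\op_{h_n}(X)\vec{w}_n,\vec{w}_n\bigr)_{\Ltwo}
+ 2\Re\bigl(\op_{h_n}(X)\vec{w}_n,\vec{f}_n\bigr)_{\Ltwo}.
\]
The first term is $O(h_n)_{\L(\Ltwo,\Ltwo)}$: although $\A_n$ is of semiclassical order $3/2$, the frequency cutoff $\op_{h_n}(\varphi)$ inside $\A_n$ together with the compact support of $X$ lets the full semiclassical symbolic calculus apply, so the commutator gains a power of $h_n$. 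The second term is $O((\varepsilon_n+h_n^{1/2})h_n^{3/2})$ by~\eqref{eq:A_n-is-almost-self-adjoint}. The third term is $o(h_n)$ in $\Lone([0,T])$ thanks to the hypothesis $\|\vec{f}_n\|_{\Ltwo([0,T],\Ltwo)} = o(h_n)$ and Cauchy--Schwarz. Dividing by $h_n$ gives $\ps F_n^X$ bounded in $\Lone([0,T])$ uniformly in~$n$, whence equicontinuity of $\{F_n^X\}_n$ on $[0,T]$.

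Next I choose a countable dense family $\{X_k\}_k$ in $M_{2\times 2}(\Ccinf(T^*\Td))$ (for a topology given by $C^N$-seminorms on fixed compact exhaustion of $T^*\Td$, with $N$ large enough to control operator norms through the Calder\'on--Vaillancourt theorem). Arzel\`a--Ascoli and a diagonal extraction yield a subsequence, still denoted $\{\vec{w}_n\}_n$, such that $F_n^{X_k}$ converges uniformly on $[0,T]$ for every $k$. By density and the uniform operator bound, for every $s \in [0,T]$ the limit extends to a continuous linear form on $M_{2\times 2}(\Ccinf(T^*\Td))$, defining an $M_{2\times 2}$-valued Radon measure $\nu_s$ on $T^*\Td$ by
\[
\lim_{n\to\infty} F_n^X(s) = \int_{T^*\Td} \tr(X\d\nu_s), \qquad \forall X \in M_{2\times 2}(\Ccinf(T^*\Td)).
\]
The Hermitian and positive semidefinite structure of $\nu_s$, as well as the identification $\nu_{s,\pm} = \nu[w^\pm_n(s)]$, are inherited from the corresponding properties at the sequence level.

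Finally, continuity of $s \mapsto \nu_s$ in $M_{2\times 2}(\mathcal{D}'(T^*\Td))$ is a direct byproduct: for each fixed $X$, the function $s \mapsto \langle X,\nu_s\rangle$ is a uniform limit of continuous functions $F_n^X$, hence continuous. The main technical point to watch is the commutator estimate $[\op_{h_n}(X),\A_n] = O(h_n)$; it is not a serious obstacle because $X$ is compactly supported in $\xi$ and $\A_n$ is frequency-localized by $\op_{h_n}(\varphi)$, so the semiclassical calculus can be applied without boundary terms.
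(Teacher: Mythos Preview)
Your proposal is correct and follows essentially the same approach as the paper: both arguments hinge on showing that the scalar functions $s\mapsto(\op_{h_n}(X)\vec{w}_n(s),\vec{w}_n(s))_{\Ltwo}$ are uniformly bounded and equicontinuous via an $\Lone$ bound on their $s$-derivative, obtained from the commutator estimate $[\op_{h_n}(X),\A_n]=O(h_n)$ and~\eqref{eq:A_n-is-almost-self-adjoint}. The only cosmetic difference is the order of the diagonal extraction---the paper first extracts over a countable dense set of times $s\in\mathbb{Q}\cap[0,T]$ and then invokes equicontinuity, whereas you extract over a countable dense family of symbols $\{X_k\}$ and apply Arzel\`a--Ascoli; both routes are standard and equivalent.
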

\begin{proof}
By a diagonal argument, we assume that $ \{\vec{w}_{n}(s)\}_n $ is pure for each $ s $ in a countable and dense subset of $ [0,T] $, for example $ \mathbb{Q} \cap [0,T] $. In order to conclude, it suffices to show that for each $ X \in \Ccinf(T^*\Td) $, the following family of functions is equicontinuous,
\begin{equation*}
g^X_n : [0,T] \ni s \mapsto (\op_{h_n}(X) \vec{w}_{n}(s),\vec{w}_{n}(s))_{\Ltwo}.
\end{equation*}
We prove this by showing that $ \{\ps g^X_n\}_n $ is bounded in $ \Lone([0,T]) $.
\begin{align*}
h_n \ps g^X_n(s)
& = \big(i(\A_n^* \op_{h_n}(X) - \op_{h_n}(X) \A_n ) \vec{w}_{n}(s), \vec{w}_{n}(s)\big)_{\Ltwo} \\
& \qquad \qquad + 2 \Re\big(i(\op_{h_n}(X)+\op_{h_n}(X)^*)\vec{f}_n(s),\vec{w}_n(s)\big)_{\Ltwo} \\
& \lesssim h_n \|\vec{w}_{n}(s)\|_{\Ltwo}^2 + h_n^{-1} \|\vec{f}_{n}(s)\|_{\Ltwo}^2,
\end{align*}
where we use the estimate
\begin{equation*}
\begin{split}
\A_n^* \op_{h_n}(X) - \op_{h_n}(X) \A_n 
& = [\A_n,\op_{h_n}(X)] + (\A_n^*-\A_n) \op_{h_n}(X) \\
& = O(h_n)_{\Linf([0,T],\L(\Ltwo,\Ltwo))}.
\end{split}
\end{equation*}
Therefore, take the integration over $ [0,T] $,
\begin{equation*}
\|\ps g^X_n\|_{\Lone([0,T])}
\lesssim \|\vec{w}_{n}\|_{\Ltwo([0,T],\Ltwo)}^2 + h_n^{-2} \|\vec{f}_{n}\|_{\Ltwo([0,T],\Ltwo)}^2
\lesssim 1.
\end{equation*}
\end{proof}

Now consider the following distribution valued continuous functions,
\begin{equation*}
\nu_\pm : [0,T] \to \mathcal{D}'(T^*\Td), \quad s \mapsto \nu_{s,\pm}.
\end{equation*}
Then $ \nu_\pm $ define two distributions on $ ]0,T[\times T^*\Td $ such that, for $ \phi \in \Ccinf(]0,T[ \times T^*\Td) $,
\begin{equation*}
\langle \nu_\pm, \phi \rangle_{\D',\D(]0,T[\times T^*\Td)} 
\bydef \int_0^T \int_{T^*\Td} \phi(s,x,\xi) \d\nu_{s,\pm}(x,\xi) \ds.
\end{equation*}

\begin{proposition}
\label{prop:properties-of-mu}
The following properties of $ \nu_\pm $ holds.
\begin{enumerate}[nosep]
\item $ \supp \nu_\pm \subset \{|\xi| \ge \upsilon \} $.
\item Consider the two sections of $ \kappa $, 
\begin{equation*}
\zeta^\pm :{} ]0,T[{} \times T^*\Td \to \Sigma^\pm \subset T^*(]0,T[\times\Td), 
\quad (s,x,\xi) \mapsto (s,x,\mp |\xi|^{-3/2},\xi).
\end{equation*} 
Then for $ \phi = \phi_1 \otimes \phi_2 $ with $ \phi_1 \in \Ccinf(]0,T[_s) $ and $ \phi_2 \in \Ccinf(\R_\sigma) \otimes \Ccinf(T^*\Td) $,
\begin{equation}
\label{eq:relation-mu-nu}
\langle \mu_\pm, \phi \rangle_{\D',\D(T^*(]0,T[{}\times\Td))} = \langle \nu_\pm, (\zeta^\pm)^*\phi  \rangle_{\D',\D(]0,T[{}\times T^*\Td)},
\end{equation}
where $ (\zeta^\pm)^* \phi = \phi(\zeta^\pm) $; or equivalently,
\begin{equation*}
\int_{\Sigma^\pm} \phi(s,x,\sigma,\xi) \d\mu_\pm(s,x,\sigma,\xi)
= \int_0^T \int_{T^*\Td} \phi(s,x,\mp|\xi|^{3/2},\xi) \d\nu_{s,\pm}(x,\xi) \ds.
\end{equation*}
\item $ \nu_\pm $ is propagated via the following transportation equation,
\begin{equation}
\label{eq:propagation-of-nu}
\big( \ps \pm \frac{3}{2}|\xi|^{-1/2} \xi \cdot \nabla_x \big) \nu_\pm = 0.
\end{equation}
\end{enumerate}
\end{proposition}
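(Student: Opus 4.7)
The plan is to verify the three assertions by combining the $h$-oscillation property~\eqref{eq:h-oscillation} with the semiclassical functional calculus of Lemma~\ref{lem:semiclassical-functional-calculus}, the space--time/pointwise bridge provided by Corollary~\ref{cor:extension-of-definition-of-mu} and Proposition~\ref{prop:nu-is-well-defined}, and a Poisson-bracket commutator argument driven by the equation~\eqref{eq:equation-w-pm-n} for $w^\pm_n$.

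For~(1), Lemma~\ref{lem:semiclassical-functional-calculus} identifies $\Pi'_{h_n}$ with $\op_{h_n}(\chi'(|\xi|^{3/2})) + o(1)_{\L(\Ltwo,\Ltwo)}$, whose symbol is supported in $\{|\xi|\ge\upsilon\}$ by the construction of $\Xi(\upsilon)$. For any $a\in\Ccinf(T^*\Td)$ with $\supp a \subset \{|\xi|<\upsilon\}$, a routine symbolic calculus yields $\op_{h_n}(a)\Pi'_{h_n} = o(1)_{\L(\Ltwo,\Ltwo)}$, and combining this with~\eqref{eq:h-oscillation} and the pointwise definition of $\nu_{s,\pm}$ in Proposition~\ref{prop:nu-is-well-defined} forces
\begin{equation*}
\int_{T^*\Td} a\,\d\nu_{s,\pm} = \lim_{n\to\infty}(\op_{h_n}(a) w^\pm_n(s), w^\pm_n(s))_{\Ltwo} = 0,
\end{equation*}
which is the claimed support property.

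For~(2), fix $\phi = \phi_1 \otimes \phi_2$ as in the statement. Since $\mu_\pm$ is concentrated on $\Sigma^\pm = \{\sigma = \mp|\xi|^{3/2}\}$, the integral $\int_{\Sigma^\pm}\phi\,\d\mu_\pm$ is unchanged when $\phi_2(\sigma,x,\xi)$ is replaced by $\tilde a(x,\xi) \bydef \phi_2(\mp|\xi|^{3/2},x,\xi) \in \Ccinf(T^*\Td)$. Corollary~\ref{cor:extension-of-definition-of-mu} applied to the spacetime symbol $\phi_1 \tilde a$ then gives
\begin{equation*}
\int_{\Sigma^\pm}\phi\,\d\mu_\pm = \lim_{n\to\infty} \int_0^T \phi_1(s)\,\bigl(\op_{h_n}(\tilde a)w^\pm_n(s),w^\pm_n(s)\bigr)_{\Ltwo}\,\ds,
\end{equation*}
where we used that $\op_{h_n}(\phi_1\tilde a) = \phi_1(s)\op_{h_n}(\tilde a)$ since the quantisation acts only in $x$. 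Proposition~\ref{prop:nu-is-well-defined} applied pointwise in $s$, together with the uniform bound~\eqref{eq:boundness-space-time-w(h)} and dominated convergence, identifies this limit with $\int_0^T \phi_1(s)\int_{T^*\Td}\tilde a\,\d\nu_{s,\pm}\,\ds$, which is precisely $\langle\nu_\pm,(\zeta^\pm)^*\phi\rangle$; this is~\eqref{eq:relation-mu-nu}.

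Statement~(3) is the main technical point. Fix real $a\in\Ccinf(T^*\Td)$ and $\phi \in \Ccinf(]0,T[)$. Substituting $h_n D_s w^\pm_n = -(\pm\mathcal{Z}_{h_n} + h_n^{1/2}\op_{h_n}(V_n\cdot\xi\varphi^2))w^\pm_n + o(h_n)_{\Ltwo}$ from~\eqref{eq:equation-w-pm-n} into both slots of $h_n\ps(\op_{h_n}(a)w^\pm_n,w^\pm_n)_{\Ltwo}$, and using the near self-adjointness $\mathcal{Z}_{h_n}-\mathcal{Z}_{h_n}^* = O(h_n^2)_{\L(\Ltwo,\Ltwo)}$ from~\eqref{eq:symbolic-expansion-Z} and~\eqref{eq:gamma-almost-selfadjoint} together with $\|h_n^{1/2}\op_{h_n}(V_n\cdot\xi\varphi^2)\|_{\L(\Ltwo,\Ltwo)} \lesssim \varepsilon_n h_n^{1/2}$ (which follows from $\|V_n\|_{\Linf}\lesssim\varepsilon_n$), the computation reduces modulo $o(h_n)$ to $\pm i([\mathcal{Z}_{h_n},\op_{h_n}(a)]w^\pm_n,w^\pm_n)_{\Ltwo}$. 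The semiclassical symbolic calculus turns this into $\pm h_n(\op_{h_n}(\{\gamma^{(3/2)}(\udlu_n)\varphi^2,a\})w^\pm_n,w^\pm_n)_{\Ltwo} + O(h_n^2)$; dividing by $h_n$ and passing to the limit, using that $\gamma^{(3/2)}(\udlu_n)\to|\xi|^{3/2}$ as $\varepsilon_n\to 0$ and that $\varphi\equiv 1$ on $\{|\xi|\ge\upsilon\}\supset\supp\nu_{s,\pm}$ by~(1), one obtains, distributionally in $s$,
\begin{equation*}
\ps\int a\,\d\nu_{s,\pm} = \pm\int \{|\xi|^{3/2},a\}\,\d\nu_{s,\pm} = \pm\int \tfrac{3}{2}|\xi|^{-1/2}\xi\cdot\nabla_x a\,\d\nu_{s,\pm},
\end{equation*}
which integrated against $\phi(s)$ and rearranged is exactly~\eqref{eq:propagation-of-nu}. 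The main obstacle lies precisely in justifying the passage to the limit inside the Poisson bracket: both the subprincipal $V_n$ contribution and the error $\gamma^{(3/2)}(\udlu_n)-|\xi|^{3/2}$ must be shown to be negligible, which rests entirely on $\varepsilon_n\to 0$ and on the support property~(1) that allows the replacement $\varphi^2 \to 1$ inside the integrals against $\nu_{s,\pm}$.
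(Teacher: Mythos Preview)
Your proof is correct and follows essentially the same strategy as the paper: the same $h$-oscillation argument for~(1), the same use of Corollary~\ref{cor:extension-of-definition-of-mu} to link $\mu_\pm$ and $\nu_\pm$ in~(2), and the same commutator/Poisson-bracket computation driven by~\eqref{eq:equation-w-pm-n} for~(3). The only notable difference is that in~(2) you interchange the limit and the $s$-integral via dominated convergence (pointwise convergence from Proposition~\ref{prop:nu-is-well-defined} plus the uniform bound~\eqref{eq:boundness-space-time-w(h)}), whereas the paper invokes the equicontinuity of $g^X_n$ established in Proposition~\ref{prop:nu-is-well-defined} to get uniform convergence; both are valid here and the choice is cosmetic.
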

\begin{proof}
The first statement is by the same reason as that for $ \mu $ (see Proposition~\ref{prop:properties-of-mu}).
To prove the second, use the uniform convergence following from the equicontinuity of $ g^{X}_n $ (here $ X = \phi_2\comp\zeta^\pm $) proved in Proposition~\ref{prop:nu-is-well-defined}, and Corollary~\ref{cor:extension-of-definition-of-mu}. Let $ \psi \in \Ccinf(]0,T[) $ be such that $ \psi \phi = \psi $, then
\begin{align*}
\int_0^T\int_{T^*\Td} (\zeta^\pm)^*\phi \d\nu_{s,\pm} \ds
& = \lim_{n\to\infty} \int_0^T (\op_{h_n}((\zeta^\pm)^*\phi) \psi w^\pm_{n},\psi w^\pm_{n})_{\Ltwo}|_s \ds \\
& = \int_{\Sigma^\pm} \kappa^*(\zeta^\pm)^*\phi \d\mu_\pm
= \int_{\Sigma^\pm} \phi \d\mu_\pm,
\end{align*}
where the last equality is due to $ \kappa^*(\zeta^\pm)^*\phi |_{\Sigma^\pm} = \phi |_{\Sigma^\pm} $.

To prove the propagation property, we use the equation~\eqref{eq:equation-w-pm-n} satisfied by $ w^\pm_{n} $, and omit the factor~$ \psi $ above for simplicity,
\begin{align*}
\langle \ps \nu_\pm, \phi \rangle_{\D',\D}
& = - \langle \nu_\pm, \ps\phi \rangle_{\D',\D} 
= -  \int_0^T \lim_{n\to\infty} \big(\op_{h_n}(\ps\phi) w^\pm_{n},w^\pm_{n}\big)_{\Ltwo} \ds\\
& = -  \int_0^T \lim_{n\to\infty} \big(\ps (\op_{h_n}(\phi) w^\pm_{n})-\op_{h_n}(\phi) \ps w^\pm_{n}, w^\pm_{n}\big)_{\Ltwo} \ds \\
& =  \int_0^T \lim_{n\to\infty} \big(\op_{h_n}(\phi) w^\pm_{n}, \ps w^\pm_{n}\big)_{\Ltwo} + \big(\op_{h_n}(\phi) \ps w^\pm_{n}, w^\pm_{n}\big)_{\Ltwo} \ds \\
& =   \int_0^T \mp   \lim_{n\to\infty} \frac{1}{h_n} \big(\op_{h_n}(\phi) w^\pm_{n}, i \mathcal{Z}_n w^\pm_{n}\big)_{\Ltwo} + \big(\op_{h_n}(\phi) i \mathcal{Z}_n w^\pm_{n}, w^\pm_{n}\big)_{\Ltwo} \ds \\
& = \int_0^T \pm \lim_{n\to\infty} \frac{i}{h_n} \big((\mathcal{Z}_n^* \op_{h_n}(\phi) - \op_{h_n}(\phi) \mathcal{Z}_n) w^\pm_{n}, w^\pm_{n}\big)_{\Ltwo} \ds.
\end{align*}
To continue, we use the explicit calculus of~$ \Zh $ and~$ \Zh^* $,
\begin{align*}
\mathcal{Z}_n^* \op_{h_n}(\phi) - \op_{h_n}(\phi) \mathcal{Z}_n
& = [\mathcal{Z}_n,\op_{h_n}(\phi)] + (\mathcal{Z}_n^* - \mathcal{Z}_n) \op_{h_n}(\phi) \\
& = \frac{h_n}{i} \op_{h_n} \{\gamma^{(3/2)} \varphi^2, \phi\} + O(h_n^2)_{\Linf([0,T],\L(\Ltwo,\Ltwo))}.
\end{align*}
Plug this into the limit above, and use the fact that $ \varphi \equiv 1 $ on a neighborhood of $ \supp \nu_\pm $, whence $ \{|\xi|^{3/2}\varphi^2,\phi \} = \{|\xi|^{3/2}, \phi\} $ on $ \supp \nu_\pm $,
\begin{align*}
\langle \ps \nu_\pm, \phi \rangle_{\D',\D}
& = \pm \langle \nu_\pm, \{|\xi|^{3/2}\varphi^2,\phi \} \rangle _{\D',\D} \\
& = \pm \langle \nu_\pm, \{|\xi|^{3/2}, \phi\} \rangle_{\D',\D}
= \mp \langle \{|\xi|^{3/2},\nu_\pm\}, \phi \rangle_{\D',\D}.
\end{align*}
Therefore, $  \ps \nu_\pm \pm \{|\xi|^{3/2},\nu_\pm\}  = 0 $ in the sense of distribution, which is the desired transportation equation for $ \nu_\pm $.
\end{proof}

We continue with the proof of the semi\-classical observability.
\begin{proof}[Proof of Proposition~\ref{prop:semiclassical-observability}]
By the hypothesis that $ \|\phiw \vec{e} \cdot \vec{w}_{n}\|_{\Ltwo([0,T],\Ltwo)} = o(1) $, the semi\-classical measure for the sequence $ \vec{e} \cdot \vec{w}_{n} = w^+_{n} + w^-_{n} $ (which we denote by $ \mu[\vec{e} \cdot \vec{w}_{n}] $) vanishes on $ T^*(]0,T[ \times \omega) $. Combining this with the orthogonality~\eqref{eq:orthogonality-mu-pm},
\begin{equation*}
0 = \mu[\vec{e} \cdot \vec{w}_{n}] |_{T^*(]0,T[ \times \omega)} 
= \mu_+|_{T^*(]0,T[ \times \omega)} + \mu_-|_{T^*(]0,T[ \times \omega)},
\end{equation*}
which implies $ \mu_\pm|_{T^*(]0,T[ \times \omega)} = 0 $ because $ \mu_\pm \ge 0 $.
Then by the identity~\eqref{eq:relation-mu-nu},
\begin{equation*}
\nu_\pm |_{]0,T[ \times T^*\omega} = 0.
\end{equation*}
By the geometric control condition, the propagation law~\eqref{eq:propagation-of-nu}, and the condition for propagation speed~\eqref{eq:condition-propagation-speed},
\begin{equation*}
\nu_\pm(0) = \nu_{0,\pm} = 0.
\end{equation*}
We then conclude by contradiction. Since by~\eqref{eq:h-oscillation}, $ \{\vec{w}_{n}(0)\}_n $ is $ h_n $-oscillating, and by hypothesis $ \|\vec{w}_n(0)\|_{\Ltwo} = 1 $, whence
\begin{equation*}
\int_{T^*\Td} \chi'(\gamma^{(3/2)}\varphi^2)|_{s=0} \, \tr\d\nu_0
= \lim_{n\to\infty} (\Pi'_{n} \vec{w}_{n}, \vec{w}_{n})_{\Ltwo}|_{s=0} 
= \lim_{n\to\infty} (\vec{w}_{n}, \vec{w}_{n})_{\Ltwo}|_{s=0} 
= 1,
\end{equation*}
and $ \nu_{0,+} + \nu_{0,-} = \tr \, \nu_0  \ne 0 $.
\end{proof}

\begin{corollary}
Suppose that~$ \omega $ satisfies the geometric control condition, $ s $ is sufficiently large, $ T > 0 $, $ \varepsilon_0 > 0 $, and $ \udlu \in \Cls^{1,s}(T,\varepsilon_0) $. Let~$ \vec{w} $ satisfy~\eqref{eq:equation-pseudo-diff-sys}, and let $ \vec{w}_h $ be defined by~\eqref{eq:definition-w-h}. Then for~$ \varepsilon_0 $ and $ h_0 > 0 $ sufficiently small, and all $ 0 < h = 2^{-j} < h_0 $ with $ j \in 2\N $, the following observability holds for all $ k = 0, 1, \ldots, h^{-1/2}-1 $,
\begin{equation}
\label{eq:observability-on-I-k-h}
h^{1/2}\|\vec{w}_{h}(t = kh^{1/2}T)\|_{\Ltwo}^2 \lesssim \int_{I^k_h} \|\phiw \vec{e} \cdot \vec{w}_{h}(t)\|_{\Ltwo}^2 \dt + h^{-2}\|R_h \vec{w}\|_{\Ltwo_t(I^k_h,\Ltwo)}^2,
\end{equation}
where $ I^k_h \bydef [kh^{1/2}T,(k+1)h^{1/2}T]_t $. 
\end{corollary}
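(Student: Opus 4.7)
The proof is a routine reduction of this statement to Proposition~\ref{prop:semiclassical-observability} via time translation together with the change of variable $s = h^{-1/2} t$ connecting the physical and semi\-classical time scales. I will present the case $ k = 0 $ first, then deduce the general case by translation.

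First I would recall that by Proposition~\ref{prop:equation-w(h)}, the function $ \vec{w}_h $ satisfies the semi\-classical equation $ (h D_s + \A_h(\udlu)) \vec{w}_h = -R_h \vec{w} $ on the full time interval, with $ R_h $ given by the decomposition~\eqref{eq:decomposition-R-h}. Moreover, Corollary~\ref{cor:h-oscillation} combined with Lemma~\ref{lem:semiclassical-functional-calculus} (which gives $ \Pi'_h = \Delta'_h + O(h)_{\L(\Ltwo,\Ltwo)} $) shows that $ \vec{w}_h = \Pi'_h(\udlu) \vec{w}_h + O(h)_{C([0,T]_t,\L(\Ltwo,\Ltwo))} \vec{w}_h $, so the $ h $-oscillation hypothesis~\eqref{eq:h-oscillation} is satisfied. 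Applying Proposition~\ref{prop:semiclassical-observability} with $ \vec{f}_h = -R_h \vec{w} $ on the interval $ [0,T]_s $ (which corresponds to $ [0,h^{1/2} T]_t $) then yields
\begin{equation*}
\|\vec{w}_h|_{s=0}\|_{\Ltwo}^2 \lesssim \int_0^T \|\phiw \vec{e} \cdot \vec{w}_h\|_{\Ltwo}^2 \ds + h^{-2} \|R_h\vec{w}\|_{\Ltwo_s([0,T],\Ltwo)}^2.
\end{equation*}
Next, I would convert back to the physical time variable: the change of variable $ s = h^{-1/2} t $ gives $ \ds = h^{-1/2} \dt $, so both integrals gain a factor $ h^{-1/2} $. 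Multiplying both sides by $ h^{1/2} $ yields exactly the inequality~\eqref{eq:observability-on-I-k-h} with $ k = 0 $ and $ I^0_h = [0,h^{1/2} T]_t $.

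For arbitrary $ k \in \{0,1,\dots,h^{-1/2}-1\} $, I would time-translate. Set $ \tau_k \udlu (t) \bydef \udlu(t + k h^{1/2} T) $ and $ \tau_k \vec{w}(t) \bydef \vec{w}(t + k h^{1/2} T) $, defined on $ [0,h^{1/2} T]_t $. Since $ k h^{1/2} T + h^{1/2} T \le T $ (because $ k \le h^{-1/2} - 1 $), these translates are well defined on $ [0, h^{1/2} T]_t $, and $ \tau_k \udlu $ satisfies $ \|\tau_k \udlu\|_{C([0,h^{1/2} T]_t,\H{s})} \le \|\udlu\|_{C([0,T]_t,\H{s})} < \varepsilon_0 $, hence $ \tau_k \udlu \in \Cls^{0,s}(h^{1/2} T,\varepsilon_0) $. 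Because the operators $ \Pi_h $, $ \Delta_h $ depend on $ \udlu $ only pointwise in time through $ \gamma $, translation commutes with the construction of $ \vec{w}_h $, that is $ (\tau_k \vec{w})_h = \tau_k(\vec{w}_h) $, and the translated pair satisfies the analogue of~\eqref{eq:equation-w(h)} driven by $ \tau_k \udlu $. Applying the $ k=0 $ case above to this translated system produces
\begin{equation*}
h^{1/2} \|\vec{w}_h(k h^{1/2} T)\|_{\Ltwo}^2 \lesssim \int_{I^k_h} \|\phiw \vec{e}\cdot\vec{w}_h(t)\|_{\Ltwo}^2 \dt + h^{-2} \|R_h(\tau_k\udlu)(\tau_k\vec{w})\|_{\Ltwo_t([0,h^{1/2}T],\Ltwo)}^2,
\end{equation*}
and a further change of variable $ t \mapsto t - k h^{1/2} T $ replaces the last norm by $ \|R_h\vec{w}\|_{\Ltwo_t(I^k_h,\Ltwo)}^2 $, yielding~\eqref{eq:observability-on-I-k-h}.

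The only genuine check is that Proposition~\ref{prop:semiclassical-observability} can be applied uniformly in $ k $ with the \emph{same} constants $ \varepsilon_0 $ and $ h_0 $; this is immediate because the constants therein depend only on $ d $, $ \omega $, $ T $, $ \b $, and the smallness of $ \udlu $, all of which are preserved under time translation. No genuine obstacle arises: the content of the corollary is simply the translation-invariance of the semi\-classical observability together with the time rescaling $ s = h^{-1/2} t $.
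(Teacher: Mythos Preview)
Your proposal is correct and follows essentially the same approach as the paper: verify the $h$-oscillation hypothesis via Corollary~\ref{cor:h-oscillation}, apply Proposition~\ref{prop:semiclassical-observability} with $\vec{f}_h = -R_h\vec{w}$, and convert between the $s$ and $t$ time scales via $\|\cdot\|_{\Ltwo([a,b]_s,\Ltwo)}^2 = h^{-1/2}\|\cdot\|_{\Ltwo([h^{1/2}a,h^{1/2}b]_t,\Ltwo)}^2$. The paper states the result for general $k$ directly (applying Proposition~\ref{prop:semiclassical-observability} on each $s$-interval $[kT,(k+1)T]$) rather than doing $k=0$ first and then translating, but this is only a cosmetic difference; your explicit translation argument is a faithful unpacking of the paper's phrase ``uniformly for $k$''.
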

\begin{proof}
The condition~\eqref{eq:h-oscillation} of Proposition~\ref{prop:semiclassical-observability} is verified by~\eqref{eq:h-oscillance-of-w(h)}, therefore, by Proposition~\ref{prop:semiclassical-observability}, for some $ \varepsilon_0 > 0 $ and $ h_0 > 0 $, uniformly for $ k = 0, 1, \ldots, h^{-1/2}-1 $,
\begin{equation*}
\|\vec{w}_{h}(s = kT)\|_{\Ltwo}^2 \lesssim \int_{kT}^{(k+1)T} \|\phiw \vec{e} \cdot \vec{w}_{h}(s)\|_{\Ltwo}^2 \ds + h^{-2}\|R_h \vec{w}\|_{\Ltwo_s([k T,(k+1)T],\Ltwo)}^2.
\end{equation*}
We conclude by $ \|\cdot\|_{\Ltwo([a,b]_s,\Ltwo)}^2 = h^{-1/2} \|\cdot\|_{\Ltwo([h^{1/2}a,h^{1/2}b]_t,\Ltwo)}^2 $, following from the change of time variable, $ s = h^{-1/2} t $.
\end{proof}

\subsection{Weak Observability}

\begin{proposition}
\label{prop:weak-observability}
Suppose that~$ \omega $ satisfies the geometric control condition, $ s $ is sufficiently large, $ T > 0 $, $ \varepsilon_0 > 0 $, and $ \udlu \in \Cls^{1,s}(T,\varepsilon_0) $. Then for~$ \varepsilon_0 > 0 $ sufficiently small, and for any $ N > 0 $, the following weak observability of~\eqref{eq:equation-pseudo-diff-sys} holds, that for all its solution~$ \vec{w} $ with $ \Ltwo $ initial data,
\begin{equation}
\label{eq:weak-observability-Ltwo}
\|\vec{w}(0)\|_{\Ltwo}^2 \lesssim \int_0^T \|\phiw\vec{e}\cdot\vec{w}\|_{\Ltwo}^2 \dt + \|\vec{w}(0)\|_{\H{-N}}^2.
\end{equation}
\end{proposition}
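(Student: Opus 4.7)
The plan is to combine the semiclassical observability on the intervals $I^k_h$ from~\eqref{eq:observability-on-I-k-h} with the Littlewood--Paley decomposition of Corollary~\ref{cor:Littlewood-Paley-for-Pi-h-Delta-h}. First I would use an energy estimate to compare $\|\vec{w}_h(t)\|_{\Ltwo}$ at different times. From Proposition~\ref{prop:equation-w(h)} we have $(hD_s + \A_h)\vec{w}_h = -R_h\vec{w}$, and together with the almost-self-adjointness~\eqref{eq:A_n-is-almost-self-adjoint} a Gronwall argument on each interval $I^k_h$ (of length $h^{1/2}T$ in $t$, or $T$ in $s$) yields, uniformly in $k=0,\dots,h^{-1/2}-1$,
\begin{equation*}
\|\vec{w}_h(kh^{1/2}T)\|_{\Ltwo}^2 \gtrsim \|\vec{w}_h(0)\|_{\Ltwo}^2 - C\Bigl(\varepsilon_0 + h^{1/2}\Bigr)\|\vec{w}_h\|_{\Ltwo_t([0,T],\Ltwo)}^2 - Ch^{-2}\|R_h\vec{w}\|_{\Ltwo_t([0,T],\Ltwo)}^2.
\end{equation*}
Summing~\eqref{eq:observability-on-I-k-h} over $k$ and using that $h^{1/2}\cdot h^{-1/2}=1$ produces a dyadic observability
\begin{equation*}
\|\vec{w}_h(0)\|_{\Ltwo}^2 \lesssim \int_0^T \|\phiw \vec{e}\cdot\vec{w}_h\|_{\Ltwo}^2 \dt + h^{-2}\|R_h\vec{w}\|_{\Ltwo_t([0,T],\Ltwo)}^2,
\end{equation*}
valid for every $h=2^{-j}$ with $j\in 2\N$, $j\ge j_0$ and $\varepsilon_0$ sufficiently small.

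Next I would sum this estimate over dyadic $h$. On the left, the lower bound of Corollary~\ref{cor:Littlewood-Paley-for-Pi-h-Delta-h} gives, for $j_0$ large,
\begin{equation*}
\sum_{j\ge j_0,\, j\in 2\N}\|\vec{w}_h(0)\|_{\Ltwo}^2 \gtrsim \|\vec{w}(0)\|_{\Ltwo}^2 - \|\vec{w}(0)\|_{\H{-N}}^2.
\end{equation*}
On the right-hand side, the first term is handled by commuting $\phiw\vec{e}\,\cdot$ through $\Pi_h\Delta_h$ and appealing to the upper bound of Corollary~\ref{cor:Littlewood-Paley-for-Pi-h-Delta-h}, the commutator contributing $O(h)$ and thus an $O(\|\vec w\|_{\Ltwo([0,T],\Ltwo)}^2)$ term that is absorbed below. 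The second term is bounded using Corollary~\ref{cor:littlewood-paley-R_h}: writing $h^{-2}=h\cdot h^{-3}$ and integrating in time,
\begin{equation*}
\sum_{j\ge j_0}h^{-2}\|R_h\vec{w}\|_{\Ltwo_t([0,T],\Ltwo)}^2 \lesssim (\varepsilon_0^2+2^{-j_0})\,\|\vec{w}\|_{\Ltwo_t([0,T],\Ltwo)}^2.
\end{equation*}

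Finally, the standard energy estimate for~\eqref{eq:equation-pseudo-diff-sys} (analogous to~\eqref{eq:energy-estimate-eq-w-h} with $\vec{f}_n=0$) gives $\|\vec{w}\|_{\Ltwo_t([0,T],\Ltwo)}^2\lesssim T\|\vec{w}(0)\|_{\Ltwo}^2$. Combining all the preceding estimates,
\begin{equation*}
\|\vec{w}(0)\|_{\Ltwo}^2 \lesssim \int_0^T \|\phiw\vec{e}\cdot\vec{w}\|_{\Ltwo}^2 \dt + (\varepsilon_0^2+2^{-j_0})\|\vec{w}(0)\|_{\Ltwo}^2 + \|\vec{w}(0)\|_{\H{-N}}^2,
\end{equation*}
so that for $\varepsilon_0$ sufficiently small and $j_0$ sufficiently large (both being fixed quantities depending only on $T,\omega,\b,d$) the middle term is absorbed into the left, yielding~\eqref{eq:weak-observability-Ltwo}.

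The main obstacle is the bookkeeping of the three small parameters $\varepsilon_0,h,2^{-j_0}$: the semiclassical observability requires $h<h_0$ \emph{and} $\varepsilon_0$ small, while the Littlewood--Paley summation only captures frequencies $|\xi|\gtrsim 2^{j_0}$, and the loss $h^{-2}$ in front of $\|R_h\vec{w}\|^2$ must be exactly compensated by the $O((\varepsilon_0+h^{1/2})h^{3/2})$ gain in Proposition~\ref{prop:equation-w(h)}. The decomposition~\eqref{eq:decomposition-R-h} of $R_h$ into pieces that each end with a frequency cutoff ($\Pi_h$, $\Pi'_h$, or $\Delta_h$) is exactly what makes the square-sum in Corollary~\ref{cor:littlewood-paley-R_h} converge.
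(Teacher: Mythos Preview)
Your proposal follows essentially the same route as the paper: energy estimate for $\vec w_h$ on the whole interval, sum the semiclassical observability~\eqref{eq:observability-on-I-k-h} over $k$, then sum over dyadic $h$ via Corollary~\ref{cor:Littlewood-Paley-for-Pi-h-Delta-h} and Corollary~\ref{cor:littlewood-paley-R_h}, and finally absorb the small remainder using $\|\vec w\|_{L^2_t L^2}\lesssim\|\vec w(0)\|_{L^2}$.

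One bookkeeping point: your dyadic observability with the factor $h^{-2}$ in front of $\|R_h\vec w\|_{L^2_t}^2$ is slightly optimistic. The contribution from summing~\eqref{eq:observability-on-I-k-h} over $k$ indeed gives $h^{-2}\|R_h\vec w\|_{L^2_t([0,T])}^2$, but the energy comparison between $\|\vec w_h(kh^{1/2}T)\|^2$ and $\|\vec w_h(0)\|^2$ over the full interval $[0,T]_t=[0,h^{-1/2}T]_s$ yields (after Young and Gronwall) a remainder of order $h^{-3}\|R_h\vec w\|_{L^2_t}^2$, not $h^{-2}$; this is why the paper arrives at~\eqref{eq:observability-[0,T]-semiclassical} with $h^{-3}$. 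Your argument still closes, since Corollary~\ref{cor:littlewood-paley-R_h} controls exactly $\sum_h h^{-3}\|R_h\vec w\|^2$, and your own remark ``$h^{-2}=h\cdot h^{-3}$'' shows you are invoking that estimate anyway. So the discrepancy is harmless, but the stated intermediate inequality with $h^{-2}$ does not follow from the energy estimate as written.
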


\begin{proof}
The idea is to first sum up~\eqref{eq:observability-on-I-k-h} to obtain an observability for $ \vec{w}_h $ on the whole interval $ [0,T] $, and then use Littlewood-Paley's theory to conclude.
However, the left hand side of~\eqref{eq:observability-on-I-k-h} is $ \|\vec{w}_{h}(t = kh^{1/2}T)\|_{\Ltwo}^2 $, an energy estimate of the equation~\eqref{eq:equation-w(h)} should be performed to bound it from below by $ \|\vec{w}_{h}(0)\|_{\Ltwo}^2 $. As in~\eqref{eq:energy-estimate-eq-w-h}, but we use the estimate $ \A_h - \A_h^* = O(h^{3/2})_{\L(\Ltwo,\Ltwo)} $ (the size~$ h^{3/2} $ is crucial here to obtain a uniform energy estimate, independent of~$ h $, on the interval $ [0,T]_t=[0,h^{-1/2}T]_s $),
\begin{equation*}
\begin{split}
h \ps \|\vec{w}_h\|_{\Ltwo}^2
& = (i(\A_h - \A_h^*)\vec{w}_h,\vec{w}_h)_{\Ltwo} + 2 \Re (R_h \vec{w}, \vec{w}_h)_{\Ltwo} \\
& \lesssim h^{3/2} \|\vec{w}_h\|_{\Ltwo}^2 + h^{-3/2} \|R_h \vec{w}\|_{\Ltwo}^2.
\end{split}
\end{equation*}
Therefore, for some constant $ C > 0 $,
\begin{equation*}
\ps \|\vec{w}_h\|_{\Ltwo}^2 - C h^{1/2} \|\vec{w}_h\|_{\Ltwo}^2
\lesssim h^{-5/2} \|R_h \vec{w}\|_{\Ltwo}^2,
\end{equation*}
and consequently,
\begin{equation*}
\ps (e^{-Ch^{1/2}s} \|\vec{w}_h\|_{\Ltwo}^2 )
= e^{-Ch^{1/2}s} (\ps \|\vec{w}_h\|_{\Ltwo}^2 - C h^{1/2} \|\vec{w}_h\|_{\Ltwo}^2) 
\lesssim e^{-Ch^{1/2}s} h^{-5/2} \|R_h \vec{w}\|_{\Ltwo}^2.
\end{equation*}
Now that $ h^{1/2}s $ is bounded for $ s \in [0,h^{-1/2}T] $, by Newton-Leibniz's rule,
\begin{equation*}
\|\vec{w}_h(0)\|_{\Ltwo}^2
\lesssim \|\vec{w}_h(s)\|_{\Ltwo}^2 + h^{-5/2}\|R_h \vec{w}\|_{\Ltwo([0,h^{-1/2}T]_s,\Ltwo)}^2.
\end{equation*}
Or equivalently, for $ t \in [0,T] $,
\begin{equation*}
h^{1/2}\|\vec{w}_h(0)\|_{\Ltwo}^2
\lesssim h^{1/2}\|\vec{w}_h(t)\|_{\Ltwo}^2 + h^{-5/2}\|R_h \vec{w}\|_{\Ltwo_t([0,T]_t,\Ltwo)}^2.
\end{equation*}
Set $ t = kh^{1/2}T $ for $ h = 2^{-j} $ with $ j \in 2\N $ sufficiently large, and $ k = 0,1,\ldots,h^{-1/2}-1 $, and use~\eqref{eq:observability-on-I-k-h} by absorbing $ h^{-2}\|R_h \vec{w}\|_{\Ltwo_t(I^k_h,\Ltwo)}^2 $ into $ h^{-5/2}\|R_h \vec{w}\|_{\Ltwo_t([0,T]_t,\Ltwo)}^2 $, we have
\begin{equation*}
\begin{split}
h^{1/2}\|\vec{w}_h(0)\|_{\Ltwo}^2
\lesssim \int_{I^k_h} \|\phiw \vec{e} \cdot \vec{w}_{h}(t)\|_{\Ltwo}^2 \dt + h^{-5/2}\|R_h \vec{w}\|_{\Ltwo_t([0,T],\Ltwo)}^2.
\end{split}
\end{equation*}
Sum up for $ k = 0, 1, \ldots, h^{-1/2} - 1 $,
\begin{equation}
\label{eq:observability-[0,T]-semiclassical}
\|\vec{w}_h(0)\|_{\Ltwo}^2 
\lesssim \int_0^T \|\phiw \vec{e}\cdot \vec{w}_{h}(t)\|_{\Ltwo}^2 \dt + h^{-3}\|R_h \vec{w}\|_{\Ltwo_t([0,T],\Ltwo)}^2.
\end{equation}
For the integrand, write
$\phiw \vec{e} \cdot \vec{w}_{h}
= \Pi_h \Delta_h \phiw \vec{e} \cdot \vec{w} + O(h)_{\L(\Ltwo,\Ltwo)}\vec{w}$.
Then sum up for $ h = 2^{-j} < h_0 = 2^{-j_0} $ sufficiently small, using Littlewood-Paley's theory for $ \Pi_h\Delta_h $ and $ R_h $ developed respectively by Corollary~\ref{cor:Littlewood-Paley-for-Pi-h-Delta-h} and Corollary~\ref{cor:littlewood-paley-R_h}, we obtain
\begin{equation*}
\|\vec{w}(0)\|_{\Ltwo}^2 
\lesssim \int_0^T \|\phiw \vec{e} \cdot \vec{w}(t)\|_{\Ltwo}^2 \dt
+ (\varepsilon_0^2 + 2^{-2j_0}) \|\vec{w}\|_{\Ltwo([0,T],\Ltwo)}^2 + \|\vec{w}(0)\|_{\H{-N}}^2 .
\end{equation*}
It suffices to bound $ \|\vec{w}\|_{\Ltwo([0,T],\Ltwo)} \lesssim  \|\vec{w}(0)\|_{\Ltwo}^2 $ and then absorb $ (\varepsilon_0^2 + 2^{-2j_0}) \|\vec{w}(0)\|_{\Ltwo}^2 $ into the left hand side.
\end{proof}

\subsection{Unique Continuation and Strong Observability}

We remove the remainder in~\eqref{eq:weak-observability-Ltwo} by the uniqueness-compactness argument to finish the proof of Proposition~\ref{prop:Ltwo-observability-sys}.

\begin{proof}[Proof of Proposition~\ref{prop:Ltwo-observability-sys}]
We proceed by contradiction. Suppose that the strong observability does not hold, then there exists a sequence $ \{\varepsilon_n,\udlu_n,\vec{w}_n\}_{n} $, with $ \varepsilon_n > 0 $, $ \udlu_n \in \Cls^{1,s}(T,\varepsilon_n) $, and $ \vec{w}_n \in C([0,T],\Ldottwo(\Td)) $ satisfying the equation
\begin{equation}
\label{eq:equation-w_n-uniqueness-compactness}
(D_t + \A(\udlu_n)) \vec{w}_n = 0,
\end{equation}
such that, as $ n \to \infty $,
\begin{equation*}
\varepsilon_n = o(1), 
\quad \|\vec{w}_n(0)\|_{\Ltwo}  = 1, 
\quad \int_0^T \|\phiw \vec{e}\cdot\vec{w}_n\|_{\Ltwo}^2 \dt  = o(1).
\end{equation*}
By an energy estimate, $ \{\vec{w}_n\}_n $ is bounded in $ C([0,T],\Ldottwo(\Td)) $, $ \{\pt \vec{w}_n \}_n $ is bounded in $ \Linf([0,T],\Hdot{-3/2}(\Td)) $. Therefore, by Arzelà-Ascoli's theorem, we may assume that, up to a subsequence, 
\begin{itemize}[noitemsep]
\item $ \vec{w}_n \to \vec{w} $ strongly in $ C([0,T],\Hdot{-3/2}(\Td)) $,
\item $ \vec{w}_n \rightharpoonup \vec{w} $ weakly in $ \Ltwo([0,T],\Ldottwo(\Td)) $,
\item $ \vec{w}_n(0) \rightharpoonup \vec{w}(0) $ weakly in $ \Ldottwo(\Td) $.
\end{itemize}
Now that $ \underline{u}_n \to 0 $ in $ C([0,T],\Hdot{s}(\Td)) $, we also have $ \A(\udlu_n) \vec{w}_n \to \A(0) \vec{w} $ strongly in $ C([0,T],\Hdot{-3}(\Td)) $. Therefore, passing to the limit $ n\to\infty $ of~\eqref{eq:equation-w_n-uniqueness-compactness} in the sense of distribution, we see that $ \vec{w} \in C([0,T],\Ldottwo(\Td)) $ as it satisfies the following equation,
\begin{equation}
\label{eq:limit-equation-u=0}
D_t \vec{w} + \A(0) \vec{w} = 0,
\end{equation}
where $ \A(0) = \op(A(0) \pi) $ with
\begin{equation}
\label{eq:A(0)-formula}
\begin{split}
A(0) 
= |\xi|^{3/2} \begin{pmatrix} 1 & \phantom{*}0 \\ 0 & -1 \end{pmatrix} + \frac{g}{2|\xi|^{1/2}}  \begin{pmatrix} \phantom{*}1 & \phantom{*}1 \\ -1 & -1 \end{pmatrix} + \frac{|\xi|^{1/2} m_{\b}}{2}  \begin{pmatrix} \phantom{*}1 & -1 \\ -1 & \phantom{*}1 \end{pmatrix}.
\end{split}
\end{equation}
By the weak convergence, $ \phiw  \vec{e}\cdot\vec{w}_n \rightharpoonup \phiw  \vec{e}\cdot\vec{w} $ in $ \Ltwo([0,T],\Ltwo(\Td)) $, we have
\begin{equation*}
\int_0^T \|\phiw  \vec{e}\cdot\vec{w}\|_{\Ltwo}^2 \dt \le \liminf_{n \to \infty} \int_0^T \|\phiw  \vec{e}\cdot\vec{w}_n\|_{\Ltwo}^2 \dt = 0,
\end{equation*}
implying that $ \vec{e}\cdot\vec{w}|_{]0,T[ \times \omega} = 0 $ in the sense of distribution.
Then by the weak observability~\eqref{eq:weak-observability-Ltwo} and Rellich–Kondrachov's compact injection theorem,
\begin{equation*}
\|\vec{w}(0)\|_{\H{-N}}^2 
= \lim_{n \to \infty} \|\vec{w}_n(0)\|_{\H{-N}}^2 
\gtrsim \limsup_{n\to\infty} \Big( \|\vec{w}_n(0)\|_{\Ltwo}^2 - \int_0^T \|\phiw  \vec{e} \cdot \vec{w}_n\|_{\Ltwo}^2 \dt \Big)
= 1,
\end{equation*}
whence $ \vec{w}(0) \ne 0 $.
To conclude, it suffices to prove the unique continuation property of~\eqref{eq:limit-equation-u=0} and obtain a contradiction. This is done in the following lemma.
\end{proof}

\begin{lemma}
Under the hypothesis of Proposition~\ref{prop:Ltwo-observability-sys}, suppose that $ \vec{w} \in C([0,T],\Ldottwo(\Td)) $ satisfies~\eqref{eq:limit-equation-u=0} and that $ \vec{e}\cdot\vec{w}|_{I \times \omega} = 0 $ for some interval $ I \subset [0,T] $ with non-empty interior, then $ \vec{w} \equiv 0 $.
\end{lemma}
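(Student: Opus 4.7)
The proof will follow the uniqueness--compactness scheme sketched in Section~\ref{sec:Strategy-of-Proof}. Arguing by contradiction, suppose $\vec{w}\not\equiv 0$. After a time translation I may assume $0$ lies in the interior of $I$, and extending $\vec{w}$ to $C(\R,\Ldottwo)$ via the group generated by $\A(0)$, I would introduce the closed $\C$-linear subspace
\[
\mathscr{N} := \{\vec{v}_0\in\Ldottwo(\Td,\C^2) : \vec{e}\cdot e^{-itA(0)}\vec{v}_0|_{I\times\omega}=0 \},
\]
which contains $\vec{w}(0)\neq 0$.

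Applying the weak observability of Proposition~\ref{prop:weak-observability} to $\udlu=0$ on the interval $I$ gives $\|\vec{v}_0\|_{\Ltwo}^2\lesssim \|\vec{v}_0\|_{\H{-N}}^2$ for every $\vec{v}_0\in\mathscr{N}$; combined with the compact embedding $\Ltwo\hookrightarrow\H{-N}$ (Rellich--Kondrachov) this forces the $\Ltwo$-unit ball of $\mathscr{N}$ to be compact, so $\dim_{\C}\mathscr{N}<\infty$ by F.~Riesz. Next I would show $\A(0)$ restricts to a $\C$-linear endomorphism of $\mathscr{N}$: for $\vec{v}_0\in\mathscr{N}$ the solution $\vec{v}(t)=e^{-itA(0)}\vec{v}_0$ lies in $C^1(\R,\Hdot{-3/2})$, so distributional differentiation of $\vec{e}\cdot\vec{v}|_{I\times\omega}=0$ in $t$ yields $\vec{e}\cdot\A(0)\vec{v}|_{I\times\omega}=0$, and since $\A(0)$ commutes with the flow, $\A(0)\vec{v}_0$ satisfies the defining condition of~$\mathscr{N}$ at Sobolev index $-3/2$; the bootstrap that returns $\A(0)\vec{v}_0$ to $\Ltwo$ is handled by running the weak-observability argument at this lower regularity, which goes through unchanged because $\A(0)$ is a constant-coefficient Fourier multiplier invertible on mean-zero Sobolev scales.

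Since $\mathscr{N}\neq\{0\}$ is finite-dimensional over $\C$, the operator $\A(0)|_{\mathscr{N}}$ admits an eigenvector $\vec{v}_0=\binom{v_0^+}{v_0^-}$ with $\A(0)\vec{v}_0=\lambda\vec{v}_0$, $\lambda\in\C$; the corresponding solution being $\vec{v}(t)=e^{-it\lambda}\vec{v}_0$, the vanishing of $\vec{e}\cdot\vec{v}$ on $I\times\omega$ forces $v_0^++v_0^-\equiv 0$ on $\omega$. Reading the identity $A(0)(k)\,\hat{\vec{v}}_0(k)=\lambda\,\hat{\vec{v}}_0(k)$ in Fourier, the two eigenvalues
\[
\lambda_\pm(k)=\tfrac{|k|^{1/2}m_{\b}(k)}{2}\pm\sqrt{\tfrac{|k|\,m_{\b}(k)^2}{4}+|k|^3+g|k|}
\]
of the $2\times 2$ symbol $A(0)(k)$ (see \eqref{eq:A(0)-formula}) are real, distinct for $k\neq 0$, and satisfy $|\lambda_\pm(k)|\to\infty$ as $|k|\to\infty$, so only finitely many $k$ satisfy $\lambda\in\{\lambda_+(k),\lambda_-(k)\}$. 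Consequently $\vec{v}_0$ is a trigonometric polynomial, and the real-analytic scalar $v_0^++v_0^-$, vanishing on the nonempty open set $\omega$, must vanish identically on $\Td$.

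Finally, substituting $\hat{v}_0^-(k)=-\hat{v}_0^+(k)$ into $A(0)(k)\hat{\vec{v}}_0(k)=\lambda\hat{\vec{v}}_0(k)$ and adding the two scalar rows of the resulting identity eliminates $\lambda$ and yields $2|k|^{3/2}\hat{v}_0^+(k)=0$; hence $\hat{v}_0^+(k)=0$ for every $k\neq 0$, so $\vec{v}_0\equiv 0$, contradicting the fact that $\vec{v}_0$ was a nonzero eigenvector. The most delicate point in this plan is the $\A(0)$-invariance of $\mathscr{N}$, where the $3/2$-derivative loss of $\A(0)$ must be reconciled with the $\Ltwo$-valued definition of $\mathscr{N}$; all other steps are essentially formal once this is in place.
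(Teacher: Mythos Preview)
Your overall strategy matches the paper's, and your spectral endgame (computing the eigenvalues of $A(0)(k)$, concluding $\vec v_0$ is a trigonometric polynomial, then forcing $\vec v_0=0$) is correct and equivalent to the paper's elliptic-equation route. The genuine gap is precisely where you flag it: the $\A(0)$-invariance of $\mathscr N$.

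Your proposed bootstrap does not close. Conjugating the weak observability by $\langle D_x\rangle^{-3/2}$ and absorbing the commutator $[\phiw,\langle D_x\rangle^{-3/2}]$ into the remainder yields, for $\vec v_0\in\Hdot{-3/2}$ with $\vec e\cdot e^{-it\A(0)}\vec v_0|_{I\times\omega}=0$, only the inequality $\|\vec v_0\|_{\H{-3/2}}\lesssim\|\vec v_0\|_{\H{-5/2}}$. This shows the analogue $\mathscr N^{-3/2}$ at regularity $-3/2$ is finite-dimensional, but it does \emph{not} lift $\A(0)\vec v_0$ back into $\Ldottwo$; a finite-dimensional subspace of $\Hdot{-3/2}$ need not lie in $\Ldottwo$, so you cannot conclude $\A(0)\vec v_0\in\mathscr N$. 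Iterating only pushes the problem further down the scale.

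The paper repairs this with a shrinking-interval device that you should incorporate. Instead of a single $\mathscr N$, one defines $\mathscr N_\delta=\{\vec w_0\in\Ldottwo:\vec e\cdot e^{-it\A(0)}\vec w_0|_{[0,T-\delta]\times\omega}=0\}$, each finite-dimensional by weak observability, and nested increasingly in $\delta$. Integer-valued monotone dimensions force $\mathscr N_\delta$ to stabilise on some interval $(0,\delta_0]$; call this common space $\mathscr N$. The point is that for $\vec w_0\in\mathscr N$ and $0<\epsilon<\delta_0$ one has $\vec w(\epsilon)\in\mathscr N_{\delta+\epsilon}=\mathscr N$ \emph{as an $\Ltwo$ element}, so the difference quotients $\tfrac{1}{i\epsilon}(\vec w(\epsilon)-\vec w(0))$ lie in $\mathscr N$. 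Since on the finite-dimensional space $\mathscr N$ the $\Ltwo$ and $\H{-3/2}$ norms are equivalent, and the difference quotients are bounded in $\H{-3/2}$ by the equation, they are bounded in $\Ltwo$; a convergent subsequence has limit $-\A(0)\vec w_0$, hence $\A(0)\vec w_0\in\mathscr N$. This replaces your distributional differentiation plus regularity bootstrap by a compactness argument that never leaves $\Ltwo$. With this modification, the rest of your proof goes through unchanged.
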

\begin{proof}
There is no harm in assuming that $ I = [0,T] $. For any $ 0 \le \delta < T $, define the following $ \C $-linear subspace space of $ \Ldottwo(\Td) $
\begin{equation*}
\mathscr{N}_\delta = \{\vec{w}_0 \in \Ldottwo(\Td) : \vec{e}\cdot \exp\{-it\A(0)\}\vec{w}_0|_{[0,T-\delta] \times \omega} = 0\},
\end{equation*}
where $ \mathrm{e}^{-it\A(0)} \vec{w}_0 \in C([0,T],\Ldottwo(\Td)) $ denotes the solution to equation~\eqref{eq:limit-equation-u=0} with initial data $ \vec{w}_0 $.
It suffices to show that for some $ 0 \le \delta_0 < T $, $ \mathscr{N}_{\delta_0} = \{0\}  $.

Applying the weak observability~\eqref{eq:weak-observability-Ltwo} with $ \varepsilon_0 = 0 $, $ \udlu = 0 $, $ N > 0 $, for time $ T - \delta > 0 $, and for $ \vec{w}_0 \in \mathscr{N}_\delta $,
\begin{equation}
\label{eq:compactness-of-N}
\|\vec{w}_0\|_{\Ltwo} \le C(T-\delta) \|\vec{w}_0\|_{\H{-N}},
\end{equation}
where the constant $ C(T-\delta) $ is uniformly bounded as long as $ T-\delta $ stays away from $ 0 $.
This implies that, by the compact injection theorem, the closed unit ball of $ (\mathscr{N}_\delta,\|\cdot\|_{\Ltwo}) $ is compact, and thus
\begin{equation*}
\dim \mathscr{N}_\delta < \infty, \quad \forall \delta \in [0,T).
\end{equation*}
Moreover, by definition
\begin{equation*}
\delta < \delta' \Rightarrow \mathscr{N}_\delta \subset \mathscr{N}_{\delta'},
\end{equation*}
which implies that the family $ \{\mathscr{N}_\delta\}_{0\le\delta<T} $ is totally ordered by the inclusion relation~$ \subset $. If $ \dim \mathscr{N}_0 = 0 $, then the proof is closed. Otherwise, there exists a $ \delta_0 > 0 $, such that for all $ 0 < \delta \le \delta_0 $,
\begin{equation*}
\dim \mathscr{N}_\delta = \dim \mathscr{N}_{\delta_0} \ge \dim \mathscr{N}_0 > 0,
\end{equation*}
or equivalently,
\begin{equation*}
\mathscr{N} \bydef \mathscr{N}_\delta = \mathscr{N}_{\delta_0} \supset \mathscr{N}_0 \ne \{0\}.
\end{equation*}
We will show that $ \mathscr{N} = \{0\} $ to obtain a contradiction.
Let $ \vec{w}_0 \in \mathscr{N} $ and set $ \vec{w}(t) = \mathrm{e}^{-it\A(0)}\vec{w}_0$. Then for $ 0 < \epsilon < \delta_0 $, by the identity $ \vec{w}(t) = \mathrm{e}^{-i(t-\epsilon)\A(0)} \vec{w}(\epsilon) $, we see that $ \vec{w}(\epsilon) \in \mathscr{N} $. And since $ \mathscr{N} $ is a linear vector space, $ \frac{1}{i\epsilon} (\vec{w}(\epsilon)-\vec{w}(0)) \in \mathscr{N} $. Moreover, apply the compactness~\eqref{eq:compactness-of-N} with $ N = 3/2 $,
\begin{align*}
\big\|\frac{1}{i\epsilon} (\vec{w}(\epsilon)-\vec{w}(0))\big\|_{\Ltwo}
& \lesssim \big\|\frac{1}{i\epsilon}(\vec{w}(\epsilon)-\vec{w}(0))\big\|_{\H{-3/2}} 
\lesssim \sup_{0 \le t \le \epsilon} \|D_t \vec{w}(t)\|_{\H{-3/2}} \\
& \lesssim \sup_{0 \le t \le \epsilon} \|\A(0) \vec{w}(t)\|_{\H{-3/2}}
\lesssim \sup_{0 \le t \le \epsilon} \|\vec{w}(t)\|_{\Ltwo} 
\lesssim \|\vec{w}_0\|_{\Ltwo}.
\end{align*}
So the family $ \{\frac{1}{i\epsilon}(\vec{w}(\epsilon)-\vec{w}(0))\}_{0 < \epsilon < \delta} $ is bounded in $ (\mathscr{N},\|\cdot\|_{\Ltwo}) $, and consequently relatively compact. Therefore, up to a subsequence $ \epsilon_n \to 0 $, $ \frac{1}{i\epsilon_n}(\vec{w}(\epsilon_n)-\vec{w}(0)) \to D_t \vec{w}(0) = - \A(0) \vec{w}_0 $ strongly in $ (\mathscr{N},\|\cdot\|_{\Ltwo}) $, and we have a well defined $ \C $-linear map on $ \mathscr{N} $,
\begin{equation*}
\vec{w}_0 \mapsto \A(0) \vec{w}_0,
\end{equation*}
which admits an eigenfunction, say $ \mathscr{N} \ni \vec{w}_0 = \binom{w^+_0}{w^-_0} \ne 0 $, with $ \A(0) \vec{w} = \lambda \vec{w} $ for some $ \lambda \in \C $. By the definition of $ \A(0) $,
\begin{align*}
|D_x|^{3/2} w^+_0 + \frac{g}{2} |D_x|^{-1/2} (w^+_0 + w^-_0) + \frac{1}{2}|D_x|^{1/2}M_{\b} (w^+_0 - w^-_0) & = \lambda w^+_0, \\
-|D_x|^{3/2} w^-_0 - \frac{g}{2} |D_x|^{-1/2} (w^+_0 + w^-_0) - \frac{1}{2} |D_x|^{1/2}M_{\b} (w^+_0 - w^-_0) & = \lambda w^-_0,
\end{align*}
Taking respectively the sum and the difference of the above two equations,
\begin{align*}
|D_x|^{3/2} (w^+_0 - w^-_0) & = \lambda (w^+_0 + w^-_0), \\
(|D_x|^{3/2} + g |D_x|^{-1/2}) (w^+_0 + w^-_0) + |D_x|^{1/2} M_{\b} (w^+_0 - w^-_0)  & = \lambda (w^+_0 - w^-_0).
\end{align*}
Apply $ |D_x|^{3/2} $ to the second equation, and use the first one to eliminate $ w^+_0 - w^-_0 $,
\begin{equation*}
(|D_x|^{3} + g |D_x| + \lambda |D_x|^{1/2}M_{\b}) (w^+_0 + w^-_0) = \lambda^2 (w^+_0 + w^-_0).
\end{equation*}
Now that $ M_{\b} $ is of order $ -\infty $, it is an elliptic equation, implying that $ w^+_0 + w^-_0 $ has only a finite number of Fourier modes. So it is analytic and can never vanish on a nonempty open set unless it is identically zero. Hence by the definition of $ \mathscr{N} $, $ (w^+_0 + w^-_0)|_\omega = \vec{e} \cdot \vec{w}_0|_\omega = 0 $, we have $ w^+_0 + w^-_0 \equiv 0 $. Then by the first equation, $ |D_x|^{3/2} (w^+_0 - w^-_0) = \lambda (w^+_0 + w^-_0) = 0 $, we have $ w^+_0 - w^-_0 \equiv 0 $, for~$ \vec{w}_0 $ has no zero frequency. Therefore $ \vec{w}_0 = 0 $, which is a contradiction.
\end{proof}

\section{$ \H{s} $ Linear Control}
\label{sec:H^s-linear-control}

\subsection{Sobolev Regularity of HUM Control Operator}

\begin{proposition}
\label{prop:regularity-H^s}
Suppose that~$ \omega $ satisfies the geometric control condition, $ s $ is sufficiently large, $ \mu \ge 0 $, $ T > 0 $, $ \varepsilon_0 > 0 $, and $ \udlu \in \Cls^{1,s}(T,\varepsilon_0) $. Then for~$ \varepsilon_0 $ sufficiently small, $ \Theta|_{\Hdot{\mu}} = \Theta(\underline{u})|_{\Hdot{\mu}} $ defines a bounded $ \R $-linear operator from $ \Hdot{\mu}(\Td) $ to $ C([0,T],\Hdot{\mu}(\Td)) $, such that 
\begin{equation*}
\|\Theta|_{\Hdot{\mu}}\|_{\L(\Hdot{\mu}, C([0,T],\Hdot{\mu}))} \lesssim 1.
\end{equation*}
\end{proposition}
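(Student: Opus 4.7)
The plan is to exploit the factorization $ \Theta = -\B^*\Sol\K^{-1} $ obtained in Section~\ref{sec:HUM}. Since $ \B^* = \chi_T \phiw \T{q}^* \Re $ preserves $ \Hdot{\mu} $ regularity by the boundedness of paraproducts, and $ \Sol : \Hdot{\mu}(\Td) \to C([0,T],\Hdot{\mu}(\Td)) $ by the energy estimates of Appendix~\ref{app:linear-equations}, it suffices to show that $ \K $ restricts to a continuous isomorphism of $ \Hdot{\mu}(\Td) $, with operator bounds uniform in $ \udlu $. That $ \K $ sends $ \Hdot{\mu} $ to itself is already recorded in the remark following~\eqref{eq:definition-K}; only the invertibility remains.

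For this, I would follow the semi-classical Lax--Milgram scheme outlined in Section~\ref{sec:Strategy-of-Proof}. Fix a small semi-classical parameter $ h > 0 $, set $ \Lambda^\mu_h = 1 + h^\mu \T{(\gamma^{(3/2)})^{2\mu/3}}\big|_{t=0} $ (elliptic of order~$ \mu $ and approximately self-adjoint), so that $ \|\Lambda^\mu_h f\|_{\Ltwo} $ defines an equivalent norm on $ \Hdot{\mu}(\Td) $, and consider the $ \R $-bilinear form
\begin{equation*}
\varpi^\mu_h(f,g) = \Re\bigl(\Lambda^\mu_h \K f, \Lambda^\mu_h g\bigr)_{\Ltwo}.
\end{equation*}
Writing $ \Lambda^\mu_h \K = \K \Lambda^\mu_h + [\Lambda^\mu_h,\K] $ and combining the $ \Ltwo $-coercivity of $ \varpi $ from Proposition~\ref{prop:HUM-Ltwo} with the commutator bound
\begin{equation*}
\bigl\|[\K,\Lambda^\mu_h]\Lambda^{-\mu}_h\bigr\|_{\L(\Ltwo,\Ltwo)} \lesssim \varepsilon_0 + h, \quad (\ast)
\end{equation*}
one deduces, for $ \varepsilon_0 $ and $ h $ sufficiently small, the coercivity $ \varpi^\mu_h(f,f) \gtrsim \|\Lambda^\mu_h f\|_{\Ltwo}^2 $. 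Lax--Milgram then produces an isomorphism $ \K|_{\Hdot{\mu}} : \Hdot{\mu}(\Td) \to \Hdot{\mu}(\Td) $ with uniform bounds, and the proposition follows upon composing with $ \B^*\Sol $.

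The technical heart of the proof is the commutator estimate~$(\ast)$. Writing $ \K = -(\B^*\Sol)^*(\B^*\Sol) $ and noting that $ \B^* $ is paradifferential, one reduces it (modulo a symmetric adjoint argument and a treatment of the easier commutator $ [\Lambda^\mu_h,\B^*] $) to the estimate
\begin{equation*}
\bigl\|[\Lambda^\mu_h,\Sol]\Lambda^{-\mu}_h\bigr\|_{\L(\Ltwo,C([0,T],\Ltwo))} \lesssim \varepsilon_0 + h.
\end{equation*}
Setting $ v := \Lambda^\mu_h\Sol f - \Sol(\Lambda^\mu_h f) $, a direct calculation shows that $ v $ solves the dual equation~\eqref{eq:equation-dual-non-perturbed} with zero initial data and source term $ [\Lambda^\mu_h,P^*(\udlu(t))]\Sol f $, so an $ \Ltwo $ energy estimate for the dual equation reduces the problem to proving
\begin{equation*}
\bigl\|[\Lambda^\mu_h,P^*(\udlu(t))]\bigr\|_{\L(\Hdot{\mu},\Ltwo)} \lesssim \varepsilon_0 + h, \quad\text{uniformly in } t\in[0,T].
\end{equation*}
By~\eqref{eq:definition-P} and~\eqref{eq:gamma-almost-selfadjoint}, the principal symbol of $ P^* $ is, up to a skew-adjoint piece already compensated in $ \gamma^{(1/2)} $, a function of $ \gamma^{(3/2)} $, and hence the leading Poisson bracket $ \{(\gamma^{(3/2)})^{2\mu/3},\gamma^{(3/2)}\} $ vanishes. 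What remains carries either an explicit factor $ h $ from the semi-classical symbolic expansion, or an explicit factor $ \varepsilon_0 $ from the smallness of $ V(\udlu) $, $ r(\udlu)-|\xi|^{1/2} $, $ \gamma^{(3/2)}-|\xi|^{3/2} $, $ M_\b $, and from the fact that $ \Lambda^\mu_h $ is frozen at $ t=0 $ while $ P^* $ varies in time through $ \eta(t) $. Carrying out this symbolic bookkeeping cleanly, in particular handling the transport term $ \nabla_x\cdot\T{V} $ and the $ \R $-linear pieces $ \Re,\Im $ appearing in $ P $, is the step I expect to be the principal obstacle.
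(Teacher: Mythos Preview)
Your overall architecture matches the paper's: reduce to showing that $\K|_{\Hdot{\mu}}$ is an isomorphism via a semi-classical Lax--Milgram argument on $\Hdot{\mu}_h$, and for this establish the commutator bound $(\ast)$. The paper does exactly this (Proposition~\ref{prop:K-is-H^s_h-isomorphism} and Lemma~\ref{lem:commutator-estimate-K}).

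There is, however, a genuine gap in your reduction of $(\ast)$. You freeze $\Lambda^\mu_h$ at $t=0$ and claim
\[
\bigl\|[\Lambda^\mu_h|_{t=0},\,P^*(\udlu(t))]\bigr\|_{\L(\Hdot{\mu},\Ltwo)} \lesssim \varepsilon_0 + h.
\]
This is false. The relevant bracket is $\{\hat\gamma^{(\mu)}(0,\cdot),\gamma^{(3/2)}(t,\cdot)\}$; the identity $\{\hat\gamma^{(\mu)},\gamma^{(3/2)}\}=0$ holds only at equal times (one is then a function of the other). Writing $\gamma^{(3/2)}(t)=\gamma^{(3/2)}(0)+\bigl(\gamma^{(3/2)}(t)-\gamma^{(3/2)}(0)\bigr)$, the bracket $\{\hat\gamma^{(\mu)}(0),\gamma^{(3/2)}(t)-\gamma^{(3/2)}(0)\}$ contains the term $\partial_\xi\hat\gamma^{(\mu)}(0)\cdot\partial_x\bigl(\gamma^{(3/2)}(t)-\gamma^{(3/2)}(0)\bigr)$, which is $O(\varepsilon_0)$ in size but of order $\mu+\tfrac12$ in $\xi$ (the factor $\partial_\xi\hat\gamma^{(\mu)}(0)$ is \emph{not} small). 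Hence $[\Lambda^\mu_h|_{t=0},P^*(t)]$ is of order $\mu+\tfrac12$, not $\mu$, and does not map $\Hdot{\mu}\to\Ltwo$; the missing half-derivative cannot be recovered from $\Lambda^{-\mu}_h$, which only gains $\mu$ semi-classical derivatives.

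The paper avoids this by letting the conjugating symbol flow with time. Rather than commuting the frozen $\Lambda^\mu_h|_{t=0}$ through each factor of $\K=-\Range\,\B\B^*\Sol$, it inserts the time-dependent $\T{\hat\gamma^{(\mu)}(t)}$ between the factors and estimates separately
\[
\Sol\,\T{\hat\gamma^{(\mu)}|_{t=0}}-\T{\hat\gamma^{(\mu)}(t)}\Sol,\qquad
\Range\,\T{\hat\gamma^{(\mu)}(t)}-\T{\hat\gamma^{(\mu)}|_{t=0}}\Range,\qquad
[\B\B^*,\T{\hat\gamma^{(\mu)}(t)}].
\]
For the first two, the source in the energy argument is $[\T{\hat\gamma^{(\mu)}(t)},\pt\pm P]$, whose principal Poisson bracket now vanishes identically, leaving only $\T{\pt\hat\gamma^{(\mu)}}=O(\varepsilon_0)_{\L(\Hdot{\mu},\Ltwo)}$ and genuine order-$(\mu-\tfrac12)$ remainders. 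The endpoints match because $\Sol$ takes data and $\Range$ returns data at $t=0$. The third commutator is merely $O(1)_{\L(\Hdot{\mu-1},\Ltwo)}$ and supplies the factor $h$ in $(\ast)$. Your argument is repaired precisely by this modification; the remaining bookkeeping you anticipate (transport term, $\Re/\Im$ pieces) is then routine.
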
 
To prove this, recall that $ \Theta = - \B^* \Sol \K^{-1}$, with $ \K = - \Range \B \B^* \Sol $, where by the Hilbert uniqueness method (Proposition~\ref{prop:HUM-Ltwo}), $ \K$ defines an isomorphism on $ \Ldottwo(\Td) $; by Theorem~\ref{thm:Operator-Norm-Estimate-Paradiff} and Corollary~\ref{cor:Range-Sol-regularity}, $ \K|_{\Hdot{\mu}} $ sends $ \Hdot{\mu}(\Td) $ to itself. Therefore, to prove Proposition~\ref{prop:regularity-H^s}, it remains to show that $ \K|_{\Hdot{\mu}} $ defines an isomorphism on $ \Hdot{\mu}(\Td) $. Our idea is to use the Lax-Milgram's theorem as in the Hilbert uniqueness method. However, for technical reasons, we will work in the semi\-classical Sobolev spaces $ \Hdot{\mu}_h(\Td) = (\Hdot{\mu}(\Td), \|\cdot\|_{\H{\mu}_h})$, equipped with a real scalar product,
\begin{equation*}
\Re (u,v)_{\H{\mu}_h} = \Re (\langle hD_x \rangle^\mu u,\langle hD_x \rangle^\mu v)_{\Ltwo}.
\end{equation*}
For each fixed~$ h $, $ \Hdot{\mu}_h(\Td) $ and $ \Hdot{\mu}(\Td) $ are isomorphic as Banach spaces with equivalent norms, even though not uniformly in~$ h $. Therefore, Proposition~\ref{prop:regularity-H^s} is a consequence of the following Proposition~\ref{prop:K-is-H^s_h-isomorphism}.

\begin{proposition}
\label{prop:K-is-H^s_h-isomorphism}
Under the hypothesis of Proposition~\ref{prop:regularity-H^s}, for~$ h $ and~$ \varepsilon_0 $ sufficiently small, $ \K|_{\Hdot{\mu}_h} : \Hdot{\mu}_h(\Td) \to \Hdot{\mu}_h(\Td) $ defines an isomorphism.
\end{proposition}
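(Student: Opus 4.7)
The plan is to apply the Lax-Milgram theorem to an appropriate $\R$-bilinear form on the real Hilbert space $(\Hdot{\mu}_h(\Td),\Re(\cdot,\cdot)_{\H{\mu}_h})$, following the strategy outlined in Section~\ref{sec:Strategy-of-Proof}. Define, in analogy with $\Lambda^s_h$,
\begin{equation*}
\Lambda^\mu_h = 1 + h^\mu \T{(\gamma^{(3/2)}(\udlu))^{2\mu/3}}|_{t=0},
\end{equation*}
so that, by a paradifferential symbolic calculus and the smallness of $\udlu$, $\Lambda^\mu_h$ is a bounded isomorphism from $\Hdot{\mu}_h(\Td)$ onto $\Ldottwo(\Td)$, uniformly in $h$ small and $\varepsilon_0$ small, and the norm $\|f\|_{\H{\mu}_h}$ is equivalent to $\|\Lambda^\mu_h f\|_{\Ltwo}$. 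Consider the $\R$-bilinear form on $\Hdot{\mu}_h(\Td)$,
\begin{equation*}
\varpi^\mu_h(f,g) = \Re(\Lambda^\mu_h \K f, \Lambda^\mu_h g)_{\Ltwo}.
\end{equation*}
Continuity of $\varpi^\mu_h$ follows from the fact that $\K$ maps $\Hdot{\mu}(\Td)$ into itself (which is a consequence of Theorem~\ref{thm:Operator-Norm-Estimate-Paradiff} and the regularity statements of $\Range$ and $\Sol$ on $\Hdot{\mu}$).

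The central step is the commutator estimate, the analogue of~\eqref{eq:intro-commutator-estimate}:
\begin{equation*}
\bigl[\K,\Lambda^\mu_h\bigr]\Lambda^{-\mu}_h = O(\varepsilon_0 + h)_{\L(\Ldottwo,\Ldottwo)},
\end{equation*}
for $h$ and $\varepsilon_0$ sufficiently small. Granting this, coercivity of $\varpi^\mu_h$ follows: for $f\in\Hdot{\mu}_h(\Td)$ setting $g = \Lambda^\mu_h f \in \Ldottwo(\Td)$ and writing $\Lambda^\mu_h \K f = \K g + [\Lambda^\mu_h,\K]\Lambda^{-\mu}_h g$, one gets
\begin{equation*}
\varpi^\mu_h(f,f) = \varpi(g,g) + O(\varepsilon_0+h)\,\|g\|_{\Ltwo}^2 \gtrsim \|g\|_{\Ltwo}^2 \simeq \|f\|_{\H{\mu}_h}^2,
\end{equation*}
by the $\Ltwo$-coercivity of $\varpi$ provided by the strong $\Ltwo$-observability (Proposition~\ref{prop:observability-Ltwo-pafadiff-nonperturbed}). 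Lax-Milgram then yields that for every $u_0\in\Hdot{\mu}_h(\Td)$ there exists a unique $f_0\in\Hdot{\mu}_h(\Td)$ with $\varpi^\mu_h(f_0,g)=\Re(u_0,g)_{\H{\mu}_h}$ for all $g$, which together with the $\Ltwo$-invertibility of $\K$ (Proposition~\ref{prop:HUM-Ltwo}) forces $\K f_0 = u_0$; injectivity on $\Hdot{\mu}_h$ is inherited from injectivity on $\Ldottwo$.

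The hard part will be the commutator estimate. I would split $\K = -\Range\B\B^*\Sol$ and commute $\Lambda^\mu_h$ past each factor. The commutators $[\Lambda^\mu_h,\B]$ and $[\Lambda^\mu_h,\B^*]$ reduce, after multiplying by $\Lambda^{-\mu}_h$, to paradifferential commutators between $\T{q}$ (resp.\ $\T{q}^*$), the multiplication by $\phiw$, the operator $\Re$, and $\T{(\gamma^{(3/2)})^{2\mu/3}}|_{t=0}$; by the standard symbolic calculus each of these is $O(\varepsilon_0)$ thanks to $\udlu\in\Cls^{1,s}(T,\varepsilon_0)$ (and the $O(h)$ terms come from the semi-classical symbolic expansion, where one factor of $h$ is gained for each commutator). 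The commutator with $\Sol$ (or with $\Range$) is handled by differentiating $e^{tP^*}$: writing
\begin{equation*}
[\Lambda^\mu_h,\Sol v_0](t) = \int_0^t \Sol(t-\tau)\,[\Lambda^\mu_h, -P^*]\,\Sol(\tau)v_0\,\drv\tau,
\end{equation*}
one reduces to the commutator $[\Lambda^\mu_h, P^*]$, which by the same symbolic calculus, combined with the explicit almost-self-adjointness identity~\eqref{eq:gamma-almost-selfadjoint} needed to gain the crucial extra power of $h$, is $O(\varepsilon_0 + h)$ in operator norm from $\Hdot{\mu}$ into $\Ldottwo$; an energy estimate for $\Sol$ on $\Hdot{\mu}$ then closes the bound. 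Assembling the three pieces gives the desired estimate and completes the proof.
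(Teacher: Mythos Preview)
Your overall architecture---Lax--Milgram on $\Hdot{\mu}_h$ with the bilinear form $\varpi^\mu_h(f,g)=\Re(\Lambda^\mu_h\K f,\Lambda^\mu_h g)_{\Ltwo}$, reducing everything to the commutator estimate $[\K,\Lambda^\mu_h]\Lambda^{-\mu}_h=O(\varepsilon_0+h)$---is exactly the paper's approach. The gap is in how you propose to prove the commutator estimate for the $\Sol$ and $\Range$ factors.

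You freeze $\Lambda^\mu_h$ at $t=0$ and claim that $[\Lambda^\mu_h|_{t=0},P^*(t)]$ is $O(\varepsilon_0+h)$ from $\Hdot{\mu}$ to $\Ldottwo$. This is false. The principal contribution is $h^\mu[T_{\hat\gamma^{(\mu)}(0)},T_{\gamma^{(3/2)}(t)}]$, whose leading symbol is the Poisson bracket $\{\hat\gamma^{(\mu)}(0),\gamma^{(3/2)}(t)\}$. For $t>0$, $\hat\gamma^{(\mu)}(0)=(\gamma^{(3/2)}(0))^{2\mu/3}$ is \emph{not} a function of $\gamma^{(3/2)}(t)$, so this bracket does not vanish; it is a symbol of order $\mu+1/2$ (with $O(\varepsilon_0)$ seminorms), hence $T_{\{\cdots\}}$ maps $\H{\mu+1/2}\to\Ltwo$ but only $\H{\mu}\to\H{-1/2}$. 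The Duhamel formula then lands in $\Hdot{-1/2}$, not $\Ldottwo$, and the coercivity argument does not close. The identity~\eqref{eq:gamma-almost-selfadjoint} you invoke concerns the subprincipal symbol of $\gamma$ and the self-adjointness defect $\T\gamma-\T\gamma^*$; it has no bearing on the Poisson bracket of two principal symbols frozen at different times.

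The paper's remedy is to let the conjugator be time-dependent in the interior of the composition: one proves
\[
\|\Sol\,T_{\hat\gamma^{(\mu)}|_{t=0}}-T_{\hat\gamma^{(\mu)}(\cdot)}\,\Sol\|_{\L(\Hdot{\mu},C([0,T],\Ldottwo))}\lesssim\varepsilon_0,
\qquad
\|\Range\,T_{\hat\gamma^{(\mu)}(\cdot)}-T_{\hat\gamma^{(\mu)}|_{t=0}}\,\Range\|_{\L(\Ltwo([0,T],\Hdot{\mu}),\Ldottwo)}\lesssim\varepsilon_0.
\]
Now, at each fixed time, $\hat\gamma^{(\mu)}(t)=(\gamma^{(3/2)}(t))^{2\mu/3}$ is a function of $\gamma^{(3/2)}(t)$, so $\{\hat\gamma^{(\mu)}(t),\gamma^{(3/2)}(t)\}\equiv0$ and the commutator with $P$ is genuinely of order $\le\mu$. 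The price of time dependence is the extra term $-T_{\pt\hat\gamma^{(\mu)}}$ from $[\pt,T_{\hat\gamma^{(\mu)}}]$, but this is order $\mu$ with $O(\varepsilon_0)$ seminorms (since $\pt\eta$ is small), hence harmless. A secondary inaccuracy: the $\B,\B^*$ commutators are not all $O(\varepsilon_0)$---in particular $[\phiw,T_{\hat\gamma^{(\mu)}}]$ is $O(1)$, not small---but it \emph{is} of order $\mu-1$, and that is precisely what produces the $O(h)$ contribution after pairing with $h^{\mu-1}\Lambda^{-\mu}_h$.
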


Some preliminary results will be proven before the proof of this proposition.
\begin{lemma}
\label{lem:equivalent-semiclassical-sobolev-norm}
Under the hypothesis of Proposition~\ref{prop:regularity-H^s}, define the symbol $ \hat{\gamma}^{(\mu)} = (\gamma^{(3/2)})^{2\mu/3}$ for $ \mu \ge 0 $, and the para\-differential operator
\begin{equation}
\label{eq:definition-Lambda^s_h}
\Lambda^\mu_h = 1 + h^\mu \T{\hat{\gamma}^{(\mu)}}.
\end{equation}
Then for $ \varepsilon_0 $ sufficiently small, $ \Lambda^\mu_h : \Hdot{\mu}_h(\Td) \to \Ldottwo(\Td) $ is invertible (whose inverse will be denoted by $ \Lambda^{-\mu}_h \bydef (\Lambda^\mu_h)^{-1} $). Moreover, we have the following estimate, uniformly in~$ h $,
\begin{equation}
\label{eq:compare-H^s_h-norm}
\Lambda^\mu_h - \langle hD_x \rangle^\mu = O(\varepsilon_0)_{\Linf([0,T],\L(\Hdot{\mu}_h,\Ldottwo))},
\end{equation}
which in particular implies the norm equivalence, uniformly for $ t_0 \in [0,T] $,
\begin{equation}
\label{eq:equivalence-two-H^s_h-norm}
\|\cdot\|_{\H{\mu}_h} \sim \|\Lambda_h^\mu|_{t=t_0} \cdot \|_{\Ltwo}.
\end{equation}
\end{lemma}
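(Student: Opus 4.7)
The plan is to establish a symbol-level smallness for $\hat\gamma^{(\mu)}-|\xi|^\mu$, lift this to an operator estimate on semiclassical Sobolev spaces via the paradifferential calculus, and then conclude invertibility by a Neumann-series argument with the comparison operator $\langle hD_x\rangle^\mu$.

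First I would analyze the symbol $\hat\gamma^{(\mu)}=(\gamma^{(3/2)})^{2\mu/3}$. Since $\gamma^{(3/2)}=\gamma^{(3/2)}(\udlu)$ is a smooth nonlinear function of $\nabla_x\eta(\udlu)$, and $\udlu\in\Cls^{1,s}(T,\varepsilon_0)$ combined with Proposition~\ref{prop:reversibility-u-eta-psi} gives $\|\eta\|_{\H{s+1/2}}\lesssim\varepsilon_0$, the same reasoning as in Remark~\ref{rmk:smallness-symbol} yields, uniformly in $t\in[0,T]$,
\begin{equation*}
\M{3/2}{\rho}{d/2+1}{\gamma^{(3/2)}-|\xi|^{3/2}}\lesssim\varepsilon_0.
\end{equation*}
For $\varepsilon_0$ small the symbol $\gamma^{(3/2)}$ is elliptic with constants close to $|\xi|^{3/2}$, so composing with the smooth function $z\mapsto z^{2\mu/3}$ (away from $z=0$, which is allowed because of the ellipticity) gives $\M{\mu}{\rho}{d/2+1}{\hat\gamma^{(\mu)}-|\xi|^\mu}\lesssim\varepsilon_0$, and $\hat\gamma^{(\mu)}$ is itself elliptic of order $\mu$.

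Next I would prove~\eqref{eq:compare-H^s_h-norm} by the decomposition
\begin{equation*}
\Lambda^\mu_h-\langle hD_x\rangle^\mu
= h^\mu\T{\hat\gamma^{(\mu)}-|\xi|^\mu}+\bigl(1+h^\mu\T{|\xi|^\mu}-\langle hD_x\rangle^\mu\bigr).
\end{equation*}
By Theorem~\ref{thm:Operator-Norm-Estimate-Paradiff} and the previous step, $\T{\hat\gamma^{(\mu)}-|\xi|^\mu}$ is bounded $\H{\mu}\to\Ltwo$ with norm $\lesssim\varepsilon_0$; the extra factor $h^\mu$ is exactly what is needed to convert this into an $O(\varepsilon_0)$ bound in $\L(\Hdot{\mu}_h,\Ldottwo)$, using $\|u\|_{\H{\mu}_h}\gtrsim h^\mu\|u\|_{\H{\mu}}$ for the dyadic pieces where $|h\xi|\gtrsim 1$ and $\|u\|_{\H{\mu}_h}\gtrsim\|u\|_{\Ltwo}$ for the pieces where $|h\xi|\lesssim 1$ (on which $\hat\gamma^{(\mu)}-|\xi|^\mu$ contributes only a bounded multiple of the low-frequency mass). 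The second summand is a pure Fourier multiplier once restricted to nonzero frequencies (which is the ambient space $\Hdot{\mu}_h$ by Proposition~\ref{prop:T_1=pi}), so a Taylor expansion of $\langle h\xi\rangle^\mu=(1+h^2|\xi|^2)^{\mu/2}$ versus $1+h^\mu|\xi|^\mu$ handled separately in the two frequency regimes $|h\xi|\lesssim 1$ and $|h\xi|\gtrsim 1$ gives an $O(h)$ bound in $\L(\Hdot{\mu}_h,\Ldottwo)$, which is absorbed into $O(\varepsilon_0)$ once $h$ is small.

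Finally, invertibility of $\Lambda^\mu_h:\Hdot{\mu}_h(\Td)\to\Ldottwo(\Td)$ follows from Neumann series: $\langle hD_x\rangle^\mu$ is trivially an isometric isomorphism from $\Hdot{\mu}_h$ to $\Ldottwo$, and by~\eqref{eq:compare-H^s_h-norm} we can write $\Lambda^\mu_h=\langle hD_x\rangle^\mu\bigl(\Id+\langle hD_x\rangle^{-\mu}(\Lambda^\mu_h-\langle hD_x\rangle^\mu)\bigr)$ whose second factor has operator norm $\le C\varepsilon_0<1$ on $\Ldottwo$ when $\varepsilon_0$ is small; the inverse $\Lambda^{-\mu}_h$ is then uniformly bounded in $h$, and~\eqref{eq:equivalence-two-H^s_h-norm} is immediate. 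The main obstacle to watch is the low-frequency mismatch between the paradifferential operator $h^\mu\T{|\xi|^\mu}$ and the Fourier multiplier $\langle hD_x\rangle^\mu-1$, which has to be handled by exploiting that all objects live on nonzero frequencies so that $\pi(D_x)=\Id_{\Hdot{\mu}_h}$ removes the paradifferential cutoff ambiguity at $\xi=0$.
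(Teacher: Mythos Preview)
Your overall strategy---symbol smallness for $\hat\gamma^{(\mu)}-|\xi|^\mu$, then a Neumann-series argument against a comparison isomorphism---matches the paper's. The paper executes it more directly by writing $\Lambda^\mu_h=(1+\T{B_h})\langle hD_x\rangle^\mu$ with $B_h=h^\mu(\hat\gamma^{(\mu)}-|\xi|^\mu)/\langle h\xi\rangle^\mu$, and checking that $B_h$ is an order-zero symbol with $\M{0}{0}{d/2+1}{B_h}\lesssim\varepsilon_0$ \emph{uniformly in $h$} (the factor $h^\mu$ exactly balances the homogeneity of the denominator), so that $\T{B_h}=O(\varepsilon_0)_{\L(\Ldottwo,\Ldottwo)}$ by Theorem~\ref{thm:Operator-Norm-Estimate-Paradiff}; no dyadic decomposition or frequency splitting is needed.

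There is, however, a genuine gap in your treatment of the second summand $1+h^\mu\T{|\xi|^\mu}-\langle hD_x\rangle^\mu$. On nonzero frequencies this is the Fourier multiplier with symbol $m(h\xi)=1+|h\xi|^\mu-\langle h\xi\rangle^\mu$, and the $O(h)$ bound you claim in $\L(\Hdot{\mu}_h,\Ldottwo)$ would require $\sup_{\xi\ne 0}|m(h\xi)|/\langle h\xi\rangle^\mu=O(h)$. But at $|h\xi|=1$ this ratio equals $|2-2^{\mu/2}|/2^{\mu/2}$, a fixed positive number for $\mu\neq 2$; no Taylor expansion or low/high split makes it small in $h$. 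The clean fix---which incidentally also repairs the paper's own factorization~\eqref{eq:difference-of-two-H^s_h-norm}, since that identity is not exact for $\mu\neq 2$ either---is to compare $\Lambda^\mu_h$ to the Fourier multiplier $1+|hD_x|^\mu$ rather than $\langle hD_x\rangle^\mu$: the two give uniformly equivalent norms on $\Hdot{\mu}_h$, and then $\Lambda^\mu_h-(1+|hD_x|^\mu)=h^\mu\T{\hat\gamma^{(\mu)}-|\xi|^\mu}$ holds exactly and is $O(\varepsilon_0)$, yielding the invertibility and the norm equivalence.
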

\begin{proof}
We omit the time variable in the proof, and write
\begin{equation}
\label{eq:difference-of-two-H^s_h-norm}
\Lambda^\mu_h = (1 + B_h) \langle hD_x \rangle^\mu
\end{equation}
with $ B_h = h^\mu T_{\hat{\gamma}^{(\mu)}-|\xi|^\mu} (1+h^\mu T_{|\xi|^\mu})^{-1} $. Then by Theorem~\ref{thm:Operator-Norm-Estimate-Paradiff}, $ B_h = O(\varepsilon_0)_{\L(\Ldottwo,\Ldottwo)} $, uniformly in~$ h $. For~$ \varepsilon_0 $ sufficiently small, $ \Id + B_h : \Ldottwo(\Td) \to \Ldottwo(\Td) $ is invertible. The norm equivalence follows as $ (1+h^\mu\T{|\xi|^\mu})\jp{hD_x}^{-\mu}  $ and $ (1+h^\mu\T{|\xi|^\mu})^{-1}\jp{hD_x}^{\mu}  $ are both bounded on $ \Ltwo $ as they both have Fourier multipliers which are bounded independently of~$ h $.
\end{proof}

The key point to the proof of Proposition~\ref{prop:K-is-H^s_h-isomorphism} is the following commutator estimate.

\begin{lemma}
\label{lem:commutator-estimate-K}
Under the hypothesis of Proposition~\ref{prop:regularity-H^s}, for~$ h $ and~$ \varepsilon_0 $ sufficiently small and $ \mu \ge 1 $, the following commutator estimate holds,
\begin{equation}
\label{eq:commutator-estimate-K-Lambda-s-h}
\big[\K,\Lambda^\mu_h|_{t=0}\big] \Lambda^{-\mu}_h|_{t=0} = O(\varepsilon_0 + h)_{\L(\Ldottwo,\Ldottwo)},
\end{equation}
\end{lemma}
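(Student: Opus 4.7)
The strategy is to write $\K=-\Range\B\B^*\Sol$ and commute $\Lambda_0:=\Lambda^\mu_h|_{t=0}$ through each factor, exploiting that $\Lambda_0$ is the ``time-frozen'' version of the time-dependent operator $\Lambda(t):=\Lambda^\mu_h(t)$ (whose $\udlu$-dependent symbol $\hat\gamma^{(\mu)}$ varies with $t$). The plan is to reduce the desired bound to smallness estimates on five elementary commutator sources
\begin{equation*}
(\pt\Lambda)(t),\quad [\Lambda(t),P(t)],\quad [\Lambda(t),P^*(t)],\quad [\Lambda(t),\B(t)],\quad [\Lambda(t),\B^*(t)],
\end{equation*}
each composed on the right with $\Lambda^{-\mu}_h$, and to show that each is $O(\varepsilon_0+h)$ in $\L(\Ldottwo,\Ldottwo)$ uniformly in $t\in[0,T]$.

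I would first carry out the telescoping. For $f\in\Ldottwo(\Td)$, set $v(t)=\Sol(f)(t)$ and $\tilde v(t)=\Lambda(t)v(t)$; since $(\pt-P^*)v=0$,
\begin{equation*}
(\pt-P^*)\tilde v = \bigl((\pt\Lambda)+[\Lambda,P^*]\bigr)v, \qquad \tilde v(0)=\Lambda_0 f,
\end{equation*}
and by the Duhamel formula for the propagator $U(t,s)$ of the dual equation,
\begin{equation*}
\Sol(\Lambda_0 f)(t) - \Lambda(t)\Sol(f)(t) = -\int_0^t U(t,s)\bigl((\ps\Lambda)+[\Lambda,P^*]\bigr)(s)\,v(s)\ds.
\end{equation*}
An analogous backward computation on $\Range$ yields
\begin{equation*}
\Lambda_0\Range G - \Range(\Lambda G) = \Range\bigl((\pt\Lambda+[P,\Lambda])\,\Sol_+(G)\bigr),
\end{equation*}
where $\Sol_+(G)$ denotes the forward solution path of $(\pt+P)u=G$ with $u(T)=0$. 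The commutators with $\B$ and $\B^*$ are pointwise in time. Substituting these identities into the expansion
\begin{equation*}
[\K,\Lambda_0] = -\Range\B\B^*(\Sol\Lambda_0-\Lambda\Sol) - \Range\B[\Lambda,\B^*]\Sol - \Range[\Lambda,\B]\B^*\Sol - (\Range\Lambda-\Lambda_0\Range)\B\B^*\Sol,
\end{equation*}
and composing with $\Lambda_0^{-1}$ on the right expresses $[\K,\Lambda_0]\Lambda_0^{-1}$ as a sum of products of the five commutator sources (each dressed with $\Lambda^{-\mu}_h$) and operators from $\{\Sol,\Sol_+,\Range,\B,\B^*,U(\cdot,s)\}$; the latter are uniformly bounded on $\Ldottwo$-based spaces by Proposition~\ref{prop:HUM-Ltwo} and the energy estimates.

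For the smallness of the commutator sources, the bound on $(\pt\Lambda)\Lambda^{-\mu}_h$ follows from $\pt\Lambda = h^\mu\T{\pt\hat\gamma^{(\mu)}}$ together with the hypothesis $\udlu\in\Cls^{1,s}(T,\varepsilon_0)$, which gives an $\varepsilon_0$-bound on $\pt\hat\gamma^{(\mu)}$ in the relevant symbol seminorm. The commutators with $\B=\chi^{}_T\T{q}\phiw\Re$ and $\B^*$ split into a piece $[\T{\hat\gamma^{(\mu)}},\T q]$, small because $q-1=O(\varepsilon_0)$, and a piece $[\T{\hat\gamma^{(\mu)}},\phiw]$ which, by the paradifferential Poisson bracket, gains one classical order and produces an $O(h)$ contribution after multiplication by $h^\mu$ and normalization by $\Lambda^{-\mu}_h$.

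The main obstacle is the estimate on $[\Lambda,P]\Lambda^{-\mu}_h$, since $P$ contains the sharp-order term $i\T\gamma$. The crucial point is a \emph{cancellation of the top-order Poisson bracket}: paradifferential symbolic calculus gives
\begin{equation*}
[h^\mu\T{\hat\gamma^{(\mu)}},i\T\gamma] = h^\mu\T{\tfrac{1}{i}\{\hat\gamma^{(\mu)},\gamma\}} + (\text{lower order}),
\end{equation*}
and
\begin{equation*}
\{\hat\gamma^{(\mu)},\gamma^{(3/2)}\} = \tfrac{2\mu}{3}(\gamma^{(3/2)})^{\frac{2\mu}{3}-1}\{\gamma^{(3/2)},\gamma^{(3/2)}\} = 0
\end{equation*}
because $\hat\gamma^{(\mu)}$ is a pointwise function of $\gamma^{(3/2)}$. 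What survives is $h^\mu\{\hat\gamma^{(\mu)},\gamma^{(1/2)}\}$, of one classical order lower than expected, which together with the subprincipal remainders of the paradifferential calculus produces the $O(h)$ contribution after composition with $\Lambda^{-\mu}_h$. The remaining terms in $P$, namely $\nabla_x\cdot\T V$, $-g\T{r^{-1}}\Im$ and $i\T r M_\b\Re$, involve $V$ and $r^{\pm 1}-|\xi|^{\pm 1/2}$ all of size $O(\varepsilon_0)$, so their commutators contribute the $\varepsilon_0$ part; the bound for $[\Lambda,P^*]\Lambda^{-\mu}_h$ is handled identically. Assembling all the estimates through the telescoping then yields the desired commutator bound~\eqref{eq:commutator-estimate-K-Lambda-s-h}.
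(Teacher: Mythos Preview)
Your approach is essentially the paper's: telescope $\Lambda_0$ through the four factors of $\K=-\Range\B\B^*\Sol$, reduce to Duhamel-type identities for $\Sol$ and $\Range$, and exploit the crucial cancellation $\{\hat\gamma^{(\mu)},\gamma^{(3/2)}\}=0$. The paper organizes the bookkeeping slightly differently---it strips the constant $1$ from $\Lambda$ and works directly with $h^\mu\T{\hat\gamma^{(\mu)}}$, groups $[\T{\hat\gamma^{(\mu)}},\B\B^*]$ rather than splitting $\B$ and $\B^*$, and phrases the three resulting estimates in the classical Sobolev scale $\Hdot{\mu}\to\Ldottwo$ after factoring $h^\mu\Lambda^{-\mu}_h|_{t=0}:\Ldottwo\to\Hdot{\mu}$---but the substance is identical.

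One small imprecision in your accounting: you attribute the $O(h)$ contribution to the subprincipal remainder of $[\T{\hat\gamma^{(\mu)}},i\T\gamma]$, but after the Poisson-bracket cancellation the surviving commutator is only of order $\mu-\tfrac12$, which under the normalization $h^\mu(\cdot)\Lambda^{-\mu}_h$ yields $O(h^{1/2})$, not $O(h)$. The paper sidesteps this by invoking Remark~\ref{rmk:smallness-symbol} (the symbols differ from Fourier multipliers by $O(\varepsilon_0)$), so that the entire $[\T{\hat\gamma^{(\mu)}},\pt+P]$ contribution is in fact $O(\varepsilon_0)$; the $O(h)$ in the final bound comes \emph{only} from $[\T{\hat\gamma^{(\mu)}},\phiw]$ inside $\B\B^*$, which gains a full classical order. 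This does not affect correctness, since either bound suffices for the coercivity argument in Proposition~\ref{prop:K-is-H^s_h-isomorphism}.
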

\begin{proof}
By the definition of~$ \K $, write
\begin{align*}
-\big[\K,\Lambda^\mu_h|_{t=0}\big] \Lambda^{-\mu}_h|_{t=0} 
& = (\Range\T{\hat{\gamma}^{(\mu)}} - \T{\hat{\gamma}^{(\mu)}|_{t=0}}\Range) \B \B^* \Sol (h^\mu \Lambda^{-\mu}_h|_{t=0}) \\
& \qquad + h \Range (\B\B^*\T{\hat{\gamma}^{(\mu)}}  - \T{\hat{\gamma}^{(\mu)}} \B\B^*) \Sol (h^{\mu-1} \Lambda^{-\mu}_h|_{t=0}) \\
& \qquad \qquad + \Range\B\B^*(\Sol\T{\hat{\gamma}^{(\mu)}|_{t=0}} - \T{\hat{\gamma}^{(\mu)}}\Sol) (h^\mu\Lambda^{-\mu}_h|_{t=0}).
\end{align*}
By Lemma~\ref{lem:equivalent-semiclassical-sobolev-norm}, for $ 0 \le \sigma \le \mu $,
\begin{equation*}
\Lambda^{-\mu}_h|_{t=0} 
= O(1)_{\L(\Ldottwo, \Hdot{\mu}_h)} 
= O(1)_{\L(\Ldottwo, \Hdot{\sigma}_h)} 
= O(h^{-\sigma})_{\L(\Ldottwo, \Hdot{\sigma})},
\end{equation*}
from which $ \|h^\mu \Lambda^{-\mu}_h|_{t=0}\|_{\L(\Ldottwo,\Hdot{\mu})} \lesssim 1$ , $ \|h^{\mu-1} \Lambda^{-\mu}_h|_{t=0}\|_{\L(\Ldottwo,\Hdot{\mu-1})}  \lesssim 1 $.
Then~\eqref{eq:commutator-estimate-K-Lambda-s-h} results from the following estimates,
\begin{equation*}
\begin{split}
\|\B\B^*\T{\hat{\gamma}^{(\mu)}}  - \T{\hat{\gamma}^{(\mu)}} \B\B^*\|_{C([0,T],\L(\Hdot{\mu-1},\Ldottwo))} & \lesssim 1, \\
\|\Range\T{\hat{\gamma}^{(\mu)}} - \T{\hat{\gamma}^{(\mu)}|_{t=0}}\Range\|_{\L(\Ltwo([0,T],\Hdot{\mu}),\Ldottwo)} & \lesssim \varepsilon_0, \\
\|\Sol\T{\hat{\gamma}^{(\mu)}|_{t=0}} - \T{\hat{\gamma}^{(\mu)}}\Sol\|_{\L(\Hdot{\mu}, C([0,T],\Ldottwo))}  & \lesssim \varepsilon_0,
\end{split}
\end{equation*}
which will be treated separately in the following lemmas.
\end{proof}

\begin{lemma}
$\|\Range\T{\hat{\gamma}^{(\mu)}} - \T{\hat{\gamma}^{(\mu)}|_{t=0}}\Range\|_{\L(\Ltwo([0,T],\Hdot{\mu}),\Ldottwo)} \lesssim \varepsilon_0.$
\end{lemma}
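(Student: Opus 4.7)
The strategy is a standard conjugation argument: apply $\T{\hat{\gamma}^{(\mu)}}$ to the equation defining $\Range$ and compare the result with the equation defining $\Range \T{\hat{\gamma}^{(\mu)}}$, controlling the discrepancy by an energy estimate.

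Given $G \in \Ltwo([0,T],\Hdot{\mu}(\Td))$, let $u \in C([0,T],\Hdot{\mu}(\Td))$ solve $(\pt+P)u = G$ with $u(T)=0$, so $\Range G = u(0)$; and let $v \in C([0,T],\Ldottwo(\Td))$ solve $(\pt+P)v = \T{\hat{\gamma}^{(\mu)}} G$ with $v(T)=0$, so $\Range \T{\hat{\gamma}^{(\mu)}} G = v(0)$. Set $w = \T{\hat{\gamma}^{(\mu)}} u$. A direct computation gives
\begin{equation*}
(\pt + P)(w-v) = \T{\pt \hat{\gamma}^{(\mu)}} u + [P,\T{\hat{\gamma}^{(\mu)}}] u, \qquad (w-v)(T) = 0,
\end{equation*}
so that
\begin{equation*}
\T{\hat{\gamma}^{(\mu)}|_{t=0}} \Range G - \Range \T{\hat{\gamma}^{(\mu)}} G
= \Range \bigl(\T{\pt \hat{\gamma}^{(\mu)}} u + [P,\T{\hat{\gamma}^{(\mu)}}] u \bigr).
\end{equation*}
Since $\Range : \Ltwo([0,T],\Ldottwo) \to \Ldottwo$ is bounded (Appendix~\ref{app:linear-equations}) and $\|u\|_{C([0,T],\Hdot{\mu})} \lesssim \|G\|_{\Ltwo([0,T],\Hdot{\mu})}$ by the same energy estimates, the claim reduces to
\begin{equation*}
\bigl\| \T{\pt \hat{\gamma}^{(\mu)}} + [P,\T{\hat{\gamma}^{(\mu)}}] \bigr\|_{\Linf([0,T],\L(\Hdot{\mu},\Ldottwo))} \lesssim \varepsilon_0.
\end{equation*}

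For the first term, $\hat{\gamma}^{(\mu)} = (\gamma^{(3/2)})^{2\mu/3}$ is a function of $\nabla_x \eta, \nabla_x^2\eta$, hence $\pt \hat{\gamma}^{(\mu)}$ is of order~$\mu$ with seminorms controlled by $\|\pt \eta\|_{\Holder{1}}$. The hypothesis $\udlu \in \Cls^{1,s}(T,\varepsilon_0)$ and Proposition~\ref{prop:reversibility-u-eta-psi} give $\|\pt \eta\|_{\Linf([0,T],\H{s-1})} \lesssim \varepsilon_0$, so Theorem~\ref{thm:Operator-Norm-Estimate-Paradiff} yields the desired bound. For the commutator, the principal contribution $[iT_\gamma,\T{\hat{\gamma}^{(\mu)}}]$ has symbol $\{\gamma,\hat{\gamma}^{(\mu)}\}$ modulo order $\mu-1/2$; the key cancellation is $\{\gamma^{(3/2)},(\gamma^{(3/2)})^{2\mu/3}\}=0$, so only cross terms involving either $\gamma^{(3/2)}-|\xi|^{3/2}$, $\gamma^{(1/2)}$, or $\hat{\gamma}^{(\mu)}-|\xi|^\mu$ remain, each of which carries an $\varepsilon_0$ factor in the appropriate seminorm (in the spirit of Remark~\ref{rmk:smallness-symbol}). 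The remaining lower-order terms in $P$ (namely $\nabla_x\cdot\T{V}$, $-g\T{r^{-1}}\Im$ and $i\T{r}M_\b\Re$) similarly produce $O(\varepsilon_0)$ commutators, either because $V = O(\varepsilon_0)_{\H{s-1}}$ and $r-|\xi|^{1/2}$, $r^{-1}-|\xi|^{-1/2}$ are of size~$\varepsilon_0$, or because $M_\b$ is smoothing.

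The only delicate point is that $\T{\hat\gamma^{(\mu)}}$ produces $\C$-linear paradifferential operators while $P$ contains the $\R$-linear pieces $\Im$ and $\Re$; these commute with $\T{\hat\gamma^{(\mu)}}$ by Lemma~\ref{lem:real-part-preserving-property} since $\hat\gamma^{(\mu)}$ is real and even in~$\xi$, so the commutator is reduced to a bona fide paradifferential commutator and the usual symbolic calculus applies. Assembling these estimates closes the argument.
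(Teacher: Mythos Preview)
Your proof is correct and follows essentially the same conjugation argument as the paper: introduce the auxiliary solution $v$, compare with $\T{\hat{\gamma}^{(\mu)}}u$, and reduce to bounding $\T{\pt\hat{\gamma}^{(\mu)}} + [P,\T{\hat{\gamma}^{(\mu)}}]$ in $\L(\Hdot{\mu},\Ldottwo)$ via the Poisson-bracket cancellation $\{\gamma^{(3/2)},\hat{\gamma}^{(\mu)}\}=0$ and the $O(\varepsilon_0)$ smallness of the remaining pieces. Your treatment is in fact slightly more explicit than the paper's about the lower-order terms of $P$ and the commutation of $\T{\hat{\gamma}^{(\mu)}}$ with $\Re,\Im$ via Lemma~\ref{lem:real-part-preserving-property}.
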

\begin{proof}
Let $ G \in \Ltwo([0,T],\Hdot{\mu}(\Td)) $, and let $ u \in C([0,T],\Hdot{\mu}(\Td)) $, $ v \in C([0,T],\Ldottwo(\Td)) $ solve respectively the following equations
\begin{equation*}
(\pt + P) u  = G, \  u(T) = 0;  \qquad
(\pt + P) v  = \T{\hat{\gamma}^{(\mu)}} G, \  v(T) = 0.
\end{equation*}
Then $ w \bydef v - \T{\hat{\gamma}^{(\mu)}} u \in C([0,T],\Ldottwo(\Td)) $ satisfies
\begin{equation*}
(\pt + P) w = [\T{\hat{\gamma}^{(\mu)}},\pt + P] u, \quad w(T) = 0.
\end{equation*}
Observe that $ [\T{\hat{\gamma}^{(\mu)}},\pt + P] = -\T{\pt \hat{\gamma}^{(\mu)} } + [\T{\hat{\gamma}^{(\mu)}},P] $ where $ \T{\pt \hat{\gamma}^{(\mu)} } = O(\varepsilon_0)_{\L(\Hdot{\mu},\Ldottwo)} $ by Theorem~\ref{thm:Operator-Norm-Estimate-Paradiff}. The main estimate for $ [\T{\hat{\gamma}^{(\mu)} },P] $ is $ [\T{\hat{\gamma}^{(\mu)} },\T{\gamma^{(3/2)}}] $. By Remark~\ref{rmk:smallness-symbol},
\begin{equation*}
[\T{\hat{\gamma}^{(\mu)}},\T{\gamma^{(3/2)}}]
= \T{\{\hat{\gamma}^{(\mu)},\gamma^{(3/2)}\}/i} + O(\varepsilon_0)_{\L(\Hdot{\mu},\Ldottwo)} \\
= O(\varepsilon_0)_{\L(\Hdot{\mu},\Ldottwo)},
\end{equation*}
because $ \{\hat{\gamma}^{(\mu)},\gamma^{(3/2)}\} = 0 $ for $ \xi \ne 0 $. Combining the lower order terms, we then have $ [\T{\hat{\gamma}^{(\mu)}},\pt + P] = O(\varepsilon_0)_{\L(\Hdot{\mu},\Ldottwo)} $. Finally by an energy estimate,
\begin{align*}
\|(\Range\T{\hat{\gamma}^{(\mu)}} - \T{\hat{\gamma}^{(\mu)}|_{t=0}}\Range) G\|_{\Ltwo} 
& = \|w(0)\|_{\Ltwo}
 \lesssim \|[\T{\hat{\gamma}^{(\mu)}},\pt + P] u\|_{\Lone([0,T],\Ltwo)} \\
& \lesssim \varepsilon_0 \|u\|_{\Lone([0,T],\H{\mu})}
\lesssim \varepsilon_0 \|G\|_{\Ltwo([0,T],\H{\mu})}.
\end{align*}
\end{proof}

\begin{lemma}
$ \|\Sol\T{\hat{\gamma}^{(\mu)}|_{t=0}} - \T{\hat{\gamma}^{(\mu)}}\Sol\|_{\L(\Hdot{\mu}, C([0,T],\Ldottwo))} \lesssim \varepsilon_0. $
\end{lemma}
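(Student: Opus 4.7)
The plan is to mirror the proof of the preceding lemma for $\Range$, exploiting the self-adjoint structure of the dual equation. Given $v_0 \in \Hdot{\mu}(\Td)$, let $v = \Sol v_0 \in C([0,T],\Hdot{\mu}(\Td))$ and $w = \Sol(\T{\hat{\gamma}^{(\mu)}|_{t=0}} v_0) \in C([0,T],\Ldottwo(\Td))$ solve the dual equation $(\pt - P^*)\,\cdot\, = 0$ with the respective initial data. The quantity we need to estimate is $z \bydef w - \T{\hat{\gamma}^{(\mu)}} v$, which is designed to have vanishing initial data:
\begin{equation*}
z(0) = \T{\hat{\gamma}^{(\mu)}|_{t=0}} v_0 - \T{\hat{\gamma}^{(\mu)}|_{t=0}} v_0 = 0.
\end{equation*}
A direct computation gives $(\pt - P^*) z = [\T{\hat{\gamma}^{(\mu)}}, \pt - P^*] v$, so the problem reduces to estimating this commutator as a map from $\Hdot{\mu}(\Td)$ to $\Ldottwo(\Td)$ with bound $\lesssim \varepsilon_0$, and then invoking an energy estimate for the dual equation.

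For the commutator, I would split $[\T{\hat{\gamma}^{(\mu)}}, \pt - P^*] = -\T{\pt \hat{\gamma}^{(\mu)}} - [\T{\hat{\gamma}^{(\mu)}}, P^*]$. The first summand is $O(\varepsilon_0)_{\L(\Hdot{\mu},\Ldottwo)}$ by Theorem~\ref{thm:Operator-Norm-Estimate-Paradiff}, because $\hat{\gamma}^{(\mu)} = (\gamma^{(3/2)})^{2\mu/3}$ is a smooth function of $\nabla_x\eta,\nabla_x^2\eta$ and $\xi$, and $\pt\eta \in \Linf([0,T],\H{s-1})$ with size $\lesssim \varepsilon_0$ by Proposition~\ref{prop:reversibility-u-eta-psi}. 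For the second summand, the dominant term is $[\T{\hat{\gamma}^{(\mu)}}, \T{\gamma^{(3/2)}}^*]$, and by the same symbolic calculus used in the previous lemma (together with Remark~\ref{rmk:smallness-symbol}), this reduces modulo $O(\varepsilon_0)_{\L(\Hdot{\mu},\Ldottwo)}$ to a paradifferential operator with symbol proportional to the Poisson bracket $\{\hat{\gamma}^{(\mu)}, \gamma^{(3/2)}\}$. The key observation — the same one that made the $\Range$ argument work — is that $\hat{\gamma}^{(\mu)}$ is a function of $\gamma^{(3/2)}$, so this Poisson bracket vanishes identically. The remaining contributions from the lower order pieces of $P^*$ (namely $\nabla_x \cdot \T{V}^*$, $g i \T{r^{-1}}^*\Re$, and $M_{\b}\T{r}^*\Im$) yield commutators that are of order $\le \mu$, hence $O(\varepsilon_0)_{\L(\Hdot{\mu},\Ldottwo)}$.

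Finally, I would close by applying the energy estimate for the dual equation (Appendix~\ref{app:linear-equations}) to $z$:
\begin{equation*}
\|z\|_{C([0,T],\Ltwo)} \lesssim \|z(0)\|_{\Ltwo} + \|(\pt - P^*) z\|_{\Lone([0,T],\Ltwo)} \lesssim 0 + \varepsilon_0 \|v\|_{\Lone([0,T],\H{\mu})} \lesssim \varepsilon_0 \|v_0\|_{\H{\mu}},
\end{equation*}
which is exactly the claimed bound. The main obstacle is verifying that the commutator estimate survives the passage to the adjoint $P^*$; concretely, one must check that taking formal adjoints does not spoil the cancellation $\{\hat{\gamma}^{(\mu)}, \gamma^{(3/2)}\} = 0$, which holds because $\T{\gamma^{(3/2)}}^* = \T{\gamma^{(3/2)}} + O(\varepsilon_0)_{\L(\Ltwo,\Ltwo)}$ (as in Remark~\ref{rmk:smallness-symbol}), so the adjoint contributes only an extra $O(\varepsilon_0)$ error that can be absorbed into the final bound.
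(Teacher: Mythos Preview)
Your proof is correct and follows essentially the same route as the paper: introduce the difference $z = \Sol(\T{\hat{\gamma}^{(\mu)}|_{t=0}} v_0) - \T{\hat{\gamma}^{(\mu)}}\Sol v_0$, observe that $z(0)=0$ and that $z$ satisfies the dual equation with source $[\T{\hat{\gamma}^{(\mu)}},\pt-P^*]v$, then bound this commutator by $O(\varepsilon_0)$ and conclude by an energy estimate. The only cosmetic difference is in the bookkeeping of the commutator: the paper writes $[\T{\hat{\gamma}^{(\mu)}},\pt-P^*] = -\T{\pt\hat{\gamma}^{(\mu)}} - [\T{\hat{\gamma}^{(\mu)}},P] + [\T{\hat{\gamma}^{(\mu)}},P-P^*]$ and invokes the already-established estimate $[\T{\hat{\gamma}^{(\mu)}},P] = O(\varepsilon_0)_{\L(\Hdot{\mu},\Ldottwo)}$ from the preceding lemma together with $P-P^* = O(\varepsilon_0)_{\L(\Hdot{\sigma},\Hdot{\sigma})}$, whereas you estimate $[\T{\hat{\gamma}^{(\mu)}},P^*]$ directly via $\T{\gamma^{(3/2)}}^* = \T{\gamma^{(3/2)}} + O(\varepsilon_0)$; both reductions amount to the same thing.
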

\begin{proof}
Let $ u_0 \in \Hdot{\mu}(\Td) $, and let $ u \in C([0,T],\Hdot{\mu}(\Td)) $, $ v \in C([0,T],\Ldottwo(\Td)) $ solve respectively the following equations,
\begin{equation*}
(\pt - P^*) u = 0, \  u(0) = u_0, \qquad
(\pt - P^*) v = 0, \  v(0) = \T{\hat{\gamma}^{(\mu)}|_{t=0}} u_0.
\end{equation*}
Then $ w = v - \T{\hat{\gamma}^{(\mu)}} u \in C([0,T],\Ldottwo(\Td)) $ satisfies
\begin{equation*}
(\pt - P^*) w = [\T{\hat{\gamma}^{(\mu)}},\pt - P^*] u, \quad w(0) = 0.
\end{equation*}
Recall that, for $ \sigma \ge 0 $, $ P - P^* = O(\varepsilon_0)_{\L(\Hdot{\sigma},\Hdot{\sigma})} $, we then have
$ [\T{\hat{\gamma}^{(\mu)}},\pt - P^*] 
= -\T{\pt\hat{\gamma}^{(\mu)}} - [\T{\hat{\gamma}^{(\mu)}},P] + [\T{\hat{\gamma}^{(\mu)}},P-P^*]
= O(\varepsilon_0)_{\L(\Ltwo,\Ltwo)} $. Again by an energy estimate,
\begin{align*}
\|(\Sol\T{\hat{\gamma}^{(\mu)}|_{t=0}} - \T{\hat{\gamma}^{(\mu)}}\Sol) u_0\|_{\Ltwo([0,T],\Ltwo)} 
& = \|w\|_{\Ltwo([0,T],\Ltwo)} 
\lesssim \|[\T{\hat{\gamma}^{(\mu)}},\pt - P^*] u\|_{\Lone([0,T],\Ltwo)} \\
& \lesssim \varepsilon_0 \|u\|_{\Lone([0,T],\H{\mu})}
\lesssim \varepsilon_0 \|u_0\|_{\H{\mu}}.
\end{align*}
\end{proof}

\begin{lemma}
$\|\B\B^*\T{\hat{\gamma}^{(\mu)}}  - \T{\hat{\gamma}^{(\mu)}} \B\B^*\|_{C([0,T],\L(\Hdot{\mu-1},\Ldottwo))} \lesssim 1$.
\end{lemma}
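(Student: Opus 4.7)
The plan is to exploit the explicit formula for $\B\B^*$ and then expand the commutator with $T_{\hat{\gamma}^{(\mu)}}$ via the Leibniz rule into three commutators, each of which gains one order by the para\-differential symbolic calculus.

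Computing the composition: by Proposition~\ref{prop:paralinearized-pb-of-control}, $\B F = \chi_T\, T_q(\phi_\omega \Re F)$, and a direct duality computation gives $\B^* G = \chi_T\, \phi_\omega\, T_q^*(\Re G)$. Because $q = (\SurfArea{\eta})^{1/4}$ is real and independent of $\xi$ (hence even in $\xi$), Lemma~\ref{lem:real-part-preserving-property} ensures that $T_q$ and $T_q^*$ preserve real-valued functions, so $T_q^*(\Re F)$ is real and
\begin{equation*}
\B\B^* F = \chi_T^2\, T_q\bigl(\phi_\omega^2\, T_q^*(\Re F)\bigr) = \chi_T^2\, T_q\, M_{\phi_\omega^2}\, T_q^*\, \Re F,
\end{equation*}
where $M_{\phi_\omega^2}$ denotes multiplication by $\phi_\omega^2 \in \Cinf(\Td)$. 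Since $\hat{\gamma}^{(\mu)}=(\gamma^{(3/2)})^{2\mu/3}$ is real and even in $\xi$, Lemma~\ref{lem:real-part-preserving-property} again gives $T_{\hat{\gamma}^{(\mu)}} \Re = \Re\, T_{\hat{\gamma}^{(\mu)}}$ as $\R$-linear operators, so
\begin{equation*}
\bigl[\B\B^*,\, T_{\hat{\gamma}^{(\mu)}}\bigr] = \chi_T^2\, \bigl[T_q\, M_{\phi_\omega^2}\, T_q^*,\, T_{\hat{\gamma}^{(\mu)}}\bigr]\, \Re.
\end{equation*}

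Expanding the inner commutator by the Leibniz rule,
\begin{equation*}
\bigl[T_q\, M_{\phi_\omega^2}\, T_q^*,\, T_{\hat{\gamma}^{(\mu)}}\bigr]
= \bigl[T_q, T_{\hat{\gamma}^{(\mu)}}\bigr] M_{\phi_\omega^2}\, T_q^*
+ T_q \bigl[M_{\phi_\omega^2}, T_{\hat{\gamma}^{(\mu)}}\bigr] T_q^*
+ T_q\, M_{\phi_\omega^2} \bigl[T_q^*, T_{\hat{\gamma}^{(\mu)}}\bigr].
\end{equation*}
Each commutator pairs an operator of order~$0$ with the order-$\mu$ operator $T_{\hat{\gamma}^{(\mu)}}$; the para\-differential symbolic calculus (cf.\ Theorem~\ref{thm:Operator-Norm-Estimate-Paradiff} and the splitting technique of Remark~\ref{rmk:smallness-symbol}) yields a principal symbol proportional to $\{\,\cdot\,,\hat{\gamma}^{(\mu)}\}/i$, of order $\mu-1$, because $q$ and $\phi_\omega^2$ are independent of $\xi$. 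For the middle term, multiplication by $\phi_\omega^2 \in \Cinf(\Td)$ is written as the paraproduct $T_{\phi_\omega^2}$ plus a smoothing remainder, reducing again to a standard para\-differential commutator of order $\mu-1$. Each of the three resulting compositions therefore maps $\Hdot{\mu-1}(\Td)$ into $\Ldottwo(\Td)$, and the flanking factors $T_q$, $T_q^*$, $M_{\phi_\omega^2}$ as well as $\chi_T^2$ and $\Re$ are uniformly bounded on the relevant spaces.

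The bound $\lesssim 1$ (rather than $\lesssim \varepsilon_0$) reflects that the operator-norm estimate uses only fixed seminorms of $q$, $\phi_\omega^2$ and $\hat{\gamma}^{(\mu)}$, controlled through $\|\eta\|_{\H{s+1/2}}$ by Sobolev embedding, with no smallness either needed or gained. Continuity in $t$ in the norm $\L(\Hdot{\mu-1},\Ldottwo)$ follows from the continuous dependence of these symbols on $\nabla_x\eta,\nabla_x^2\eta$, combined with $\eta = \eta(\udlu) \in C([0,T],\Hdot{s+1/2}(\Td))$ provided by Proposition~\ref{prop:reversibility-u-eta-psi}. The only real subtlety is to manage the placement of $\Re$ throughout, which is handled cleanly by the parity/reality properties of the symbols $q$ and $\hat{\gamma}^{(\mu)}$ invoked above.
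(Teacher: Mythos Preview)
Your proof is correct and follows essentially the same approach as the paper's: reduce the commutator to the three building-block commutators $[T_q,T_{\hat{\gamma}^{(\mu)}}]$, $[T_q^*,T_{\hat{\gamma}^{(\mu)}}]$, and $[M_{\phi_\omega^2},T_{\hat{\gamma}^{(\mu)}}]$, handle the first two by paradifferential symbolic calculus, and for the multiplication operator write $M_{\phi_\omega^2}=T_{\phi_\omega^2}+(\text{smoothing})$. The paper's proof is terser and does not bother to compute $\B\B^*$ explicitly or to track the placement of $\Re$, but the content is identical.
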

\begin{proof}
By symbolic calculus $ [\T{q},\T{\hat{\gamma}^{(\mu)}}] $ and $ [\T{q}^*,\T{\hat{\gamma}^{(\mu)}}] $ are of order $ \mu-1 $, it suffices to show that $ [\phiw,\T{\hat{\gamma}^{(\mu)}}] $ is also of order $ \mu-1 $. Indeed, write $ \phiw = \T{\phiw} + (\phiw - \T{\phiw}) $, then $ [\T{\phiw},\T{\hat{\gamma}^{(\mu)}}] $ is of order $ \mu-1 $, while $ \phiw - \T{\phiw} $ is of order $ -\infty $ since $ \phiw $ is smooth.
\end{proof}

\begin{proof}[Proof of Proposition~\ref{prop:K-is-H^s_h-isomorphism}]
Consider the $ \R $-bilinear form on $ \Hdot{\mu}_h(\Td) $,
\begin{equation*}
\varpi^\mu_h(f_0,g_0) = \Re (\Lambda^\mu_h|_{t=0} \K f_0, \Lambda^\mu_h|_{t=0} g_0)_{\Ltwo}.
\end{equation*}
Then by Lemma~\ref{lem:commutator-estimate-K}, and the $ \Ltwo $-coercivity of~$ \K $, for~$ h $ and~$ \varepsilon_0 $ sufficiently small,
\begin{align*}
\varpi^\mu_h(f_0,f_0)
& = \Re (\K \Lambda^\mu_h|_{t=0}  f_0, \Lambda^\mu_h|_{t=0} f_0)_{\Ltwo}
+ \Re ([\Lambda^\mu_h|_{t=0},\K]\Lambda^{-\mu}_h|_{t=0} \Lambda^\mu_h|_{t=0}f_0, \Lambda^\mu_h|_{t=0} f_0) \\
& \gtrsim \|\Lambda^\mu_h|_{t=0}  f_0\|_{\Ltwo}^2 - \|[\Lambda^\mu_h|_{t=0},\K]\Lambda^{-\mu}_h|_{t=0}\|_{\L(\Ldottwo,\Ldottwo)} \|\Lambda^\mu_h|_{t=0}  f_0\|_{\Ltwo}^2 \\
& \gtrsim \|f_0\|_{\H{\mu}_h}^2 - (\varepsilon_0 + h) \|f_0\|_{\H{\mu}_h}^2 \\
& \gtrsim \|f_0\|_{\H{\mu}_h}^2.
\end{align*}
Therefore, $ \varpi^\mu_h $ is coercive on $ \Hdot{\mu}_h(\Td) $, and we conclude by Lax-Milgram's theorem.
\end{proof}

\subsection{$ \H{s} $-Controllability}
\label{sec:H^s-controllability-perturbed}

The HUM control operator~$ \Theta $ solves the linear control problem~\eqref{eq:equation-paradiff-linear-non-perturbed} without the perturbation terms $ R u $ and $ \beta F $. This section constructs a control operator~$ \Phi $ for the linear control problem
\begin{equation}
\label{eq:linear-equation-perturbed}
(\pt + P + R) u = (\B + \beta) F,
\end{equation}
where $ P = P(\udlu) $, $ R = R(\udlu) $, $ \B = \B(\udlu) $, $ \beta = \beta(\udlu) $ with $ \udlu \in \Cls^{1,s}(T,\varepsilon_0) $.

\begin{proposition}
\label{prop:definition-of-Phi}
Suppose that~$ \omega $ satisfies the geometric control condition, $ s $ is sufficiently large, $ T > 0 $, $ \varepsilon_0 > 0 $, and $ \udlu \in \Cls^{1,s}(T,\varepsilon_0) $. Then for~$ \varepsilon_0 > 0 $ sufficiently small, there exists an operator
\begin{equation*}
\Phi = \Phi(\underline{u}) : \Hdot{s}(\Td) \to C([0,T],\Hdot{s}(\Td))
\end{equation*}
satisfying
\begin{equation*}
\|\Phi\|_{\L(\Hdot{s},C([0,T],\Hdot{s}))} \lesssim 1,
\end{equation*}
such that, for $ u_0 \in \Hdot{s}(\Td) $, setting $ F = \Phi u_0 $, the solution~$ u $ to~\eqref{eq:linear-equation-perturbed} with initial data $ u(0) = u_0 $ vanishes at time~$ T $, that is, $ u(T) = 0 $.
\end{proposition}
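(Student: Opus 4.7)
The plan is to treat $ Ru $ and $ \beta F $ as perturbations of the unperturbed null-control problem $ (\pt + P)u = \B F $ that is solved by Proposition~\ref{prop:regularity-H^s} via the HUM operator $ \Theta = \Theta(\udlu) $. The starting point is an inhomogeneous variant of the HUM identity: given a source $ G \in \Ltwo([0,T],\Hdot{s}(\Td)) $, the forward Cauchy problem
\begin{equation*}
(\pt + P) u = \B F + G, \qquad u(0) = u_0,
\end{equation*}
has $ u(T) = 0 $ whenever $ F = \Theta(u_0 - \Range G) $; this follows from the left-inverse relation $ \Range \B \Theta = \Id $, which is immediate from the definitions $ \Theta = -\B^* \Sol \K^{-1} $ and $ \K = -\Range \B \B^* \Sol $ of Section~\ref{sec:HUM}, upgraded to $ \Hdot{s} $ by Section~\ref{sec:H^s-linear-control}. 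Rewriting~\eqref{eq:linear-equation-perturbed} as $ (\pt + P) u = \B F + (\beta F - R u) $, the sufficient condition for null control is the fixed-point relation
\begin{equation*}
F = \Theta u_0 - \Theta \Range(\beta F - R u[F]),
\end{equation*}
where $ u[F] \in C([0,T], \Hdot{s}(\Td)) $ denotes the unique forward solution to the perturbed equation with $ u[F](0) = u_0 $, provided by the energy theory of Appendix~\ref{app:linear-equations} and the smallness of $ R $.

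To solve this relation I consider the map $ \Psi : C([0,T],\Hdot{s}(\Td)) \to C([0,T],\Hdot{s}(\Td)) $ defined by $ \Psi(F) := \Theta u_0 - \Theta\Range(\beta F - R u[F]) $. Combining the $ \Hdot{s} $-boundedness of $ \Theta $ (Proposition~\ref{prop:regularity-H^s}), the $ \Hdot{s} $-boundedness of $ \Range $ (Appendix~\ref{app:linear-equations}), the smallness estimates~\eqref{eq:estimate-various-operators} giving $ \|\beta\|_{\L(\Hdot{s},\Hdot{s})} \lesssim \varepsilon_0 $ and $ \|R\|_{\L(\Hdot{s},\Hdot{s})} \lesssim \varepsilon_0^\vartheta $, and the Lipschitz bound $ \|u[F_1] - u[F_2]\|_{C([0,T],\Hdot{s})} \lesssim \|F_1 - F_2\|_{C([0,T],\Hdot{s})} $ obtained by Gronwall's lemma applied to the perturbed equation, one gets
\begin{equation*}
\|\Psi(F_1) - \Psi(F_2)\|_{C([0,T],\Hdot{s})} \lesssim \varepsilon_0^\vartheta \|F_1 - F_2\|_{C([0,T],\Hdot{s})}.
\end{equation*}
For $ \varepsilon_0 $ sufficiently small, $ \Psi $ is a strict contraction and admits a unique fixed point $ F = \Phi u_0 $. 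The linearity of $ \Psi $ in $ u_0 $ (via $ \Theta u_0 $) and uniqueness of the fixed point give that $ \Phi $ depends linearly on $ u_0 $ and satisfies $ \|\Phi u_0\|_{C([0,T],\Hdot{s})} \lesssim \|u_0\|_{\Hdot{s}} $. Applying the inhomogeneous HUM identity with $ G = \beta F - R u[F] $ to the fixed point then yields $ u[F](T) = 0 $, which is the desired null-control property.

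The main obstacle I anticipate is establishing the $ \Hdot{s} $-level version of the identity $ \Range \B \Theta = \Id $: in Section~\ref{sec:HUM} it was proven only on $ \Ldottwo(\Td) $, so one must argue by density, using Proposition~\ref{prop:regularity-H^s} to ensure $ \Theta : \Hdot{s}(\Td) \to C([0,T],\Hdot{s}(\Td)) $ and the continuity of $ \Range $ on $ \Ltwo([0,T],\Hdot{s}) $-sources. This extension of $ \Range $ to $ \Hdot{s} $-sources, not just $ \Ldottwo $, should in turn come from the standard well-posedness of the backward Cauchy problem for $ \pt + P $ with $ \Ltwo([0,T],\Hdot{s}) $ right-hand side proved in Appendix~\ref{app:linear-equations}. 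Once these technical verifications are in place, the fixed-point argument outlined above closes the proof.
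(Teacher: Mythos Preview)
Your approach is correct and genuinely different from the paper's. The paper does not run a contraction on the control space $C([0,T],\Hdot{s})$; instead it works on the \emph{initial data} side. Concretely, given $v_0\in\Hdot{s}$ the paper sets $F=\Theta v_0$, lets $v=\Sol_\Theta v_0$ solve $(\pt+P)v=\B F$ with $v(0)=v_0$, $v(T)=0$, and then lets $w$ solve $(\pt+P+R)w=-Rv+\beta F$ with $w(T)=0$. Then $u=v+w$ satisfies~\eqref{eq:linear-equation-perturbed} with $u(T)=0$ and $u(0)=(1+\E)v_0$, where $\E=\tilde{\Range}(-R\Sol_\Theta+\beta\Theta)$ is $O(\varepsilon_0^\vartheta)$ on $\Hdot{s}$. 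A Neumann series gives $\Phi=\Theta(1+\E)^{-1}$. Your scheme instead finds $F$ as a fixed point of $\Psi(F)=\Theta u_0-\Theta\Range(\beta F-Ru[F])$.

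A few remarks. First, your ``main obstacle'' is not one: the identity $\Range\B\Theta=\Id$ holds on $\Ldottwo$, and since $\Hdot{s}\subset\Ldottwo$ it holds on $\Hdot{s}$ automatically---no density argument is needed. Second, your justification of linearity of $\Phi$ is slightly imprecise: $\Psi$ depends on $u_0$ not only through $\Theta u_0$ but also through the initial condition $u[F](0)=u_0$; the clean argument is that the equation, the map $F\mapsto u[F]$, and $\Psi$ are all jointly $\R$-linear in $(u_0,F)$, so the unique fixed point depends $\R$-linearly on $u_0$. Third, and most relevant for the rest of the paper: the explicit formula $\Phi=\Theta(1+\E)^{-1}$ is what makes the later contraction estimate for $\Phi$ (Proposition~\ref{prop:Stability-of-Phi}) straightforward, via the algebraic identity $\Phi_1-\Phi_2=(\Theta_1-\Theta_2)(1+\E_1)^{-1}+\Theta_2(1+\E_1)^{-1}(\E_2-\E_1)(1+\E_2)^{-1}$. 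With your fixed-point construction that step would require differentiating the fixed-point equation in $\udlu$, which is doable but less transparent.
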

\begin{proof}
First we define a new solution operator $ \Sol_{\Theta} = \Sol_{\Theta}(\udlu) $. 

\begin{lemma}
For $ v_0 \in \Ldottwo(\Td) $, set $ F = \Theta v_0 \in C([0,T],\Ldottwo(\Td)) $. Let $ v \in C([0,T],\Ldottwo(\Td)) $ be the solution to
\begin{equation*}
(\pt + P) v   = \B F, \quad v(0) = v_0, \  v(T) = 0.
\end{equation*} 
We set $ \Sol_{\Theta} v_0 = v $, then for any $ \mu \ge 0 $, $ \Sol_{\Theta}|_{\Hdot{\mu}} $ is a bounded $ \R $-linear operator from $ \Hdot{\mu}(\Td) $ to $ C([0,T],\Hdot{\mu}(\Td)) $.
\end{lemma}
\begin{proof}
For $ v_{0,i} \in \Ldottwo(\Td) $ and $ \lambda_i \in \R$ ($ i = 1,2 $), set $ v_i = \Sol_\Theta v_{0,i} $ and $ v_\lambda = \lambda_1 v_1 + \lambda_2 v_2 $. Then they satisfy the equations
\begin{align*}
(\pt + P) v_i  &= \B \Theta v_{0,i}, & v_i(0) &= v_{0,i}, &  v_i(T) & = 0. \\
(\pt + P) v_\lambda  &= \B (\lambda_1\Theta v_{0,1} + \lambda_2\Theta v_{0,2}), & v_\lambda(0) & = \lambda_1 v_{0,1} + \lambda_2 v_{0,2}, &  v_\lambda(T) &= 0.
\end{align*}
By the $ \R $-linearity of $ \Theta $, $ \lambda_1\Theta v_{0,1} + \lambda_2\Theta v_{0,2} = \Theta (\lambda_1 v_{0,1} + \lambda_2 v_{0,2}) $, whence $ v_\lambda = \Sol_\Theta(\lambda_1 v_{0,1} + \lambda_2 v_{0,2}) $, that is, the $ \R $-linearity of $ \Sol_\Theta $. By Proposition~\ref{prop:regularity-H^s} and Proposition~\ref{prop:app-wellposedness-paradiff-eq}, we have
\begin{equation}
\label{eq:S-theta-estimate}
\|\Sol_\Theta v_0\|_{C([0,T],\H{\mu})}
\lesssim \|\B \Theta v_0\|_{\Lone([0,T],\H{\mu})}
\lesssim \|v_0\|_{\H{\mu}}.
\end{equation}
\end{proof}

Then we define a new range operator~$ \tilde{\Range} = \tilde{\Range}(\udlu) $. 
\begin{lemma}
For $ G \in \Ltwo([0,T],\Hdot{s}(\Td)) $, let $ w \in C([0,T],\Hdot{s}(\Td)) $ be the solution to the equation
\begin{equation*}
(\pt + P + R) w = G, \quad w(T) = 0.
\end{equation*}
We set $ \tilde{\Range} G = w(0) $, then $ \tilde{\Range} $ defines a bounded $ \R $-linear operator from $ \Ltwo([0,T],\Hdot{s}(\Td)) $ to $ \Hdot{s}(\Td) $.
\end{lemma}
\begin{proof}
This is a consequence of Proposition~\ref{prop:app-wellposedness-paradiff-eq} for the backwards equation.
\end{proof}

Now we start constructing~$ \Phi $ by perturbing~$ \Theta $. For $ v_0 \in \Hdot{s}(\Td) $, set $ v = \Sol_{\Theta} v_0 $, $ F = \Theta v_0 $, and let~$ w $ be the solution to the equation
\begin{equation*}
(\pt + P + R) w  = -Rv + \beta F, \quad w(T) = 0,
\end{equation*}
then $ u = v + w $ satisfies the equation
\begin{equation*}
(\pt + P + R) u = (\B + \beta) F, \qquad u(0) = v_0 + w_0, \  u(T) = 0,
\end{equation*}
where~$ w_0 \bydef w(0) = \tilde{\Range} (-Rv + \beta F) = \tilde{\Range}(-R\Sol_\Theta + \beta\Theta) v_0 $. Define the perturbation operator,
\begin{equation*}
\E = \tilde{\Range}(-R\Sol_\Theta + \beta\Theta|_{\Hdot{s}}) : \Hdot{s}(\Td) \mapsto \Hdot{s}(\Td),
\end{equation*}
then $ u_0 = (1 + \E) v_0 $. By~\eqref{eq:estimate-various-operators},~\eqref{eq:S-theta-estimate}, Proposition~\ref{prop:Ltwo-linear-control-HUM} and Corollary~\ref{cor:Range-Sol-regularity}, $ \E $ is small in the sense that,
\begin{equation*}
\|\E\|_{\L(\Hdot{s},\Hdot{s})} \lesssim \varepsilon_0^\vartheta, \quad \vartheta > 0.
\end{equation*}
Therefore, for $ \varepsilon_0 $ sufficiently small, $ 1 + \E : \Hdot{s}(\Td) \to \Hdot{s}(\Td) $ is invertible, and
\begin{equation}
\label{eq:Psi-as-perturbation-of-Theta}
\Phi \bydef \Theta|_{\Hdot{s}} (1 + \E)^{-1}
\end{equation}
is the desired control operator.
\end{proof}

\subsection{Contraction Estimate of Control Operator}
\label{sec:contraction-estimate}

In the next section, we use an iterative scheme to solve the nonlinear control problem~\eqref{eq:equation-control-of-u}. Contraction estimates of some operators, especially of the control operator, will be of great importance.

To do this, let $ \underline{u}_i \in \Cls^{1,s}(T,\varepsilon_0) $, $ (i = 1,2) $. And let $ \psi_i = \psi(\udlu_i) \in \Hdot{s}(\Td) $ and $ \eta_i = \eta(\udlu_i) \in \Hdot{s+1/2}(\Td) $ be determined by Proposition~\ref{prop:reversibility-u-eta-psi}. For any symbol depending on $ \eta $, $ a = a(\eta) $, set $ a_i = a(\eta_i) $. For any operator $ \mathcal{L} = \mathcal{L}(\underline{u})$, set  $ \mathcal{L}_i = \mathcal{L}(\udlu_i) $.

\begin{lemma}
\label{lem:stability-u-eta}
Suppose that $ s  > 3/2 + d/2 $, then for $ \varepsilon_0 $  sufficiently small,
\begin{equation*}
\|\eta_i\|_{\H{s+1/2}}  \lesssim  \varepsilon_0, \quad
\|\eta_1 - \eta_2\|_{\H{s+1/2}}  \lesssim  \|\udlu_1-\udlu_2\|_{\H{s}}.
\end{equation*}
\end{lemma}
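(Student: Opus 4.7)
The first bound is essentially a restatement of Proposition~\ref{prop:reversibility-u-eta-psi}. Indeed, by definition $\eta_i = \eta(\udlu_i)$ is the unique element of $\Hdot{s+1/2}(\Td)$ satisfying the fixed-point relation $\T{p(\eta_i)}\eta_i = -\Im u_i$, and the proposition gives $\|\eta_i\|_{\H{s+1/2}} \le 2\|\Im u_i\|_{\H{s}} \le 2\|\udlu_i\|_{\H{s}} \lesssim \varepsilon_0$.

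For the contraction estimate, the plan is to subtract the two identities
\begin{equation*}
\T{p_1}\eta_1 = -\Im u_1, \qquad \T{p_2}\eta_2 = -\Im u_2,
\end{equation*}
to obtain
\begin{equation*}
\T{p_1}(\eta_1 - \eta_2) = -\Im(u_1 - u_2) - \T{p_1 - p_2}\eta_2,
\end{equation*}
and then invert $\T{p_1}$ and control the inhomogeneity. Invertibility of $\T{p_1}\colon \Hdot{s+1/2}(\Td) \to \Hdot{s}(\Td)$ with $\|\T{p_1}^{-1}\|_{\L(\Hdot{s},\Hdot{s+1/2})} \le 1 + C\varepsilon_0$ has already been established inside the proof of Proposition~\ref{prop:reversibility-u-eta-psi} via a Neumann series for $\T{p/|\xi|^{1/2}-1}$, so applying it yields
\begin{equation*}
\|\eta_1 - \eta_2\|_{\H{s+1/2}} \lesssim \|u_1 - u_2\|_{\H{s}} + \|\T{p_1 - p_2}\eta_2\|_{\H{s}}.
\end{equation*}

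The core step is therefore estimating $\|\T{p_1-p_2}\eta_2\|_{\H{s}}$. Recall from the remark following Proposition~\ref{prop:reversibility-u-eta-psi} that $p = p(\eta)$ has the form $p(t,x,\xi) = f(\nabla_x\eta(t,x),\nabla_x^2\eta(t,x),\xi)$ with $f\in\Cinf$ smooth in all arguments; hence a first-order Taylor expansion with Sobolev embedding (valid since $s + 1/2 > 2 + d/2$, with a further margin absorbed in ``$s$ sufficiently large'' when computing $\M{1/2}{0}{d/2+1}{\cdot}$) gives
\begin{equation*}
\M{1/2}{0}{d/2+1}{p_1 - p_2} \le C(\|(\eta_1,\eta_2)\|_{\H{s+1/2}\times\H{s+1/2}})\,\|\eta_1 - \eta_2\|_{\H{s+1/2}}.
\end{equation*}
Combined with Theorem~\ref{thm:Operator-Norm-Estimate-Paradiff} and $\|\eta_2\|_{\H{s+1/2}}\lesssim\varepsilon_0$, this yields
\begin{equation*}
\|\T{p_1 - p_2}\eta_2\|_{\H{s}} \lesssim \M{1/2}{0}{d/2+1}{p_1 - p_2} \,\|\eta_2\|_{\H{s+1/2}} \lesssim \varepsilon_0\,\|\eta_1-\eta_2\|_{\H{s+1/2}}.
\end{equation*}

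Putting everything together we obtain
\begin{equation*}
\|\eta_1 - \eta_2\|_{\H{s+1/2}} \le C\|u_1 - u_2\|_{\H{s}} + C\varepsilon_0\,\|\eta_1 - \eta_2\|_{\H{s+1/2}},
\end{equation*}
and the claim follows by absorbing the last term on the left provided $C\varepsilon_0 < 1/2$. The only non-routine point is the tame estimate on $\M{1/2}{0}{d/2+1}{p_1 - p_2}$; once the smoothness of $f$ and the $\H{s+1/2}$-smallness of $\eta_1,\eta_2$ are used to replace the Lipschitz bound on $f$ by a linear one in $\nabla_x\eta_1 - \nabla_x\eta_2$ and $\nabla_x^2\eta_1 - \nabla_x^2\eta_2$, everything else is the same contraction/inversion scheme already used in Proposition~\ref{prop:reversibility-u-eta-psi}.
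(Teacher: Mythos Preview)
Your proof is correct and follows essentially the same route as the paper: subtract the two defining relations $\T{p_i}\eta_i=-\Im\,\udlu_i$, invert $\T{p_1}$ using the Neumann-series bound from Proposition~\ref{prop:reversibility-u-eta-psi}, estimate $\T{p_1-p_2}$ by its symbol seminorm (picking up a factor $\|\eta_1-\eta_2\|_{\H{s+1/2}}$) times $\|\eta_2\|_{\H{s+1/2}}\lesssim\varepsilon_0$, and absorb. The paper writes the identity as $\eta_1-\eta_2=-\T{p_1}^{-1}\Im(\udlu_1-\udlu_2)+\T{p_2}^{-1}(\T{p_1}-\T{p_2})\T{p_1}^{-1}\Im\,\udlu_2$, which is algebraically equivalent to yours after applying $\T{p_1}^{-1}$.
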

\begin{proof}
By Proposition~\ref{prop:reversibility-u-eta-psi}, $ \|\eta_i\|_{\H{s+1/2}} \le 2 \varepsilon_0 $. Then write 
\begin{equation*}
\eta_1 - \eta_2 = - \T{p_1}^{-1} \Im\, (\udlu_1 - \udlu_2) + \T{p_2}^{-1}(\T{p_1} - \T{p_2})\T{p_1}^{-1} \Im\, \udlu_2,
\end{equation*}
from which 
\begin{equation*}
\|\eta_1-\eta_2\|_{\H{s+1/2}} \lesssim  \|\udlu_1-\udlu_2\|_{\H{s}} +  \varepsilon_0 \|\eta_1 - \eta_2\|_{\H{s+1/2}}.
\end{equation*}
We conclude for $ \varepsilon_0 $ sufficiently small.
\end{proof}

\begin{lemma}
\label{lem:stability-B-B^*}
Suppose that $ s > 3/2 + d/2 $, $ \sigma \in \R $, then for $ \varepsilon_0 $ sufficiently small,
\begin{equation*}
\|\B^*_1 - \B^*_2\|_{\L(\H{\sigma},\H{\sigma})}
=\|\B_1 - \B_2\|_{\L(\H{\sigma},\H{\sigma})} 
\lesssim \|\udlu_1 - \udlu_2\|_{\H{s}}.
\end{equation*}
\end{lemma}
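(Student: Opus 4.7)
The plan is to exploit the explicit formula $\B(\udlu) F = \chi_T^{} \T{q(\udlu)} \phiw \Re F$ given in Proposition~\ref{prop:paralinearized-pb-of-control}, together with the fact that $q(\udlu) = (1+|\nabla_x\eta(\udlu)|^2)^{1/4}$ is a symbol of order~$0$ depending only on~$x$ (not on~$\xi$). Since $\chi_T^{}$, $\phiw$ and $\Re$ do not depend on~$\udlu$, we simply write
\begin{equation*}
\B_1 - \B_2 = \chi_T^{} (\T{q_1} - \T{q_2}) \phiw \Re = \chi_T^{} \T{q_1 - q_2} \phiw \Re,
\end{equation*}
and the task reduces to bounding $\|\T{q_1 - q_2}\|_{\L(\H{\sigma},\H{\sigma})}$ for all $\sigma \in \R$.

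Next I would invoke the paradifferential operator norm estimate (Theorem~\ref{thm:Operator-Norm-Estimate-Paradiff}) for symbols of order zero independent of~$\xi$. For any $a \in \Linf(\Td)$ and any $\sigma \in \R$, the paraproduct satisfies $\|\T{a}\|_{\L(\H{\sigma},\H{\sigma})} \lesssim \|a\|_{\Linf}$. It then remains to bound $\|q_1-q_2\|_{\Linf}$. Since $q_i = f(\nabla_x\eta_i)$ with $f(v) = (1+|v|^2)^{1/4}$ smooth, and since $\|\nabla_x\eta_i\|_{\Linf} \lesssim \|\eta_i\|_{\H{s+1/2}} \lesssim \varepsilon_0$ by Sobolev embedding (valid since $s-1/2 > d/2$), the mean value theorem gives
\begin{equation*}
\|q_1 - q_2\|_{\Linf} \lesssim \|\nabla_x \eta_1 - \nabla_x \eta_2\|_{\Linf} \lesssim \|\eta_1 - \eta_2\|_{\H{s+1/2}} \lesssim \|\udlu_1 - \udlu_2\|_{\H{s}},
\end{equation*}
where the last inequality is Lemma~\ref{lem:stability-u-eta}.

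For the adjoint, since $q$ is real and even in~$\xi$ (trivially, being independent of~$\xi$), we have $\B^*_i = \chi_T^{} \phiw \T{q_i}^* \Re$, hence $\B^*_1 - \B^*_2 = \chi_T^{} \phiw \T{q_1-q_2}^* \Re$. The formal adjoint $\T{a}^*$ of a paraproduct with a real-valued $\Linf$ symbol satisfies the same operator norm bound $\|\T{a}^*\|_{\L(\H{\sigma},\H{\sigma})} \lesssim \|a\|_{\Linf}$ (either directly from the definition of paraproducts via Littlewood--Paley blocks, or by symbolic calculus identifying $\T{a}^* = \T{a}$ modulo a smoothing operator). The same $\Linf$ bound on $q_1-q_2$ then yields the corresponding estimate for $\B^*_1 - \B^*_2$.

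There is no serious analytic obstacle here --- the argument is a routine Lipschitz estimate. The one point to verify carefully is that the paraproduct bound $\|\T{a}\|_{\L(\H{\sigma},\H{\sigma})} \lesssim \|a\|_{\Linf}$ is genuinely available for \emph{all} real $\sigma$ (including negative values), which is standard for $x$-dependent symbols of order zero. The equality between the norms of $\B_1-\B_2$ and $\B_1^*-\B_2^*$ should be understood in the sense that both admit the same upper bound by $\|\udlu_1-\udlu_2\|_{\H{s}}$, coming from the common factor $\|q_1-q_2\|_{\Linf}$.
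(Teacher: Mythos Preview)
Your proof is correct and follows essentially the same approach as the paper's: reduce to estimating $\|\T{q_1}-\T{q_2}\|_{\L(\H{\sigma},\H{\sigma})}$ via Theorem~\ref{thm:Operator-Norm-Estimate-Paradiff}, then control $q_1-q_2$ in terms of $\|\eta_1-\eta_2\|_{\H{s+1/2}}$ and invoke Lemma~\ref{lem:stability-u-eta}. The paper's proof is a single line citing exactly these ingredients; you have simply spelled out the intermediate $\Linf$ bound on $q_1-q_2$ via the mean value theorem and Sobolev embedding, and treated the adjoint explicitly.
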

\begin{proof}
By the definition of~$ \B $, Theorem~\ref{thm:Operator-Norm-Estimate-Paradiff} and Lemma~\ref{lem:stability-u-eta}, this comes from the estimate
\begin{equation*}
\|\T{q_1} - \T{q_2}\|_{\L(\H{\sigma},\H{\sigma})}
\lesssim \|\eta_1-\eta_2\|_{\H{s+1/2}}
\lesssim \|\udlu_1-\udlu_2\|_{\H{s}}.
\end{equation*}
\end{proof}

\begin{lemma}
\label{lem:stability-P-P^*}
Suppose that $ s > 3/2 + d/2 $, $ \sigma \in \R $, then for $ \varepsilon_0 $ sufficiently small, 
\begin{equation*}
\|P_1 - P_2\|_{\L(\Hdot{\sigma+3/2},\Hdot{\sigma})} + \|P^*_1 - P^*_2\|_{\L(\Hdot{\sigma+3/2},\Hdot{\sigma})} \lesssim \|\udlu_1 - \udlu_2\|_{\H{s}}.
\end{equation*}
\end{lemma}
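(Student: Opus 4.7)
The plan is to write $P_1 - P_2$ explicitly as a sum of paradifferential operators with symbol differences, and estimate each term by using Theorem~\ref{thm:Operator-Norm-Estimate-Paradiff} together with the stability of $\eta = \eta(\udlu)$ given by Lemma~\ref{lem:stability-u-eta}. Recalling~\eqref{eq:definition-P},
\begin{equation*}
P_1 - P_2 = i(\T{\gamma_1} - \T{\gamma_2}) + \nabla_x \cdot (\T{V_1} - \T{V_2}) - g (\T{r_1^{-1}} - \T{r_2^{-1}}) \Im + i (\T{r_1} - \T{r_2}) M_{\b} \Re,
\end{equation*}
so the task reduces to estimating each difference of paraproducts, in the operator norm from $\Hdot{\sigma+3/2}$ to $\Hdot{\sigma}$.

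Each of the symbols $\gamma$, $r$, $r^{-1}$ is, by Proposition~\ref{prop:AM-Paradiff-of-DN} and the definitions in Section~\ref{sec:Paradifferentialization-and-Symmetrization}, a smooth function of $(\nabla_x\eta, \nabla_x^2\eta, \xi)$, homogeneous of order $3/2$, $1/2$, $-1/2$ respectively. Therefore a standard composition estimate (analogous to~\eqref{eq:paradiff-estimate-composition-by-function}) combined with the Sobolev embedding $\H{s+1/2-2} \hookrightarrow \Holder{0}$ (for $s$ sufficiently large) gives seminorm bounds of the form
\begin{equation*}
\M{m}{\rho}{d/2+1}{a_1 - a_2} \le C(\|(\eta_1,\eta_2)\|_{\H{s+1/2}\times\H{s+1/2}}) \|\eta_1-\eta_2\|_{\H{s+1/2}}
\end{equation*}
for $a = \gamma, r, r^{-1}$, with appropriate order $m$. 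Then Theorem~\ref{thm:Operator-Norm-Estimate-Paradiff} converts each of these symbol estimates into the desired operator estimate. For $\T{V_1} - \T{V_2}$, the extra $\nabla_x$ only raises the order to $1$, which is strictly less than $3/2$. For the $M_{\b}$ term, the smoothing nature of the Fourier multiplier makes it a term of order $-\infty$, giving much more than required. Collecting the estimates and invoking Lemma~\ref{lem:stability-u-eta} to convert $\|\eta_1-\eta_2\|_{\H{s+1/2}}$ into $\|\udlu_1-\udlu_2\|_{\H{s}}$ yields the bound for $P_1 - P_2$.

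For the adjoint $P_1^* - P_2^*$, observe that the symbol of $\T{a}^*$ differs from that of $\T{\bar a}$ by terms of lower order obtained by symbolic calculus, with bounds on their seminorms controlled by the same seminorms of $a$; hence the same argument applies verbatim with identical orders. Specifically $P^* - P$ has the structure of sums of paraproducts with symbols $\pxi \cdot D_x \bar a$ (of order $m-1$) and of commutators $[\cdot,\Re]$, $[\cdot,\Im]$, each of which has symbol differences estimated as above. The main (and only) non-routine step is verifying the composition-type seminorm estimate for the symbols of $\gamma$, $r$, $r^{\pm 1}$ as functions of $(\nabla_x\eta, \nabla_x^2\eta)$, but this follows from the fact that $\gamma^{(3/2)}, \ell^{(2)}, \lambda^{(1)}, p^{(1/2)}, q^{(0)}$ are smooth and the elliptic ones stay bounded away from zero under the smallness assumption $\|\eta_i\|_{\H{s+1/2}} \lesssim \varepsilon_0$, so all compositions (including square roots and inverses) remain smooth.
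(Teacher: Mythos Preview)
Your proof is correct and follows the same approach as the paper: both reduce to symbol seminorm estimates via Theorem~\ref{thm:Operator-Norm-Estimate-Paradiff} and then invoke Lemma~\ref{lem:stability-u-eta}. The paper's proof is considerably terser, stating only the principal estimate for $\T{\gamma_1^{(3/2)}} - \T{\gamma_2^{(3/2)}}$ and leaving the lower-order terms implicit, whereas you spell them out.

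One small omission to flag: the transport term $\nabla_x\cdot\T{V}$ involves $V = \nabla_x\psi - B\nabla_x\eta$, which depends on $\psi$ as well as $\eta$, so Lemma~\ref{lem:stability-u-eta} alone does not close the estimate for $\T{V_1}-\T{V_2}$. You also need $\|\psi_1-\psi_2\|_{\H{s}} \lesssim \|\udlu_1-\udlu_2\|_{\H{s}}$, which follows directly from the explicit inversion formula $\pi(D_x)\psi = (\T{q}^{-1}\Re - \T{B}\T{p}^{-1}\Im)u$ given in the remark after Proposition~\ref{prop:reversibility-u-eta-psi}, together with a contraction estimate for the Dirichlet--Neumann operator (as in Lemma~\ref{lem:stability-B(eta)}). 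The paper's proof is equally silent on this point, so this is a clarification rather than a defect in your argument.
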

\begin{proof}
The main estimate is, by Theorem~\ref{thm:Operator-Norm-Estimate-Paradiff} and Lemma~\ref{lem:stability-u-eta},
\begin{equation*}
\|\T{\gamma_1^{(3/2)}} - \T{\gamma_2^{(3/2)}}\|_{\L(\Hdot{\sigma+3/2},\Hdot{\sigma})} 
\lesssim \|\eta_1-\eta_2\|_{\H{s+1/2}}
\lesssim \|\udlu_1 - \udlu_2\|_{\H{s}}.
\end{equation*}
\end{proof}

\begin{lemma}
\label{lem:stability-of-solution-operator}
Suppose that $ s > 3/2 + d/2 $, $ \sigma \ge 0 $, then for $ \varepsilon_0 $ sufficiently small,
\begin{equation*}
\|\Sol_1 - \Sol_2\|_{\L(\Hdot{\sigma+3/2},C([0,T],\Hdot{\sigma}))} \lesssim \|\udlu_1 - \udlu_2\|_{\Linf([0,T],\H{s})}.
\end{equation*}
\end{lemma}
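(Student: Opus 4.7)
The plan is to set $w = \Sol_1 v_0 - \Sol_2 v_0$ for $v_0 \in \Hdot{\sigma+3/2}(\Td)$, and observe that $w$ satisfies an evolution equation whose source term involves the difference $P_1^* - P_2^*$ acting on $v_2 \bydef \Sol_2 v_0$. Since $v_i = \Sol_i v_0$ satisfies $(\pt - P_i^*)v_i = 0$ with $v_i(0) = v_0$, subtracting the two equations gives
\begin{equation*}
(\pt - P_1^*)w = (P_1^* - P_2^*) v_2, \quad w(0) = 0.
\end{equation*}

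The strategy is then a straightforward energy estimate at the $\Hdot{\sigma}$ level. Applying the well-posedness statement of Proposition~\ref{prop:app-wellposedness-paradiff-eq} (from Appendix~\ref{app:linear-equations}) to this Cauchy problem, one obtains
\begin{equation*}
\|w\|_{C([0,T],\Hdot{\sigma})} \lesssim \|(P_1^* - P_2^*) v_2\|_{\Lone([0,T],\Hdot{\sigma})}.
\end{equation*}
The right-hand side is then estimated by combining two ingredients already established: Lemma~\ref{lem:stability-P-P^*}, which gives the contraction bound $\|P_1^* - P_2^*\|_{\L(\Hdot{\sigma+3/2},\Hdot{\sigma})} \lesssim \|\udlu_1 - \udlu_2\|_{\H{s}}$, and the regularity of $\Sol_2$ at the $\Hdot{\sigma+3/2}$ level (Remark following Proposition~\ref{prop:Ltwo-linear-control-HUM}), yielding
\begin{equation*}
\|v_2\|_{C([0,T],\Hdot{\sigma+3/2})} = \|\Sol_2 v_0\|_{C([0,T],\Hdot{\sigma+3/2})} \lesssim \|v_0\|_{\Hdot{\sigma+3/2}}.
\end{equation*}
Chaining these bounds produces the desired inequality.

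There is no major obstacle here, as the proof reduces to a standard perturbation-of-coefficients argument for the dual equation. The only mild care needed is to verify that $v_2$ actually lies in $C([0,T],\Hdot{\sigma+3/2})$, so that $P_i^* v_2$ is well defined as an element of $C([0,T],\Hdot{\sigma})$ and the energy estimate can be performed at exactly the $\Hdot{\sigma}$ level; this is supplied by applying the $\Hdot{\mu}$-regularity of $\Sol$ with $\mu = \sigma + 3/2$. The loss of $3/2$ derivatives between hypothesis and conclusion is optimal and reflects the principal order $3/2$ of $P^*$, which is precisely what Lemma~\ref{lem:stability-P-P^*} captures.
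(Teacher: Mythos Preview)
Your proposal is correct and follows essentially the same approach as the paper's proof: both set $w = \Sol_1 v_0 - \Sol_2 v_0$, derive the equation $(\pt - P_1^*)w = (P_1^* - P_2^*)v_2$ with $w(0)=0$, apply the energy estimate at the $\Hdot{\sigma}$ level, and conclude via Lemma~\ref{lem:stability-P-P^*} together with the $\Hdot{\sigma+3/2}$-regularity of $\Sol_2$.
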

\begin{proof}
For $ v_0 \in \Hdot{\sigma+3/2}(\Td) $, set $ v^i = \Sol_i v_0 \in C([0,T],\Hdot{\sigma+3/2}(\Td)) $ for $ i = 1,2 $. Then $ w = v^1 - v^2 $ satisfies the equation
\begin{equation*}
(\pt - P_1^*) w = (P_1^* - P_2^*) v^2, \quad w(0) = 0,
\end{equation*}
whence the energy estimate
\begin{equation*}
\|w\|_{C([0,T],\H{\sigma})} 
\lesssim \|(P_1^* - P_2^*) v^2\|_{\Lone([0,T],\H{\sigma})}
\lesssim \|\udlu_1-\udlu_2\|_{\Linf([0,T],\H{s})} \|v_0\|_{\H{\sigma+3/2}}.
\end{equation*}
\end{proof}

\begin{lemma}
\label{lem:stability-range-op}
Suppose that $ s > 3/2 + d/2 $, $ \sigma \ge 0 $, then for~$ \varepsilon_0 $ sufficiently small,
\begin{equation*}
\|\Range_1 - \Range_2\|_{\L(\Ltwo([0,T],\Hdot{\sigma+3/2}),\Hdot{\sigma})} \lesssim \|\udlu_1 - \udlu_2\|_{\Linf([0,T],\H{s})}.
\end{equation*}
\end{lemma}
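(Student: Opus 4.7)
The plan is to imitate the proof of Lemma~\ref{lem:stability-of-solution-operator}, but for the backward Cauchy problem that defines~$\Range$. Given $G \in \Ltwo([0,T],\Hdot{\sigma+3/2}(\Td))$, let $u^i \in C([0,T],\Hdot{\sigma+3/2}(\Td))$ be the solutions of
\begin{equation*}
(\pt + P_i) u^i = G, \quad u^i(T) = 0, \qquad i = 1,2,
\end{equation*}
whose existence and energy bound $\|u^i\|_{C([0,T],\H{\sigma+3/2})} \lesssim \|G\|_{\Lone([0,T],\H{\sigma+3/2})}$ are provided by Proposition~\ref{prop:app-wellposedness-paradiff-eq} (for~$\varepsilon_0$ sufficiently small). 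Then $w \bydef u^1 - u^2$ solves the backward Cauchy problem
\begin{equation*}
(\pt + P_1) w = (P_2 - P_1) u^2, \quad w(T) = 0,
\end{equation*}
and by definition $(\Range_1 - \Range_2) G = w(0)$.

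Next I would apply the same backward energy estimate from Appendix~\ref{app:linear-equations} at regularity~$\Hdot{\sigma}$,
\begin{equation*}
\|w(0)\|_{\H{\sigma}} \lesssim \|(P_2 - P_1) u^2\|_{\Lone([0,T],\H{\sigma})}.
\end{equation*}
Lemma~\ref{lem:stability-P-P^*} bounds $P_2 - P_1$ from $\Hdot{\sigma+3/2}$ to $\Hdot{\sigma}$ by $\|\udlu_1 - \udlu_2\|_{\H{s}}$, so for each $t$,
\begin{equation*}
\|(P_2 - P_1)(t) u^2(t)\|_{\H{\sigma}} \lesssim \|\udlu_1(t) - \udlu_2(t)\|_{\H{s}} \|u^2(t)\|_{\H{\sigma+3/2}}.
\end{equation*}
Integrating in time and using $\|u^2\|_{\Lone([0,T],\H{\sigma+3/2})} \lesssim \|u^2\|_{C([0,T],\H{\sigma+3/2})} \lesssim \|G\|_{\Lone([0,T],\H{\sigma+3/2})} \lesssim \|G\|_{\Ltwo([0,T],\H{\sigma+3/2})}$ (by Cauchy--Schwarz on the finite interval $[0,T]$), we conclude
\begin{equation*}
\|(\Range_1 - \Range_2) G\|_{\H{\sigma}} \lesssim \|\udlu_1 - \udlu_2\|_{\Linf([0,T],\H{s})} \|G\|_{\Ltwo([0,T],\H{\sigma+3/2})}.
\end{equation*}

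There is no real obstacle here; this is a routine stability argument, structurally parallel to Lemma~\ref{lem:stability-of-solution-operator}. The only point that requires care is the choice of regularity: the source norm must be taken in $\Hdot{\sigma+3/2}$ (not $\Hdot{\sigma}$) precisely to absorb the $3/2$-order loss of $P_1 - P_2$ delivered by Lemma~\ref{lem:stability-P-P^*}, after which one trades the $\Lone$ norm in time against the $\Ltwo$ norm using the boundedness of the interval $[0,T]$.
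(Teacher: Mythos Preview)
Your proof is correct and follows essentially the same route as the paper's: set up the backward Cauchy problem for the difference $w=u^1-u^2$, apply the energy estimate from Proposition~\ref{prop:app-wellposedness-paradiff-eq}, use Lemma~\ref{lem:stability-P-P^*} to control $(P_2-P_1)u^2$, and finish by passing from $\Lone$ to $\Ltwo$ in time. The paper's argument is identical up to notation.
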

\begin{proof}
Let $ G \in \Ltwo([0,T],\Hdot{\sigma+3/2}(\Td)) $, and let $ v^i \in C([0,T],\Hdot{\sigma+3/2}(\Td))$ for $ i=1,2 $ be solutions to the equations
\begin{equation*}
 (\pt+P_i)v^i = G, \quad v^i(T) = 0. 
\end{equation*}
Then $ w = v^1-v^2 $ satisfies
\begin{equation*}
 (\pt+P_1) w = (P_2-P_1) v^2, \quad w(T) = 0, 
\end{equation*}
whence the estimate
\begin{align*}
\|(\Range_1 - \Range_2) G\|_{\H{\sigma}}
& =\|w(0)\|_{\H{\sigma}} 
 \lesssim \|(P_2-P_1) v^2\|_{\Lone([0,T],\H{\sigma})}\\
& \lesssim \|\udlu_1-\udlu_2\|_{\Linf([0,T],\H{s})} \|v^2\|_{\Lone([0,T],\H{\sigma+3/2})} \\
& \lesssim \|\udlu_1-\udlu_2\|_{\Linf([0,T],\H{s})} \|G\|_{\Ltwo([0,T],\H{\sigma+3/2})}.
\end{align*}
\end{proof}

\begin{lemma}
\label{lem:stability-K}
Suppose that $ s > 3/2 + d/2 $, $ \sigma \ge 0 $, then for~$ \varepsilon_0 $ sufficiently small,
\begin{equation*}
\|\K_1 - \K_2\|_{\L(\Hdot{\sigma+3/2},\Hdot{\sigma})} \lesssim \|\udlu_1 - \udlu_2\|_{\Linf([0,T],\H{s})}.
\end{equation*}
\end{lemma}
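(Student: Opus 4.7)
The natural plan is to telescope the difference $\K_1-\K_2$ into four pieces corresponding to the four factors in $\K = -\Range\B\B^*\Sol$, and then apply the stability lemmas already established (Lemma~\ref{lem:stability-B-B^*}, Lemma~\ref{lem:stability-of-solution-operator}, Lemma~\ref{lem:stability-range-op}), together with the uniform boundedness of the unperturbed operators on the relevant Sobolev scales.

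Concretely, I would write
\begin{align*}
-(\K_1-\K_2) &= (\Range_1-\Range_2)\,\B_1 \B_1^* \Sol_1 + \Range_2 (\B_1-\B_2) \B_1^* \Sol_1 \\
& \qquad + \Range_2 \B_2 (\B_1^*-\B_2^*) \Sol_1 + \Range_2 \B_2 \B_2^* (\Sol_1-\Sol_2),
\end{align*}
and estimate each summand as an operator from $\Hdot{\sigma+3/2}$ to $\Hdot{\sigma}$. For the first term, I apply Lemma~\ref{lem:stability-range-op} (which costs $3/2$ derivatives in the stability) after observing that the composition $\B_1\B_1^*\Sol_1$ is uniformly bounded from $\Hdot{\sigma+3/2}$ into $C([0,T],\Hdot{\sigma+3/2})$, hence into $\Ltwo([0,T],\Hdot{\sigma+3/2})$. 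For the last term, Lemma~\ref{lem:stability-of-solution-operator} gives the $3/2$-loss, and $\Range_2\B_2\B_2^*$ is uniformly bounded on $C([0,T],\Hdot{\sigma})$ into $\Hdot{\sigma}$. For the two middle terms, I use Lemma~\ref{lem:stability-B-B^*}, which is lossless in regularity, combined with the uniform boundedness of $\Range_2$, $\B_2$, $\B_1^*$, $\Sol_1$ at the level $\Hdot{\sigma+3/2}$; there is no loss of derivatives in those two contributions, so they are even smaller than the bound we claim.

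I do not expect any genuine obstacle: the result is essentially a bookkeeping consequence of the earlier stability lemmas once the telescoping is arranged properly. The only mildly delicate point is to keep track of the fact that only the $\Range$- and $\Sol$-differences cost $3/2$ derivatives, while the $\B$- and $\B^*$-differences are lossless; this forces the $3/2$-loss to appear at most once in each summand, which is exactly why we can close the estimate at the stated regularity gap $\Hdot{\sigma+3/2}\to\Hdot{\sigma}$.
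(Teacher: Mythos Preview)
Your proposal is correct and is essentially identical to the paper's proof: the paper writes exactly the same four-term telescoping of $\K_2-\K_1$ and then invokes Lemma~\ref{lem:stability-range-op}, Lemma~\ref{lem:stability-B-B^*}, and Lemma~\ref{lem:stability-of-solution-operator}. Your additional commentary on which terms cost the $3/2$ derivative loss and which are lossless is accurate and makes explicit what the paper leaves implicit.
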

\begin{proof}
By the definition~\eqref{eq:definition-K} of~$ \K $, we have the identity
\begin{align*}
\K_2-\K_1
& = (\Range_1 - \Range_2) \B_1 \B_1^* \Sol_1
+ \Range_2 (\B_1-\B_2) \B_1^* \Sol_1 \\
& \qquad + \Range_2 \B_2 (\B_1^* - \B_2^*) \Sol_1
+ \Range_2 \B_2 \B_2^* (\Sol_1 - \Sol_2).
\end{align*}
And we conclude by Lemma~\ref{lem:stability-range-op}, Lemma~\ref{lem:stability-B-B^*} and Lemma~\ref{lem:stability-of-solution-operator}.
\end{proof}

\begin{lemma}
\label{lem:stability-Theta}
Suppose that~$ s $ is sufficiently large, $ \sigma \ge 0 $, then for~$ \varepsilon_0 $ sufficiently small,
\begin{equation*}
\|\Theta_1 - \Theta_2\|_{\L(\Hdot{\sigma+3/2}, C([0,T],\Hdot{\sigma}))} \lesssim \|\udlu_1 - \udlu_2\|_{\Linf([0,T],\H{s})}.
\end{equation*}
\end{lemma}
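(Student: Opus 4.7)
The plan is to decompose $\Theta_1 - \Theta_2$ into three terms using the formula $\Theta_i = -\B_i^* \Sol_i \K_i^{-1}$ and then estimate each piece with the stability lemmas already established. Writing
\begin{align*}
\Theta_2 - \Theta_1
& = -(\B_1^* - \B_2^*) \Sol_1 \K_1^{-1}
 - \B_2^* (\Sol_1 - \Sol_2) \K_1^{-1}
 - \B_2^* \Sol_2 (\K_1^{-1} - \K_2^{-1}),
\end{align*}
and using the resolvent identity $\K_1^{-1} - \K_2^{-1} = \K_2^{-1}(\K_2 - \K_1)\K_1^{-1}$, I obtain three composed operators that I will bound in $\L(\Hdot{\sigma+3/2},C([0,T],\Hdot{\sigma}))$.

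The crucial preliminary observation is that by Proposition~\ref{prop:K-is-H^s_h-isomorphism} applied with $\mu = \sigma$ and $\mu = \sigma+3/2$ (which requires $s$ to be chosen sufficiently large, so that both regularity indices lie within the range covered by the $\H{s}$-controllability framework), the operators $\K_i^{-1}$ map $\Hdot{\sigma+3/2}(\Td)$ and $\Hdot{\sigma}(\Td)$ to themselves, uniformly in the hypothesis $\udlu_i \in \Cls^{1,s}(T,\varepsilon_0)$ for $\varepsilon_0$ small. With this in hand, each term is handled by chaining uniform bounds with exactly one factor that carries the small contraction factor $\|\udlu_1-\udlu_2\|_{\Linf([0,T],\H{s})}$:
\begin{itemize}[nosep]
\item For the first term, route $\K_1^{-1}:\Hdot{\sigma+3/2}\to\Hdot{\sigma+3/2}$, then $\Sol_1:\Hdot{\sigma+3/2}\to C([0,T],\Hdot{\sigma+3/2})$ by Corollary~\ref{cor:Range-Sol-regularity}, then $\B_1^* - \B_2^*:\Hdot{\sigma+3/2}\to\Hdot{\sigma+3/2}$ with the small factor by Lemma~\ref{lem:stability-B-B^*}; the output embeds into $C([0,T],\Hdot{\sigma})$.
\item For the second term, use $\K_1^{-1}:\Hdot{\sigma+3/2}\to\Hdot{\sigma+3/2}$ and then $\Sol_1 - \Sol_2:\Hdot{\sigma+3/2}\to C([0,T],\Hdot{\sigma})$, which provides the contraction factor via Lemma~\ref{lem:stability-of-solution-operator}; finally, $\B_2^*$ acts boundedly on $C([0,T],\Hdot{\sigma})$.
\item For the third term, compose $\K_1^{-1}:\Hdot{\sigma+3/2}\to\Hdot{\sigma+3/2}$, the contraction $\K_2 - \K_1:\Hdot{\sigma+3/2}\to\Hdot{\sigma}$ from Lemma~\ref{lem:stability-K}, then $\K_2^{-1}:\Hdot{\sigma}\to\Hdot{\sigma}$, $\Sol_2:\Hdot{\sigma}\to C([0,T],\Hdot{\sigma})$, and $\B_2^*$ as above.
\end{itemize}

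The only subtlety, and the one I would be most careful about, is bookkeeping the loss of $3/2$ derivatives that occurs whenever a stability lemma is invoked: the differences $P_1 - P_2$ and $P_1^* - P_2^*$ lose $3/2$ derivatives (Lemma~\ref{lem:stability-P-P^*}), which propagates into $\Sol_1 - \Sol_2$, $\Range_1 - \Range_2$, and hence into $\K_1 - \K_2$. This is precisely why the input must be taken in $\Hdot{\sigma+3/2}$ while the output is in $\Hdot{\sigma}$, and why I need the $\H{\mu}$-boundedness of $\K_i^{-1}$ for both $\mu = \sigma$ and $\mu = \sigma + 3/2$. Once the regularity indices are tracked consistently through every composition, summing the three estimates yields the claim.
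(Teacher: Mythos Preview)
Your proof is correct and follows essentially the same approach as the paper: a telescoping decomposition of $\Theta_1-\Theta_2$ using $\Theta_i=-\B_i^*\Sol_i\K_i^{-1}$ and the resolvent identity, followed by Lemmas~\ref{lem:stability-B-B^*}, \ref{lem:stability-of-solution-operator}, \ref{lem:stability-K} together with the $\Hdot{\mu}$-invertibility of $\K_i$. (There is a harmless sign slip in your displayed identity---the right-hand side actually equals $\Theta_1-\Theta_2$, not $\Theta_2-\Theta_1$---but this does not affect the norm estimate.)
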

\begin{proof}
By the invertibility of~$ \K $ (Proposition~\ref{prop:K-is-H^s_h-isomorphism}) and the definition~\eqref{eq:definition-Theta} of~$ \Theta $, we have the following identity
\begin{equation*}
\Theta_1 - \Theta_2
= (\B^*_2 - \B^*_1) \Sol_2 \K_2^{-1}
+ \B^*_1 (\Sol_2 - \Sol_1) \K_2^{-1}
+ \B^*_1 \Sol_1 \K_2^{-1} (\K_1 - \K_2) \K_1^{-1}.
\end{equation*}
And we conclude by Lemma~\ref{lem:stability-B-B^*}, Lemma~\ref{lem:stability-of-solution-operator} and Lemma~\ref{lem:stability-K}.
\end{proof}

\begin{lemma}
\label{lem:stability-sol-theta}
Suppose that $ s $ is sufficiently large, $ \sigma \ge 0 $, then for~$ \varepsilon_0 $ sufficiently small,
\begin{equation*}
\|(\Sol_\Theta)_1 - (\Sol_\Theta)_2 \|_{\L(\Hdot{\sigma+3/2},C([0,T],\Hdot{\sigma}))} \lesssim \|\udlu_1 - \udlu_2\|_{\Linf([0,T],\H{s})}.
\end{equation*}
\end{lemma}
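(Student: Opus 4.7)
The plan is to imitate Lemma~\ref{lem:stability-of-solution-operator}, but with an extra forcing term that is controlled by the previously established contraction estimates on $\B$ and $\Theta$.

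For $v_0 \in \Hdot{\sigma+3/2}(\Td)$, set $v^i = (\Sol_\Theta)_i v_0 \in C([0,T],\Hdot{\sigma+3/2}(\Td))$, $i=1,2$. Then $v^i$ is the unique solution of
\begin{equation*}
(\pt + P_i) v^i = \B_i \Theta_i v_0, \qquad v^i(0) = v_0, \quad v^i(T) = 0.
\end{equation*}
Subtracting these equations, $w \bydef v^1 - v^2 \in C([0,T],\Hdot{\sigma+3/2}(\Td))$ satisfies
\begin{equation*}
(\pt + P_1) w = (P_2 - P_1) v^2 + (\B_1 - \B_2) \Theta_1 v_0 + \B_2 (\Theta_1 - \Theta_2) v_0, \qquad w(0) = 0.
\end{equation*}

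First I would apply the energy estimate for $(\pt + P_1)$ (Proposition~\ref{prop:app-wellposedness-paradiff-eq}) to obtain
\begin{equation*}
\|w\|_{C([0,T],\H{\sigma})} \lesssim \|(P_2 - P_1) v^2\|_{\Lone([0,T],\H{\sigma})} + \|(\B_1 - \B_2)\Theta_1 v_0 + \B_2(\Theta_1 - \Theta_2) v_0\|_{\Lone([0,T],\H{\sigma})}.
\end{equation*}
Then I would control each term: for the first, Lemma~\ref{lem:stability-P-P^*} and the a priori bound~\eqref{eq:S-theta-estimate} applied to $(\Sol_\Theta)_2$ at regularity $\sigma + 3/2$ give
\begin{equation*}
\|(P_2 - P_1) v^2\|_{\Lone([0,T],\H{\sigma})} \lesssim \|\udlu_1 - \udlu_2\|_{\Linf([0,T],\H{s})} \|v^2\|_{C([0,T],\H{\sigma+3/2})} \lesssim \|\udlu_1-\udlu_2\|_{\Linf([0,T],\H{s})} \|v_0\|_{\H{\sigma+3/2}}.
\end{equation*}
For the second, Lemma~\ref{lem:stability-B-B^*} combined with Proposition~\ref{prop:regularity-H^s} (applied to $\Theta_1$ at regularity $\sigma+3/2$) yields
\begin{equation*}
\|(\B_1 - \B_2)\Theta_1 v_0\|_{\Lone([0,T],\H{\sigma})} \lesssim \|\udlu_1-\udlu_2\|_{\Linf([0,T],\H{s})} \|v_0\|_{\H{\sigma+3/2}},
\end{equation*}
while the boundedness of $\B_2$ (Proposition~\ref{prop:paralinearized-pb-of-control}) and Lemma~\ref{lem:stability-Theta} give
\begin{equation*}
\|\B_2(\Theta_1-\Theta_2)v_0\|_{\Lone([0,T],\H{\sigma})} \lesssim \|\Theta_1 - \Theta_2\|_{\L(\Hdot{\sigma+3/2},C([0,T],\Hdot{\sigma}))} \|v_0\|_{\H{\sigma+3/2}} \lesssim \|\udlu_1-\udlu_2\|_{\Linf([0,T],\H{s})} \|v_0\|_{\H{\sigma+3/2}}.
\end{equation*}

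Summing the three contributions yields the announced estimate. The computation is essentially routine given the preceding lemmas; the only subtlety is that the energy estimate must be performed at regularity~$\sigma$ rather than $\sigma+3/2$, which is exactly why the contraction loses a factor of~$3/2$ derivatives, mirroring the losses in Lemmas~\ref{lem:stability-of-solution-operator}–\ref{lem:stability-Theta}. Since all ingredients are already in place, no new technical obstacle arises.
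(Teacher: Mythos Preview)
Your proof is correct and follows essentially the same approach as the paper: derive the equation for $w=v^1-v^2$, apply the energy estimate at level $\sigma$, and control the forcing via Lemmas~\ref{lem:stability-P-P^*}, \ref{lem:stability-B-B^*}, and \ref{lem:stability-Theta}. The only difference is cosmetic---you explicitly split $\B_1\Theta_1-\B_2\Theta_2=(\B_1-\B_2)\Theta_1+\B_2(\Theta_1-\Theta_2)$, whereas the paper leaves this decomposition implicit.
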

\begin{proof}
For $ v_0 \in \Hdot{\sigma+3/2}(\Td) $, let $ v^i = (\Sol_\Theta)_i v_0 \in C([0,T],\Hdot{\sigma+3/2}(\Td)), (i=1,2) $, then $ w=v^1-v^2 $ satisfies the equation
\begin{equation*}
(\pt + P_1) w = (\B_1\Theta_1-\B_2\Theta_2)v_0 + (P_2-P_1)v^2, \quad w(0)=0.
\end{equation*}
Therefore, by an energy estimate,
\begin{align*}
\| v^1 - v^2 \|_{\Ltwo([0,T],\H{\sigma})}
& \lesssim  \|(\B_1\Theta_1-\B_2\Theta_2)v_0\|_{\Lone([0,T],\H{\sigma})} 
 + \|(P_2-P_1)v_2\|_{\Lone([0,T],\H{\sigma})} \\
& \lesssim \|\udlu_1 - \udlu_2\|_{\Linf([0,T],\H{s})} \|v_0\|_{\H{\sigma+3/2}},
\end{align*}
where we have used Lemma~\ref{lem:stability-P-P^*}, Lemma~\ref{lem:stability-B-B^*} and Lemma~\ref{lem:stability-Theta}.
\end{proof}

\begin{lemma}
\label{lem:stability-R}
Suppose $ s > 2 + d/2 $, then for $ \varepsilon_0 $ sufficiently small,
\begin{equation*}
\|R_1 - R_2\|_{\L(\Hdot{s},\Hdot{s-3/2})} \lesssim \|\udlu_1 - \udlu_2\|_{\H{s-3/2}}.
\end{equation*}
\end{lemma}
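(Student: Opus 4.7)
The proof follows the same template as the preceding lemmas of this subsection: decompose $R_1 - R_2$ into a finite sum of contributions coming from paradifferential symbol differences, from Bony's remainders, and from the nonlinear operators $R_i$ of Proposition~\ref{prop:paralinearized-pb-of-control}, and bound each piece using the paradifferential calculus together with a low-regularity version of the stability of $\eta(u)$.

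The first step is to upgrade Lemma~\ref{lem:stability-u-eta} to a low-regularity counterpart, namely
\begin{equation*}
\|\eta_1 - \eta_2\|_{\H{s-1}} \lesssim \|\udlu_1 - \udlu_2\|_{\H{s-3/2}}.
\end{equation*}
This is obtained by rerunning the contraction argument of Proposition~\ref{prop:reversibility-u-eta-psi} in $\Hdot{s-1}(\Td)$, as that proof explicitly shows the fixed-point map to be a contraction at every regularity $\H{\mu+1/2}$ with $s-3/2 \le \mu \le s$, once $\varepsilon_0$ is small enough. Writing $\eta_i = \Psi_i(\eta_i)$ with the subscript indicating the dependence on $\udlu_i$, one estimates
\begin{equation*}
\|\eta_1 - \eta_2\|_{\H{s-1}} \le \|\Psi_1(\eta_1) - \Psi_1(\eta_2)\|_{\H{s-1}} + \|(\Psi_1 - \Psi_2)(\eta_2)\|_{\H{s-1}} \le C\varepsilon_0 \|\eta_1 - \eta_2\|_{\H{s-1}} + C\|\udlu_1 - \udlu_2\|_{\H{s-3/2}},
\end{equation*}
and absorbs the first term. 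A strictly analogous argument on the time derivative gives $\|\pt\eta_1 - \pt\eta_2\|_{\H{s-5/2}} \lesssim \|\pt\udlu_1 - \pt\udlu_2\|_{\H{s-3}} + \|\udlu_1-\udlu_2\|_{\H{s-3/2}}$, which is needed to handle the $\pt p$ and $\pt q$ terms appearing in $R_2$ and $R_3$. The same reasoning also yields $\|\psi_1 - \psi_2\|_{\H{s-3/2}} \lesssim \|\udlu_1 - \udlu_2\|_{\H{s-3/2}}$ via the inversion formula for $\psi(u)$.

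Given this, the paradifferential pieces of $R_1 - R_2$ reduce to bounding $\T{a_1} - \T{a_2} = \T{a_1 - a_2}$ for the various symbols $a = r^{\pm 1}, q^{\pm 1}, p^{\pm 1}, \gamma, V, \ell, B, m_\b$ etc.\ appearing in Proposition~\ref{prop:paralinearized-pb-of-control}. Each is a smooth function of $(\nabla_x \eta, \nabla_x^2 \eta, \xi)$, so the chain rule gives
\begin{equation*}
a_1 - a_2 = \int_0^1 \big\langle (\nabla_{v,M} f)(\nabla_x \eta_\tau, \nabla_x^2 \eta_\tau, \xi), (\nabla_x(\eta_1-\eta_2), \nabla_x^2(\eta_1-\eta_2)) \big\rangle \drv\tau,
\end{equation*}
with $\eta_\tau = (1-\tau)\eta_2 + \tau\eta_1$. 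The corresponding paradifferential operator norm is then estimated by Theorem~\ref{thm:Operator-Norm-Estimate-Paradiff}, with a loss of $3/2$ derivatives in the target space coming from the order $3/2$ of the worst symbol $\gamma^{(3/2)}$; the estimate on $\nabla_x^2(\eta_1-\eta_2)$ in $\Linf$, which requires $s > 2 + d/2$, is the source of the regularity threshold in the lemma. The composed and difference terms (such as $\T{q}\T{p^{-1}} - \T{r^{-1}}$) are handled identically, using in addition the standard symbolic calculus identities already invoked for their smallness estimates.

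The nonlinear remainders $R_i(u)$ entering $\tilde R(u)$ are treated by a parallel multilinear telescoping: each is built as a sum of paraproducts, Bony remainders, and compositions with smooth functions of $\eta$ and $\psi$, so their differences reduce to differences of $\eta$ and $\psi$ at the regularities provided by the first step. The principal expected obstacle is the careful bookkeeping needed to confirm that the derivative loss is exactly $3/2$ in the target --- in particular, verifying that the $\pt B$ contribution to $R_3$ does not force an extra loss --- but once the low-regularity stability of $\eta(u)$ and $\pt\eta(u)$ is in place, no new ingredients beyond the paradifferential calculus of Appendix~\ref{sec:paradifferential-calculus} are required.
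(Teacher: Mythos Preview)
Your overall strategy matches the paper's: decompose $R(u)$ into its constituent pieces, estimate the differences term by term via paradifferential calculus, and rely on a low-regularity stability $\|\eta_1-\eta_2\|_{\H{s-1}} \lesssim \|\udlu_1-\udlu_2\|_{\H{s-3/2}}$ obtained by rerunning the fixed-point argument of Proposition~\ref{prop:reversibility-u-eta-psi}. For the explicit paradifferential pieces and for $R_2,R_3,R_4,R_H,R_S$ your telescoping scheme is exactly what is needed.

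There is, however, one genuine gap. You assert that all the nonlinear remainders are ``built as a sum of paraproducts, Bony remainders, and compositions with smooth functions of $\eta$ and $\psi$'' and that ``no new ingredients beyond the paradifferential calculus of Appendix~\ref{sec:paradifferential-calculus} are required.'' This is not true for $R_G(\eta)$, the remainder from the paralinearization of the Dirichlet--Neumann operator, which enters $R(u)$ through $R_1(\eta)$. That term is \emph{not} an explicit paraproduct or Bony remainder: it is defined only implicitly as $G(\eta)\psi - \T{\lambda}\omega + \nabla_x\cdot\T{V}\eta - M_{\b}\psi$, and its Lipschitz dependence on $\eta$ in low norm cannot be read off from the symbolic calculus. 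The paper singles this out as ``the main difficulty'' and invokes Lemma~6.8 of~\cite{ABZ-Surface-Tension}, which provides the shape-derivative estimate
\[
\|\drv_\eta(R_G(\eta)\psi)\cdot\delta\eta\|_{\H{s-1}} \lesssim \|\delta\eta\|_{\H{s-1}},
\]
whence $\|R_G(\eta_1)\psi - R_G(\eta_2)\psi\|_{\H{s-1}} \lesssim \|\eta_1-\eta_2\|_{\H{s-1}}\|\nabla_x\psi\|_{\H{s-1}}$. Your low-regularity stability of $\eta(u)$ then closes this piece, but the external Dirichlet--Neumann estimate is essential and should be cited explicitly rather than absorbed into the general bookkeeping.
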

\begin{proof}
The main difficulty is to estimate the $ R_G(\eta_1) - R_G(\eta_2) $, as by the definition of $ R $, the other terms in~$ R $ can easily be estimated by the estimates of paradifferential calculus. By Lemma~6.8 of~\cite{ABZ-Surface-Tension},
\begin{equation*}
\|\drv_\eta (R_G(\eta) \psi)\cdot \delta\eta\|_{\H{s-1}}
\lesssim \|\delta\eta\|_{\H{s-1}}
\end{equation*}
for $ (\psi,\eta) \in \H{s}(\Td) \times \H{s+1/2}(\Td) $ and $ \delta\eta \in \H{s+1/2}(\Td) $, from which
\begin{equation*}
\|R_G(\eta_1) \psi - R_G(\eta_2) \psi\|_{\H{s-1}}
\lesssim \|\eta_1 - \eta_2\|_{\H{s-1}} \|\nabla_x \psi\|_{\H{s-1}}
\lesssim \|\udlu_1 - \udlu_2\|_{\H{s-3/2}} \|\nabla_x \psi\|_{\H{s-1}}.
\end{equation*}
\end{proof}

\begin{lemma}
\label{lem:stability-Range-tilde}
Suppose that $ s > 2 + d/2 $, then for~$ \varepsilon_0 $ sufficiently small,
\begin{equation*}
\|\tilde{\Range}_1 - \tilde{\Range}_2\|_{\L(\Ltwo([0,T],\Hdot{s}),\Hdot{s-3/2})} \lesssim \|\udlu_1 - \udlu_2\|_{\Linf([0,T],\H{s-3/2})}.
\end{equation*}
\end{lemma}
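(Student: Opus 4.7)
The plan is to run the same kind of energy argument used in Lemma~\ref{lem:stability-range-op} for $\Range$, but now on the perturbed equation and in the weaker space $\Hdot{s-3/2}(\Td)$ instead of $\Hdot{\sigma}(\Td)$. Concretely, given $G \in \Ltwo([0,T],\Hdot{s}(\Td))$, I let $w^i \in C([0,T],\Hdot{s}(\Td))$ ($i=1,2$) be the solutions provided by the well-posedness result in Appendix~\ref{app:linear-equations} to
\begin{equation*}
(\pt + P_i + R_i) w^i = G, \qquad w^i(T) = 0,
\end{equation*}
so that $\tilde{\Range}_i G = w^i(0)$ and $\|w^i\|_{C([0,T],\H{s})} \lesssim \|G\|_{\Ltwo([0,T],\H{s})}$. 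The difference $w = w^1 - w^2$ then satisfies the backwards paradifferential equation
\begin{equation*}
(\pt + P_1 + R_1) w = (P_2 - P_1) w^2 + (R_2 - R_1) w^2, \qquad w(T) = 0.
\end{equation*}

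The next step is to apply the backwards $\Hdot{s-3/2}$-energy estimate (again from Appendix~\ref{app:linear-equations}) for the operator $\pt + P_1 + R_1$, which is admissible because $P_1$ has order $3/2$ and $R_1 \in \L(\Hdot{s-3/2},\Hdot{s-3/2})$ for small $\varepsilon_0$ (this last fact follows from an estimate analogous to~\eqref{eq:estimate-various-operators} at lower regularity, or by interpolation). This yields
\begin{equation*}
\|w(0)\|_{\H{s-3/2}} \lesssim \|(P_2-P_1) w^2\|_{\Lone([0,T],\H{s-3/2})} + \|(R_2-R_1) w^2\|_{\Lone([0,T],\H{s-3/2})}.
\end{equation*}
For the $R$-term I invoke Lemma~\ref{lem:stability-R} directly, which already gives the bound in the desired norm $\|\udlu_1-\udlu_2\|_{\H{s-3/2}}$. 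For the $P$-term I need the refined contraction estimate
\begin{equation*}
\|P_1 - P_2\|_{\L(\Hdot{s},\Hdot{s-3/2})} \lesssim \|\udlu_1 - \udlu_2\|_{\H{s-3/2}},
\end{equation*}
which is the same calculation as in Lemma~\ref{lem:stability-P-P^*} but now exploiting the gain of $3/2$ derivatives: by Theorem~\ref{thm:Operator-Norm-Estimate-Paradiff} the operator norm of $\T{\gamma_1^{(3/2)}}-\T{\gamma_2^{(3/2)}}$ from $\Hdot{s}$ to $\Hdot{s-3/2}$ is controlled by $\M{3/2}{0}{d/2+1}{\gamma_1^{(3/2)}-\gamma_2^{(3/2)}}$, which involves only $\Linf_x$-norms of $\nabla_x(\eta_1-\eta_2)$; by Sobolev embedding (using $s-1 > d/2+1$) this is dominated by $\|\eta_1-\eta_2\|_{\H{s-1}}$, and the low-regularity version of Lemma~\ref{lem:stability-u-eta} (which is obtained by repeating its proof in $\Hdot{s-1}$, using the contraction in this space exactly as in Proposition~\ref{prop:reversibility-u-eta-psi}) gives $\|\eta_1-\eta_2\|_{\H{s-1}} \lesssim \|\udlu_1-\udlu_2\|_{\H{s-3/2}}$. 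The lower-order terms in $P$ are handled analogously.

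Combining these ingredients with the uniform bound $\|w^2\|_{C([0,T],\H{s})} \lesssim \|G\|_{\Ltwo([0,T],\H{s})}$ (using $\Ltwo \hookrightarrow \Lone$ on $[0,T]$) yields
\begin{equation*}
\|(\tilde{\Range}_1 - \tilde{\Range}_2) G\|_{\H{s-3/2}}
\lesssim \|\udlu_1 - \udlu_2\|_{\Linf([0,T],\H{s-3/2})}\|G\|_{\Ltwo([0,T],\H{s})},
\end{equation*}
which is the claimed estimate. The main technical point—and the only step that is not a direct citation of a previous lemma—is the refined contraction estimate for $P_1 - P_2$ at the lower regularity $\H{s-3/2}$; once this is in place the argument is the standard duality-free energy argument that was already used for $\Range$ in Lemma~\ref{lem:stability-range-op}.
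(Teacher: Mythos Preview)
Your argument is correct and follows exactly the paper's approach: take the difference of the two backward solutions, derive the equation for $w=w^1-w^2$, and apply the $\Hdot{s-3/2}$ energy estimate together with Lemma~\ref{lem:stability-R}. You are in fact slightly more careful than the paper: the paper cites Lemma~\ref{lem:stability-P-P^*} for the $P$-term even though that lemma is stated with $\|\udlu_1-\udlu_2\|_{\H{s}}$ on the right, whereas you correctly spell out the refined bound $\|P_1-P_2\|_{\L(\Hdot{s},\Hdot{s-3/2})}\lesssim\|\udlu_1-\udlu_2\|_{\H{s-3/2}}$ that is actually needed (and which follows, as you indicate, from the $L^\infty$-type seminorm in Theorem~\ref{thm:Operator-Norm-Estimate-Paradiff} and the low-regularity version of Lemma~\ref{lem:stability-u-eta}).
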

\begin{proof}
Let $ G \in \Ltwo([0,T],\Hdot{s}(\Td)) $, and let $ v^i \in C([0,T],\Hdot{s}(\Td))$ for $ i=1,2 $ be solutions to the equations
\begin{equation*}
 (\pt+P_i+R_i)v^i = G, \quad v^i(T) = 0. 
\end{equation*}
Then $ w = v^1-v^2 $ satisfies
\begin{equation*}
 (\pt+P_1+R_1) w = (P_2-P_1) v^2 + (R_2-R_1) v^2, \quad w(T) = 0, 
\end{equation*}
whence the estimate, by Lemma~\ref{lem:stability-P-P^*}, Lemma~\ref{lem:stability-R} and Proposition~\ref{prop:app-wellposedness-paradiff-eq},
\begin{align*}
\|(\tilde{\Range}_1 - \tilde{\Range}_2) G\|_{\H{s-3/2}}
& =\|w(0)\|_{\H{s-3/2}}  \\
& \lesssim \|(P_2-P_1) v^2\|_{\Lone([0,T],\H{s-3/2})} + \|(R_2-R_1) v^2\|_{\Lone([0,T],\H{s-3/2})} \\
& \lesssim \|\udlu_1-\udlu_2\|_{\Linf([0,T],\H{s-3/2})} \|v^2\|_{\Lone([0,T],\H{s})} \\
& \lesssim \|\udlu_1-\udlu_2\|_{\Linf([0,T],\H{s-3/2})} \|G\|_{\Ltwo([0,T],\H{s})}.
\end{align*}
\end{proof}

\begin{lemma}
\label{lem:stability-B(eta)}
Suppose $ s > 2 + d/2 $, then for $ \varepsilon_0 $ sufficiently small,
\begin{equation*}
\|B(\eta_1) - B(\eta_2)\|_{\L(\Hdot{s},\Linf)} \lesssim \|\udlu_1-\udlu_2\|_{\H{s-3/2}}. 
\end{equation*}
\end{lemma}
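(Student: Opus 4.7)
The plan is to combine a refined low-regularity stability for $\eta_1 - \eta_2$ with the shape derivative formula~\eqref{eq:formula-shapde-derivative} for the Dirichlet–Neumann operator, controlling everything in $\Linf$ via Sobolev embedding (which works because $s$ is taken large).

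First I would establish the refined estimate
\begin{equation*}
\|\eta_1 - \eta_2\|_{\H{s-1}} \lesssim \|\udlu_1 - \udlu_2\|_{\H{s-3/2}}.
\end{equation*}
This is obtained by rerunning the fixed-point / contraction argument of Proposition~\ref{prop:reversibility-u-eta-psi} (and the difference identity in the proof of Lemma~\ref{lem:stability-u-eta}) at regularity $\H{s-1}$ rather than $\H{s+1/2}$. The argument is permissible because $\T{p(\eta_i)}^{-1}$ is of order $-1/2$, so maps $\H{s-3/2}$ into $\H{s-1}$, and the remainder $\T{p_1}^{-1}(\T{p_1}-\T{p_2})\eta_2$ is absorbed for $\varepsilon_0$ small.

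Next I would expand, for $\psi \in \Hdot{s}(\Td)$,
\begin{equation*}
B(\eta_1)\psi - B(\eta_2)\psi
= \tfrac{\nabla_x(\eta_1-\eta_2)\cdot\nabla_x\psi + (G(\eta_1)-G(\eta_2))\psi}{1+|\nabla_x\eta_1|^2}
+ \bigl(\nabla_x\eta_2\cdot\nabla_x\psi + G(\eta_2)\psi\bigr)\bigl(\tfrac{1}{1+|\nabla_x\eta_1|^2}-\tfrac{1}{1+|\nabla_x\eta_2|^2}\bigr),
\end{equation*}
and estimate each piece in $\Linf$. The denominators and their differences are smooth functions of $\nabla_x\eta_i$ and are bounded in $\Linf$ using $\|\nabla_x(\eta_1-\eta_2)\|_{\Linf} \lesssim \|\eta_1-\eta_2\|_{\H{s-1}}$ via Sobolev embedding, valid for $s > 2 + d/2$. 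The product $\nabla_x(\eta_1-\eta_2)\cdot\nabla_x\psi$ is immediately bounded by $\|\eta_1-\eta_2\|_{\H{s-1}}\|\psi\|_{\H{s}}$ by the same embedding.

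For the Dirichlet–Neumann difference I would use the shape derivative formula along $\eta_t = (1-t)\eta_2 + t\eta_1$:
\begin{equation*}
G(\eta_1)\psi - G(\eta_2)\psi = -\int_0^1 \bigl(G(\eta_t)\bigl(B_t(\eta_1-\eta_2)\bigr) + \nabla_x\cdot\bigl(V_t(\eta_1-\eta_2)\bigr)\bigr)\dt,
\end{equation*}
with $B_t = B(\eta_t)\psi$, $V_t = V(\eta_t)\psi$. By Lemma~\ref{lemma:estimate-B,V} and Theorem 3.12 of~\cite{ABZ-Without-Surface-Tension}, each integrand is controlled in $\H{s-2}$ by $\|\nabla_x\psi\|_{\H{s-1}}\,\|\eta_1-\eta_2\|_{\H{s-1}}$, which by the Sobolev embedding $\H{s-2}\hookrightarrow\Linf$ (valid for $s > 2+d/2$) yields the required $\Linf$ bound $\lesssim \|\psi\|_{\H{s}}\|\udlu_1-\udlu_2\|_{\H{s-3/2}}$.

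The main technical point is the shape-derivative step: one cannot simply interpolate between existing Sobolev bounds on $G(\eta)$, so the argument must be organized so that $\drv_\eta G(\eta_t)\psi\cdot(\eta_1-\eta_2)$ is controlled in a space injecting into $\Linf$, which forces the threshold $s > 2+d/2$ and explains why the hypothesis requires $s$ sufficiently large. Once this is handled, the remaining terms in the expansion of $B(\eta_1)\psi - B(\eta_2)\psi$ are routine applications of the paraproduct/product estimates together with the refined $\H{s-1}$ Lipschitz bound on $\eta_1-\eta_2$.
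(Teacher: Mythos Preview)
Your proposal is correct and follows essentially the same route as the paper: control $\eta_1-\eta_2$ in $\H{s-1}$ by $\udlu_1-\udlu_2$ in $\H{s-3/2}$, bound $(G(\eta_1)-G(\eta_2))\psi$ in $\H{s-2}$, and conclude via the Sobolev embedding $\H{s-2}\hookrightarrow\Linf$. The only difference is that the paper invokes the ready-made contraction estimate $\|(G(\eta_1)-G(\eta_2))\psi\|_{\H{s-2}}\lesssim\|\eta_1-\eta_2\|_{\H{s-1}}\|\nabla_x\psi\|_{\H{s-1}}$ from Theorem~5.2 of~\cite{ABZ-Without-Surface-Tension} directly, whereas you rederive it from the shape-derivative formula; your version is more self-contained but otherwise identical in spirit.
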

\begin{proof}
This is merely a consequence of the contraction property for the Dirichlet-Neumann operator, see Theorem 5.2 of~\cite{ABZ-Without-Surface-Tension},
\begin{equation*}
\| (G(\eta_1) - G(\eta_2)) \psi \|_{\H{s-2}} 
\lesssim  \|\eta_1-\eta_2\|_{\H{s-1}} \|\nabla_x\psi\|_{\H{s-1}}.
\end{equation*}
\end{proof}

\begin{lemma}
\label{lem:stability-beta}
Suppose $ s > 2 + d/2 $, then for $ \varepsilon_0 $ sufficiently small,
\begin{equation*}
\|\beta_1 - \beta_2\|_{\L(\Hdot{s},\Hdot{s-1})} \lesssim \|\udlu_1 - \udlu_2\|_{\H{s-3/2}}.
\end{equation*}
\end{lemma}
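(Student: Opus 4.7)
The plan is first to rewrite $\beta(u)F$ in a cleaner form. From the remark after Proposition~\ref{prop:reversibility-u-eta-psi}, we have $\pi(D_x)\eta = -\T{p(u)}^{-1}\Im u$, so since $\eta \in \Hdot{s+1/2}$ has zero mean, $\T{p(u)}^{-1}\Im u = -\eta$. Hence
\begin{equation*}
\beta(u) F = -\chi_T^{} \T{q(u)} \T{B(u)(\phiw \Re F)} \, \eta.
\end{equation*}
This eliminates the $\T{p}^{-1}\Im$ factor that would otherwise need to be tracked in the stability analysis, leaving only the three $u$-dependent objects $q$, $B(\cdot)(\phiw\Re F)$, and $\eta$.

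Next I would telescope $\beta_1 F - \beta_2 F$ as
\begin{equation*}
-\chi_T^{}\bigl[(\T{q_1}-\T{q_2})\T{B_1(\phiw\Re F)}\eta_1 + \T{q_2}\T{(B_1-B_2)(\phiw\Re F)}\eta_1 + \T{q_2}\T{B_2(\phiw\Re F)}(\eta_1-\eta_2)\bigr],
\end{equation*}
and estimate each piece in $\Hdot{s-1}$. For the first piece I would use that $q$ is an order-zero symbol depending smoothly on $\nabla_x\eta$, so $\|q_1-q_2\|_{\Linf} \lesssim \|\eta_1-\eta_2\|_{\Holder{1}} \lesssim \|\eta_1-\eta_2\|_{\H{s-1}}$ by Sobolev embedding (using $s>2+d/2$), while $\T{B_1(\phiw\Re F)}\eta_1$ is bounded in $\H{s+1/2}$ with norm $\lesssim \|B_1(\phiw\Re F)\|_{\Linf}\|\eta_1\|_{\H{s+1/2}} \lesssim \varepsilon_0\|F\|_{\H{s}}$ by Lemma~\ref{lemma:estimate-B,V}. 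For the second piece, the key input is Lemma~\ref{lem:stability-B(eta)} (contraction of $B(\eta)$ in $\L(\H{s},\Linf)$). For the third piece, $\T{q_2}\T{B_2(\phiw\Re F)}$ is uniformly bounded in $\L(\H{s-1},\H{s-1})$ times $\|F\|_{\H{s}}$, so only $\|\eta_1-\eta_2\|_{\H{s-1}}$ needs to be controlled.

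The critical ingredient, and where I expect to have to be slightly careful, is the stability estimate
\begin{equation*}
\|\eta_1-\eta_2\|_{\H{s-1}} \lesssim \|\udlu_1-\udlu_2\|_{\H{s-3/2}}.
\end{equation*}
Lemma~\ref{lem:stability-u-eta} is stated in $\H{s+1/2}$, but the contraction argument used in its proof (and in Proposition~\ref{prop:reversibility-u-eta-psi}) works equally well at the $\H{\mu+1/2}$ level for any $s-3/2 \le \mu \le s$. I would simply rerun that fixed-point contraction at $\mu = s-3/2$ to obtain the required half-order loss. Combining this with Lemma~\ref{lem:stability-B(eta)} then yields, after summing the three telescoped terms,
\begin{equation*}
\|\beta_1 F - \beta_2 F\|_{\H{s-1}} \lesssim \|\udlu_1-\udlu_2\|_{\H{s-3/2}} \|F\|_{\H{s}},
\end{equation*}
which is the desired bound. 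The main technical point throughout is to ensure that all three differences — in $q$, in $B$, and in $\eta$ — are each estimated with a $1/2$-derivative loss of regularity on the difference $\udlu_1-\udlu_2$, matching the $\H{s-3/2}$ norm on the right-hand side.
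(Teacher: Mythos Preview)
Your proof is correct and follows essentially the same route as the paper. The paper telescopes directly on the four factors of $\beta(u)F = \chi_T^{}\T{q}\T{B(u)(\phiw\Re F)}\T{p}^{-1}\Im u$, producing four terms, whereas you first substitute $\T{p(u)}^{-1}\Im u = -\eta$ and obtain only three; the paper's third and fourth terms sum precisely to your third term $T_{q_2}T_{B_2(\phiw\Re F)}(\eta_1-\eta_2)$, so the difference is purely cosmetic. Your explicit remark that Lemma~\ref{lem:stability-u-eta} must be rerun at the $\H{\mu+1/2}$ level with $\mu=s-3/2$ (as permitted by the contraction range in Proposition~\ref{prop:reversibility-u-eta-psi}) to yield $\|\eta_1-\eta_2\|_{\H{s-1}}\lesssim\|\udlu_1-\udlu_2\|_{\H{s-3/2}}$ is a point the paper's terse proof leaves implicit; making it explicit is a genuine clarification.
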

\begin{proof}
It suffices to write
\begin{align*}
\beta_1 F - \beta_2 F
& = \T{q_1-q_2}\T{B(\eta_1) F}\T{p_1}^{-1}\Im \udlu_1
+ \T{q_2}\T{(B(\eta_1)-B(\eta_2)) F}\T{p_1}^{-1}\Im \udlu_1 \\
& \qquad + \T{q_2}\T{B(\eta_2) F}(\T{p_1}^{-1}-\T{p_2}^{-1})\Im \udlu_1
+ \T{q_2}\T{B(\eta_2) F}\T{p_2}^{-1}\Im (\udlu_1-\udlu_2),
\end{align*}
and conclude by Theorem~\ref{thm:Operator-Norm-Estimate-Paradiff}, Lemma~\ref{lem:stability-u-eta} and Lemma~\ref{lem:stability-B(eta)}.
\end{proof}

\begin{lemma}
\label{lem:stability-E}
Suppose that~$ s $ is sufficiently large, then for $ \varepsilon_0 $ sufficiently small,
\begin{equation*}
\|\E_1 - \E_2\|_{\L(\Hdot{s},\Hdot{s-3/2})} 
\lesssim \|\udlu_1 - \udlu_2\|_{\Linf([0,T],\H{s-3/2})}.
\end{equation*}
\end{lemma}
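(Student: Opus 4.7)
The plan is to start from the factorization $\E(\udlu) = \tilde{\Range}(\udlu)(-R(\udlu)\Sol_\Theta(\udlu) + \beta(\udlu)\Theta(\udlu))$ coming from~\eqref{eq:Psi-as-perturbation-of-Theta} and telescope the difference as
\begin{align*}
\E_1 - \E_2
&= (\tilde{\Range}_1 - \tilde{\Range}_2)(-R_1\Sol_{\Theta,1} + \beta_1\Theta_1) + \tilde{\Range}_2(R_2-R_1)\Sol_{\Theta,1} \\
&\quad + \tilde{\Range}_2 R_2 (\Sol_{\Theta,2}-\Sol_{\Theta,1}) + \tilde{\Range}_2(\beta_1-\beta_2)\Theta_1 + \tilde{\Range}_2 \beta_2 (\Theta_1-\Theta_2).
\end{align*}
Each of the five pieces will then be estimated as an operator $\Hdot{s}(\Td) \to \Hdot{s-3/2}(\Td)$ with norm controlled by $\|\udlu_1-\udlu_2\|_{\Linf([0,T],\H{s-3/2})}$.

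For the first piece I would apply Lemma~\ref{lem:stability-Range-tilde}, which provides exactly the contraction of $\tilde{\Range}_1-\tilde{\Range}_2$ from $\Ltwo([0,T],\Hdot{s})$ to $\Hdot{s-3/2}$, and then use that the bracket $-R_1\Sol_{\Theta,1}+\beta_1\Theta_1$ sends $\Hdot{s}$ uniformly into $\Ltwo([0,T],\Hdot{s})$ by~\eqref{eq:estimate-various-operators}, \eqref{eq:S-theta-estimate} and Proposition~\ref{prop:regularity-H^s}. The second and fourth pieces come directly from Lemma~\ref{lem:stability-R} and Lemma~\ref{lem:stability-beta}, both already phrased with the right-hand side $\|\udlu_1-\udlu_2\|_{\H{s-3/2}}$, combined with the uniform boundedness of $\Sol_{\Theta,1}$ and $\Theta_1$ from $\Hdot{s}$ into $C([0,T],\Hdot{s})$, and with the uniform boundedness of $\tilde{\Range}_2:\Ltwo([0,T],\Hdot{s-3/2})\to\Hdot{s-3/2}$ (the latter being exactly the backwards energy estimate used in the proof of Proposition~\ref{prop:definition-of-Phi}, but run at regularity $s-3/2$).

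The third and fifth pieces are the hard part, because they would require the improved contraction estimates
\begin{equation*}
\|\Sol_{\Theta,1}-\Sol_{\Theta,2}\|_{\L(\Hdot{s},C([0,T],\Hdot{s-3/2}))} + \|\Theta_1-\Theta_2\|_{\L(\Hdot{s},C([0,T],\Hdot{s-3/2}))} \lesssim \|\udlu_1-\udlu_2\|_{\Linf([0,T],\H{s-3/2})},
\end{equation*}
whereas Lemmas~\ref{lem:stability-sol-theta} and~\ref{lem:stability-Theta} as stated only yield the weaker bound with $\H{s}$ on the right-hand side. My plan is therefore to redo those two proofs after first upgrading Lemma~\ref{lem:stability-P-P^*} to a low-regularity version of the form $\|P_1-P_2\|_{\L(\Hdot{\sigma+m},\Hdot{\sigma})} \lesssim \|\udlu_1-\udlu_2\|_{\H{s-3/2}}$ with $m>3/2$, then propagating this low-regularity contraction through the telescopings used in Lemmas~\ref{lem:stability-of-solution-operator}, \ref{lem:stability-range-op}, \ref{lem:stability-K}, \ref{lem:stability-Theta} and~\ref{lem:stability-sol-theta}. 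This is permissible because the symbols $\gamma$, $V$, $r$ depend smoothly on $\nabla_x\eta,\nabla_x^2\eta$: Theorem~\ref{thm:Operator-Norm-Estimate-Paradiff} allows one to trade extra mapping loss for a weaker Sobolev bound on the symbol difference, and the Sobolev embedding losses are absorbed once $s$ is taken sufficiently large. Once these refined estimates are in hand, composition with the uniformly bounded operators $R_2$ and $\beta_2$ (acting between appropriate spaces around $\Hdot{s-3/2}$) closes the remaining two pieces.
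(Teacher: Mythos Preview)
Your telescoping decomposition is exactly the one the paper uses, and your treatment of the first, second, and fourth pieces is correct and matches the paper's intention.

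You are also right that Lemmas~\ref{lem:stability-Theta} and~\ref{lem:stability-sol-theta}, \emph{as literally stated}, carry $\|\udlu_1-\udlu_2\|_{\Linf([0,T],\H{s})}$ on the right-hand side rather than $\H{s-3/2}$, so the paper's one-line ``conclude by'' does sweep something under the rug for the third and fifth pieces.

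However, your proposed fix is more elaborate than necessary. You do not need to introduce extra mapping loss $m>3/2$ in Lemma~\ref{lem:stability-P-P^*} and then re-propagate it through the whole chain. The point is simpler: all the operators $P(\udlu)$, $\B(\udlu)$, $\K(\udlu)$, $\Theta(\udlu)$, $\Sol_\Theta(\udlu)$ depend on $\udlu$ only through its values (via $\eta(\udlu)$, $\psi(\udlu)$ and the symbols built from them), not through the regularity class we assign to $\udlu$. Since $\udlu_i\in\Cls^{1,s}(T,\varepsilon_0)\subset\Cls^{1,s-3/2}(T,\varepsilon_0)$, and since the hypotheses of Lemmas~\ref{lem:stability-u-eta}--\ref{lem:stability-sol-theta} require only that the regularity parameter exceed a fixed threshold (either $3/2+d/2$ or ``sufficiently large''), one may simply invoke those lemmas with the regularity parameter taken to be $s-3/2$ instead of $s$. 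In particular Lemma~\ref{lem:stability-P-P^*} then reads
\[
\|P_1-P_2\|_{\L(\Hdot{\sigma+3/2},\Hdot{\sigma})}\lesssim\|\udlu_1-\udlu_2\|_{\H{s-3/2}},
\]
with no extra loss, and this propagates through the telescopings of Lemmas~\ref{lem:stability-of-solution-operator}, \ref{lem:stability-range-op}, \ref{lem:stability-K}, \ref{lem:stability-Theta}, \ref{lem:stability-sol-theta} unchanged, yielding the refined contractions for $\Theta_1-\Theta_2$ and $(\Sol_\Theta)_1-(\Sol_\Theta)_2$ that you need. The composition with $R_2$ and $\beta_2$ then closes as you describe.
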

\begin{proof}
We use the identity,
\begin{align*}
\E_1 - \E_2
& = (\tilde{\Range}_1 - \tilde{\Range}_2)(-R_1 (\Sol_{\Theta})_1 + \beta_1 \Theta_1|_{\Hdot{s}}) \\ 
& \qquad + \tilde{\Range}_2 (R_2-R_1) (\Sol_{\Theta})_1 + \tilde{\Range}_2 R_2 \big((\Sol_{\Theta})_2 - (\Sol_{\Theta})_1  \big) \\
& \qquad \qquad + \tilde{\Range}_2 (\beta_1 - \beta_2) \Theta_1|_{\Hdot{s}} + \tilde{\Range}_2 \beta_2 (\Theta_1|_{\Hdot{s}} - \Theta_2|_{\Hdot{s}}).
\end{align*}
And conclude by Lemma~\ref{lem:stability-Range-tilde}, Lemma~\ref{lem:stability-R}, Lemma~\ref{lem:stability-sol-theta}, Lemma~\ref{lem:stability-beta} and Lemma~\ref{lem:stability-Theta}.
\end{proof}

\begin{proposition}
\label{prop:Stability-of-Phi}
Suppose that~$ s $ is sufficiently large, then for~$ \varepsilon_0 $ sufficiently small,
\begin{equation*}
\|\Phi_1 - \Phi_2\|_{\L(\Hdot{s}, C([0,T],\Hdot{s-3/2}))} 
\lesssim \|\udlu_1 - \udlu_2\|_{\Linf([0,T],\H{s-3/2})}.
\end{equation*}
\end{proposition}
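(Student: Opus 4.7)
From Proposition~\ref{prop:definition-of-Phi} we have the explicit representation $\Phi_i = \Theta_i(1+\E_i)^{-1}|_{\Hdot{s}}$, so it is natural to telescope
\begin{equation*}
\Phi_1 - \Phi_2 = (\Theta_1 - \Theta_2)(1+\E_1)^{-1} + \Theta_2(1+\E_1)^{-1}(\E_2 - \E_1)(1+\E_2)^{-1}
\end{equation*}
and bound each summand in $\L(\Hdot{s}, C([0,T],\Hdot{s-3/2}))$ by $\|\udlu_1 - \udlu_2\|_{\Linf([0,T],\H{s-3/2})}$.

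The second summand is the easier one given Lemma~\ref{lem:stability-E}, which provides exactly the factor $\E_2 - \E_1 : \Hdot{s} \to \Hdot{s-3/2}$ with the required contraction bound. The remaining three factors need to be uniformly bounded at the regularity they are composed with: $(1+\E_2)^{-1}$ acts on $\Hdot{s}$ by Proposition~\ref{prop:definition-of-Phi}, while $\Theta_2$ and $(1+\E_1)^{-1}$ must act on $\Hdot{s-3/2}$. The former is Proposition~\ref{prop:regularity-H^s} with $\mu = s-3/2$; the latter follows by rerunning the Neumann-series argument of Proposition~\ref{prop:definition-of-Phi} at the reduced regularity, which is legitimate as soon as $s$ is large enough that $R$, $\beta$, $\Sol_\Theta$, $\tilde{\Range}$ continue to satisfy the hypotheses of Sections~\ref{sec:H^s-controllability-perturbed}--\ref{sec:contraction-estimate} with $s$ replaced by $s - 3/2$.

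For the first summand, I combine the $\Hdot{s}$-boundedness of $(1+\E_1)^{-1}$ with a sharpened form of Lemma~\ref{lem:stability-Theta}: for $\sigma = s-3/2$ and $s$ sufficiently large,
\begin{equation*}
\|\Theta_1 - \Theta_2\|_{\L(\Hdot{s}, C([0,T],\Hdot{s-3/2}))} \lesssim \|\udlu_1 - \udlu_2\|_{\Linf([0,T],\H{s-3/2})}.
\end{equation*}
The proof is the one written for Lemma~\ref{lem:stability-Theta}, but run with sharper symbolic inputs: the operator norm
\begin{equation*}
\|\T{\gamma_1^{(3/2)} - \gamma_2^{(3/2)}}\|_{\L(\Hdot{s},\Hdot{s-3/2})} \lesssim \|\gamma_1^{(3/2)} - \gamma_2^{(3/2)}\|_{W^{d/2+1,\infty}} \lesssim \|\eta_1-\eta_2\|_{\H{s-1}}
\end{equation*}
together with the improved form $\|\eta_1 - \eta_2\|_{\H{s-1}} \lesssim \|\udlu_1 - \udlu_2\|_{\H{s-3/2}}$ of Lemma~\ref{lem:stability-u-eta} (obtained by the same fixed-point identity $\eta_1-\eta_2 = -\T{p_1}^{-1}\Im(\udlu_1-\udlu_2)+\T{p_2}^{-1}(\T{p_1}-\T{p_2})\T{p_1}^{-1}\Im\udlu_2$, now at regularity $s - 3/2$ using that $\T{p}^{-1}$ has order $-1/2$). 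Propagating this upgrade through Lemmas~\ref{lem:stability-B-B^*}, \ref{lem:stability-P-P^*}, \ref{lem:stability-of-solution-operator}, \ref{lem:stability-range-op} and~\ref{lem:stability-K} yields the displayed inequality, and one concludes.

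\textbf{Main obstacle.} The real work is the bookkeeping behind the sharpened Lemma~\ref{lem:stability-Theta}: at every step along the chain $P \to \Sol \to \K \to \Theta$ one must verify that only Lipschitz-type (pointwise) information on $\eta_1 - \eta_2$ enters the symbolic estimates, so that the dependence is genuinely in $\H{s-3/2}$ rather than $\H{s}$ and no derivatives beyond the $3/2$ already present in the statement of Proposition~\ref{prop:Stability-of-Phi} are lost. Once this is done, both summands of the telescope satisfy the required bound and smallness of $\varepsilon_0$ absorbs the constants from $(1+\E_i)^{-1}$.
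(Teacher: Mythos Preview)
Your approach is exactly the paper's: the same telescoping identity
\[
\Phi_1 - \Phi_2 = (\Theta_1 - \Theta_2)(1+\E_1)^{-1} + \Theta_2(1+\E_1)^{-1}(\E_2 - \E_1)(1+\E_2)^{-1},
\]
followed by invoking Lemma~\ref{lem:stability-Theta} and Lemma~\ref{lem:stability-E}. You are, however, more careful than the paper's two-line proof on one point: Lemma~\ref{lem:stability-Theta} as literally stated gives a bound in terms of $\|\udlu_1-\udlu_2\|_{L^\infty\H{s}}$, not $\H{s-3/2}$, so the first summand does not immediately yield the claimed estimate. Your proposed fix---rerunning Lemma~\ref{lem:stability-u-eta} at regularity $s-3/2$ to obtain $\|\eta_1-\eta_2\|_{\H{s-1}}\lesssim\|\udlu_1-\udlu_2\|_{\H{s-3/2}}$ and propagating this through Lemmas~\ref{lem:stability-B-B^*}--\ref{lem:stability-K}---is correct and is in fact the same shift the paper itself silently uses elsewhere (e.g.\ in the proof of Lemma~\ref{lem:stability-R}). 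Likewise, your remark that $(1+\E_1)^{-1}$ must be bounded on $\Hdot{s-3/2}$, which requires rerunning the Neumann-series argument at the lower regularity, is a point the paper does not explicitly address. In short, your proof is the paper's proof with the regularity bookkeeping made explicit.
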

\begin{proof}
We use the identity,
\begin{equation*}
\begin{split}
\Phi_1 - \Phi_2
= (\Theta_1|_{\Hdot{s}} - \Theta_2|_{\Hdot{s}}) (\Id + \E_1)^{-1} + \Theta_2|_{\Hdot{s}} (\Id + \E_1)^{-1} (\E_2 - \E_1) (\Id + \E_2)^{-1}.
\end{split}
\end{equation*}
And conclude by Lemma~\ref{lem:stability-Theta} and Lemma~\ref{lem:stability-E}.
\end{proof}

\section{Iterative Scheme}
\label{sec:iteration}

We adapt the iterative scheme of~\cite{ABH-Control} to construct a solution for the nonlinear control problem~\eqref{eq:equation-control-of-u} and thus close the proof of Theorem~\ref{thm:main-paradiff}. We have to be careful about the constants and will no longer use the notation~$ \lesssim $ in this section.

Suppose that~$ s $ is sufficiently large, fix $ 0 < \varepsilon_0 < 1 $ sufficiently small, such that the results of previous sections apply. Let $ C > 10 $ be a constant such that the following conditions are satisfied.
\begin{itemize}[noitemsep]
\item For all $ \udlu \in \Cls^{1,s}(T,\varepsilon_0) $, and all~$ \sigma \in \R $ with $ s \ge \sigma \ge 0 $,
\begin{equation*}
\begin{split}
& \|\Phi(\udlu)\|_{\L(\Hdot{s},C([0,T],\Hdot{s}))}  + \|P(\udlu)\|_{\Linf([0,T],\L(\Hdot{\sigma}, \Hdot{\sigma-3/2}))} \\
& \qquad + \|\B(\udlu)\|_{\Linf([0,T],\L(\Hdot{\sigma}, \Hdot{\sigma}))} + \|\beta(\udlu)\|_{\Linf([0,T],\L(\Hdot{s}, \Hdot{s+1/2}))}
 \le C.
\end{split}
\end{equation*}
\item For $ s \ge \mu \ge s-3/2 $, $ G \in \Linf([0,T],\Hdot{\mu}(\Td)) $, if~$ u $ satisfies the equation
\begin{equation*}
(\pt + P(\udlu) + R(\udlu)) u = G, \quad u(0) = 0 \mathrm{\ or\ } u(T) = 0,
\end{equation*}
then we have the energy estimate (by Proposition~\ref{prop:app-wellposedness-paradiff-eq}),
\begin{equation}
\label{eq:energy-estimate-iterative-scheme}
\|u\|_{C([0,T],\H{\mu})} + \|\pt u\|_{\Linf([0,T],\H{\mu-3/2})}  \le C \|G\|_{\Linf([0,T],\H{\mu})}.
\end{equation}
\item In all the lemmas and propositions of Section~\ref{sec:contraction-estimate}, the statements remain valid after replacing the relations $ \mathrm{l.h.s} \lesssim \mathrm{r.h.s} $ with $ \mathrm{l.h.s} \le C \times \mathrm{r.h.s} $.
\end{itemize}

Now fix $ K > C^{10} $, and let $ u_0 \in \Hdot{s}(\Td) $ be such that
\begin{equation*}
\|u_0\|_{\H{s}} < \varepsilon_0 / K.
\end{equation*}
We will define a sequence of functions $ \{u^n, F^n\}_{n \ge 0} $ by inductively solving a sequence a linear control problems. Once~$ u^n $ is defined, for any operator $ \mathcal{L}(u^n) $ that depends on~$ u^n $, we denote for simplicity $ \mathcal{L}_n = \mathcal{L}(u^n) $. The induction proceeds as follows. Let $ u^0 = F^0 \equiv 0 $, and for $ n \ge 0 $, let
\begin{equation*}
F^{n+1} = \Phi_{n}(u_0) \in C([0,T],\Hdot{s}(\Td)),
\end{equation*}
and let $ u^{n+1} \in C([0,T],\Hdot{s}(\Td)) $ be the solution to the equation
\begin{equation}
\label{eq:equation-u^n-iteration}
(\pt + P_{n} + R_{n}) u^{n+1} = (\B_{n} + \beta_n) F^{n+1}, \quad u^{n+1}(0) = u_0, \quad u^{n+1}(T) = 0.
\end{equation}
In order for $ F^{n+1} $ to be well defined, we have to verify that $ u^n \in \Cls^{1,s}(T,\varepsilon_0) $, this is justified by the following lemma.

\begin{lemma}
The sequence of functions $ \{u^n,F^n\}_{n\ge 0} $, formally defined as above, is well defined, and satisfies furthermore the following estimates, for all $ n \ge 0 $,
\begin{align*}
u^n  &\in \Cls^{1,s}(T,\varepsilon_0), &
u^{n+1}-u^n  &\in \Cls^{1,s-3/2}\big(T,\varepsilon_0^{n+1}\big), \\
F^{n}  &\in \Cls^{0,s}(T,\varepsilon_0), &
F^{n+1}-F^n  &\in \Cls^{0,s-3/2}\big(T,\varepsilon_0^{n+1}\big).
\end{align*}
\end{lemma}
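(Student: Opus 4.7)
The plan is to proceed by induction on $n$, maintaining simultaneously the two size bounds and the two contraction bounds, and exploiting the linearity and energy estimates for the forward and backward Cauchy problems listed as standing hypotheses above. The base case $n=0$ is immediate from $u^0 = F^0 \equiv 0$, and the contraction bounds for $n=0$ reduce respectively to $\|u^1\|_{\Cls^{1,s-3/2}} \le \varepsilon_0$ and $\|F^1\|_{\Cls^{0,s-3/2}} \le \varepsilon_0$, which will follow from the size bounds to be proven for $u^1, F^1$.

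For the inductive step, suppose $u^n \in \Cls^{1,s}(T,\varepsilon_0)$ and $F^n \in \Cls^{0,s}(T,\varepsilon_0)$ are already constructed. First I define $F^{n+1} = \Phi_n u_0$, which is licit because $u^n$ is in the domain on which Proposition~\ref{prop:definition-of-Phi} provides $\Phi_n$; the boundedness $\|\Phi_n\|\le C$ yields
\begin{equation*}
\|F^{n+1}\|_{C([0,T],\Hdot{s})} \le C\|u_0\|_{\H{s}} \le C\varepsilon_0/K < \varepsilon_0.
\end{equation*}
Next, $u^{n+1}$ is the unique solution of the linear equation~\eqref{eq:equation-u^n-iteration}, whose existence is guaranteed by Proposition~\ref{prop:app-wellposedness-paradiff-eq} of Appendix~\ref{app:linear-equations}, and the defining property of $\Phi_n$ forces $u^{n+1}(T)=0$. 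Splitting $u^{n+1}$ as the free-evolution part with initial datum $u_0$ plus the zero-initial-data forced part is not even necessary: since $u^{n+1}(T)=0$, I can apply the backward energy estimate~\eqref{eq:energy-estimate-iterative-scheme} directly, obtaining
\begin{equation*}
\|u^{n+1}\|_{C([0,T],\Hdot{s})} + \|\pt u^{n+1}\|_{\Linf([0,T],\Hdot{s-3/2})} \le C\|(\B_n+\beta_n)F^{n+1}\|_{\Linf([0,T],\Hdot{s})} \le 2C^3\varepsilon_0/K < \varepsilon_0,
\end{equation*}
using $\|\B_n\|+\|\beta_n\|\le 2C$. Hence $u^{n+1}\in\Cls^{1,s}(T,\varepsilon_0)$ and $F^{n+1}\in\Cls^{0,s}(T,\varepsilon_0)$.

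For the contraction bounds, I will write the equation satisfied by $u^{n+1}-u^n$, which after subtracting the two consecutive iterations takes the form
\begin{equation*}
(\pt+P_n+R_n)(u^{n+1}-u^n) = (\B_n+\beta_n)(F^{n+1}-F^n) + (\B_n+\beta_n-\B_{n-1}-\beta_{n-1})F^n - (P_n+R_n-P_{n-1}-R_{n-1})u^n,
\end{equation*}
with zero data at both $t=0$ and $t=T$. Applying the $(s-3/2)$-level energy estimate~\eqref{eq:energy-estimate-iterative-scheme}, combined with the contraction estimates of Section~\ref{sec:contraction-estimate} (Lemma~\ref{lem:stability-B-B^*}, Lemma~\ref{lem:stability-P-P^*}, Lemma~\ref{lem:stability-R}, Lemma~\ref{lem:stability-beta} for the operator differences, and Proposition~\ref{prop:Stability-of-Phi} to handle $F^{n+1}-F^n=(\Phi_n-\Phi_{n-1})u_0$), I obtain
\begin{equation*}
\|u^{n+1}-u^n\|_{\Cls^{1,s-3/2}} \le C'\bigl(\|u^n-u^{n-1}\|_{\Cls^{1,s-3/2}}\|u_0\|_{\H{s}} + \|u^n-u^{n-1}\|_{\Linf([0,T],\H{s-3/2})}(\|F^n\|_{\Cls^{0,s}}+\|u^n\|_{\Cls^{1,s}})\bigr),
\end{equation*}
and a parallel estimate $\|F^{n+1}-F^n\|_{\Cls^{0,s-3/2}} \le C'\|u^n-u^{n-1}\|_{\Linf([0,T],\H{s-3/2})}\|u_0\|_{\H{s}}$. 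Inserting the inductive bounds and $\|u_0\|_{\H{s}}\le\varepsilon_0/K$, the right-hand sides are bounded by $C''\varepsilon_0^n\cdot\varepsilon_0/K$, which is $\le\varepsilon_0^{n+1}$ provided $K\ge C''$, guaranteed by the choice $K>C^{10}$.

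The main obstacle is purely book-keeping of constants: one has to choose $K$ large once and for all (depending on the bounding constant $C$ from Section~\ref{sec:H^s-linear-control} and Section~\ref{sec:contraction-estimate}) so that every appearance of $C$, $C^2$, $C^3$ arising from composing boundedness with contraction estimates is absorbed by $K$, thereby ensuring that neither the absolute bound $\varepsilon_0$ nor the geometric factor $\varepsilon_0^{n+1}$ is degraded across the induction. No deeper analytic obstruction arises, because all the heavy lifting---linear well-posedness at the $(s-3/2)$-level, boundedness of $\Phi$ in $\H{s}$, and Lipschitz dependence in $\H{s-3/2}$---has been carried out in the previous sections.
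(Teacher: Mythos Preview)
Your proposal is correct and follows essentially the same approach as the paper's proof: bound $F^{n+1}$ via $\|\Phi_n\|\le C$, use the backward energy estimate (exploiting $u^{n+1}(T)=0$) to get the finer bound $\|u^{n+1}\|_{\Cls^{1,s}}\lesssim C^3\varepsilon_0/K$, write the equation for $u^{n+1}-u^n$, and apply the contraction lemmas of Section~\ref{sec:contraction-estimate} term by term. The only point worth making explicit is that when you write ``the right-hand sides are bounded by $C''\varepsilon_0^n\cdot\varepsilon_0/K$'', you are tacitly using the \emph{finer} bounds $\|u^n\|,\|F^n\|\lesssim \varepsilon_0/K$ (which you did derive), not merely the inductive hypothesis $\|u^n\|<\varepsilon_0$ stated in the lemma; without that extra $K^{-1}$ the term $C'\|u^n-u^{n-1}\|\,\|u^n\|$ would only give $C'\varepsilon_0^{n+1}$, which does not close.
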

\begin{proof}
In the following estimates, we keep in mind that $ C^{10} K^{-1} < 1 $. For $ n = 0 $, the estimate for~$ u^0 $ and~$ F^0 $ are clearly satisfied. As for the differences, we use $ F^1 = \Theta_0(u_0) $, Proposition~\ref{prop:definition-of-Phi}, and the energy estimate~\eqref{eq:energy-estimate-iterative-scheme}
\begin{align*}
\|F^1-F^0\|_{C([0,T],\H{s-3/2})}
& = \|F^1\|_{C([0,T],\H{s-3/2})} 
\le C \|u_0\|_{\H{s}}
\le C K^{-1} \varepsilon_0. \\
\|u^1-u^0\|_{C([0,T],\H{s-3/2})}
& = \|u^1\|_{C([0,T],\H{s-3/2})}
\le C \|F^1\|_{C([0,T],\H{s-3/2})}
\le C^2 K^{-1} \varepsilon_0.
\end{align*}
Then by the equation and~\eqref{eq:energy-estimate-iterative-scheme},
\begin{equation*}
\|\pt(u^1-u^0)\|_{\Linf([0,T],\H{s-3})}
= \|\pt u^1\|_{\Linf([0,T],\H{s-3})} 
\le C^2 \|F^1\|_{C([0,T],\H{s-3/2})} 
\le C^3 K^{-1} \varepsilon_0.
\end{equation*}
Suppose by now the estimates are proven for~$ n $. By Proposition~\ref{prop:definition-of-Phi},
\begin{equation*}
\|F^{n+1}\|_{C([0,T],\H{s})} \le C \|u_0\|_{\H{s}} \le C K^{-1} \varepsilon_0,
\end{equation*}
and then by~\eqref{eq:energy-estimate-iterative-scheme},
\begin{equation}
\label{eq:finer-estimate-for-u^n}
\|u^{n+1}\|_{C([0,T],\H{s})} + \|\pt u^{n+1}\|_{\Linf([0,T],\H{s-3/2})} 
 \le C^2 \|F^{n+1}\|_{C([0,T],\H{s})} \le C^3 K^{-1} \varepsilon_0. \\
\end{equation}
For the difference, we use Proposition~\ref{prop:Stability-of-Phi},
\begin{equation*}
\|F^{n+1} - F^n\|_{C([0,T],\H{s-3/2})} 
\le C \|u^n-u^{n-1}\|_{C([0,T],\H{s-3/2})} \|u_0\|_{\H{s}} 
\le C K^{-1} \varepsilon_0^{n+1}.
\end{equation*}
Observe that $ (\delta u)_n \bydef u^{n+1}-u^n $ satisfies the equation
\begin{equation*}
\begin{split}
(\pt + P_{n} + R_{n}) (\delta u)_n & = - (P_{n} - P_{n-1}) u^n - (R_n - R_{n-1}) u^n  \\
& \qquad + (\B_n F^{n+1} - \B_{n-1} F^n) + (\beta_n F^{n+1} - \beta_{n-1} F^n),  
\end{split}
\end{equation*}
with $ (\delta u)_n(0) = (\delta u)_n(T) = 0 $. By~\eqref{eq:energy-estimate-iterative-scheme},
\begin{align*}
\|& u^{n+1} - u^n\|_{C([0,T],\H{s-3/2})} + \|\pt (u^{n+1}-u^n)\|_{\Linf([0,T],\H{s-3})} \\
& \le C \|(P_n-P_{n-1}) u^n\|_{\Linf([0,T],\H{s-3/2})}
+ C \|(R_n-R_{n-1}) u^n\|_{\Linf([0,T],\H{s-3/2})} \\
& \quad + C \|\B_n F^{n+1} - \B_{n-1} F^n\|_{\Linf([0,T],\H{s-3/2})} 
+ C \|\beta_n F^{n+1} - \beta_{n-1} F^n\|_{\Linf([0,T],\H{s-3/2})},
\end{align*}
where, by Lemma~\ref{lem:stability-P-P^*}, and~\eqref{eq:finer-estimate-for-u^n},
\begin{align*}
\|(P_n-P_{n-1}) u^n\|_{\Linf([0,T],\H{s-3/2})}
& \le C \|u^n-u^{n-1}\|_{\Linf([0,T],\H{s-3/2})} \|u^n\|_{\Linf([0,T],\H{s})} \\
& \le C \times \varepsilon_0^{n} \times C^3 K^{-1} \varepsilon_0
\le C^4 K^{-1} \varepsilon_0^{n+1}.
\end{align*}
The same estimate, using Lemma~\ref{lem:stability-R}, gives
\begin{align*}
\|(R_n-R_{n-1}) u^n\|_{\Linf([0,T],\H{s-3/2})}
& \le C \|u^n-u^{n-1}\|_{\Linf([0,T],\H{s-3/2})} \|u^n\|_{\Linf([0,T],\H{s})} \\
& \le C \times \varepsilon_0^{n} \times C^3 K^{-1} \varepsilon_0
\le C^4 K^{-1} \varepsilon_0^{n+1}.
\end{align*}
Similarly, by Lemma~\ref{lem:stability-B-B^*} and Lemma~\ref{lem:stability-beta}, and the triangular inequality, we show that
\begin{equation*}
\|\B_n F^{n+1} - \B_{n-1} F^n\|_{\Linf([0,T],\H{s-3/2})} 
+ \|\beta_n F^{n+1} - \beta_{n-1} F^n\|_{\Linf([0,T],\H{s-3/2})}
\le C^3 K^{-1} \varepsilon_0^{n+1}.
\end{equation*}
In summary, we have
\begin{equation*}
\|u^{n+1} - u^n\|_{C([0,T],\H{s-3/2})} + \|\pt (u^{n+1}-u^n)\|_{\Linf([0,T],\H{s-3})} 
\le C^6 K^{-1} \varepsilon_0^{n+1},
\end{equation*}
which closes the proof.
\end{proof}

\begin{corollary}
\label{cor:Cauchy-(u,F)}
$ \{u^n,F^n\}_{n \ge 0} $ is a Cauchy sequence in 
\begin{equation*}
C([0,T],\Hdot{s-3/2}(\Td)) \cap \Holder{1}([0,T],\Hdot{s-3}(\Td)) \times C([0,T],\Hdot{s-3/2}(\Td)),
\end{equation*}
whose limit will be denoted by
\begin{equation*}
(u,F) = \lim_{n \to \infty} (u^n,F^n) 
\in \Cls^{1,s-3/2}(T,\varepsilon_0) \times \Cls^{0,s-3/2}(T,\varepsilon_0).
\end{equation*}
\end{corollary}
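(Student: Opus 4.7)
The statement is an immediate consequence of the preceding lemma together with a geometric summation, so the plan is essentially bookkeeping. First I would invoke the preceding lemma, which delivers the crucial telescoping estimate
\[
\|u^{n+1}-u^n\|_{C([0,T],\H{s-3/2})} + \|\pt(u^{n+1}-u^n)\|_{\Linf([0,T],\H{s-3})} \le C^6 K^{-1} \varepsilon_0^{n+1},
\]
and the analogous inequality for $F^{n+1}-F^n$ in $C([0,T],\Hdot{s-3/2}(\Td))$. Summing a geometric series in $\varepsilon_0 < 1$ and using the triangle inequality immediately yields the Cauchy property for $\{u^n,F^n\}_{n\ge 0}$ in the stated product space.

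Next I would invoke completeness of the Banach space
\[
C([0,T],\Hdot{s-3/2}(\Td)) \cap \Holder{1}([0,T],\Hdot{s-3}(\Td)) \times C([0,T],\Hdot{s-3/2}(\Td))
\]
(using that $\pt u^n$ is Cauchy in $\Linf([0,T],\Hdot{s-3}(\Td))$ so its limit in that space must coincide with $\pt u$, by an elementary distributional argument). This produces a well-defined limit pair $(u,F)$.

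Finally, to verify that $(u,F)$ lies in the specified subset $\Cls^{1,s-3/2}(T,\varepsilon_0) \times \Cls^{0,s-3/2}(T,\varepsilon_0)$, I would telescope from $u^0 = F^0 = 0$:
\[
\|u\|_{C([0,T],\H{s-3/2})} + \|\pt u\|_{\Linf([0,T],\H{s-3})} \le \sum_{n\ge 0} C^6 K^{-1} \varepsilon_0^{n+1} = \frac{C^6 K^{-1}}{1-\varepsilon_0}\, \varepsilon_0,
\]
and similarly for $F$. Since $K > C^{10}$ and $\varepsilon_0 < 1$, we have $C^6 K^{-1}/(1-\varepsilon_0) < 1$, so both norms are strictly less than $\varepsilon_0$. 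There is no substantive obstacle here — the entire work has already been carried out in the preceding lemma, whose contraction estimate was the genuinely delicate input; this corollary merely packages its output into the form needed for Section~\ref{sec:theorem-paradiff-implies-theorem}.
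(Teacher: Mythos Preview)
Your proposal is correct and is exactly the argument the paper has in mind; the paper itself gives no proof for this corollary, treating it as an immediate consequence of the preceding lemma. One tiny quibble: the inequality $C^6 K^{-1}/(1-\varepsilon_0) < 1$ does not follow from $K > C^{10}$ and $\varepsilon_0 < 1$ alone (take $\varepsilon_0$ near $1$), but since $\varepsilon_0$ is assumed sufficiently small throughout Section~\ref{sec:iteration} this is harmless.
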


\begin{corollary}
The following convergence holds,
\begin{enumerate}[nosep]
\item $ P(u^n)u^{n+1} \to P(u)u $, strongly in $ C([0,T],\Hdot{s-3}(\Td)) $;
\item $ R(u^n)u^{n+1} \to R(u)u $, strongly in $ C([0,T],\Hdot{s-3}(\Td)) $, weakly in $ \Ltwo([0,T],\H{s}(\Td)) $;
\item $ \B(u^n) F^{n+1} \to \B(u) F $, strongly in $ C([0,T],\Hdot{s-3/2}(\Td)) $, weakly in $ \Ltwo([0,T],\H{s}(\Td)) $;
\item $ \beta(u^n) F^{n+1} \to \beta(u) F $, strongly in $ C([0,T],\Hdot{s-3/2}(\Td)) $, weakly in $ \Ltwo([0,T],\H{s}(\Td)) $.
\end{enumerate}
In particular, $ (u,F) $ satisfies the equation
\begin{equation}
\label{eq:equation-u-limit}
(\pt + P(u) + R(u)) u = ( \B(u) + \beta(u) )F, \quad u(0) = u_0, \quad u(T) = 0,
\end{equation}
in sense of distribution. Moreover,
\begin{equation*}
\|R(u)u\|_{\Ltwo([0,T],\H{s})} + \|\B(u) F\|_{\Ltwo([0,T],\H{s})} + \|\beta(u) F\|_{\Ltwo([0,T],\H{s})} \lesssim \varepsilon_0.
\end{equation*}
\end{corollary}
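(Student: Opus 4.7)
The plan is to verify the four convergences separately, then pass to the distributional limit in equation~\eqref{eq:equation-u^n-iteration} to obtain~\eqref{eq:equation-u-limit}, and finally read off the $L^2([0,T],H^s)$ bounds from the uniform estimate~\eqref{eq:finer-estimate-for-u^n}. The starting data are: Corollary~\ref{cor:Cauchy-(u,F)} (strong convergence at regularity $s-3/2$), the uniform bound $\|u^n\|_{C([0,T],H^s)} + \|F^n\|_{C([0,T],H^s)} \lesssim \varepsilon_0$ coming from the inductive lemma, and the Lipschitz estimates of Section~\ref{sec:contraction-estimate}.

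For the strong convergences I would use the standard splitting, for instance
\begin{equation*}
P(u^n) u^{n+1} - P(u) u = P(u^n)(u^{n+1}-u) + (P(u^n)-P(u)) u.
\end{equation*}
The first term is controlled by the uniform bound $\|P(u^n)\|_{\mathcal{L}(\Hdot{s-3/2},\Hdot{s-3})} \lesssim 1$ times $\|u^{n+1}-u\|_{C([0,T],H^{s-3/2})} \to 0$, while the second one uses Lemma~\ref{lem:stability-P-P^*} (losing $3/2$ derivatives) together with $u^n \to u$ strongly in $C([0,T],H^{s-3/2})$ and $u \in C([0,T],H^s)$ (see below). The same splitting works for $R(u^n)u^{n+1}$ via Lemma~\ref{lem:stability-R}, for $\B(u^n)F^{n+1}$ via Lemma~\ref{lem:stability-B-B^*}, and for $\beta(u^n)F^{n+1}$ via Lemma~\ref{lem:stability-beta}. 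Once these four strong convergences are secured in $C([0,T],\Hdot{s-3}(\Td))$, passing to the limit in~\eqref{eq:equation-u^n-iteration} in $\mathcal{D}'$ yields~\eqref{eq:equation-u-limit}; the boundary conditions $u(0)=u_0$ and $u(T)=0$ are preserved because the convergence is uniform in time.

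For the weak convergences in $L^2([0,T],H^s)$, the key point is that each of the four sequences is uniformly bounded in this space: indeed $\{u^n\}$ and $\{F^n\}$ are bounded in $C([0,T],H^s)$ by the inductive lemma, and the operator norms $\|P(u^n)\|_{\mathcal{L}(H^s,H^{s-3/2})}$, $\|R(u^n)\|_{\mathcal{L}(H^s,H^s)}$, $\|\B(u^n)\|_{\mathcal{L}(H^s,H^s)}$, $\|\beta(u^n)\|_{\mathcal{L}(H^s,H^{s+1/2})}$ are uniformly bounded by~\eqref{eq:estimate-various-operators}. By Banach--Alaoglu, I extract weakly convergent subsequences; the weak limits must coincide with the already identified strong limits in the lower regularity space, so the whole sequences converge weakly in $L^2([0,T],H^s)$. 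As a byproduct, $u \in L^2([0,T],\Hdot s) \cap C([0,T],\Hdot{s-3/2})$ and $F \in L^2([0,T],\Hdot s)$ by weak lower semicontinuity of the norms, which also gives the final $L^2([0,T],H^s)$ bound of order $\varepsilon_0$.

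The only mildly delicate point is the stability factor: Lemma~\ref{lem:stability-P-P^*} requires a loss of $3/2$ derivatives, so I can only obtain strong convergence for $P(u^n)u^{n+1}$ at level $H^{s-3}$ (not $H^{s-3/2}$), which is why the statement distinguishes $H^{s-3}$ from $H^{s-3/2}$ for the $P,R$ terms versus the $\B,\beta$ terms (the latter being zeroth-order perturbations of their arguments). No real obstacle is expected; the proof is a bookkeeping exercise organized by the splitting above, and the argument is essentially forced once one writes down what is available.
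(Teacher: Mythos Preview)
Your proposal is correct and follows the same route as the paper's one-line proof: the triangle-inequality splitting together with the stability lemmas of Section~\ref{sec:contraction-estimate}, the uniform $\Cls^{1,s}$ bounds from the inductive lemma, and weak compactness for the $L^2([0,T],H^s)$ claims and the final estimate. One small point to clean up: Lemmas~\ref{lem:stability-B-B^*} and~\ref{lem:stability-P-P^*} as stated bound the operator difference by $\|\udlu_1-\udlu_2\|_{H^{s}}$, not $H^{s-3/2}$, so to make your second term go to zero you should either invoke those lemmas with the regularity parameter shifted to $s-3/2$ (legitimate since $s>3+d/2$) or interpolate between the strong $H^{s-3/2}$ convergence and the uniform $H^{s}$ bound; likewise, at this stage only $u\in L^\infty([0,T],H^s)$ is available from weak-$*$ limits, not yet $C([0,T],H^s)$, but that suffices for your splitting.
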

\begin{proof}
The convergences follow directly from our construction, Corollary~\ref{cor:Cauchy-(u,F)}, the triangular inequality, and Lemma~\ref{lem:stability-P-P^*}, Lemma~\ref{lem:stability-R}, Lemma~\ref{lem:stability-B-B^*}, Lemma~\ref{lem:stability-beta} and~\eqref{eq:estimate-various-operators}.
Therefore we pass to the limit $ n\to\infty $ in~\eqref{eq:equation-u^n-iteration}, and obtain~\eqref{eq:equation-u-limit}. The last estimate comes from the weak convergence.
\end{proof}

Now we rewrite~\eqref{eq:equation-u-limit} as 
\begin{equation}
\label{eq:nonlinear-linear-rewrite}
(\pt + P(\udlu)) u = G, \quad u(0) = u_0,\ u(T) = 0,
\end{equation}
where $ \udlu = u $, and $ G = - R(u)u + \B(u) F + \beta(u) F $ with $ \|G\|_{\Ltwo([0,T],\H{s})} \lesssim \varepsilon_0 $. By Proposition~\ref{prop:app-wellposedness-paradiff-eq}, the equation~\eqref{eq:nonlinear-linear-rewrite} admits a unique solution in $ C([0,T],\Hdot{s}(\Td)) $. Therefore, $ u \in C([0,T],\Hdot{s}(\Td)) $ with the energy estimate
\begin{equation*}
\|u\|_{C([0,T],\H{s})} \lesssim \|u_0\|_{\H{s}} + \|G\|_{\Ltwo([0,T],\H{s})} \lesssim \varepsilon_0.
\end{equation*}
That is, $ u \in \Cls^{0,s}(T,C'\varepsilon_0) $ for some $ C' > 0 $. Therefore, for~$ \varepsilon_0 $ sufficiently small, by~\eqref{eq:estimate-various-operators}, $ G \in \Linf([0,T],\Hdot{s-3/2}(\Td)) $, with $ \|G\|_{\Linf([0,T],\H{s-3/2})} \lesssim \varepsilon_0 $. This implies that
\begin{equation*}
\|\pt u\|_{\Linf([0,T],\H{s-3/2})}
\lesssim \|u_0\|_{\H{s}} + \|G\|_{\Linf([0,T],\H{s-3/2})} \lesssim \varepsilon_0.
\end{equation*}
Consequently $ u \in \Cls^{1,s}(T,C''\varepsilon_0) $ for some $ C'' > 0 $. When $ \varepsilon_0 $ is sufficiently small, $ \Phi(u) $ is well defined, and as $ n \to \infty $,
\begin{equation*}
\|\Phi(u) - \Phi(u^n)\|_{\L(\Hdot{s}, C([0,T],\Hdot{s-3/2}))}
\lesssim \|u - u^n\|_{C([0,T],\H{s-3/2})} \to 0.
\end{equation*}
Therefore in the sense of distribution,
\begin{equation*}
F = \lim_{n\to\infty} F^n = \lim_{n\to\infty} \Phi(u^n) u_0 = \Phi(u) u_0 \in C([0,T],\Hdot{s}(\Td)).
\end{equation*}
This finishes the proof of Theorem~\ref{thm:main-paradiff}.

\section{Theorem~\ref{thm:main-paradiff} Implies Theorem~\ref{thm:main}}

\label{sec:theorem-paradiff-implies-theorem}

Now we deduce Theorem~\ref{thm:main} from Theorem~\ref{thm:main-paradiff}. Observe that the null controllability holds for the time reversed equation (that is, the equation obtained by the change of variable $ t \mapsto -t $) of~\eqref{eq:intro-equation-paradiff-nonlinear} as well, with the same proof. Let $ (\eta_i,\psi_i)\  (i=0,1) $ satisfy the hypotheses of Theorem~\ref{thm:main}, and let $ u_i = u(\eta_i,\psi_i) \in \Hdot{s}(\Td) $ be defined by~\eqref{eq:def-u}. Then $ \|u_i\|_{\H{s}} \lesssim \varepsilon_0 $, and for $ \varepsilon_0 $ sufficiently small, there exist $ \dot{F}^i \in C([0,T/2],\Hdot{s}(\Td)) $ by Theorem~\ref{thm:main-paradiff}, such that $ \dot{F}^0 $ sends initial data~$ u_0 $ at time $ t=0 $ to final data~$ 0 $ at time~$ t=T/2 $ by~\eqref{eq:intro-equation-paradiff-nonlinear}, while $ \dot{F}^1 $ sends initial data~$ u_1 $ at time~$ t=0 $ to final data~$ 0 $ at time~$ t=T/2 $ by the time reversed equation of~\eqref{eq:intro-equation-paradiff-nonlinear}. Moreover, the estimates~\eqref{eq:soft-estimate-u-F} are verified by~$ \dot{F}_i $ and the corresponding solutions $ u^i \in C([0,T/2],\Hdot{s}(\Td)) $. Now define $ u \in C([0,T],\Hdot{s}(\Td)) $ and $ \dot{F} \in C([0,T],\Hdot{s}(\Td)) $ by
\begin{equation*}
u(t)  = \begin{cases}
u^0(t), & t \in [0,T/2], \\
u^1(T-t), & t \in (T/2,T].
\end{cases} 
\quad
\dot{F}(t)  = \begin{cases}
\chi_{T/2}^{}(t) \dot{F}^0(t), & t \in [0,T/2], \\
\chi_{T/2}^{}(T-t) \dot{F}^1(T-t), & t \in (T/2,T].
\end{cases}
\end{equation*}
Indeed~$ \dot{F} $ is continue in time, for the truncation function vanishes near $ t = T/2 $. Then~$ u $ satisfies~\eqref{eq:intro-equation-paradiff-nonlinear} (with~$ F $ replaced by~$ \dot{F} $), and $ u(0) = u_0 $, $ u(T) = u_1 $. By Proposition~\ref{prop:reversibility-u-eta-psi}, we can find
\begin{align*}
\dot{\eta} & = \dot{\eta}(u) \in C([0,T],\Hdot{s+1/2}(\Td)) \cap \Holder{1}([0,T],\Hdot{s-1}(\Td)), \\
\dot{\psi} & = \dot{\psi}(u) \in C([0,T],\Hdot{s}(\Td)) \cap \Holder{1}([0,T],\Hdot{s-3/2}(\Td)), 
\end{align*}
such that $ u = \T{q(\dot{\eta})} (\dot{\psi}-\T{B(\dot{\eta})\dot{\psi}} \dot{\eta}) -i \T{p(\dot{\eta})} \dot{\eta}$ by~\eqref{eq:identity-u-eta-psi}, and
\begin{equation*}
\dot{\eta}(0) = \eta_0,  \quad \dot{\psi}(0) = \pi(D_x) \psi_0, \quad
\dot{\eta}(T) = \eta_1,  \quad \dot{\psi}(T) = \pi(D_x) \psi_1.
\end{equation*} 
Next we look for $ (\eta,\psi,F) $ by adding zero frequencies to $ (\dot{\eta},\dot{\psi},\dot{F}) $ in such way that $ (\eta,\psi) $ satisfies~\eqref{eq:equation-water-wave}, as well as the initial and final conditions, with the exterior pressure being
\begin{equation*}
\Pext(t,x) = \phiw(x) \Re\, F(t,x).
\end{equation*}
More precisely, we look for $ (\eta, \psi, F) $ of the following form,
\begin{equation}
\label{eq:definition-eta-psi-F}
\eta(t,x) = \dot{\eta}(t,x), 
\quad \psi(t,x) = \dot{\psi}(t,x) + \frac{1}{(2\pi)^d} \alpha(t),
\quad F(t,x) = \dot{F}(t,x) + c_0,
\end{equation}
where $ c_0 \in \R $ is a constant, and $ \alpha $ is a $ C^1 $ function of $ t $. By reversing the paralinearization process, $ \pt \dot{\eta} = G(\dot{\eta}) \dot{\psi} $, hence $ \pt \eta = G(\eta) \psi $. In order for~$ \psi $ to meet the initial and final data, $ \alpha $ should satisfy the boundary conditions,
\begin{equation*}
\alpha(0) = \intTorus \psi_0(x) \dx, \quad \alpha(T) = \intTorus \psi_1(x) \dx.
\end{equation*}
Plugging~\eqref{eq:definition-eta-psi-F} into~\eqref{eq:equation-water-wave}, and integrating it over~$ \Td $, we obtain an ordinary differential equation for $ \alpha $,
\begin{equation*}
\ddt \alpha(t) + \intTorus \Big( \frac{1}{2} |\nabla_x \dot{\psi}|^2  - \frac{1}{2} \frac{(\nabla_x \dot{\eta} \cdot \nabla_x \dot{\psi} + G(\dot{\eta})\dot{\psi})^2}{1+|\nabla_x \dot{\eta}|^2} \Big) \dx  
= \intTorus \phiw \Re (\dot{F}(t,x) + c_0) \dx
\end{equation*}
Solving this equation by integrating it over $ [0,t] $ and using the initial condition for $ \alpha(0) $,
\begin{equation}
\label{eq:solution-alpha}
\begin{split}
\alpha(t) 
& = \intTorus \psi_0 \dx + \int_0^t \intTorus \phiw \Re (\dot{F} + c_0) \dx \dt   \\
& \qquad \qquad + \int_0^t \intTorus \Big( \frac{1}{2} \frac{(\nabla_x \dot{\eta} \cdot \nabla_x \dot{\psi} + G(\dot{\eta})\dot{\psi})^2}{1+|\nabla_x \dot{\eta}|^2} - \frac{1}{2} |\nabla_x \dot{\psi}|^2 \Big) \dx \dt.
\end{split}
\end{equation}
Observe that $ c_0 \mapsto \alpha(T) $ is an affine function of $ c_0 $, with the coefficient of $ c_0 $ being $ T \times \intTorus \phiw(x) \dx \ne 0 $, so there exists a unique $ c_0 \in \R $ such that $ \alpha(T) = \intTorus \psi_1(x) \dx $.

\begin{remark}
In the solution~\eqref{eq:solution-alpha} of $ \alpha $ the integrals for $ g \eta $ and $ H(\eta) $ do not appear because they have no zero frequencies. The former is due to our assumption, while the latter is by the divergence theorem. If we have infinite depth, that is, $ \b = \infty $, then the integral over $ \Td $ of $ \frac{1}{2} \frac{(\nabla_x\dot{\eta}\cdot\nabla_x\dot{\psi} + G(\dot{\eta})\dot{\psi})^2}{1+|\nabla_x\dot{\eta}|^2} - \frac{1}{2} |\nabla_x \dot{\psi}|^2 $ also vanishes. This can be proven by a direct computation using Green's identity, and is related to a conserved quantity of~\eqref{eq:equation-water-wave} by its Hamiltonian structure. For more on this subject, see Benjamin-Olver~\cite{BO-Conservation-Laws}.
\end{remark}

\appendix

\section{Necessity of Geometric Control Condition}

\label{app:necessity-of-GCC}

We prove that the geometric control condition is necessary for the controllability of the three dimensional water wave equation (that is $ d = 2 $) with infinite depth (that is $ b = \infty $) linearized around the flat surface (that is $ \eta = 0 $). We believe that similar arguments are suffice to prove the same results in arbitrary dimensions and for finite depth, however, we do not attempt to generalize the result in this direction for the sake of simplicity.

Now this linearized equation is a fractional Schr\"{o}dinger equation $ \pt u + i |D|^{3/2} u = \T{1}\Re \phiw F $.
We will consider more generally the following control problem in $ \Ltwo(\Torus^2) $,
\begin{equation}
\label{eq:equation-schrodinger-fractional}
\pt u + i |D|^{\alpha} u = B \varphi F,
\quad 1 \le \alpha \le 2.
\end{equation}
Here $ \varphi \in \Cinf(\Torus^2) $ and~$ B $ is a bounded operator on $ \Ltwo(\Torus^2) $. If $ \alpha = 1 $, we have the (half) wave equation. When $ B = \Id $, it is exactly controllable, if and only if the geometric control condition is satisfied, see~\cite{BLR,BG-control-wave}. If $ \alpha = 2 $, we have the Schr\"{o}dinger equation. When $ B = \Id $, it is always exactly controllable (on tori) whether or not under the geometric control condition, see~\cite{Haraux,Jaffard-Control-Plaque,Anantharaman-Macia-SM-Schrodinger,BZ-Control-Schrodinger,BBZ-control-schrodinger}. We are now in the middle of the two typical cases $ 1 < \alpha < 2 $ where we show that the geometric control condition is necessary to exactly control~\eqref{eq:equation-schrodinger-fractional} on~$ \Torus^2 $.

\begin{definition}
We say that~\eqref{eq:equation-schrodinger-fractional} is exactly controllable, if there exists $ T > 0 $, such that for all $ u_0, u_1 \in \Ltwo(\Torus^2) $, there exists $ F \in C([0,T],\Ltwo(\Torus^2)) $, and a solution $ u \in C([0,T],\Ltwo(\Torus^2)) $ to~\eqref{eq:equation-schrodinger-fractional}, satisfying $ u(0) = u_0 $, $ u(T) = u_1 $.
\end{definition}

\begin{proposition}
Suppose that $ 1 < \alpha < 2 $, and~\eqref{eq:equation-schrodinger-fractional} is exactly controllable, then
\begin{equation*}
\omega \bydef \{ z = (x,y) \in \Torus^2 : \varphi(z) \ne 0\}
\end{equation*}
satisfies the geometric control condition.
\end{proposition}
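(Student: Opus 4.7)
Since $e^{it|D|^\alpha}$ defines a unitary $C_0$-group on $\Ltwo(\Torus^2)$, the Hilbert uniqueness method (cf.\ Section~\ref{sec:Ltwo-linear-control}) shows that exact controllability of~\eqref{eq:equation-schrodinger-fractional} is equivalent to the observability inequality
\begin{equation*}
\|v_0\|_{\Ltwo}^2 \le C_T \int_0^T \|\varphi B^* e^{it|D|^\alpha} v_0\|_{\Ltwo}^2 \dt, \quad \forall v_0 \in \Ltwo(\Torus^2).
\end{equation*}
The plan is to argue by contradiction: assuming $\omega$ fails GCC, I will exhibit a sequence $\{v_0^{(n)}\}_{n \ge 1}$ with $\|v_0^{(n)}\|_{\Ltwo} = 1$ for which the right-hand side above tends to zero. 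Failure of GCC produces $(x_0, \xi_0) \in \Torus^2 \times \S^{1}$ with $\{x_0 + t\xi_0 : t \in \R\} \cap \omega = \emptyset$. Since $\omega$ is nonempty open, an irrational-slope $\xi_0$ would give a dense orbit that must meet $\omega$; hence $\xi_0 = q/|q|$ for some primitive $q \in \Z^2$, and $\Gamma := \{x_0 + tq/|q| : t \in \R\}$ is a closed geodesic disjoint from $\omega$. By continuity there is $\delta > 0$ such that $\varphi \equiv 0$ on the tube $\mathcal{T}_\delta := \{z : d(z, \Gamma) < \delta\}$.

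\textbf{Wave packet along $\Gamma$.} Fix $\Psi \in \Ccinf(\mathcal{T}_\delta)$ with $\|\Psi\|_{\Ltwo} = 1$ and set $v_0^{(n)}(z) := \Psi(z)\, e^{inq \cdot z}$, so $\|v_0^{(n)}\|_{\Ltwo} = 1$. Writing the solution in Fourier modes $k = nq + \ell$,
\begin{equation*}
v_n(t, z) = e^{i(nq \cdot z + t|nq|^\alpha)} \sum_{\ell \in \Z^2} e^{it\alpha|nq|^{\alpha-2}(nq) \cdot \ell}\, e^{itr_n(\ell)}\, \widehat\Psi(\ell)\, e^{i\ell \cdot z},
\end{equation*}
with dispersion defect $r_n(\ell) := |nq+\ell|^\alpha - |nq|^\alpha - \alpha|nq|^{\alpha-2}(nq)\cdot\ell$. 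Replacing $e^{itr_n(\ell)}$ by $1$ collapses the sum to a translation of $\Psi$, so
\begin{equation*}
v_n(t, z) = e^{i(nq \cdot z + t|nq|^\alpha)}\, \Psi\bigl(z + \alpha t|nq|^{\alpha-2}nq\bigr) + E_n(t, z),
\end{equation*}
with $\|E_n(t)\|_{\Ltwo}^2 = \sum_\ell |e^{itr_n(\ell)} - 1|^2 |\widehat\Psi(\ell)|^2$ by Plancherel. A second-order Taylor expansion gives $r_n(\ell) = O_\ell(n^{\alpha-2}) \to 0$ for each fixed $\ell$ since $\alpha < 2$; combined with the uniform bound $|e^{itr_n(\ell)} - 1| \le 2$ and the $\ell^1$-summability of $|\widehat\Psi|^2$, dominated convergence yields $\int_0^T \|E_n(t)\|_{\Ltwo}^2 \dt \to 0$.

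\textbf{Contradiction.} The translation $z \mapsto z + \alpha t|nq|^{\alpha-2}nq$ is by a scalar multiple of $q$, hence preserves $\Gamma$ and therefore $\mathcal{T}_\delta$; the leading term of $v_n(t)$ thus remains supported where $\varphi = 0$, giving $\varphi \cdot v_n(t) = \varphi \cdot E_n(t)$ pointwise and $\int_0^T \|\varphi v_n(t)\|_{\Ltwo}^2 \dt \to 0$. In the water-wave application $B = \T{1} = \pi(D_x)$ acts as the identity on nonzero Fourier modes, and every Fourier mode of $v_n$ has the form $nq + \ell$ with $nq \ne 0$; hence $B^* v_n(t) = v_n(t)$ for all $n \ge 1$, and the observability inequality is violated, contradicting exact controllability.

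\textbf{Main obstacle.} The heart of the argument is the dispersion bound on $E_n$, whose smallness hinges on $\alpha < 2$: the Hessian $\nabla^2 |\xi|^\alpha$ at $\xi = nq$ has operator norm $\sim n^{\alpha-2} \to 0$, so wave packets of unit-scale position spread stay coherent over the fixed window $[0, T]$ despite traveling distance $\sim T n^{\alpha-1}$ around the torus. For $\alpha = 2$ the Hessian is constant, wave packets disperse over the whole torus, and no such coherent quasimode exists — consistent with the known controllability of Schrödinger on $\Torus^2$ without GCC. A secondary subtlety is that the step $B^* v_n = v_n$ uses the specific structure of $B = \T{1}$; treating a fully general bounded $B$ would require additional assumptions ensuring $B^*$ does not scramble the high-frequency concentration of $v_n$, but this is not needed for the water-wave application of interest.
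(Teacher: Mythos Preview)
Your argument is essentially sound and follows a different route from the paper's, but there is one genuine slip. You claim that since $\Gamma$ is disjoint from the open set $\omega = \{\varphi \ne 0\}$, continuity of $\varphi$ forces $\varphi \equiv 0$ on some tube $\mathcal{T}_\delta$. That is false: $\varphi$ vanishes \emph{on} $\Gamma$ but may be nonzero arbitrarily close to it (take $\varphi(x,y) = \sin^2 y$ with $\Gamma = \{y=0\}$). The fix is immediate --- choose $\delta$ so that $\|\varphi\|_{\Linf(\mathcal{T}_\delta)}$ is as small as you like, carry the constant through, and obtain $\limsup_n \int_0^T \|\varphi v_n\|_{\Ltwo}^2 \, dt \le T \|\varphi\|_{\Linf(\mathcal{T}_\delta)}^2$, still violating observability --- but as written the step is incorrect. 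This is exactly how the paper proceeds: it only requires $\|\varphi\|_{\Linf(\mathcal{N}_\delta)}$ to be below a fixed threshold depending on $C$ and $\|B\|$.

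Beyond this, your approach differs from the paper's in two instructive ways. First, you work directly with the observability inequality and track the time evolution of a wave packet, showing that the leading profile slides along $\Gamma$ while the dispersive error $E_n$ vanishes thanks to the $O(n^{\alpha-2})$ Hessian bound. The paper instead invokes the Burq--Zworski/Miller resolvent criterion and builds \emph{static} quasimodes $u^n = e^{in\gamma\cdot z}\chi(z)$ violating the estimate $C\|u\| \le \|(|D|^\alpha - \lambda)u\| + \|B\varphi u\|$. Second, the paper takes the amplitude $\chi$ to be \emph{invariant under translation along $\Gamma$} (constructed via the quotient $\Torus^2/\Gamma_\gamma$), so its Fourier support lies in $\gamma^\perp$; then $|n\gamma + k|^2 = n^2|\gamma|^2 + |k|^2$ with no cross term, and the resolvent defect takes the clean closed form $|k|^\alpha f(|k|/(n|\gamma|))$ with $f(t) = ((1+t^2)^{\alpha/2}-1)/t^\alpha = O(t^{2-\alpha})$. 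Your Taylor-remainder control of $r_n(\ell)$ reaches the same conclusion with a general cutoff but a less explicit computation. Finally, note that the resolvent formulation places the operator as $\|B\varphi u^n\|$ rather than $\|\varphi B^* v_n\|$; since $u^n$ is exactly supported in $\mathcal{N}_\delta$, the paper's argument covers arbitrary bounded $B$ at no extra cost --- a limitation of the observability route that you rightly flag.
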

\begin{proof}
The idea is to prove by contradiction by using the following lemma due to~Burq--Zworski~\cite{BZ-black-box}, and Miller~\cite{Miller}.
\begin{lemma}
The equation~\eqref{eq:equation-schrodinger-fractional} is exactly controllable, if and only if, for some $ C > 0 $, for all $ \lambda \in \R $, and for all $ u \in \Cinf(\Torus^2) $,
\begin{equation}
\label{eq:resolvent-estimate-app}
C \|u\|_{\Ltwo} \le \|(|D_z|^\alpha - \lambda) u\|_{\Ltwo} + \|B\varphi u\|_{\Ltwo}.
\end{equation}
\end{lemma} 
We may assume that $ \varphi \not\equiv 0 $, so that $ \omega \ne \emptyset $, for the case will be trivial otherwise. Suppose that~$ \omega $ does not satisfy the geometric control condition, we will show that~\eqref{eq:resolvent-estimate-app} does not hold for any fixed $ C > 0 $. By hypothesis, modulo some necessary translation, there exists some $ \gamma \in \R^2\backslash\{0\} $ such that the geodesic $ \Gamma_\gamma = \{\gamma t : t \in \R \} $ does not enter~$ \omega $. Now that~$ \omega \ne \emptyset $, $ \Gamma_\gamma $ cannot be dense. So we may further more assume that
$ \gamma \in  \Z^2 $.

Consider $ \Gamma_\gamma $ as a Lie group acting on $ \Torus^2 $, $ \Gamma_\gamma \ni \gamma t : z \mapsto z + \gamma t $, which defines a quotient manifold $ \kappa : \Torus^2 \to \Torus^2 / \Gamma_\gamma $, $ z \mapsto z + \Gamma_\gamma $. Choose $ \delta > 0 $ sufficiently small such that 
\begin{equation*}
\|\varphi\|_{\Linf(\mathcal{N}_\delta)} < \frac{C}{2}(1+\|B\|_{\L(\Ltwo,\Ltwo)})^{-1},
\end{equation*} 
where $ \mathcal{N}_\delta = \{z \in \Torus^2 : \mathrm{dist}(z,\Gamma_\gamma) < \delta \} $. Observe that $ \kappa(\mathcal{N}_\delta) $ is open (with respect to the canonical quotient topology) for $ \kappa^{-1} (\kappa(\mathcal{N}_\delta)) = \mathcal{N}_\delta $ is open. Fix $ 0 \ne \psi \in \Ccinf(\kappa(\mathcal{N}_\delta)) \subset \Cinf(\Torus^2/\Gamma_\gamma) $, and set $ \chi = \psi\comp\kappa \in \Ccinf(\mathcal{N}_\delta) \subset \Cinf(\Torus^2) $, $ u^n(z) = e^{in\gamma\cdot z} \chi(z) $ for $ n \in \N $.

Expending~$ \chi $ in Fourier series, we write $ \chi(z) = \sum_{k\in\Z^2} c_k e^{ik\cdot z} $, and claim that $ c_k = 0 $ unless $ k \in \gamma^\perp \bydef \{\ell \in \Z^d : \ell \cdot \gamma = 0 \}$. Indeed, if $ k \notin \gamma^\perp $, then there exists $ w = \gamma t \in \Gamma_\gamma $ such that, $ k \cdot w \not\equiv 0 $ modulo~$ 2\pi $. Observe that $ \chi(z+w) = \psi(\kappa(z+w)) = \psi(\kappa(z)) = \chi(z) $, we have
\begin{equation*}
c_k = \frac{1}{4\pi^2} \int_{\Torus^2} \chi(z+w) e^{-ik\cdot z} \d z
= e^{ik\cdot w} \frac{1}{4\pi^2} \int_{\Torus^2} \chi(z) e^{-ik\cdot z} \d z
= e^{ik\cdot w} c_k,
\end{equation*}
which implies that $ c_k = 0 $. Therefore, $ u^n(z) = \sum_{k \in \gamma^\perp} c_k e^{i(n\gamma + k) \cdot z} $, and
\begin{equation*}
|D_z|^\alpha u^n 
= \sum_{k \in \gamma^\perp} c_k |n\gamma+k|^{\alpha} e^{i(n\gamma + k) \cdot z}
= \sum_{k \in \gamma^\perp} c_k (n^2|\gamma|^2+|k|^2)^{\alpha/2} e^{i(n\gamma + k) \cdot z}.
\end{equation*}
Let $ \lambda_n = n^\alpha |\gamma|^\alpha $, then
\begin{equation*}
(|D_z|^\alpha - \lambda_n) u^n
= \sum_{0 \ne k \in \gamma^\perp} c_k |k|^\alpha f\big(\frac{|k|}{n|\gamma|}\big) e^{i(n\gamma + k) \cdot z},
\end{equation*}
where $ f(t) = \frac{(1+t^2)^{\alpha/2}-1}{t^\alpha}. $ By an integration by part, for any $ N \ge 1 $, $ |k|^\alpha |c_k| \lesssim |k|^{-N}  $. Observe that~$ f $ is continuous on $ ]0,+\infty[ $, with $ \lim_{t\to+\infty} f(t) = 1 $, and $ f(t) =  O(t^{2-\alpha}) $ as $ t \to 0^+ $. We have therefore the estimate,
\begin{equation*}
f\big(\frac{|k|}{n|\gamma|}\big) \lesssim
\begin{cases}
\frac{1}{n^{(2-\alpha)/2}}, & 0 < |k| \le \sqrt{n}; \\
1, & |k| > \sqrt{n}.
\end{cases}
\end{equation*}
To conclude, we show that the sequence $ (u^n,\lambda_n) $ violates~\eqref{eq:resolvent-estimate-app}. Indeed $ \|u^n\|_{\Ltwo} = \|\chi\|_{\Ltwo} $, and $ \|B \varphi u^n\|_{\Ltwo} \le \|B\|_{\L(\Ltwo,\Ltwo)} \|\varphi\|_{\Linf(\mathcal{N}_\delta)} \|\chi\|_{\Ltwo} \le \frac{C}{2} \|\chi\|_{\Ltwo} $, and for any $ N \ge 2 $,
\begin{align*}
\|(|D_z|^\alpha - \lambda_n) u^n\|_{\Ltwo}^2
& \lesssim \frac{1}{n^{(2-\alpha)/2}} \sum_{|k| \le \sqrt{n}} \frac{1}{|k|^{N}} + \sum_{|k| > \sqrt{n}} \frac{1}{|k|^{N}} = o(1), \quad
\mathrm{as} \quad n \to \infty.
\end{align*}
\end{proof}

\section{Paradifferential Calculus}
\label{sec:paradifferential-calculus}

For results of this section, we refer to~\cite{Metivier-Paradifferential-Calculus,ABZ-non-local}.

\subsection{Paradifferential Operators}

For $ \infty \ge \rho \ge 0 $, denote by $ \Holder{\rho}(\Td) $ the space of H\"{o}lderian functions of regularity~$ \rho $ on~$ \Td $.

\begin{definition}
For $ m \in \R$, $ \rho \ge 0 $, let $ \Symbol{m}{\rho}(\Td) $ denote the space of locally bounded functions $ a(x,\xi) $ on $ \Td_x \times (\Rdxi \backslash 0) $, which are $ \Cinf $ with respect to $ \xi \in \Rd \backslash 0 $, such that for all $ \alpha \in \N^d $ and $ \xi \ne 0 $, the function $ x \mapsto \pxi^\alpha a(x,\xi) $ belongs to $ \Holder{\rho}(\Td) $, and that for some constant $ C_\alpha $,
\begin{equation*}
\|\pxi^\alpha a(\cdot,\xi)\|_{\Holder{\rho}} \le C_\alpha \langle\xi\rangle^{m - |\alpha|}, 
\quad \forall |\xi| \ge \demi,
\end{equation*}
where $ \jp{\xi} = (1 + |\xi|^2)^{1/2} $.
\end{definition}
Define on $ \Symbol{m}{\rho}(\Td) $ the semi-norms,
\begin{equation*}
\Mdot{m}{\rho}{n}{a} = \sup_{|\alpha| \le n} \sup_{|\xi| \ge 1/2} \|\langle\xi\rangle^{|\alpha| - m} \pxi^{\alpha} a(\cdot,\xi)\|_{\Holder{\rho}}.
\end{equation*}

\begin{definition}
The function $ \chi = \chi(\theta,\eta) $ is called an admissible cutoff function, if 
\begin{enumerate}[nosep]
\item $ \chi \in \Cinf(\Rd_\theta \times \Rd_\eta) $ is an even function, that is, $ \chi(-\theta,-\eta) = \chi(\theta,\eta) $;
\item it satisfies the following spectral condition: for some $ 0 < \epsilon_1 < \epsilon_2 < 1/2 $,
\begin{equation}
\label{eq:Spectral-Condition-for-Admissible-CutOff}
\begin{cases}
\chi(\theta,\eta) = 1, & |\theta| \le \epsilon_1 \langle\eta\rangle, \\
\chi(\theta,\eta) = 0, & |\theta| \ge \epsilon_2 \langle\eta\rangle;
\end{cases}
\end{equation}
\item for all $ (\alpha,\beta) \in \N^d\times\N^d $, and some $ C_{\alpha\beta} > 0 $,
\begin{equation*}
\big|\partial_\theta^\alpha\partial_\eta^\beta \chi(\theta,\eta)\big| 
\le C_{\alpha\beta} \langle\eta\rangle^{-|\alpha|-|\beta|}.
\end{equation*}
\end{enumerate}
\end{definition}

Let~$ \chi $ be an admissible cutoff function, and let $ \pi \in \Cinf(\Rd) $ be an even function such that $ 0 \le \pi \le 1 $, $ \pi(\xi) = 0 $ for $ |\xi| \le 1/4 $, and $ \pi(\xi) = 1 $ for $ |\xi| \ge 3/4 $. Now given a symbol $ a \in \Symbol{m}{\rho}(\Td) $, the para\-differential operator $ \T{a} $ is formally defined by
\begin{equation}
\label{eq:definition-paradifferential-operator}
\widehat{\T{a}{u}}(\xi) = (2\pi)^{-d} \sum_{\eta \in \Z^d} \chi(\xi - \eta,\eta) \hat{a}(\xi - \eta,\eta) \pi(\eta) \hat{u}(\eta),
\end{equation}
where $ \hat{a}(\theta,\eta) = \big(\Fourier_{x \to \theta} a\big)(\theta,\eta) = \intTorus e^{-i x \cdot \theta} a(x,\eta) \dx $. Alternatively, set 
\begin{equation*}
a^\chi(\cdot,\xi) = \chi(D_x,\xi) a(\cdot,\xi),
\end{equation*}
then by definition, $ \T{a}{u} = a^\chi(x,D_x) \pi(D_x) u $.

\begin{theorem}
\label{thm:Operator-Norm-Estimate-Paradiff}
Let $ a \in \Symbol{m}{0} $, then $ \T{a} $ is of order $ m $ such that for all $ s \in \R $, $ \T{a} $ defines a bounded operator from $ \H{s+m}(\Td) $ to $ \Hdot{s}(\Td) $, such that
\begin{equation}
\|\T{a}\|_{\L(\H{s + m}, \Hdot{s})} \lesssim \M{m}{0}{d/2+1}{a}.
\end{equation}
In particular, $ \T{a} = \T{a} \pi(D_x) = \pi(D_x) \T{a} $.
\end{theorem}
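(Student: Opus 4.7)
The plan is to reduce the claim to a frequency-localized bound via Littlewood-Paley decomposition, exploiting the spectral support condition~\eqref{eq:Spectral-Condition-for-Admissible-CutOff} on the cutoff $\chi$. First I would dispatch the "in particular" statement: the identity $T_a = T_a \pi(D_x)$ is immediate from the definition~\eqref{eq:definition-paradifferential-operator} since $\pi(\eta)$ appears explicitly next to $\hat{u}(\eta)$; the identity $T_a = \pi(D_x) T_a$ follows by observing that whenever $\pi(\eta) \ne 0$ on $\Z^d$ we have $|\eta| \ge 1$, and then the spectral condition $|\xi - \eta| \le \epsilon_2 \langle\eta\rangle$ with $\epsilon_2 < 1/2$ forces $|\xi| \ge (1 - \epsilon_2\sqrt{2}) > 0$, so $\xi \in \Z^d$ lies in the region where $\pi \equiv 1$.

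For the main bound, I would dyadically decompose $u = \sum_{j \ge 0} \Delta_j u$ with $\Delta_j$ localizing to $|\eta| \sim 2^j$ (keeping only $j \ge j_0$ thanks to the first step). The same spectral condition then shows that $\widehat{T_a \Delta_j u}$ is supported in an annulus $|\xi| \sim 2^j$, so by almost-orthogonality
\[
\|T_a u\|_{\dot H^s}^2 \;\sim\; \sum_{j \ge j_0} 2^{2js}\, \|T_a \Delta_j u\|_{L^2}^2.
\]
The whole theorem therefore reduces to the frequency-localized estimate
\begin{equation*}
\|T_a \Delta_j u\|_{L^2} \;\lesssim\; \Mdot{m}{0}{d/2+1}{a}\, 2^{jm}\, \|\Delta_j u\|_{L^2},
\end{equation*}
which I would summed geometrically against the norm $\|u\|_{H^{s+m}}^2 \sim \sum_j 2^{2j(s+m)}\|\Delta_j u\|_{L^2}^2$.

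The hard part is this frequency-localized bound, and it is where the particular order $d/2+1$ of $\xi$-derivatives enters. My preferred route is to write $T_a \Delta_j u(x) = \int K_j(x,y)\,\Delta_j u(y)\,dy$ with kernel $K_j(x,y) = (2\pi)^{-d}\sum_\eta e^{i(x-y)\cdot \eta} a^\chi(x,\eta)\tilde\psi_j(\eta)$, where $a^\chi = \chi(D_x,\xi) a(\cdot,\xi)$ and $\tilde\psi_j$ is a cutoff to $|\eta|\sim 2^j$ equal to $1$ on the spectrum of $\Delta_j$. A Bernstein/convolution argument (using that $\chi(\theta,\xi)$ has $L^1$-bounded convolution kernel uniformly in $\xi$) gives $\|\partial_\xi^\alpha a^\chi(\cdot,\xi)\|_{L^\infty} \lesssim \Mdot{m}{0}{|\alpha|}{a}\langle\xi\rangle^{m-|\alpha|}$. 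Integration by parts in $\eta$ then yields the pointwise bound
\[
|K_j(x,y)| \;\lesssim\; \Mdot{m}{0}{N}{a}\, 2^{j(m+d)}\,(1+2^j|x-y|)^{-N}
\]
for each $N$ up to the available $\xi$-regularity.

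The challenge is to squeeze this into the critical count $N = d/2+1$ rather than $N > d$ (which a naive Schur test would demand). I would close the gap by a TT$^*$/Plancherel argument: using the representation of $K_j(x,\cdot)$ as a trigonometric polynomial of degree $\sim 2^j$, Parseval in $y$ controls $\|K_j(x,\cdot)\|_{L^2}$ in terms of $2^{jd/2}$ times $\sup_\eta |a^\chi(x,\eta)\tilde\psi_j(\eta)|$, and a dyadic splitting of the $y$-integral combined with the weighted decay above needs only $N = d/2+1$ to produce a summable series; equivalently one can invoke a Calderón--Vaillancourt argument after the natural semiclassical rescaling $\eta = 2^j\zeta$, which recasts $T_a\Delta_j$ (up to the factor $2^{jm}$) as a uniformly bounded family of order-zero semiclassical operators on $L^2$. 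This semiclassical Calderón--Vaillancourt step is the main technical obstacle; everything else is bookkeeping.
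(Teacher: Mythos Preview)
The paper's own proof is minimal: it cites M\'etivier's monograph for the operator norm estimate and only verifies explicitly that $T_a u$ has vanishing zero Fourier mode, by computing
\[
\widehat{T_a u}(0) = (2\pi)^{-d}\sum_{\eta\ne 0}\chi(-\eta,\eta)\,\hat a(-\eta,\eta)\,\hat u(\eta)=0
\]
since $\chi(-\eta,\eta)=0$ for $\eta\ne 0$ by the spectral condition. Your argument for the ``in particular'' part is equivalent: you show that any $\xi$ contributing to $\widehat{T_a u}$ satisfies $\xi\ne 0$, which on $\Z^d$ forces $\pi(\xi)=1$.

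Your sketch of the boundedness estimate goes well beyond what the paper itself writes and is essentially an outline of the cited result. The Littlewood--Paley reduction via the spectral support of $a^\chi$, the almost-orthogonality of the pieces, and the identification of the frequency-localized $L^2$ bound as the core step are all correct and standard; the route you propose (kernel bounds plus a semiclassical Calder\'on--Vaillancourt rescaling $\eta=2^j\zeta$) is one of the usual ways to reach the sharp count $d/2+1$ of $\xi$-derivatives. So your proposal is sound, but you are supplying the argument that the paper deliberately outsources to the literature; for the purposes of this appendix the paper is content to quote the estimate and only check the torus-specific point that the output lands in $\dot H^s$.
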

\begin{proof}
For the estimate we refer to~\cite{Metivier-Paradifferential-Calculus}. It remains to show that for $ u \in \H{s+m}(\Td) $, $ \T{a} u $ has no zero frequency, or equivalently, $ \widehat{\T{a}u}(0) = 0 $.
Indeed, by definition
\begin{equation*}
\widehat{\T{a}{u}}(0) = (2\pi)^{-d} \sum_{0 \ne \eta \in \Z^d} \chi(- \eta,\eta) \hat{a}(- \eta,\eta) \hat{u}(\eta) = 0,
\end{equation*}
since $ \chi(- \eta,\eta) = 0 $ for all $ \eta \ne 0 $ by~\eqref{eq:Spectral-Condition-for-Admissible-CutOff}.
\end{proof}

\begin{proposition}
\label{prop:T_1=pi}
For all $ s \in \R $,
$ \T{1} = \pi(D_x) = \Id_{\Hdot{s}} $.
\end{proposition}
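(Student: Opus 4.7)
The proof reduces to unwinding the defining formula \eqref{eq:definition-paradifferential-operator} in the special case $a \equiv 1$ and comparing with the definition \eqref{eq:Hdot-definition} of $\Hdot{s}(\Td)$.

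First I would compute the spatial Fourier transform of the constant symbol $a(x,\xi) \equiv 1$ on the torus: for every $\eta \in \Rd_\eta$ one has $\hat{a}(\theta,\eta) = (2\pi)^d \mathbf{1}_{\theta=0}$ as a measure on $\Z^d_\theta$. Plugging this into \eqref{eq:definition-paradifferential-operator}, only the term $\eta = \xi$ survives the Kronecker symbol, yielding
\begin{equation*}
\widehat{\T{1}u}(\xi) = \chi(0,\xi)\,\pi(\xi)\,\hat{u}(\xi), \qquad \xi \in \Z^d.
\end{equation*}

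Next I would invoke the spectral condition \eqref{eq:Spectral-Condition-for-Admissible-CutOff}: since $|0| = 0 \le \epsilon_1 \jp{\xi}$ for every $\xi$, we have $\chi(0,\xi) = 1$ identically in $\xi$. Consequently $\widehat{\T{1}u}(\xi) = \pi(\xi)\hat{u}(\xi)$ for all $\xi \in \Z^d$, i.e.\ $\T{1} = \pi(D_x)$ as Fourier multipliers on $\mathcal{D}'(\Td)$.

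Finally, the identity $\pi(D_x) = \Id_{\Hdot{s}}$ is immediate from the definition \eqref{eq:Hdot-definition} of $\Hdot{s}(\Td)$ as the subspace of $\H{s}(\Td)$ on which $\pi(D_x)$ acts as the identity. There is no real obstacle here; the only mild point of care is to keep track that the Fourier-transform convention used in \eqref{eq:definition-paradifferential-operator} is the periodic one, so that $\hat{a}(\cdot,\eta)$ for $a\equiv 1$ really is a Kronecker mass at the origin of $\Z^d$ (with the normalization producing the factor $(2\pi)^d$ that cancels the $(2\pi)^{-d}$ in front).
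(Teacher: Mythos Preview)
Your proof is correct and follows essentially the same approach as the paper: both compute $\hat{a}(\theta,\eta)$ for $a\equiv 1$ as a Kronecker mass at $\theta=0$, substitute into the defining formula \eqref{eq:definition-paradifferential-operator} so that only the diagonal term $\eta=\xi$ survives, and then invoke $\chi(0,\xi)=1$ from \eqref{eq:Spectral-Condition-for-Admissible-CutOff}. You are slightly more explicit than the paper about the $(2\pi)^d$ normalization and about why $\pi(D_x)=\Id_{\Hdot{s}}$, but the argument is the same.
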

\begin{proof}
Observe that $ \hat{a}(\theta,\eta) = 1_{\theta = 0}(\theta,\eta) $ if $ a \equiv 1 $, therefore by definition,
\begin{equation*}
\widehat{\T{1} u}(\xi) 
= \sum_{\eta \in \Z^d} \chi(\xi-\eta,\eta) 1_{\xi=\eta} \pi(\eta) \hat{u}(\eta) \\
= \chi(0,\xi) \pi(\xi) \hat{u}(\xi)
= \pi(\xi) \hat{u}(\xi),
\end{equation*}
since $ \chi(0,\xi) = 1 $ for all $ \xi \in \Z^d $.
\end{proof}

\begin{lemma}
\label{lem:Estimate-pseudo-para-difference}
Let $ a \in \Symbol{m}{\rho}(\Td) $, and $ \alpha \in \N^d $, with $ |\alpha| \le \rho $, then $ \px^\alpha (a - a^{\chi}) \in \Symbol{m - \rho + |\alpha|}{0}(\Td) $ with estimates that for all $ n \in \N $, $ \M{m - \rho + |\alpha|}{0}{n}{\px^\alpha (a - a^{\chi})} \lesssim \M{m}{\rho}{n}{a} $.
\end{lemma}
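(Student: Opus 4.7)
The plan is to reduce the estimate to a Littlewood--Paley/Bernstein argument, exploiting the spectral support of $1-\chi(\cdot,\xi)$ in the $\theta$ variable (dual to $x$).

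First, I would rewrite $a-a^\chi=(1-\chi(D_x,\xi))a(\cdot,\xi)$ and observe, using the spectral condition~\eqref{eq:Spectral-Condition-for-Admissible-CutOff}, that the Fourier multiplier $\theta\mapsto 1-\chi(\theta,\xi)$ vanishes for $|\theta|\le\epsilon_1\jp\xi$. Consequently, for each fixed $\xi$, the function $x\mapsto(a-a^\chi)(x,\xi)$ is spectrally supported in $\{|\theta|\gtrsim\jp\xi\}$. Combined with the admissible-cutoff bounds $|\partial_\theta^{\alpha'}\partial_\eta^{\beta'}\chi(\theta,\eta)|\lesssim\jp\eta^{-|\alpha'|-|\beta'|}$, a standard Mikhlin/Young-type argument shows that, for each $\xi$, convolution against the inverse Fourier transform of $1-\chi(\cdot,\xi)$ acts on $L^\infty(\Td)$ with operator norm bounded uniformly in $\xi$.

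Next, I would introduce a standard Littlewood--Paley decomposition $\mathrm{Id}=\sum_{j\ge-1}\Delta_j$ on $\Td$ and use the Zygmund-type characterization $\sup_j 2^{j\rho}\|\Delta_j f\|_{L^\infty}\lesssim\|f\|_{\Holder{\rho}}$ (together with Bernstein's inequality $\|\partial_x^\alpha\Delta_j f\|_{L^\infty}\lesssim 2^{j|\alpha|}\|\Delta_j f\|_{L^\infty}$). Applying this to $f=a(\cdot,\xi)$, whose $\Holder\rho$ norm is bounded by $\Mdot{m}{\rho}{0}{a}\jp\xi^{m}$, and retaining only the dyadic blocks with $2^j\gtrsim\jp\xi$ (which is all that survives after applying $1-\chi(D_x,\xi)$, up to a harmless uniformly bounded $L^\infty$ multiplier), I obtain
\begin{equation*}
\|\partial_x^\alpha(a-a^\chi)(\cdot,\xi)\|_{L^\infty}
\lesssim \sum_{2^j\gtrsim\jp\xi} 2^{j|\alpha|}\cdot 2^{-j\rho}\|a(\cdot,\xi)\|_{\Holder\rho}
\lesssim \jp\xi^{m-\rho+|\alpha|}\,\Mdot{m}{\rho}{0}{a},
\end{equation*}
where the geometric series converges for $|\alpha|<\rho$; the boundary case $|\alpha|=\rho$ is handled directly from $\|\partial_x^\alpha a(\cdot,\xi)\|_{L^\infty}\lesssim\jp\xi^m$ and the uniform $L^\infty$-boundedness of $1-\chi(D_x,\xi)$.

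To handle the $\xi$-derivatives required for $\Mdot{m-\rho+|\alpha|}{0}{n}{\cdot}$, I apply $\partial_\xi^\beta$ and expand by Leibniz:
\begin{equation*}
\partial_\xi^\beta(a-a^\chi)(\cdot,\xi)
=\sum_{\beta_1+\beta_2=\beta}\binom{\beta}{\beta_1}\bigl[\partial_\xi^{\beta_1}(1-\chi)\bigr](D_x,\xi)\,\partial_\xi^{\beta_2}a(\cdot,\xi).
\end{equation*}
The factor $\partial_\xi^{\beta_1}\chi(\theta,\xi)$ still satisfies $|\partial_\theta^{\alpha'}\partial_\xi^{\beta_1}\chi|\lesssim\jp\xi^{-|\alpha'|-|\beta_1|}$ with the same $\theta$-support property, so it defines a Fourier multiplier that gains $\jp\xi^{-|\beta_1|}$ in operator norm on $L^\infty$; meanwhile $\partial_\xi^{\beta_2}a\in\Symbol{m-|\beta_2|}{\rho}$ by hypothesis. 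Repeating the Littlewood--Paley argument on each term and summing yields the required estimate with constants controlled by $\Mdot{m}{\rho}{n}{a}$.

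The main obstacle is the technical lemma underpinning step one, namely that the family of Fourier multipliers $\{1-\chi(\cdot,\xi)\}_\xi$ (and their $\xi$-derivatives, suitably rescaled) is uniformly bounded on $L^\infty(\Td)$; this is where the full set of admissible-cutoff estimates on $\chi$ is used, via a rescaling $\theta=\jp\xi\,\theta'$ that turns $\chi(\cdot,\xi)$ into a Schwartz-type symbol whose inverse Fourier transform has uniformly bounded $L^1$ norm. The rest is bookkeeping with geometric series and Leibniz expansions.
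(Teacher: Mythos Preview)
Your proposal is correct and follows the standard Littlewood--Paley/Bernstein argument for this lemma. The paper itself does not give a proof but simply refers to M\'etivier's book~\cite{Metivier-Paradifferential-Calculus}, where the argument is essentially the one you outline: exploit the spectral localization of $1-\chi(D_x,\xi)$ to frequencies $|\theta|\gtrsim\jp\xi$, use the dyadic decay $\|\Delta_j a(\cdot,\xi)\|_{L^\infty}\lesssim 2^{-j\rho}\|a(\cdot,\xi)\|_{\Holder{\rho}}$ together with Bernstein, and sum the resulting geometric series; the $\xi$-derivatives are then handled by Leibniz exactly as you describe.
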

\begin{proof}
See~\cite{Metivier-Paradifferential-Calculus}.
\end{proof}

\begin{proposition}
\label{prop:unparadifferentialization}
Let $ a \in \Symbol{m}{\rho}(\Td) $ with $ m \ge 0 $ and $ \rho > m + 1 + d/2 $, then
\begin{equation*}
\| \op(a\pi) - \T{a} \|_{\L(\Ltwo,\Ltwo)} \lesssim \M{m}{\rho}{d/2+1}{a}.
\end{equation*}
\end{proposition}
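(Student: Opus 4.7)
The plan is to show that $\op(a\pi) - \T{a}$ is in fact Hilbert--Schmidt on $L^2(\Td)$, which will yield the operator norm bound directly, with room to spare. Comparing the definitions of $\op(a\pi)$ and $\T{a}$ via~\eqref{eq:definition-paradifferential-operator}, one sees that $\op(a\pi) - \T{a} = \op(b\,\pi)$ with $b := a - a^\chi$, and that its Fourier-side matrix on $\Z^d \times \Z^d$ is
\begin{equation*}
K(\xi,\eta) = (2\pi)^{-d}\,\bigl(1-\chi(\xi-\eta,\eta)\bigr)\,\hat a(\xi-\eta,\eta)\,\pi(\eta).
\end{equation*}
By Lemma~\ref{lem:Estimate-pseudo-para-difference} with $\alpha = 0$, the symbol $b$ belongs to $\Symbol{m-\rho}{0}$ with $\M{m-\rho}{0}{0}{b} \lesssim \M{m}{\rho}{0}{a}$; equivalently, $\|b(\cdot,\eta)\|_{L^\infty(\Td)} \lesssim \M{m}{\rho}{0}{a}\,\langle\eta\rangle^{m-\rho}$ for $|\eta| \ge 1/2$.

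Next, I apply Cauchy--Schwarz in $\eta$ to $(\op(b\pi) u)^\wedge(\xi) = \sum_\eta K(\xi,\eta)\,\hat u(\eta)$, and then sum in $\xi$. This reduces the $L^2$-operator norm to the Hilbert--Schmidt norm of $K$:
\begin{equation*}
\|\op(a\pi)-\T{a}\|_{\L(L^2,L^2)}^2 \;\le\; \sum_{\xi,\eta\in\Z^d}|K(\xi,\eta)|^2.
\end{equation*}
Changing the outer variable to $\theta = \xi - \eta$ and applying Parseval in $\theta$,
\begin{equation*}
\sum_{\xi,\eta}|K(\xi,\eta)|^2 \;\lesssim\; \sum_{\eta \ne 0}\pi(\eta)^2\,\|b(\cdot,\eta)\|_{L^2(\Td)}^2 \;\lesssim\; \M{m}{\rho}{0}{a}^2 \sum_{\eta\ne 0}\langle\eta\rangle^{2(m-\rho)}.
\end{equation*}
The series converges as soon as $2(\rho - m) > d$, which is comfortably ensured by the hypothesis $\rho > m + 1 + d/2$; whence
\begin{equation*}
\|\op(a\pi)-\T{a}\|_{\L(L^2,L^2)} \;\lesssim\; \M{m}{\rho}{0}{a} \;\le\; \M{m}{\rho}{d/2+1}{a}.
\end{equation*}

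There is no real obstacle: the only conceptual ingredient is the observation that the spectral cutoff $1-\chi$ forces $b = a - a^\chi$ to have order $m - \rho$, and under the stated hypothesis this order is strictly less than $-d/2-1$, well below the Hilbert--Schmidt threshold $-d/2$. In fact the argument proves a slightly stronger statement, with the seminorm $\M{m}{\rho}{0}{a}$ (no $\xi$-derivatives) on the right-hand side and the weaker requirement $\rho > m + d/2$; the formulation in the proposition is merely convenient for uniformity with the other estimates of the paradifferential calculus appendix.
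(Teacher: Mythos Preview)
Your proof is correct, and it takes a genuinely different route from the paper's. The paper writes $\op(a\pi)-\T{a}=\op((a-a^\chi)\pi)$ and then appeals to the $L^2$-boundedness statement underlying Theorem~\ref{thm:Operator-Norm-Estimate-Paradiff}, after checking via Lemma~\ref{lem:Estimate-pseudo-para-difference} that $a-a^\chi\in\Symbol{0}{d/2+1}$ with the seminorm $\M{0}{d/2+1}{d/2+1}{a-a^\chi}$ controlled by $\M{m}{\rho}{d/2+1}{a}$; this is why the paper needs $d/2+1$ many $x$-derivatives of $a-a^\chi$ to remain of nonpositive order, i.e.\ $\rho>m+1+d/2$. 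You instead bypass the boundedness theorem entirely by computing the Hilbert--Schmidt norm of the Fourier-side matrix, using only the $\alpha=0$ case of Lemma~\ref{lem:Estimate-pseudo-para-difference} and Parseval. Your argument is more elementary, self-contained, and as you correctly observe, yields the sharper bound $\M{m}{\rho}{0}{a}$ under the weaker hypothesis $\rho>m+d/2$. The paper's approach, on the other hand, stays within the Calder\'on--Vaillancourt framework that drives the rest of the appendix, which keeps the exposition uniform but costs a little in sharpness.
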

\begin{proof}
By the Calder\'{o}n--Vaillancourt Theorem, it suffices to show that 
\begin{equation*}
\M{0}{d/2+1}{d/2+1}{a-a^\chi} \lesssim \M{m}{\rho}{d/2+1}{a}. 
\end{equation*}
Indeed, for $ |\alpha| \le d/2 + 1 $, by the previous lemma,
\begin{equation*}
\M{0}{0}{d/2+1}{\px^\alpha (a-a^\chi)}
\lesssim \M{m-\rho+|\alpha|}{0}{d/2+1}{\px^\alpha (a-a^\chi)}
\lesssim \M{m}{\rho}{d/2+1}{a}.
\end{equation*}
\end{proof}

\begin{lemma}
\label{lem:real-part-preserving-property}
Let $ a \in S^m(\Td) $ be in the H\"{o}rmander class. If it is either a real valued even function of~$ \xi $, or a pure imaginary valued odd function of~$ \xi $, then for $ u \in \Cinf(\Td,\C) $, 
\begin{equation*}
\op(a) \Re\, u = \Re\, \op(a) u, \qquad \T{a} \Re\, u = \Re\, \T{a} u.
\end{equation*}
\end{lemma}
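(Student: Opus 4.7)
The key tool is the identity, already used in the derivation of the system \eqref{eq:equation-pseudo-diff-sys}, that for any symbol $b$,
\begin{equation*}
\op(b) u = \overline{\op(\tilde b)\bar u}, \qquad \tilde b(x,\xi) = \overline{b(x,-\xi)}.
\end{equation*}
My strategy is to show that, under either hypothesis on $a$, the symbol is invariant under $b \mapsto \tilde b$, so that the above identity becomes the reality statement $\overline{\op(a) u} = \op(a)\bar u$, from which the commutation with $\Re = \tfrac{1}{2}(\mathrm{Id} + \mathfrak{c})$ (where $\mathfrak{c} u = \bar u$) is immediate by $\R$-linearity.

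First I would handle the pseudodifferential case. In the case $a$ real and even in~$\xi$, $\tilde a(x,\xi) = \overline{a(x,-\xi)} = a(x,-\xi) = a(x,\xi)$; in the case $a$ pure imaginary and odd in~$\xi$, $\tilde a(x,\xi) = \overline{a(x,-\xi)} = -a(x,-\xi) = a(x,\xi)$. In both cases $\tilde a = a$, so $\overline{\op(a) u} = \op(a)\bar u$, and therefore
\begin{equation*}
\op(a)\Re u \;=\; \tfrac{1}{2}\bigl(\op(a) u + \op(a)\bar u\bigr) \;=\; \tfrac{1}{2}\bigl(\op(a) u + \overline{\op(a) u}\bigr) \;=\; \Re\,\op(a) u.
\end{equation*}

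For the paradifferential case, I would use the factorization $\T{a} = a^{\chi}(x,D_x)\,\pi(D_x)$ where $a^{\chi}(\cdot,\xi) = \chi(D_x,\xi) a(\cdot,\xi)$, and show that each factor commutes with complex conjugation. For $\pi(D_x)$: since $\pi$ is real and even, a direct Fourier-series computation gives $\overline{\pi(D_x) u} = \pi(D_x)\bar u$. For $a^{\chi}$: I would check on the partial Fourier side that $a^{\chi}$ inherits the reality and the parity in $\xi$ of $a$. Indeed $\widehat{a^{\chi}}(\theta,\eta) = \chi(\theta,\eta)\hat a(\theta,\eta)$; using that $\chi$ is real (it is a standard smooth cutoff taking values in $[0,1]$) and even in the sense $\chi(-\theta,-\eta)=\chi(\theta,\eta)$, together with the reality/parity of $a$ translated into the symmetry relations $\overline{\hat a(\theta,\eta)} = \hat a(-\theta,\eta)$ (reality) and $\hat a(\theta,-\eta) = \pm\hat a(\theta,\eta)$ (parity), one verifies $\widetilde{a^{\chi}} = a^{\chi}$. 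Then the pseudodifferential argument above, applied to $a^{\chi}$, yields $a^{\chi}(x,D_x)\Re = \Re\, a^{\chi}(x,D_x)$, and composing with $\pi(D_x)$ gives $\T{a}\Re u = \Re\,\T{a} u$.

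There is no real obstacle here: the whole proof is a bookkeeping exercise about conjugation and parity on the Fourier side. The only point requiring a small amount of care is the verification that the paradifferential truncation $a \mapsto a^{\chi}$ preserves the two hypotheses, and this reduces to the evenness of $\chi$ in $(\theta,\eta)$ (which is part of the definition of an admissible cutoff) together with its reality.
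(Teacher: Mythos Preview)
Your proposal is correct and follows essentially the same route as the paper: both reduce to the conjugation identity $\op(b)u=\overline{\op(\tilde b)\bar u}$, verify $\tilde a=a$ under either hypothesis, and then check that the paradifferential truncation preserves this symmetry using that $\chi$ is real and jointly even in $(\theta,\eta)$ (together with $\pi$ real and even). The only cosmetic differences are that the paper phrases step one via ``$\op(a)$ sends real functions to real functions'' and the decomposition $u=\Re u+i\Im u$, whereas you use $\Re=\tfrac12(\mathrm{Id}+\mathfrak c)$; and in the paradifferential step you verify $\widetilde{a^\chi}=a^\chi$ directly on the Fourier side, which is arguably cleaner than the paper's phrasing.
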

\begin{proof}
We first prove the case of pseudo\-differential operators $ \op(a) $. Let $ \tilde{a}(x,\xi) = \overline{a(x,-\xi)} $, then by our hypothesis $ a = \tilde{a} $. Therefore, for any real valued function~$ u $,
\begin{equation*}
\op(a) u = \overline{\op(\tilde{a}) \bar{u}} = \overline{\op(a) u},
\end{equation*}
which implies that $ \op(a) u $ is real valued. Then for a complex function~$ u $,
\begin{equation*}
\op(a) u = \op(a) (\Re\, u + i \Im\, u) = \op(a) \Re\, u + i \op(a) \Im\, u.
\end{equation*}
We conclude that $ \Re\, \op(a) u = \op(a) \Re\, u $, $ \Im\,\op(a) u = \op(a) \Im\, u $.

As for the para\-differential case, $ \T{a} = \op(a^\chi\pi) $ with $ (a^\chi \pi)(x,\xi) = \chi(D_x,\xi) a(\cdot,\xi) $. Now that~$ \chi $ is even, by the pseudo\-differential case, $ \chi(D_x,\xi) $ commute with $ \Re $. This implies that $ a^\chi\pi $ remains to be a real symbol, and an even function of~$ \xi $, or a pure imaginary symbol and an odd function of~$ \xi $. So the case of para\-differential operators follows.
\end{proof}

\subsection{Symbolic Calculus}

\begin{theorem}
\label{thm:Symbolic-Calculus-Paradiff-Composition}
Let $ a \in \Symbol{m}{\rho}(\Td) $, $ b \in \Symbol{m'}{\rho}(\Td) $, with $ m, m' \in \R $, $ 0 \le \rho <\infty $. Set
\begin{equation*}
a \sharp b = \sum_{|\alpha| < \rho} \frac{1}{\alpha!} \pxi^\alpha a D_x^\alpha b.
\end{equation*}
Then $ \T{a} \T{b} - \T{a \sharp b} $ is of order $ m + m' - \rho $, and
\begin{equation*}
\| \T{a} \T{b} - \T{a \sharp b}{} \|_{ \L(\H{s + m + m' - \rho}, \H{s})} 
\lesssim \M{m}{0}{d/2+1+\rho}{a} \M{m'}{\rho}{d/2+1}{b}
+ \M{m'}{0}{d/2+1+\rho}{b} \M{m}{\rho}{d/2+1}{a}.
\end{equation*}
\end{theorem}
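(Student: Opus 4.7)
The plan is to reduce the composition of two paradifferential operators to a standard pseudodifferential composition, then isolate the symbol $a\sharp b$ inside the leading terms and absorb everything else into a remainder of the claimed order.

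First, I would use the identities $\T{a}=\op(a^\chi\pi)$ and $\T{b}=\op(b^\chi\pi)$, noting that because of the spectral condition~\eqref{eq:Spectral-Condition-for-Admissible-CutOff} on $\chi$, the symbols $a^\chi$ and $b^\chi$ are smooth in $x$ with controlled growth: derivatives $\partial_x^\beta a^\chi(\cdot,\xi)$ are bounded by $\Mdot{m}{0}{n}{a}\langle\xi\rangle^{m+|\beta|}$, with Bernstein-type improvement when we only spend a fraction of the available Hölder regularity. This puts $a^\chi\pi$ and $b^\chi\pi$ in a class to which the standard pseudodifferential composition formula applies. Writing $N=\lfloor\rho\rfloor+1$, I would then expand
\begin{equation*}
\op(a^\chi\pi)\op(b^\chi\pi) \;=\; \sum_{|\alpha|<N}\tfrac{1}{\alpha!}\op\!\big(\partial_\xi^\alpha(a^\chi\pi)\,D_x^\alpha(b^\chi\pi)\big)+\mathcal{R}_N,
\end{equation*}
where $\mathcal{R}_N$ is given by a Taylor integral in $\xi$.

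Second, I would compare the truncation to the quantization of $a\sharp b$. For each $|\alpha|<\rho$, split $a^\chi=a-(a-a^\chi)$ and $b^\chi=b-(b-b^\chi)$; all cross terms and the double-difference term contain at least one factor of $a-a^\chi$ or $b-b^\chi$, which by Lemma~\ref{lem:Estimate-pseudo-para-difference} yields a symbol of order $m+m'-\rho+|\alpha|-|\alpha|=m+m'-\rho$ (or better), controlled by $\M{m}{\rho}{d/2+1+|\alpha|}{a}\M{m'}{0}{d/2+1}{b}$ or its transpose. The remaining main term is $(\partial_\xi^\alpha a\cdot D_x^\alpha b)^\chi\pi$ modulo analogous errors, and summing over $|\alpha|<\rho$ reconstructs $(a\sharp b)^\chi\pi$, i.e.\ $\T{a\sharp b}$.

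Third, I would handle the Taylor remainder $\mathcal{R}_N$. The integral formula involves $\partial_\xi^\alpha a^\chi$ with $|\alpha|=N>\rho$ and $D_x^\alpha b^\chi$; although $b$ has only $\rho$ units of Hölder regularity, the spectral cutoff $\chi$ lets $D_x^\alpha b^\chi$ gain the missing $(N-\rho)$ powers of $\langle\xi\rangle$ at the price of lowering the order, so that $\mathcal{R}_N$ is of order $m+m'-\rho$ with the required mixed-seminorm bound. Combining this with Theorem~\ref{thm:Operator-Norm-Estimate-Paradiff} applied to each remainder symbol gives the stated operator norm estimate.

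The main obstacle I anticipate is the non-integer $\rho$ case, where the Taylor expansion stops at an integer order $N>\rho$ and one must use a sharper (Hölder rather than $C^N$) control on $\partial_x^\alpha b^\chi$ to avoid losing $N-\rho$ orders; this requires invoking the Bernstein-type smoothing from the $\chi$-cutoff with the correct fractional exponent, so that the residual symbol lies in $\Symbol{m+m'-\rho}{0}$ with the asymmetric seminorm bound stated in the theorem.
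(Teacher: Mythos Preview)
The paper does not actually prove this theorem: it is stated in Appendix~\ref{sec:paradifferential-calculus} as a standard result, with the blanket attribution ``For results of this section, we refer to~\cite{Metivier-Paradifferential-Calculus,ABZ-non-local}.'' So there is no proof in the paper to compare against.

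Your proposal is the standard route taken in those references (notably M\'etivier's monograph): write $\T{a}=\op(a^\chi\pi)$, expand the pseudodifferential composition to order $N>\rho$, and use the spectral localization of $\chi$ to trade the missing $N-\rho$ derivatives on $b^\chi$ for powers of $\langle\xi\rangle$. The sketch is sound and your identification of the fractional-$\rho$ step as the delicate point is correct; that is exactly where the Bernstein-type gain from the cutoff is needed to land in $\Symbol{m+m'-\rho}{0}$ rather than $\Symbol{m+m'-N}{0}$.
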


\begin{theorem}
\label{thm:Symbolic-Calculus-Paradiff-Adjoint}
Let $ a \in \Symbol{m}{\rho}(\Td) $, with $ m \in \R $ and $ 0 \le \rho < \infty $. Set
\begin{equation*}
a^* = \sum_{|\alpha|<\rho} \frac{1}{\alpha!} \pxi^\alpha D_x^\alpha \bar{a}.
\end{equation*}
Denote by $ \T{a}^* $ the formal adjoint of $ \T{a} $, then $ \T{a}^* - \T{a^*} $ is of order $ m - \rho $, and
\begin{equation*}
\| \T{a}^* - \T{a^*} \|_{\L(\H{s+m-\rho},\H{s})}
\lesssim \M{m}{\rho}{d/2+1+\rho}{a}.
\end{equation*}
\end{theorem}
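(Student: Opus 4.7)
The plan is to realize both $\T{a}^*$ and $\T{a^*}$ as honest (smooth-symbol) pseudodifferential operators via the regularized symbols $a^\chi$ and $(a^*)^\chi$, and then compare their full symbols through a Taylor expansion in the space variable, using the H\"older regularity of~$a$ to estimate the remainder.

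First, by definition $\T{a} = \op(a^\chi\pi)$, and since $a^\chi(\cdot,\xi)\in\Cinf(\Td)$ for every fixed~$\xi$, the operator $\T{a}$ is a bona fide pseudodifferential operator with smooth symbol (although not in the H\"ormander class). Its formal adjoint with respect to $(\cdot,\cdot)_{\Ltwo}$ is $\T{a}^* = \op(b)$ where, by the standard adjoint formula for pseudodifferential operators,
\begin{equation*}
b(x,\xi) = (2\pi)^{-d}\iint e^{-iy\cdot\eta}\,\overline{a^\chi(x+y,\xi+\eta)\pi(\xi+\eta)}\,\drv y\,\drv\eta,
\end{equation*}
understood as an oscillatory integral. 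Similarly $\T{a^*} = \op((a^*)^\chi \pi)$.

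Next I would perform a Taylor expansion of $\overline{a^\chi(x+y,\xi+\eta)}$ in the $y$ variable to order $N = \lfloor\rho\rfloor$. Integrating by parts in $\eta$ (which converts each factor $y^\alpha$ into $D_\eta^\alpha$ acting on the rest) and using the identity $(2\pi)^{-d}\iint e^{-iy\cdot\eta}f(x+y,\xi+\eta)\drv y\drv\eta = f(x,\xi)$ for sufficiently nice $f$, the terms with $|\alpha|<\rho$ reproduce exactly $\sum_{|\alpha|<\rho}\frac{1}{\alpha!}\pxi^\alpha D_x^\alpha \overline{a^\chi}(x,\xi)\pi(\xi)$, which is essentially $(a^*)^\chi\pi$ modulo the discrepancy between applying $\chi$ before or after taking derivatives (this discrepancy is absorbed into a symbol of arbitrarily low order since $a - a^\chi\in\Symbol{m-\rho}{0}$ by Lemma~\ref{lem:Estimate-pseudo-para-difference}, so each of its $x$-derivatives lowers the order by one, ensuring order $m-\rho$ overall). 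The key technical step is bounding the Taylor remainder, which is a non-integer-order derivative and must be controlled by the H\"older seminorm $\M{m}{\rho}{d/2+1+\rho}{a}$.

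The main obstacle will be the careful control of the remainder: the $N$th order integral remainder in the Taylor expansion produces a non-smooth integrand (only $\Holder{\rho-N}$ in~$x$), so one cannot freely integrate by parts as many times as desired. The standard way around this is to decompose $a^\chi$ dyadically in the $\xi$ variable: on the support of each dyadic piece, the spectral condition~\eqref{eq:Spectral-Condition-for-Admissible-CutOff} forces the $x$-frequencies to be bounded by~$\epsilon_2\jp{\xi}$, which allows one to trade lost $x$-smoothness for $\xi$-localization (Bernstein's inequality), converting the $\Holder{\rho}$ control of~$a$ in~$x$ into an $L^\infty$-decay with an explicit power of~$\jp{\xi}$. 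Summing the dyadic contributions then yields an operator bounded from $\H{s+m-\rho}$ to $\H{s}$, with operator norm controlled by $\M{m}{\rho}{d/2+1+\rho}{a}$, which is the desired estimate. Finally, to pass from $\op$-level smooth-symbol estimates to bounds on $\T{\cdot}$ one applies Theorem~\ref{thm:Operator-Norm-Estimate-Paradiff} together with Proposition~\ref{prop:unparadifferentialization}, or argues directly on the $\op$ representations throughout and invokes the mapping properties once at the end.
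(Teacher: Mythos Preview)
The paper does not give a proof of this theorem: the entire Appendix~\ref{sec:paradifferential-calculus} opens with ``For results of this section, we refer to~\cite{Metivier-Paradifferential-Calculus,ABZ-non-local}'', and Theorem~\ref{thm:Symbolic-Calculus-Paradiff-Adjoint} is simply quoted from that literature. So there is no paper-proof to compare against.

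Your outline is essentially the standard argument one finds in M\'etivier's book: write $\T{a}=\op(a^\chi\pi)$, apply the classical oscillatory-integral formula for the adjoint, Taylor expand in the $y$-variable to order $\lfloor\rho\rfloor$, and exploit the spectral localization~\eqref{eq:Spectral-Condition-for-Admissible-CutOff} together with the $\Holder{\rho}$ regularity of $a(\cdot,\xi)$ to bound the remainder. The plan is sound. Two small cautions: (i) the appeal to Proposition~\ref{prop:unparadifferentialization} at the very end is not generally available, since that proposition requires $\rho>m+1+d/2$; you should stay with the direct $\op$-level argument throughout, as you yourself suggest as an alternative. (ii) The ``discrepancy between applying $\chi$ before or after taking derivatives'' is not quite handled by Lemma~\ref{lem:Estimate-pseudo-para-difference} alone; one also needs that $\pxi^\alpha D_x^\alpha(a^\chi)-(\pxi^\alpha D_x^\alpha a)^\chi$ is of order $m-\rho$, which follows from the same circle of ideas (commuting $D_x^\alpha$ with $\chi(D_x,\xi)$ is harmless, and the difference of cut-offs in $\xi$ is treated via the spectral condition) but deserves a separate line.
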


\subsection{Paraproducts and Paralinearization}

\begin{theorem}
\label{thm:Estimate-Paraproduct-Paralinearization}
Let $ a \in \H{\alpha}(\Td) $ and $ b \in \H{\beta}(\Td) $ with $ \alpha>d/2 $ , $\beta>d/2 $. Then
\begin{enumerate}[nosep]
\item $ T_aT_b-T_{ab} $ is of order $ -\rho $ with $ \rho = \min\{\alpha,\beta\} - d/2 $, that is, for $ s \in \R $,
\begin{equation}
\label{eq:composition-for-paraproduct}
\|\T{a}\T{b}-\T{ab}\|_{\L(\H{s-\rho},\H{s})}
\lesssim \|a\|_{\H{\alpha}} \|b\|_{\H{\beta}};
\end{equation}
\item $ \T{a}^* - \T{\bar{a}} $ is of order $ -\rho $ with $ \rho = \alpha - d/2 $, that is, for $ s \in \R $,
\begin{equation*}
\| T_a^* - T_{\bar{a}} \|_{\L(\H{s-\rho},\H{s})}
\lesssim \| a \|_{\H{\alpha}}; 
\end{equation*}
\item Define the bilinear form,
\begin{equation}
\label{eq:remainder-of-paraproduct}
R(a,b) = ab - \T{a}{b} - \T{b}{a},
\end{equation}
then $ R(a,b) \in \H{\alpha + \beta - d/2}(\Td) $,
\begin{equation*}
\| R(a,b) \|_{\H{\alpha+\beta-d/2}} 
\lesssim \| a \|_{\H{\alpha}} \| b \|_{\H{\beta}}; 
\end{equation*}
\item Let $ F \in \Cinf $ with $ F(0) = 0 $, then
$ F(a) = \T{F'(a)}{a} + R_F(a) $ with
\begin{equation}
\label{eq:paralinearization}
\| R_F(a) \|_{\H{2\alpha-d/2}} \lesssim C(\|a\|_{\H{\alpha}})\|a\|_{\H{\alpha}}.
\end{equation}
In particular,
\begin{equation}
\label{eq:paradiff-estimate-composition-by-function}
\|F(a)\|_{\H{\alpha}} \le C(\|a\|_{\H{\alpha}}) \|a\|_{\H{\alpha}}.
\end{equation}
\end{enumerate}
\end{theorem}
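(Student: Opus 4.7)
The plan is to derive the four statements from tools already developed in the appendix, combined with a classical Littlewood–Paley/Bony dyadic analysis. For (1) and (2), I would first observe that any $a\in\H{\alpha}(\Td)$ with $\alpha>d/2$ may be regarded as a $\xi$-independent symbol in $\Symbol{0}{\rho}$ with $\rho=\alpha-d/2$, via Sobolev embedding (with the borderline integer case handled by the direct Littlewood–Paley characterisation of H\"older spaces). Since the $\xi$-derivatives of a $\xi$-independent symbol vanish, the composition formula $a\sharp b$ truncates to $ab$ and the adjoint formula $a^{*}$ truncates to $\bar a$. Thus Theorem~\ref{thm:Symbolic-Calculus-Paradiff-Composition} applied to the pair $(a,b)\in\Symbol{0}{\rho_a}\times\Symbol{0}{\rho_b}$, $\rho_{\bullet}=\bullet-d/2$, yields (1) with $\rho=\min\{\alpha,\beta\}-d/2$, while Theorem~\ref{thm:Symbolic-Calculus-Paradiff-Adjoint} applied to $a$ gives (2). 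The required semi-norms $\M{0}{\rho}{n}{a}$ are dominated by $\|a\|_{\H{\alpha}}$ through Sobolev embedding.

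For (3), I would perform the standard Bony decomposition with a Littlewood–Paley partition $(\Delta_j)_{j\ge -1}$ and $S_j=\sum_{k<j}\Delta_k$. Write $ab=\sum_{j,k}\Delta_j a\,\Delta_k b$ and split into three pieces according to whether $j<k-N_0$, $k<j-N_0$, or $|j-k|\le N_0$, where $N_0$ is chosen so that the first two pieces coincide, up to smoothing terms of arbitrarily high order, with $\T{a}b$ and $\T{b}a$ respectively (this is where one must match the cut-off $\chi$ of the paraproduct definition to the dyadic scheme). The remainder $R(a,b)$ then collects the near-diagonal blocks. On each such block, $\Delta_j a\,\Delta_k b$ has Fourier spectrum in $\{|\xi|\lesssim 2^{\max(j,k)}\}$, and by Bernstein
\begin{equation*}
\|\Delta_j a\,\Delta_k b\|_{\Ltwo}\lesssim \|\Delta_j a\|_{\Ltwo}\|\Delta_k b\|_{\Linf}\lesssim 2^{-j\alpha}\|a\|_{\H{\alpha}}\cdot 2^{-k(\beta-d/2)}\|b\|_{\H{\beta}}.
\end{equation*}
Since $|j-k|\le N_0$, multiplying by $2^{(j+k)(\alpha+\beta-d/2)/2}$ and squaring-and-summing gives, by a standard square-function argument, the bound $\|R(a,b)\|_{\H{\alpha+\beta-d/2}}\lesssim \|a\|_{\H{\alpha}}\|b\|_{\H{\beta}}$.

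For (4), I would use Bony's paralinearization. Because $F(0)=0$ and $S_{-1}a=0$, the telescoping identity gives
\begin{equation*}
F(a)=\sum_{j\ge -1}\bigl(F(S_{j+1}a)-F(S_j a)\bigr)=\sum_{j}m_j\Delta_j a,\qquad m_j=\int_0^1 F'(S_j a+t\Delta_j a)\,dt.
\end{equation*}
Write $m_j=S_j F'(a)+(m_j-S_j F'(a))$. The first contribution reconstitutes the paraproduct $\T{F'(a)}a$ modulo admissible remainders by the same dyadic matching as in (3). For the second, splitting $m_j-S_j F'(a)=(m_j-F'(S_j a))+(F'(S_j a)-S_j F'(a))$ and using the Lipschitz regularity of $F'$ (through a composition estimate analogous to~\eqref{eq:paradiff-estimate-composition-by-function}) together with the dyadic bounds $\|a-S_j a\|_{\Linf}\lesssim 2^{-j(\alpha-d/2)}\|a\|_{\H{\alpha}}$, one obtains $\|m_j-S_j F'(a)\|_{\Linf}\lesssim 2^{-j(\alpha-d/2)}C(\|a\|_{\H{\alpha}})$. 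Combining with $\|\Delta_j a\|_{\Ltwo}\lesssim 2^{-j\alpha}\|a\|_{\H{\alpha}}$ and the Fourier localisation of $(m_j-S_j F'(a))\Delta_j a$, the square-function argument yields $\|R_F(a)\|_{\H{2\alpha-d/2}}\lesssim C(\|a\|_{\H{\alpha}})\|a\|_{\H{\alpha}}$.

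The main obstacle is the frequency bookkeeping in (3): one must verify that the off-diagonal dyadic sums actually coincide with $\T{a}b$ and $\T{b}a$ up to smoothing operators, which requires a careful compatibility between $\chi$ and the Littlewood–Paley partition. Once this is settled, the proofs of (1)–(2) are essentially specialisations of the symbolic calculus, and the proof of (4) follows from (3) by the Bony telescoping argument and elementary composition estimates; these classical computations are carried out in detail in~\cite{Metivier-Paradifferential-Calculus}, to which I would defer for the most tedious verifications.
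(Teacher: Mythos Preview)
The paper does not actually prove this theorem: it is stated in the appendix as background material, with the blanket attribution at the start of Section~\ref{sec:paradifferential-calculus} that ``For results of this section, we refer to~\cite{Metivier-Paradifferential-Calculus,ABZ-non-local}.'' Your sketch is a faithful outline of the standard proofs found in those references---reducing (1)--(2) to the symbolic calculus via the embedding $\H{\alpha}\hookrightarrow\Holder{\alpha-d/2}$, handling (3) by the Bony near-diagonal remainder estimate, and (4) by the telescoping paralinearization---so there is no discrepancy to discuss beyond the fact that you have supplied an argument where the paper simply cites one.
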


\section{Some Linear Equations}
\label{app:linear-equations}

\begin{proposition}
\label{prop:app-eq-pseudo-well-posedness}
Let $ \udlu \in \Cls^{0,s}(T,\varepsilon_0) $ for~$ s $ sufficiently large, $ T > 0 $, and $ \varepsilon_0 $ sufficiently small. Let $ Q = Q(\udlu) $ be defined by~\eqref{eq:definition-Q}, and suppose 
\begin{equation*}
R \in \Linf([0,T],\L(\Ldottwo,\Ldottwo)) \cap C([0,T],\L(\Ldottwo,\Hdot{-\mu}))
\end{equation*}
for some $ \mu \ge 0 $, $ f \in \Lone([0,T],\Ldottwo(\Td)) $. Then the Cauchy problem
\begin{equation}
\label{eq:app-eq-pseudo}
(\pt + i Q + R) u = f, \quad u(0) = u_0 \in \Ldottwo(\Td),
\end{equation}
admits a unique solution $ u \in C([0,T],\Ldottwo(\Td)) $, which moreover satisfies the estimate
\begin{equation*}
\|u\|_{C([0,T],\H{s})} \lesssim \|u_0\|_{\Ltwo} + \|f\|_{\Lone([0,T],\Ltwo)}
\end{equation*}
\end{proposition}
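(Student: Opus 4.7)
The strategy is the classical one for linear evolution equations with a symmetric principal part: (i) establish an $L^2$ a priori estimate exploiting the fact that $iQ$ is skew-adjoint modulo a bounded operator on $\Ldottwo$, (ii) obtain existence by a Friedrichs regularization, and (iii) recover strong continuity in time from weak continuity plus the energy equality.

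The first step is the key a priori bound. The point is that $Q - Q^\ast = O(1)_{\L(\Ldottwo,\Ldottwo)}$ uniformly in $t\in[0,T]$. Indeed, by the symbolic calculus for pseudodifferential operators, the adjoint of $\op(\gamma^{(3/2)}\pi)$ differs from itself by an operator of order $1/2$ whose symbol at leading order is $\frac{1}{i}\pxi \cdot D_x \gamma^{(3/2)}$; by \eqref{eq:gamma-almost-selfadjoint} this is exactly cancelled by $\op(\gamma^{(1/2)}\pi) - \op(\gamma^{(1/2)}\pi)^\ast$ modulo lower order. For the advection term $V \cdot D_x$ one has $(V\cdot D_x) - (V\cdot D_x)^\ast = \frac{1}{i}(\nabla_x\cdot V) = O(1)_{\L(\Ldottwo,\Ldottwo)}$, and the remaining pieces $g\op(r^{-1}\pi)\Re$ and $\op(r\cdot m_b)\,i\Im$ are of order $\le 0$ hence $L^2$-bounded. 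Pairing $\pt u + iQu + Ru = f$ with $u$ in $\Re(\cdot,\cdot)_{\Ltwo}$ and using $\Re(iQu,u)_{\Ltwo} = -\tfrac{1}{2i}((Q-Q^\ast)u,u)_{\Ltwo}$, one obtains
\begin{equation*}
\ddt \|u(t)\|_{\Ltwo}^2 \le C \|u(t)\|_{\Ltwo}^2 + 2\|f(t)\|_{\Ltwo}\|u(t)\|_{\Ltwo},
\end{equation*}
with $C$ depending only on the quantities fixed in the statement. Gronwall's lemma then yields the desired bound. Uniqueness follows by applying this estimate to the difference of two solutions.

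For existence I would use a Friedrichs regularization. Pick a smooth even cutoff $\chi\in\Ccinf(\R^d)$ equal to $1$ near the origin and set $J_\epsilon = \chi(\epsilon D_x)$. The regularized Cauchy problem
\begin{equation*}
(\pt + i J_\epsilon Q J_\epsilon + J_\epsilon R J_\epsilon) u_\epsilon = J_\epsilon f, \quad u_\epsilon(0) = J_\epsilon u_0,
\end{equation*}
is, for each $\epsilon>0$, an ODE in the Banach space $\Ldottwo(\Td)$ with generator bounded in $\L(\Ldottwo,\Ldottwo)$ and depending continuously on $t$; standard ODE theory furnishes a unique $u_\epsilon \in C^1([0,T],\Ldottwo(\Td))$. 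Crucially, the conjugation by $J_\epsilon$ preserves the almost-self-adjointness (since $J_\epsilon^\ast = J_\epsilon$), so the same energy computation yields $\|u_\epsilon\|_{C([0,T],\Ltwo)} \lesssim \|u_0\|_{\Ltwo} + \|f\|_{\Lone([0,T],\Ltwo)}$ uniformly in $\epsilon$. Extract a weak-$\ast$ limit $u\in\Linf([0,T],\Ldottwo(\Td))$; the uniform $L^2$ bound together with the convergence $J_\epsilon \to \Id$ on every fixed Sobolev space lets one pass to the limit in the equation in the sense of distributions on $]0,T[{}\times\Td$.

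For time regularity, the equation rewritten as $\pt u = f - iQu - Ru$ shows $\pt u \in \Linf([0,T],\Hdot{-3/2}) + \Lone([0,T],\Ldottwo)$, hence $u\in C([0,T],\Hdot{-3/2})$ with $u(0)=u_0$. Combined with the uniform $L^2$ bound, this gives weak continuity $t\mapsto u(t)$ in $\Ldottwo$. To upgrade to strong continuity (and consequently the full estimate with $C([0,T],\Ltwo)$), I would note that the energy identity obtained by pairing the equation with $u$ holds in the sense of distributions on $[0,T]$ and yields
\begin{equation*}
\|u(t)\|_{\Ltwo}^2 = \|u_0\|_{\Ltwo}^2 + \int_0^t \Big( -i\big((Q-Q^\ast)u,u\big)_{\Ltwo} - 2\Re(Ru,u)_{\Ltwo} + 2\Re(f,u)_{\Ltwo} \Big)\ds,
\end{equation*}
so $t\mapsto \|u(t)\|_{\Ltwo}$ is continuous. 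Weak continuity plus continuity of the norm gives strong continuity in $\Ldottwo$.

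The main technical obstacle is the justification of the energy identity at the limit level, since the brackets $(Qu,u)_{\Ltwo}$ are only defined through the $\Hdot{-3/2}$--$\Hdot{3/2}$ duality. I would handle this by first establishing the identity for the smooth approximants $u_\epsilon$ (where all terms are classical) and passing to the limit using the uniform bounds and the strong convergence in $C([0,T],\Hdot{-3/2})$ inherited from the equation; the hypothesis $R \in C([0,T],\L(\Ldottwo,\Hdot{-\mu}))$ ensures that $Ru_\epsilon \to Ru$ in $\Lone([0,T],\Hdot{-\max\{\mu,3/2\}})$, which is enough to conclude.
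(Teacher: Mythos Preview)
Your proof is correct and follows the same overall architecture as the paper (regularize, derive a uniform $L^2$ energy estimate from almost-self-adjointness of $Q$, pass to the limit, upgrade weak to strong time-continuity via continuity of the norm). The one genuine technical difference is in the regularization step.

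The paper regularizes one-sidedly, replacing $iQ+R$ by $iQJ_\varepsilon + RJ_\varepsilon$, and for this the energy computation forces the commutator $[Q,J_\varepsilon]$ to be bounded on $\Ldottwo$ uniformly in $\varepsilon$. A plain Fourier cutoff $\chi(\varepsilon D_x)$ does \emph{not} achieve this, since $Q$ has variable coefficients of order $3/2$; the paper therefore takes a symbol-adapted mollifier $J_\varepsilon=\pi(D_x)\op(j_\varepsilon\pi)$ with $j_\varepsilon=\exp(-\varepsilon\gamma^{(3/2)})+\tfrac12\pxi\cdot D_x\exp(-\varepsilon\gamma^{(3/2)})$, so that the Poisson bracket with $\gamma^{(3/2)}$ vanishes and the needed commutator bound holds. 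Your two-sided sandwich $J_\epsilon Q J_\epsilon$ with $J_\epsilon=\chi(\epsilon D_x)$ sidesteps this entirely: since $J_\epsilon^\ast=J_\epsilon$, one has $(J_\epsilon Q J_\epsilon)-(J_\epsilon Q J_\epsilon)^\ast = J_\epsilon(Q-Q^\ast)J_\epsilon$, which is uniformly bounded once $Q-Q^\ast$ is. This is arguably more elementary and avoids designing a special mollifier; the trade-off is that you must also mollify the data and forcing, and then check that $J_\epsilon u_0\to u_0$ and $J_\epsilon f\to f$ in the right norms when passing to the limit (straightforward here). Both routes are standard; yours is the classical Friedrichs-type argument, the paper's is the para/pseudo-adapted variant one finds e.g.\ in M\'etivier's notes.
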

\begin{proof}
Let $ j_\varepsilon = \exp(-\varepsilon \gamma^{(3/2)}) + \frac{1}{2} \pxi \cdot D_x \exp(-\varepsilon \gamma^{(3/2)}) $, and set $ J_\varepsilon = \pi(D_x) \op(j_\varepsilon \pi) $. Consider the regularized Cauchy problem
\begin{equation*}
(\pt + i Q J_\varepsilon + R J_\varepsilon) u^\varepsilon = f, \quad u^\varepsilon(0) = u_0,
\end{equation*}
which admits a unique solution $ u^\varepsilon \in C([0,T_\varepsilon],\Ldottwo(\Td)) $ for some $ T_\varepsilon > 0 $, by Cauchy-Lipschitz theorem, as $ Q J_\varepsilon, R J_\varepsilon \in C([0,T],\L(\Ldottwo,\Ldottwo)) $. Following a routine method, see for example~\cite{Metivier-Paradifferential-Calculus}, to prove the existence of a solution, on the whole interval $ [0,T] $, it suffice to prove a uniform \textit{a priori} bound for $ u^\varepsilon $ and its time derivative in the energy space over the time interval $ [0,T] $.

By the choice of the symbol $ j_\varepsilon $, we have
\begin{equation*}
\|[Q,J_\varepsilon]\|_{\L(\Ldottwo,\Ldottwo)} \lesssim 1, \quad
\|Q-Q^*\|_{\L(\Ldottwo,\Ldottwo)} \lesssim 1, \quad
\|J_\varepsilon - J_\varepsilon^*\|_{\L(\Ldottwo,\Hdot{2})} \lesssim 1,
\end{equation*}
from which the a priori estimate, that for almost every $ t \in [0,T] $,
\begin{equation}
\label{eq:app-pseudo-derivative-estimate-pre}
\ddt \|u^\varepsilon(t)\|_{\Ltwo}^2 \lesssim \|u^\varepsilon(t)\|_{\Ltwo}^2 + (u^\varepsilon(t),f(t))_{\Ltwo}.
\end{equation}
Moreover, by Gronwall's inequality, we have
\begin{equation}
\label{eq:app-pseudo-Gronwall}
\|u^\varepsilon\|_{C([0,T],\Ltwo)} \lesssim \|u_0\|_{\Ltwo} + \|f\|_{\Lone([0,T],\Ltwo)}.
\end{equation}
Plugging it into~\eqref{eq:app-pseudo-derivative-estimate-pre}, we obtain
\begin{equation}
\label{eq:app-pseudo-derivative-estimate}
\big\| \ddt \|u^\varepsilon(t)\|_{\Ltwo}^2 \big\|_{\Lone([0,T])}
\lesssim \|u_0\|_{\Ltwo}^2 + \|f\|_{\Lone([0,T],\Ltwo)}^2.
\end{equation}
The energy estimate~\eqref{eq:app-pseudo-Gronwall}, the hypothesis on~$ R $, and Arzela-Ascoli's theorem imply the existence of a weak solution 
\begin{equation*}
u \in \Ltwo([0,T],\Ldottwo(\Td)) \cap C([0,T],\Hdot{-\mu}(\Td))
\end{equation*}
to~\eqref{eq:app-eq-pseudo}. Then~\eqref{eq:app-pseudo-derivative-estimate} and Arzela-Ascoli's theorem again implies that $ t \mapsto \|u\|_{\Ltwo}^2 $ is continuous in time. Therefore $ u \in C([0,T],\Ldottwo(\Td)) $. The energy estimate for~$ u $ follows by Gronwall's inequality as in~\eqref{eq:app-pseudo-Gronwall}, and the uniqueness follows from the energy estimate.
\end{proof}

Using the same method, we have the following corollary.
\begin{corollary}
For $ \vec{w}_0 \in \Ldottwo(\Td) $, there exists a unique solution to the Cauchy problem~\eqref{eq:equation-pseudo-diff-sys}, $ \vec{w} \in C([0,T],\Ldottwo(\Td)) $ with $ \vec{w}(0) = \vec{w}_0 $.
\end{corollary}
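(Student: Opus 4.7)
The plan is to prove this corollary by closely mirroring the proof of Proposition~\ref{prop:app-eq-pseudo-well-posedness}, since the system~\eqref{eq:equation-pseudo-diff-sys} is $ \pt \vec{w} + i \A \vec{w} = 0 $ with $ \A = \pi(D_x) \op(A\pi) $, and $ A $ has a structure entirely analogous to the symbol of~$ Q $: its principal part $ \gamma^{(3/2)} \begin{pmatrix} 1 & \phantom{-}0 \\ 0 & -1 \end{pmatrix} $ is real and diagonal, the $ V\cdot\xi $ contribution is scalar, and the remaining matrix blocks have orders at most~$ -1/2 $ (and the $ m_{\b} $ block is smoothing). The only genuinely new feature is that the equation is vector-valued, but this is harmless.

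First I would verify the analog of the self-adjointness-modulo-bounded estimate
\begin{equation*}
\A - \A^* = O(1)_{\L(\Ldottwo,\Ldottwo)},
\end{equation*}
where the adjoint is taken with respect to $ (\cdot,\cdot)_{\Ltwo\times\Ltwo} $. For the diagonal principal part this follows from identity~\eqref{eq:gamma-almost-selfadjoint} exactly as in the proof of Lemma~\ref{lem:estimate-R_Q}; the $ V\cdot\xi $ contribution is essentially self-adjoint because $ V $ is real; and the off-diagonal blocks are of order $ \le -1/2 $, hence $ \L(\Ldottwo,\Ldottwo) $-bounded together with their adjoints with plenty of room to spare.

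Next I would run the Friedrichs regularization argument. Define $ J_\varepsilon = \pi(D_x)\op(j_\varepsilon\pi) $ acting componentwise on pairs, with $ j_\varepsilon $ as in the proof of Proposition~\ref{prop:app-eq-pseudo-well-posedness}. Consider
\begin{equation*}
(\pt + i\A J_\varepsilon) \vec{w}^\varepsilon = 0, \quad \vec{w}^\varepsilon(0)=\vec{w}_0.
\end{equation*}
Since $ \A J_\varepsilon \in C([0,T],\L(\Ldottwo\times\Ldottwo,\Ldottwo\times\Ldottwo)) $, the Cauchy-Lipschitz theorem in $ \Ldottwo\times\Ldottwo $ produces a unique global solution $ \vec{w}^\varepsilon \in C([0,T],\Ldottwo\times\Ldottwo) $. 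The uniform symbolic estimates $ \|[\A,J_\varepsilon]\|_{\L(\Ldottwo,\Ldottwo)}\lesssim 1 $ and $ \|J_\varepsilon-J_\varepsilon^*\|_{\L(\Ldottwo,\Hdot{2})}\lesssim 1 $, combined with $ \A-\A^* = O(1)_{\L(\Ldottwo,\Ldottwo)} $, yield
\begin{equation*}
\ddt \|\vec{w}^\varepsilon(t)\|_{\Ltwo}^2 \lesssim \|\vec{w}^\varepsilon(t)\|_{\Ltwo}^2,
\end{equation*}
whence by Gronwall $ \|\vec{w}^\varepsilon\|_{C([0,T],\Ltwo)} \lesssim \|\vec{w}_0\|_{\Ltwo} $ uniformly in $ \varepsilon $, and $ \ddt\|\vec{w}^\varepsilon\|_{\Ltwo}^2 $ is bounded in $ \Lone([0,T]) $ uniformly in~$ \varepsilon $.

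Finally, I would pass to the limit exactly as in Proposition~\ref{prop:app-eq-pseudo-well-posedness}: weak-$* $ compactness produces a limit $ \vec{w}\in \Linf([0,T],\Ldottwo\times\Ldottwo) $ satisfying the equation in $ \D'([0,T]\times\Td) $; since $ \A\vec{w}\in C([0,T],\Hdot{-3/2}\times\Hdot{-3/2}) $ we get $ \vec{w} \in C([0,T],\Hdot{-3/2}\times\Hdot{-3/2}) $, and the uniform $ \Lone $-bound on $ \ddt \|\vec{w}^\varepsilon\|_{\Ltwo}^2 $ together with Arzela-Ascoli upgrades this to $ \vec{w}\in C([0,T],\Ldottwo\times\Ldottwo) $. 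Uniqueness follows from the energy estimate applied to the difference of two putative solutions. I do not expect any genuine obstacle: the only point requiring vigilance is keeping track that the Friedrichs mollifier is applied componentwise so that the scalar proof transfers verbatim, and the verification that the skew-symmetric part of $ \A $ is bounded on $ \Ldottwo $ despite the matrix structure of the lower-order blocks.
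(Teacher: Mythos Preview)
Your proposal is correct and follows exactly the approach the paper intends: the paper's own proof is the single sentence ``Using the same method, we have the following corollary,'' and you have faithfully unpacked that sentence by transporting the Friedrichs regularization argument of Proposition~\ref{prop:app-eq-pseudo-well-posedness} to the vector-valued setting, with the key input $\A-\A^*=O(1)_{\L(\Ldottwo,\Ldottwo)}$ (which the paper in fact establishes in the sharper form~\eqref{eq:A_n-is-almost-self-adjoint}). There is nothing to add.
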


\begin{corollary}
For $ \vec{w}_{h,0} \in \Ldottwo(\Td) $, $ f \in \Lone([0,T]_s,\Ldottwo(\Td)) $, there exists a unique solution to the Cauchy problem~\eqref{eq:equation-semiclassical-w-f}, $ \vec{w}_h \in C([0,T]_s,\Ldottwo(\Td)) $ with $ \vec{w}_h(0) = \vec{w}_{h,0} $.
\end{corollary}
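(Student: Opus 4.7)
The plan is to follow the Friedrichs regularization scheme of Proposition~\ref{prop:app-eq-pseudo-well-posedness} verbatim, with only cosmetic modifications to accommodate the semiclassical parameter~$h$ (which is fixed throughout the argument) and the fact that the unknown is vector-valued. First I rewrite~\eqref{eq:equation-semiclassical-w-f} as the evolution equation
\begin{equation*}
\partial_s \vec{w}_h + \tfrac{i}{h} \A_h(\udlu) \vec{w}_h = \tfrac{i}{h}\vec{f}_h, \qquad \vec{w}_h(0)=\vec{w}_{h,0},
\end{equation*}
and introduce a Friedrichs smoother $J_\varepsilon$ built from the symbol $\exp(-\varepsilon \gamma^{(3/2)})$ (exactly as in the cited proposition, applied componentwise), for which $J_\varepsilon \to \Id$ strongly in $\L(\Ldottwo,\Ldottwo)$ as $\varepsilon\to 0^+$, and for each fixed $\varepsilon>0$, $\A_h(\udlu)J_\varepsilon$ is bounded on $\Ldottwo(\Td)\times\Ldottwo(\Td)$ with norm continuous in time. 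The Cauchy--Lipschitz theorem then produces a unique $\vec{w}_h^\varepsilon \in C^1([0,T]_s,\Ldottwo(\Td))$ solving the regularized equation on all of $[0,T]_s$.

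Second, I derive an a priori estimate, uniform in $\varepsilon$. Computing $\partial_s \|\vec{w}_h^\varepsilon\|_{\Ltwo}^2 = 2\Re(\partial_s\vec{w}_h^\varepsilon,\vec{w}_h^\varepsilon)_{\Ltwo}$ and using the key input~\eqref{eq:A_n-is-almost-self-adjoint}, namely $\A_h(\udlu)-\A_h(\udlu)^* = O(h^{3/2})_{\L(\Ltwo,\Ltwo)}$, together with $\|[\A_h,J_\varepsilon]\|_{\L(\Ldottwo,\Ldottwo)} \lesssim h^{3/2}$ and $\|J_\varepsilon-J_\varepsilon^*\|_{\L(\Ldottwo,\Hdot{m})}\lesssim 1$ for suitable $m$, yields exactly the energy inequality~\eqref{eq:energy-estimate-eq-w-h}, which by Gronwall's lemma gives
\begin{equation*}
\|\vec{w}_h^\varepsilon\|_{C([0,T]_s,\Ltwo)} \lesssim \|\vec{w}_{h,0}\|_{\Ltwo} + h^{-1}\|\vec{f}_h\|_{\Lone([0,T]_s,\Ltwo)},
\end{equation*}
uniformly in $\varepsilon$. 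An analogue of~\eqref{eq:app-pseudo-derivative-estimate} controls $\ddz\|\vec{w}_h^\varepsilon\|_{\Ltwo}^2$ in $\Lone([0,T]_s)$.

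Third, I pass to the limit $\varepsilon \to 0$. Banach--Alaoglu together with Arzel\`a--Ascoli extracts a subsequence converging weakly-$*$ in $\Linf([0,T]_s,\Ldottwo)$ and strongly in $C([0,T]_s,\Hdot{-N})$ for any $N$ large enough to ensure strong continuity of $\A_h(\udlu)$ between these spaces; the limit $\vec{w}_h$ then satisfies~\eqref{eq:equation-semiclassical-w-f} in the distributional sense, takes value $\vec{w}_{h,0}$ at $s=0$, and lies in $\Linf([0,T]_s,\Ldottwo(\Td)) \cap C([0,T]_s,\Hdot{-N}(\Td))$. The uniform $\Lone$-estimate on $\ddz\|\vec{w}_h^\varepsilon\|_{\Ltwo}^2$ survives the limit and forces $s\mapsto\|\vec{w}_h(s)\|_{\Ltwo}^2$ to be continuous, upgrading $\vec{w}_h$ to $C([0,T]_s,\Ldottwo(\Td))$ by a classical weak-continuity-plus-norm-continuity argument. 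Uniqueness follows by applying the same energy estimate to the difference of two solutions with vanishing data.

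The only step requiring genuine care is the verification that $\A_h(\udlu)J_\varepsilon$ inherits uniform-in-$\varepsilon$ control of its antisymmetric part and of the commutator $[\A_h,J_\varepsilon]$, so that the energy estimate picks up no hidden dependence on $\varepsilon$. This is purely a semiclassical symbolic calculation using that both $\A_h$ and $J_\varepsilon$ are built from functions of $\gamma^{(3/2)}$, whose Poisson bracket with itself vanishes; the computation is an exact parallel of the one carried out in the proof of Proposition~\ref{prop:app-eq-pseudo-well-posedness} and presents no new difficulty.
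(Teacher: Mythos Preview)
Your proposal is correct and follows exactly the route the paper intends: the paper does not give a separate proof for this corollary but states ``Using the same method, we have the following corollary,'' referring to the Friedrichs regularization scheme of Proposition~\ref{prop:app-eq-pseudo-well-posedness}. Your write-up fleshes out that scheme faithfully, invoking the almost-self-adjointness~\eqref{eq:A_n-is-almost-self-adjoint} to close the energy estimate, which is precisely the point.
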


Similar results hold for the para\-differential equations.
\begin{proposition}
\label{prop:app-wellposedness-paradiff-eq}
Suppose $ s \ge s' \ge 0 $, $ \mu \ge 3 + d/2 $, $ T > 0 $, $ \udlu \in \Cls^{0,\mu}(T,\varepsilon_0) $ for some $ \varepsilon_0 > 0 $ sufficiently small. Let $ P = P(\udlu) $ be defined by~\eqref{eq:definition-P}, 
\begin{equation*}
R \in \Linf([0,T],\L(\Hdot{s},\Hdot{s})) \cap C([0,T],\L(\Hdot{s},\Hdot{s'})),
\end{equation*}
and let $ F \in \Lone([0,T],\Hdot{s}(\Td)) $. Then for $ u_0 \in \Hdot{s}(\Td) $, the following Cauchy problem
\begin{equation}
(\pt + P + R) u = F, \quad u(0) = u_0,
\end{equation}
admits a unique solution $ u \in C([0,T],\Hdot{s}(\Td)) $, which moreover satisfies the estimate
\begin{equation}
\|u\|_{C([0,T],\H{s})} \lesssim \|u_0\|_{\H{s}} + \|F\|_{\Lone([0,T],\H{s})}.
\end{equation}
\end{proposition}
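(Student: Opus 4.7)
The strategy mirrors that of Proposition~\ref{prop:app-eq-pseudo-well-posedness} above: regularize the equation to obtain approximate solutions as solutions of ODEs in a fixed Hilbert space, establish a uniform $\Hdot{s}$ energy estimate, then pass to the limit and recover continuity in time from the uniform bounds. The only real content is the energy estimate at the $\Hdot{s}$ level for the paradifferential equation, since everything else follows a standard recipe.

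First I would set up the regularization. Let $J_\varepsilon = \pi(D_x)\op(j_\varepsilon \pi)$ with, for instance, $j_\varepsilon = \exp(-\varepsilon\gamma^{(3/2)}) + \tfrac12 \pxi \cdot D_x \exp(-\varepsilon\gamma^{(3/2)})$ (as in Proposition~\ref{prop:app-eq-pseudo-well-posedness}), so that $J_\varepsilon$ is regularizing, $J_\varepsilon \to \Id$ strongly, and $PJ_\varepsilon, RJ_\varepsilon \in C([0,T],\L(\Hdot{s},\Hdot{s}))$. The modified Cauchy problem $(\pt + PJ_\varepsilon + RJ_\varepsilon)u^\varepsilon = F$, $u^\varepsilon(0) = u_0$, has a unique solution $u^\varepsilon \in C([0,T_\varepsilon],\Hdot{s}(\Td))$ by Cauchy-Lipschitz; the uniform energy estimate below will then extend this to the full interval $[0,T]$.

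The heart of the argument is the energy estimate. Define $\Lambda^s = 1 + \T{(\gamma^{(3/2)})^{2s/3}}$ as in~\eqref{eq:definition-Lambda^s_h} with $h=1$; by Lemma~\ref{lem:equivalent-semiclassical-sobolev-norm} its $\Ltwo$ norm is equivalent to the $\H{s}$ norm for $\varepsilon_0$ small. Differentiating $\|\Lambda^s u^\varepsilon\|_{\Ltwo}^2$, I would need to estimate $\Re(\Lambda^s(P+R)J_\varepsilon u^\varepsilon, \Lambda^s u^\varepsilon)_{\Ltwo}$. Write
\begin{equation*}
\Re(\Lambda^s P J_\varepsilon u^\varepsilon,\Lambda^s u^\varepsilon) = \Re(P \Lambda^s J_\varepsilon u^\varepsilon,\Lambda^s u^\varepsilon) + \Re([\Lambda^s,P] J_\varepsilon u^\varepsilon,\Lambda^s u^\varepsilon).
\end{equation*}
For the first term I would invoke the almost-skew-adjointness of $P$: the principal part $i\T{\gamma}$ is skew modulo $O(\varepsilon_0)_{\L(\Hdot{0},\Hdot{0})}$ by~\eqref{eq:gamma-almost-selfadjoint}; the transport term $\nabla_x \cdot \T{V}$ is skew modulo $\T{\nabla_x \cdot V}$ which is bounded; and the remaining $g\T{r^{-1}}\Im$ and $i\T{r}M_\b \Re$ pieces are of order $\le 1/2$ and $-\infty$ respectively. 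For the commutator $[\Lambda^s,P]$, the potentially dangerous piece $[\Lambda^s, i\T{\gamma}]$ is in fact of order $s+1/2$ only, since the Poisson bracket $\{(\gamma^{(3/2)})^{2s/3},\gamma^{(3/2)}\}=0$ and Theorem~\ref{thm:Symbolic-Calculus-Paradiff-Composition} then gives a gain. All other commutators are of order $\le s$. Combining these, Gronwall yields
\begin{equation*}
\|u^\varepsilon\|_{C([0,T],\H{s})} \lesssim \|u_0\|_{\H{s}} + \|F\|_{\Lone([0,T],\H{s})},
\end{equation*}
uniformly in $\varepsilon$, and an analogous argument for $\pt u^\varepsilon$ provides a uniform bound in $\Linf([0,T],\Hdot{s-3/2})$.

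Finally, I would pass to the limit $\varepsilon \to 0$ by Arzela-Ascoli: the bounds above give a subsequence converging weakly-$*$ in $\Linf([0,T],\Hdot{s})$ and strongly in $C([0,T],\Hdot{s-\delta})$ for any $\delta>0$ to some $u$, which is a distributional solution of the unregularized equation. The continuity $u \in C([0,T],\Hdot{s})$ then follows from the uniform estimate on $\|u^\varepsilon\|_{\H{s}}^2$ being equicontinuous in $t$ (deduced from the $\Linf$ bound on $\pt u^\varepsilon$ combined with the energy identity), together with $u \in C_w([0,T],\Hdot{s})$. Uniqueness is immediate from the energy estimate applied to the difference of two solutions. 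The main obstacle is, as anticipated, the control of $[\Lambda^s,P]$, but this is precisely of the same flavor as the commutator estimate in Lemma~\ref{lem:commutator-estimate-K} and uses the same vanishing of the Poisson bracket.
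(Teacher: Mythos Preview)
Your approach is correct and follows the same skeleton as the paper's: regularize, establish a uniform $\H{s}$ energy estimate, and pass to the limit by Arzel\`a--Ascoli. The paper is terser --- it chooses the \emph{para}differential mollifier $J_\varepsilon = \T{j_\varepsilon}$ (rather than the pseudodifferential one you borrow from Proposition~\ref{prop:app-eq-pseudo-well-posedness}) and compresses the energy argument into the three estimates $\|[P,J_\varepsilon]\|_{\L(\Hdot{s},\Hdot{s})}\lesssim 1$, $\|P-P^*\|_{\L(\Hdot{s},\Hdot{s})}\lesssim 1$, $\|J_\varepsilon-J_\varepsilon^*\|_{\L(\Hdot{s},\Hdot{s+2})}\lesssim 1$, without spelling out the conjugation by an $\H{s}$-weight. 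Your explicit introduction of $\Lambda^s$ makes that step transparent and is exactly the mechanism the paper itself uses elsewhere (see the proof of Lemma~\ref{lem:commutator-estimate-K}, where $[\T{\hat\gamma^{(\mu)}},P]$ is shown to map $\Hdot{\mu}\to\Ldottwo$).

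One genuine slip: you write that $[\Lambda^s,i\T{\gamma}]$ is ``of order $s+1/2$ only''. That order would \emph{not} close the estimate, since you need $[\Lambda^s,P]:\Hdot{s}\to\Ldottwo$, i.e.\ order at most $s$. Your own mechanism actually delivers this: because $\{(\gamma^{(3/2)})^{2s/3},\gamma^{(3/2)}\}=0$, Theorem~\ref{thm:Symbolic-Calculus-Paradiff-Composition} with $\rho\ge 3/2$ gives $[\T{\hat\gamma^{(s)}},\T{\gamma^{(3/2)}}]$ of order $s+3/2-\rho\le s$ (indeed $s-1/2$ with $\rho=2$), and the $\T{\gamma^{(1/2)}}$ contribution is of order $s-1/2$ already by the one-derivative commutator gain. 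So the error is in the arithmetic, not the idea; just correct the stated order to $\le s$.
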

\begin{proof}
The proof is almost the same as above, but here we choose $ J_\varepsilon = T_{j_\varepsilon} $ with $ j_\varepsilon $ defined as above, and use the following estimates
\begin{equation*}
\|[P,J_\varepsilon]\|_{\L(\Hdot{s},\Hdot{s})} \lesssim 1, \quad
\|P-P^*\|_{\L(\Hdot{s},\Hdot{s})} \lesssim 1, \quad
\|J_\varepsilon - J_\varepsilon^*\|_{\L(\Hdot{s},\Hdot{s+2})} \lesssim 1.
\end{equation*}
\end{proof}

\begin{corollary}
\label{cor:Range-Sol-regularity}
Let the Range operator $ \Range $ and the solution operator $ \Sol $ be formally defined in Section~\ref{sec:HUM}, then for all $ \mu \ge 0 $,
\begin{equation*}
\Range : \Ltwo([0,T],\Hdot{\mu}(\Td)) \to \Hdot{\mu}(\Td),  
\qquad
\Sol : \Hdot{\mu}(\Td) \to C([0,T],\Hdot{\mu}(\Td)),
\end{equation*}
and satisfies the estimates
\begin{equation*}
\|\Range\|_{\L(\Ltwo([0,T],\Hdot{\mu}), \Hdot{\mu})} \lesssim 1,
\qquad
\|\Sol\|_{\L(\Hdot{\mu}, C([0,T],\Hdot{\mu}))} \lesssim 1.
\end{equation*}
\end{corollary}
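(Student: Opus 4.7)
The plan is to deduce both statements directly from Proposition~\ref{prop:app-wellposedness-paradiff-eq} by routine modifications. For $\Sol$, the defining Cauchy problem is $(\pt - P^*)v = 0$ with $v(0) = v_0 \in \Hdot{\mu}(\Td)$. I would observe that $-P^*$ has the same principal symbol structure as $P$, and that the ``almost self-adjoint'' property $(-P^*) - (-P^*)^* = P - P^* = O(1)_{\L(\Hdot{\mu},\Hdot{\mu})}$ still holds. The parabolic regularization scheme used in the proof of Proposition~\ref{prop:app-wellposedness-paradiff-eq}, combined with Gr\"{o}nwall's inequality, then goes through unchanged with $P$ replaced by $-P^*$, producing a unique $v \in C([0,T],\Hdot{\mu}(\Td))$ satisfying $\|v\|_{C([0,T],\H{\mu})} \lesssim \|v_0\|_{\H{\mu}}$.

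For $\Range$, the defining problem $(\pt + P)u = G$ with $u(T) = 0$ is backward in time, so I would perform the time reversal $\tilde{u}(t) = u(T-t)$, $\tilde{G}(t) = G(T-t)$, $\tilde{P}(t) = P(\udlu(T-t))$. Since $\udlu(T-\cdot) \in \Cls^{1,s}(T,\varepsilon_0)$, the reversed function $\tilde{u}$ solves the forward Cauchy problem $(\pt - \tilde{P})\tilde{u} = -\tilde{G}$ with $\tilde{u}(0) = 0$, to which the same adaptation of Proposition~\ref{prop:app-wellposedness-paradiff-eq} applies (this time with a source term). This yields $\tilde{u} \in C([0,T],\Hdot{\mu}(\Td))$ with $\|\tilde{u}\|_{C([0,T],\H{\mu})} \lesssim \|\tilde{G}\|_{\Lone([0,T],\H{\mu})}$. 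Cauchy--Schwarz in time and the finiteness of $T$ give $\|\tilde{G}\|_{\Lone([0,T],\H{\mu})} \lesssim \|G\|_{\Ltwo([0,T],\H{\mu})}$, whence
\begin{equation*}
\|\Range G\|_{\H{\mu}} = \|u(0)\|_{\H{\mu}} = \|\tilde{u}(T)\|_{\H{\mu}} \lesssim \|G\|_{\Ltwo([0,T],\H{\mu})},
\end{equation*}
with the $T$-dependent factor absorbed into~$\lesssim$.

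The only point requiring any care---and it is pure bookkeeping rather than a genuine obstacle---is that Proposition~\ref{prop:app-wellposedness-paradiff-eq} is stated verbatim for $P = P(\udlu)$, whereas the two applications above involve $-P^*$ and $-\tilde{P}$. Its proof uses only the three estimates
\begin{equation*}
\|[P,J_\varepsilon]\|_{\L(\Hdot{\mu},\Hdot{\mu})} \lesssim 1, \quad
\|P-P^*\|_{\L(\Hdot{\mu},\Hdot{\mu})} \lesssim 1, \quad
\|J_\varepsilon - J_\varepsilon^*\|_{\L(\Hdot{\mu},\Hdot{\mu+2})} \lesssim 1,
\end{equation*}
and the analogues for $-P^*$ and $-\tilde{P}$ follow by taking adjoints: $(-P^*)-(-P^*)^* = P-P^*$, and $[-P^*,J_\varepsilon]$ is bounded on $\Hdot{\mu}$ by combining $[P,J_\varepsilon]^* = -[P^*,J_\varepsilon^*]$ with the order-$(-1/2)$ estimate for $[P^*,J_\varepsilon - J_\varepsilon^*]$. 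Thus no new analysis is required beyond these symmetry observations.
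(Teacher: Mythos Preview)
Your proposal is correct and matches the paper's intent: the corollary is stated without proof immediately after Proposition~\ref{prop:app-wellposedness-paradiff-eq}, and your argument supplies precisely the routine bookkeeping (replacing $P$ by $-P^*$ for $\Sol$, time-reversing for $\Range$, and checking that the three key estimates survive) that the paper leaves implicit. No additional ideas are needed.
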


\vspace{\baselineskip}
\parbox{5in}{
HUI ZHU

\textsc{Laboratoire de Mathématiques d’Orsay, Univ.~Paris-Sud, CNRS,}

\textsc{Université Paris-Saclay, 91405 Orsay, France}

\textit{E-mail}: \texttt{hzhu.pde@gmail.com}
}

\end{document}